\documentclass[11pt]{amsart}

\usepackage{mystyle}

\begin{document}

\title[Thresholds for Latin squares and Steiner triple systems]
{
  Thresholds for Latin squares and Steiner triple systems: Bounds within a logarithmic factor
}

\email{\{D.Y.Kang.1, D.Kuhn, D.Osthus\}@bham.ac.uk, tom.kelly@gatech.edu, abhishekmethuku@gmail.com}
\address{School of Mathematics, University of Birmingham,
Edgbaston, Birmingham, United Kingdom}
\address{School of Mathematics, Georgia Institute of Technology, Atlanta, GA, USA}
\address{Department of Mathematics, ETH Z\"urich, Z\"urich, Switzerland}

\author[Kang]{Dong Yeap Kang}

\author[Kelly]{Tom Kelly}

\author[K\"uhn]{Daniela K\"uhn}

\author[Methuku]{Abhishek Methuku}

\author[Osthus]{Deryk Osthus}

\date{March 26, 2023}

\thanks{This project has received partial funding from the European Research
Council (ERC) under the European Union's Horizon 2020 research and innovation programme (grant agreement no.\ 786198, D.~Kang, T.~Kelly, D.~K\"uhn, A.~Methuku, and D.~Osthus).
The research leading to these results was also partially supported by the EPSRC, grant no.\ EP/S00100X/1 (A.~Methuku and D.~Osthus).}

\subjclass{%
    05C80,
    05B05,
    05B07,
    05B15%
}
\keywords{%
  Latin square,
  Steiner triple system,
  1-factorization,
  block design,
  triangle decomposition,
  random graph,
  random hypergraph,
  threshold,
  Shamir's problem,
  spreadness%
}

\begin{abstract}
We prove that for $n \in \mathbb N$ and an absolute constant $C$, if $p \geq C\log^2 n / n$ and $L_{i,j} \subseteq [n]$ is a random subset of $[n]$ where each $k\in [n]$ is included in $L_{i,j}$ independently with probability $p$ for each $i, j\in [n]$, then asymptotically almost surely there is an order-$n$ Latin square in which the entry in the $i$th row and $j$th column lies in $L_{i,j}$.  The problem of determining the threshold probability for the existence of an order-$n$ Latin square was raised independently by Johansson, by Luria and Simkin, and by Casselgren and H{\"a}ggkvist; our result provides an upper bound which is tight up to a factor of $\log n$ and strengthens the bound recently obtained by Sah, Sawhney, and Simkin.  We also prove analogous results for Steiner triple systems and $1$-factorizations of complete graphs, and moreover, we show that each of these thresholds is at most the threshold for the existence of a $1$-factorization of a nearly complete regular bipartite graph.
\end{abstract}

\maketitle

\section{Introduction}

An \textit{order-$n$ Latin square} $\cL$ is an $n \times n$ matrix with entries from a set of $n$ symbols, such that each row and each column contains each symbol exactly once.
An \textit{order-$n$ Steiner triple system} $\cS$ is a pair $(V, \cT)$ where $V$ is an $n$-element ground set and $\cT \subseteq \binom{V}{3}$ is a collection of triples of $V$ such that every pair $\{u, v\} \in \binom{V}{2}$ is contained in exactly one element of $\cT$.  Latin squares and Steiner triple systems are central objects of study in combinatorics with a long and extensive line of research that can be traced back to the work of Euler in the 1700s and of Kirkman in the 1800s.
Both are fundamental in the study of combinatorial designs (an order-$n$ Steiner triple system is what is known as a \textit{$t$-$(n, k, \lambda)$ design} where $t = 2$, $k = 3$, and $\lambda = 1$\COMMENT{A \textit{$t$-$(n, k, \lambda)$ design} is a pair $(V, \cB)$ where $V$ is an $n$-element ground set and $\cB \subseteq \binom{V}{k}$ is a collection of $k$-element subsets of $V$ such that every $t$-element subset of $V$ is contained in exactly $\lambda$ elements of $\cB$}) and graph decompositions.  A \textit{triangle decomposition} $\cT$ of a graph $G$ is a collection of triangles in $G$ such that every edge $e \in E(G)$ is contained in exactly one triangle in $\cT$.  There is an obvious bijection between order-$n$ Latin squares (with a fixed symbol set) and triangle decompositions of the $n \times n \times n$ complete tripartite graph $K_{n,n,n}$ and between order-$n$ Steiner triple systems (with a fixed ground set) and triangle decompositions of the $n$-vertex complete graph $K_n$, so we refer to them interchangeably as such.  Another important class of `design-like' structures are \textit{$1$-factorizations}---that is, decompositions into perfect matchings---of complete graphs, which can be viewed as schedules for a round-robin tournament. 

In this paper, we bound the \textit{threshold probability} for the appearance of an order-$n$ Latin square or Steiner triple system in a binomial random subset of triangles in $K_{n,n,n}$ and $K_n$, respectively (see Theorems~\ref{thm:ls-threshold} and \ref{thm:sts-threshold}).  We also bound the threshold for the appearance of a $1$-factorization of $K_{2n}$ in the analogous model (see Theorem~\ref{thm:list-threshold}).

\subsection{Thresholds}

For a graph $G$ and $p \in [0, 1]$, we let $\randt(G, p)$ be a random set of triangles of $G$ where each is included independently with probability $p$.  Here we are primarily interested in the case when $G \cong K_n$ or $G \cong K_{n,n,n}$; one can view a random subset of triangles in $K_n$ as a random $3$-uniform hypergraph and a random subset of triangles in $K_{n,n,n}$ as a random $3$-uniform $3$-partite hypergraph.  For $1$-factorizations of $K_{2n}$, we also consider $\randt(G, p)$ where $G$ is the join of $K_{2n}$ and an empty graph with $2n-1$ vertices (as in Theorem~\ref{thm:red2}).
For an increasing property $\cP$ of $3$-uniform hypergraphs and a sequence of graphs $(G_n : n \in \mathbb N)$, a function $p^* = p^*(n)$ is a \textit{threshold} if
\begin{equation*}
  \Prob{\randt(G_n, p) \in \cP} \rightarrow \left\{
    \begin{array}{l l}
      0 & \text{if $p = o(p^*)$ and}\\
      1 & \text{if $p = \omega(p^*)$.}
    \end{array}
  \right.
\end{equation*}

The notion of a threshold extends naturally to other random structures, and thresholds are one of the most intensively studied topics in probabilistic combinatorics.  Among the most classical results in this area are Erd\H{o}s and R\'{e}nyi's~\cite{ER64, ER66} determination of the threshold\COMMENT{really it is only \textit{a} threshold, but it is commonly referred to as \textit{the} threshold in the literature} for the appearance of a perfect matching in a binomial random subgraph of $K_{n,n}$ and $K_{2n}$.  Moreover, determining the threshold more generally for the appearance of a perfect matching in a binomial random uniform hypergraph, known as \lq Shamir's problem\rq\ (see \cite{Erdos81}), was long viewed as one of the most important problems in the area until it was famously resolved by Johansson, Kahn, and Vu~\cite{JKV08}.  Shamir's problem itself is a special case of the problem of determining the threshold for the existence of designs, since a perfect matching in an $n$-vertex $k$-uniform hypergraph is a so-called $1$-$(n, k, 1)$ design.
This problem is of course extremely challenging; the existence of designs with arbitrary parameters was proved only recently, first by Keevash~\cite{Ke14} via randomized algebraic constructions and shortly afterwards, by Glock, K\"{u}hn, Lo, and Osthus~\cite{GKLO16} via iterative absorption.
Nevertheless, this problem was recently discussed by Frankston, Kahn, Narayanan, and Park~\cite{FKNP21}, and Simkin~\cite{Si17} boldly conjectured a threshold for $t$-$(n, t + 1, 1)$ designs.
Our bound on the threshold for Steiner triple systems is within a $\log n$ factor of the first unknown case $t = 2$ of Simkin's conjecture.  
The related problem of determining the threshold for Latin squares was first raised in 2006 by Johansson~\cite{J06}, when he gave an approximate solution to Shamir's problem.  Corresponding conjectures were later formulated by Luria and Simkin~\cite{LS19} and Casselgren and H{\"a}ggkvist~\cite{CH16}, and our results resolve these within a $\log n$ factor.

Our proof makes use of the recent breakthrough work of Frankston, Kahn, Narayanan, and Park~\cite{FKNP21}.  The Frankston--Kahn--Narayanan--Park theorem is a remarkably general result that provides an upper bound on the threshold for any property in terms of its \textit{fractional expectation-threshold}, or equivalently via linear programming duality in terms of the minimum `spreadness' of a probability distribution it supports.
(This result was subsequently generalized by Park and Pham~\cite{PP22} to a full proof of the Kahn--Kalai conjecture, but we do not make use of this here.)
The solution to Shamir's problem follows immediately as a corollary of the Frankston--Kahn--Narayanan--Park theorem, as do many other important threshold results analogously, by considering the spreadness of the uniform distribution on perfect matchings.  However, determining the spreadness of the uniform distribution for Latin squares or Steiner triple systems currently seems out of reach, so in our proof we instead construct a `well-spread' distribution on Latin squares, which still yields a bound on the threshold for Latin squares.  (Our main result is Theorem~\ref{thm:robust-ls-threshold}, a slightly stronger result which we can also use to deduce the approximate threshold for Steiner triple systems and for $1$-factorizations of complete graphs.)  In all applications of the Frankston--Kahn--Narayanan--Park theorem given in \cite{FKNP21}, bounding the spreadness is straightforward, whereas in our paper it is the main task of the proof.  This is also the case in the very recent work of Sah, Sawhney, and Simkin~\cite{SSS22} which also bounds the threshold for the appearance of spanning Latin squares and Steiner triple systems.  However, apart from this shared aspect of our proofs, our approach is quite different, and our results strengthen all of their bounds. 

\subsubsection{Latin squares}

Luria and Simkin~\cite{LS19} conjectured that $p = p(n) = \log n / n$ is the threshold for the property that $\randt(K_{n,n,n}, p)$ contains an order-$n$ Latin square, that is, for the property that $\randt(K_{n,n,n}, p)$ contains a triangle decomposition of $K_{n,n,n}$.
As evidence for their conjecture, Luria and Simkin~\cite{LS19} proved that for every $\eps > 0$, the threshold for $\randt(K_{m, n, n}, p)$ to contain an \textit{$m \times n \times n$ Latin box},\footnote{which is a triangle packing of $K_{m, n, n}$ with the maximum number of edges} where $m = \lceil(1 - \eps)n\rceil$, is $\log n / n$.
As is commonly the case in this area, the lower bound for this threshold is straightforward to prove; indeed, if $p = o(\log n / n)$, then asymptotically almost surely (which we henceforth abbreviate \textit{a.a.s.}) there is an edge of $K_{n,n,n}$ not contained in a triangle of $\randt(K_{n,n,n}, p)$ and consequently there is no triangle decomposition of $K_{n,n,n}$ in $\randt(K_{n,n,n}, p)$.  Thus, the challenge for the Luria--Simkin conjecture is to prove that if $p = \omega(\log n / n)$, then \aas $\randt(K_{n,n,n}, p)$ contains an order-$n$ Latin square.  Recently, Sah, Sawhney, and Simkin~\cite{SSS22} made a significant step in this direction by proving an upper bound of $p = \exp(C\log^{3/4} n) / n$.  Our first main result improves this bound and provides an upper bound on this threshold that is within a $\log n$ factor of best possible, as follows.

\begin{theorem}\label{thm:ls-threshold}
If $p \geq C_{\ref*{thm:ls-threshold}} \log^2 n / n$, where $C_{\ref*{thm:ls-threshold}} > 0$ is an absolute constant, then \aas $\randt(K_{n,n,n}, p)$ contains an order-$n$ Latin square.
\end{theorem}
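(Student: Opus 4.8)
The plan is to derive Theorem~\ref{thm:ls-threshold} from the Frankston--Kahn--Narayanan--Park theorem~\cite{FKNP21}. Regarding an order-$n$ Latin square as the $n^2$-element set of triangles it forms in $K_{n,n,n}$, it suffices to construct a probability distribution $\mu$ on order-$n$ Latin squares that is $q$-spread with $q = O(\log n/n)$, i.e.\ with $\mu(\{\mathcal{L} : T \subseteq \mathcal{L}\}) \le q^{|T|}$ for every set $T$ of triangles; since a Latin square has only $n^2$ triangles, the Frankston--Kahn--Narayanan--Park theorem then upgrades such a $\mu$ into the conclusion that $\randt(K_{n,n,n},p)$ asymptotically almost surely contains a Latin square provided $p \ge Cq\log(n^2) = C'\log^2 n/n$. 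Thus the real content is the construction of a well-spread distribution, which is precisely what Theorem~\ref{thm:robust-ls-threshold} provides (in a slightly stronger, robust form that also feeds the Steiner triple system and $1$-factorization statements).

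To build $\mu$ I would sample the Latin square by a random process and bound the spreadness step by step. Identifying a Latin square with a proper $n$-edge-colouring of $K_{n,n}$ (rows versus symbols, the colours indexing the columns), i.e.\ with a decomposition of $K_{n,n}$ into $n$ perfect matchings, the most naive process peels off one colour class at a time. This is hopeless: after $n-t$ colours have been peeled the remaining bipartite graph is only $t$-regular, so the final colour classes are essentially forced and certain triangles then occur with probability $\Theta(1)$ rather than $\Theta(1/n)$, which destroys spreadness. The fix is never to work with a graph far sparser than necessary, so instead I would build $\mu$ recursively by \emph{balanced splitting}: partition the current $d$-regular bipartite graph into a uniformly random $\lceil d/2\rceil$-regular subgraph and a uniformly random $\lfloor d/2\rfloor$-regular subgraph, split the remaining colours to match, and recurse on the two halves; such splits exist and can be sampled by König's theorem, the recursion has depth $\Theta(\log n)$, and along the way one is led to $1$-factorising the nearly complete regular bipartite graphs featuring in the abstract.

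The hard part will be the spreadness analysis. For a set $T$ of target triangles, the event that all of $T$ is realised factorises over the $\Theta(\log n)$ levels of the recursion, the level-$\ell$ factor being the conditional probability that a prescribed partial matching falls inside the prescribed balanced regular subgraph. The key estimate needed is that a uniformly random $d'$-regular subgraph of a $d$-regular bipartite graph is $(1+O(1/d))(d'/d)$-spread with respect to partial matchings---the kind of statement one proves from permanent bounds (Bregman--Minc above, Egorychev--Falikman/Schrijver below) or by an entropy or switching argument. Telescoping $\prod_\ell(d'_\ell/d_\ell)=1/n$ across the levels where $d_\ell$ is not too small then yields an essentially $O(1/n)$-spread distribution; the genuine obstacle---and where the paper's new ideas are concentrated---is the bottom of the recursion, where the auxiliary graphs have only polylogarithmic degree, the per-triangle losses no longer telescope harmlessly, and a separate argument is needed. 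This regime is what accounts for the extra logarithmic factor over the conjectured threshold $\log n/n$ (the second logarithm being the $\log(n^2)$ in the Frankston--Kahn--Narayanan--Park bound). Finally I would dispatch the routine points---inconsistent $T$ contribute probability $0$, the balanced splits are always available, and the constants assemble into an absolute $C_{\ref*{thm:ls-threshold}}$---and obtain the Steiner triple system and $1$-factorization bounds by running the same machinery on those objects.
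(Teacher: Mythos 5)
Your high-level skeleton is exactly the paper's: reduce Theorem~\ref{thm:ls-threshold} to the construction of an $O(\log n/n)$-spread distribution on $1$-factorizations of $K_{n,n}$ and then invoke the Frankston--Kahn--Narayanan--Park theorem. You also correctly identify that peeling off one perfect matching at a time is hopeless for spreadness. Where you diverge from the paper is the construction of the distribution, and that is where all the difficulty lives.

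Your construction is a recursive \emph{balanced split}: at each of $\Theta(\log n)$ levels, replace a $d$-regular bipartite graph by a uniformly random $\lceil d/2\rceil$-regular subgraph and its complement, and recurse. This rests on the unproven estimate that a uniformly random $d'$-regular subgraph of an arbitrary $d$-regular bipartite graph is $(1+O(1/d))(d'/d)$-spread with respect to partial matchings. You gesture at permanent bounds (Egorychev--Falikman/Schrijver and Br\`{e}gman), but the paper explicitly notes (Section~\ref{sect:spreadness}) that these bounds ``do not seem strong enough to prove the required spreadness'' for the uniform distribution on $1$-factorizations; the same counting obstacle hits your per-level estimate, especially once the intermediate graphs are no longer $K_{n,n}$ but random regular subgraphs whose local structure you do not control. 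You also concede explicitly that ``a separate argument is needed'' at the bottom of the recursion where degrees are polylogarithmic. So this is a genuine gap, not just a detail to tidy.

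For comparison, the paper's Theorem~\ref{thm:LS-spreadness} uses an entirely different mechanism. Instead of halving, it partitions $E(G)$ uniformly at random into $\Theta(n/\log n)$ parts $H_{i,j}$ (the ``edge vortex''), so spreadness at this stage is immediate: each edge lands in each part with probability $\Theta(\log n/n)$, independently. The hard work is then to convert this approximate decomposition into an exact decomposition into $O(\log n)$-\emph{regular} spanning subgraphs $R_{i,j}$, which is done by an iterative-absorption (``cover-down'') argument, moving a small number of extra edges into each part via a max-flow/Hall-type lemma (Lemmas~\ref{lemma:f-factor}, \ref{lemma:non-spread-cover-down-lemma}, \ref{lemma:inductive-cover-down-lemma}, \ref{lemma:main-decomposition-lemma}). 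The final $1$-factorization of each $R_{i,j}$ is taken \emph{arbitrarily}, and the spreadness bound is obtained by replacing ``$e$ lies in a particular matching of $R_{i,j}$'' with the weaker (and already spread) event ``$e\in E(R_{i,j})$''. This sidesteps any spreadness estimate for random regular subgraphs: the only randomness whose spreadness must be controlled is the uniform edge partition (plus one extra uniform coin per edge from the cover-down step), for which the bound is elementary. The $\log n$ factor in the spreadness comes from stopping at $O(\log n)$-regular pieces rather than recursing to perfect matchings, not from the bottom of a splitting recursion.

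In short: the top-level strategy is shared, the proposed construction is different, and your construction is incomplete at precisely the point where the argument has to produce content. To make your route rigorous you would need a uniform-over-$G$ spreadness bound for random regular subgraphs (together with control of the intermediate graphs' quality), which is not addressed and is plausibly harder than the paper's absorption argument.
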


An equivalent formulation of this result in terms of the completion of Latin squares via random lists is the following: If $p \geq C_{\ref*{thm:ls-threshold}}\log^2 n / n$ and $L_{i,j} \subseteq [n]$ is a random subset of $[n]$ chosen independently for each $i,j \in [n]$ such that each $k \in [n]$ is included in $L_{i,j}$ independently with probability $p$, then \aas there exists an order-$n$ Latin square $\cL$ such that $\cL_{i,j} \in L_{i,j}$ for each $i,j\in[n]$.  

Theorem~\ref{thm:ls-threshold} can also be understood as a result guaranteeing a proper $n$-edge-colouring of the complete bipartite graph $K_{n,n}$ from a random list assignment, as follows.  A \textit{list assignment} for the edges of a graph $G$ is a map $L$ whose domain is the edge set $E(G)$ of $G$ where $L(e)$ is a finite set for every $e \in E(G)$, and an \textit{$L$-edge-colouring} of $G$ is a proper edge-colouring of $G$ in which each edge $e$ is assigned a colour from $L(e)$.  For a graph $G$, $n \in \mathbb N$, and $p \in [0, 1]$, a \textit{random $(p, n)$-list assignment} $L$ for the edges of $G$ is a list assignment for the edges of $G$ such that $L(e)$ is a random subset of $[n]$ chosen independently for each $e \in E(G)$ such that each $k \in [n]$ is included in $L(e)$ independently with probability $p$.  Theorem~\ref{thm:ls-threshold} implies that if $p \geq C_{\ref*{thm:ls-threshold}}\log^2 n / n$ and $L$ is a random $(p, n)$-list assignment for the edges of $K_{n,n}$, then \aas there is an $L$-edge-colouring of $K_{n,n}$.
Related `palette sparsification' results have recently been employed in algorithmic applications including the design of sublinear-time algorithms for graph colouring \cite{AA20, ACK19}, but they are also of independent interest\COMMENT{we do not obtain any such applications since our proof does not yield an algorithm for finding the edge-colouring, but this is an intriguing open problem}.

Theorem~\ref{thm:ls-threshold} yields the following corollary which nearly resolves a conjecture of Casselgren and H{\"a}ggkvist~\cite{CH16}.  For a graph $G$ and $k, n \in \mathbb N$, a \textit{random $(k, n)$-list assignment} $L$ for the edges of $G$ is a list assignment for the edges of $G$ where $L(e) \subseteq \binom{[n]}{k}$ is chosen independently and uniformly at random for each $e \in E(G)$.  Casselgren and H{\"a}ggkvist~\cite{CH16} conjectured that if $k \geq C\log n$ for sufficiently large $C$ and $L$ is a random $(k, n)$-list assignment for the edges of $K_{n,n}$, then \aas there is an $L$-edge-colouring of $K_{n,n}$, and we show that this is true for $k \geq 2C_{\ref*{thm:ls-threshold}}\log^2 n$.

\begin{corollary}\label{thm:bipartite-list-threshold}
  If $k \geq 2C_{\ref*{thm:ls-threshold}}\log^2 n$ and $L$ is a random $(k, n)$-list assignment for the edges of $K_{n,n}$, then \aas there is an $L$-edge-colouring of $K_{n,n}$.
\end{corollary}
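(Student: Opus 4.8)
The plan is to derive Corollary~\ref{thm:bipartite-list-threshold} from Theorem~\ref{thm:ls-threshold} by a standard coupling argument between the binomial random list model and the uniform random $k$-set model. Recall that Theorem~\ref{thm:ls-threshold} (in its list-colouring reformulation) guarantees an $L$-edge-colouring of $K_{n,n}$ \aas\ whenever $L$ is a random $(p,n)$-list assignment with $p \geq C_{\ref*{thm:ls-threshold}}\log^2 n / n$. So it suffices to show that a random $(k,n)$-list assignment with $k \geq 2C_{\ref*{thm:ls-threshold}}\log^2 n$ stochastically dominates (edge by edge) a random $(p,n)$-list assignment for a suitable $p$ in this range; since the property of admitting an $L$-edge-colouring is monotone increasing in the lists, this will finish the proof.

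First I would fix $p := C_{\ref*{thm:ls-threshold}}\log^2 n / n$, so that the hypothesis $k \geq 2C_{\ref*{thm:ls-threshold}}\log^2 n = 2pn$ holds. The key point is a pointwise comparison: for a single edge $e$, let $B(e) \subseteq [n]$ be the binomial $p$-random subset and let $U(e) \in \binom{[n]}{k}$ be the uniform random $k$-subset. A binomial$(n,p)$ random variable has mean $pn$, so by a Chernoff bound, $|B(e)| \leq k$ with probability $1 - o(n^{-2})$ (indeed exponentially small failure probability since $k = 2pn$ is twice the mean). Conditioned on $|B(e)| \leq k$, the set $B(e)$ is a uniformly random subset of $[n]$ of its (random) size, and any uniformly random $j$-subset with $j \leq k$ can be extended to a uniformly random $k$-subset by adding $k - j$ further uniformly chosen elements; hence one can couple $B(e)$ and $U(e)$ so that $B(e) \subseteq U(e)$ whenever $|B(e)| \leq k$. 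Doing this independently across all edges $e \in E(K_{n,n})$ (there are $n^2$ of them) and taking a union bound over the $o(n^{-2})$-probability bad events, we get that \aas\ $B(e) \subseteq U(e)$ for every edge simultaneously.

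To conclude, I would invoke Theorem~\ref{thm:ls-threshold}: \aas\ the binomial list assignment $B$ admits an $L$-edge-colouring of $K_{n,n}$, and on the (also a.a.s.) event that $B(e) \subseteq U(e)$ for all $e$, the same colouring is a valid $U$-edge-colouring. Intersecting these two a.a.s.\ events gives that $U$ \aas\ admits an $L$-edge-colouring, which is exactly the statement of Corollary~\ref{thm:bipartite-list-threshold}.

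There is essentially no hard part here; the only thing to be careful about is the direction of the coupling and the error-probability bookkeeping — we need the Chernoff bound to beat the $n^2$ union bound, which it does comfortably since $k$ is a constant factor larger than the mean of the binomial (any factor bounded away from $1$ would suffice, and $\log^2 n \to \infty$ gives plenty of room). One should also note that the coupling need not be a true stochastic domination of the whole list assignments as random objects in a fixed probability space, but it is enough to have, for each realization, a.a.s.\ a coupling with $B(e) \subseteq U(e)$ for all $e$; alternatively one phrases it as: the property ``$L$ admits an $L$-edge-colouring'' is monotone, so it suffices that the distribution of $U$ stochastically dominates that of $B$ coordinatewise, which the above coupling establishes after conditioning on the high-probability event.
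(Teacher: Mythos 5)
Your proof is correct and is exactly the ``standard methods---similar to those relating thresholds for the random graph $G_{n,p}$ in the binomial model to those for $G_{n,m}$ in the uniform model'' that the paper alludes to and deliberately omits: fix $p = C_{\ref*{thm:ls-threshold}}\log^2 n/n$, couple the binomial lists inside the uniform $k$-lists edge-by-edge (extending a uniform $j$-subset to a uniform $k$-subset whenever $j \le k$), use Chernoff plus a union bound over the $n^2$ edges to make the coupling succeed \aas, and then apply Theorem~\ref{thm:ls-threshold} together with monotonicity of the colourability property. One small remark: the closing paragraph of your write-up somewhat muddles the point---you do not actually establish full stochastic domination of $U$ over $B$, only a coupling that succeeds a.a.s., but as you already correctly note two paragraphs earlier, intersecting the two a.a.s.\ events (the coupling succeeding and Theorem~\ref{thm:ls-threshold} producing a $B$-colouring) is all that is needed, so the conclusion stands.
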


The proof of Corollary~\ref{thm:bipartite-list-threshold} assuming Theorem~\ref{thm:ls-threshold} involves standard methods---similar to those relating thresholds for the random graph $G_{n,p}$ in the binomial model to those for $G_{n,m}$ in the uniform model (see e.g.~\cite{Bo01, JLR00})---so we omit it.
\subsubsection{$1$-factorizations}\label{sect:1-factorizations}

A \textit{$1$-factorization} of a $d$-regular graph $G$ is an ordered partition $(M_1, \dots, M_d)$ of the edges of $G$ into perfect matchings, which can also be considered a proper $d$-edge-colouring of $G$.  Order-$n$ Latin squares are also in bijection with $1$-factorizations of $K_{n,n}$, and we use this fact in the proof of Theorem~\ref{thm:ls-threshold}.    We also provide upper bounds on the threshold for a random list assignment for the edges of a complete graph to admit a $1$-factorization, as follows.

\begin{theorem}\label{thm:list-threshold}
  If $k  \geq C_{\ref*{thm:list-threshold}}\log^2 n$ and $L$ is a random $(k, 2n-1)$-list assignment for the edges of $K_{2n}$, or if $p \geq C_{\ref*{thm:list-threshold}}\log^2 n / n$ and $L$ is a random $(p, 2n-1)$-list assignment for the edges of $K_{2n}$, where $C_{\ref*{thm:list-threshold}} > 0$ is an absolute constant, then \aas there is an $L$-edge-colouring of $K_{2n}$.
\end{theorem}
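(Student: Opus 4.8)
The plan is as follows. By standard coupling arguments relating the uniform and binomial list models---the same ones used to deduce Corollary~\ref{thm:bipartite-list-threshold} from Theorem~\ref{thm:ls-threshold}---it suffices to prove the second statement, so fix $p \geq C_{\ref*{thm:list-threshold}}\log^2 n/n$. I would first pass to the language of triangle decompositions via Theorem~\ref{thm:red2}: a random $(p,2n-1)$-list assignment $L$ for the edges of $K_{2n}$ admits an $L$-edge-colouring precisely when $\randt(G,p)$ contains a triangle decomposition, where $G$ is the join of $K_{2n}$ with an independent set of $2n-1$ vertices, one per colour (in such a decomposition every triangle consists of two vertices $u,v$ of $K_{2n}$ together with the vertex of the colour assigned to $uv$). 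Hence it suffices to show that $\randt(G,p)$ \aas contains a triangle decomposition, and by the Frankston--Kahn--Narayanan--Park theorem~\cite{FKNP21} this follows once we exhibit a probability distribution on the triangle decompositions of $G$---equivalently, on the $1$-factorizations of $K_{2n}$---that is sufficiently well spread, namely with spreadness $O(\log n/n)$.

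The crux is constructing such a distribution, and here I would reduce to the near-complete bipartite setting of Theorem~\ref{thm:robust-ls-threshold}. The key point is that $1$-factorizations of $K_{2n}$ are in bijection with the \emph{symmetric} proper $(2n-1)$-edge-colourings of the bipartite double cover of $K_{2n}$, that is, of the graph $H$ obtained from $K_{2n,2n}$ by deleting a perfect matching: if the classes of $H$ are $\{x_v\}$ and $\{y_v\}$ with $x_u \sim y_v$ exactly when $u\neq v$, then a colouring $c$ with $c(x_uy_v)=c(x_vy_u)$ for all $u\neq v$ descends to the proper edge-colouring $\{u,v\}\mapsto c(x_uy_v)$ of $K_{2n}$, and conversely. (Equivalently, a $1$-factorization of $K_{2n}$ is a \emph{symmetric} Latin square of order $2n-1$, whose diagonal is then automatically a permutation of the symbols, since an involution on a set of odd size has a fixed point.) Now $H$ is exactly the kind of near-complete regular bipartite graph handled by Theorem~\ref{thm:robust-ls-threshold}; so the plan is to run the construction behind that theorem in a way that is equivariant under the deck transformation swapping the two classes of $H$, so that its output is automatically symmetric and hence yields a well-spread distribution on $1$-factorizations of $K_{2n}$.

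The main obstacle is imposing this symmetry while retaining the spreadness. One cannot simply sample a well-spread edge-colouring of $H$ from Theorem~\ref{thm:robust-ls-threshold} and then symmetrize it, since overwriting $c(x_vy_u)$ by $c(x_uy_v)$ destroys properness at the vertices $y_u$; the symmetry must instead be built into the process, colouring each edge $x_uy_v$ (with, say, $u<v$) together with its mirror $x_vy_u$ as a single unit. This turns the task into constructing a well-spread distribution on symmetric Latin squares of order $2n-1$, and the delicate part is to rerun the spreadness estimate from the proof of Theorem~\ref{thm:robust-ls-threshold} in this coupled model---where fixing the colour of one edge now also fixes that of its mirror---and in addition to control the extra constraint that the rigidly determined diagonal of the symmetric Latin square lands in the prescribed lists. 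I expect this modified spreadness computation, rather than any of the reductions above, to be the heart of the argument; once it is in hand, the conclusion follows from the Frankston--Kahn--Narayanan--Park theorem as indicated.
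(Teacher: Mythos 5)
Your plan diverges from the paper's at the very first step after invoking Frankston--Kahn--Narayanan--Park. The paper does \emph{not} construct a well-spread distribution on $1$-factorizations of $K_{2n}$ (symmetric or otherwise). Instead, Theorem~\ref{thm:list-threshold} is deduced from Theorem~\ref{thm:robust-ls-threshold} together with a separate \emph{reduction}, Theorem~\ref{thm:list-reduction} (proved via Theorem~\ref{thm:red2}): one fixes an arbitrary balanced bipartition of $V(K_{2n})$, uses the first $2n-d-1$ colours and a direct random construction---R\"{o}dl nibble (Corollary~\ref{cor:almostdecomp}), the cover-down lemma (Lemma~\ref{lem:coverdown_reservoir}), and equitable edge-colourings---to properly colour every edge of $K_{2n}$ except a leftover $d$-regular bipartite graph across that bipartition, and then applies Theorem~\ref{thm:robust-ls-threshold} to colour the leftover with the remaining $d$ colours. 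No spreadness estimate for the complete graph is ever needed; the ``hard'' spreadness work is done only in the bipartite case.

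Your proposed route---running the edge-vortex construction behind Theorem~\ref{thm:LS-spreadness} equivariantly on the bipartite double cover $H$ of $K_{2n}$---has a gap more fundamental than the ones you flag. The construction decomposes $H$ into spanning regular bipartite subgraphs $R_{i,j}$ of degree $O(\log n)$, and the final step of the proof of Theorem~\ref{thm:LS-spreadness} $1$-factorizes each $R_{i,j}$ \emph{arbitrarily}, which is always possible since a regular bipartite graph is $1$-factorable. In your equivariant version each $R_{i,j}$ is symmetric under the deck transformation, hence descends to an $O(\log n)$-regular simple graph $G_{i,j}$ on $V(K_{2n})$, and a \emph{symmetric} $1$-factorization of $R_{i,j}$ is precisely a $1$-factorization of $G_{i,j}$. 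But regular graphs of even order are not in general $1$-factorable (the Petersen graph is the standard example), and nothing in the construction forces the sparse $G_{i,j}$ to be. So the last, ``easy'' step of the bipartite argument---the one you do not mention---silently fails in the symmetric setting. (There is a milder version of the same problem one level up: the cover-down step rests on a bipartite $f$-factor criterion, Lemma~\ref{lemma:f-factor}, and a symmetric version would amount to a degree-sequence realization in a general graph, which has different and less forgiving conditions.) You correctly identify that na\"{i}ve symmetrization destroys properness and that the spreadness estimate needs redoing, but those are not the obstruction; the obstruction is that the bipartite structure is what makes regularity imply decomposability, and symmetrizing forfeits it. This is exactly why the paper introduces the separate reduction theorem rather than attempting a symmetric analogue of Theorem~\ref{thm:LS-spreadness}.
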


This result also clearly applies with $K_{2n}$ replaced with $K_{2n-1}$.  The `$2n - 1$' in Theorem~\ref{thm:list-threshold} is best possible in both cases, since a proper edge-colouring of $K_{2n}$ or $K_{2n-1}$ uses at least $2n - 1$ colours.  However, we conjecture that Theorem~\ref{thm:list-threshold} holds with $k = \omega(\log n)$ and $p =\omega(\log n / n)$, which if true, would be best possible.
In fact, we conjecture more strongly a \textit{sharp threshold} of $p = \log n / n$ and a \textit{hitting time} result -- see Conjecture~\ref{conj:1-factorization-threshold}.

\subsubsection{Steiner triple systems}

Our next result provides an upper bound on the threshold for $\randt(K_n, p)$ to contain an order-$n$ Steiner triple system.  For similar reasons as before, $p = \Omega(\log n / n)$ is a straightforward lower bound, and it is natural to conjecture that $\log n / n$ is in fact the threshold.\COMMENT{It is also necessary that $n \cong 1, 3 \mod 6$, since otherwise an order-$n$ Steiner triple system does not exist for divisibility reasons.}  Indeed, Simkin~\cite{Si17} even conjectured more generally that this is the threshold for the appearance of a $t$-$(n, t + 1, 1)$ design (also known as an $(n, t + 1, t)$-Steiner system) in a binomial random subset of $\binom{[n]}{t + 1}$.  With a similar proof as for the Latin square case\COMMENT{actually, they only wrote the proof for Steiner triple systems and said the Latin square proof is the same}, Sah, Sawhney, and Simkin~\cite{SSS22} proved an upper bound of $p = \exp(C\log^{3/4} n) / n$ for the threshold for Steiner triple systems.  
Similarly as with Theorem~\ref{thm:ls-threshold}, we improve their bound and match the conjectured threshold up to a $\log n$ factor, as follows.

\begin{theorem}\label{thm:sts-threshold}
  If $n \equiv 1,3 \mod 6$ and $p \geq C_{\ref*{thm:sts-threshold}}\log^2 n / n$, where $C_{\ref*{thm:sts-threshold}} > 0$ is an absolute constant, then \aas $\randt(K_{n}, p)$ contains an order-$n$ Steiner triple system.
\end{theorem}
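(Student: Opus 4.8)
The strategy is to deduce Theorem~\ref{thm:sts-threshold} from the robust Latin square result (Theorem~\ref{thm:robust-ls-threshold}) via the theorem of Frankston, Kahn, Narayanan, and Park~\cite{FKNP21}. Since $\randt(K_n,p)$ is precisely a binomial $p$-random subset of the ground set $\binom{[n]}{3}$, and the family of subsets of $\binom{[n]}{3}$ containing the triple set of an order-$n$ Steiner triple system is increasing with all of its minimal members of size $\binom{n}{2}/3 = \Theta(n^2)$, it suffices to construct, for each admissible $n$, a probability distribution $\mu = \mu_n$ on the triple sets of order-$n$ Steiner triple systems that is $q$-spread with $q = O(\log n/n)$; that is, $\mu(\{\cT : \cS \subseteq \cT\}) \le q^{|\cS|}$ for every set $\cS$ of triples of $[n]$. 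The Frankston--Kahn--Narayanan--Park theorem then yields that, once $p \ge O(q\log n) = O(\log^2 n/n)$, \aas $\randt(K_n,p)$ contains an order-$n$ Steiner triple system, which is the conclusion of Theorem~\ref{thm:sts-threshold} (the matching lower bound $p = \Omega(\log n/n)$ is already noted above).

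To build $\mu$, I would use a classical algebraic construction of Steiner triple systems: the Bose construction when $n \equiv 3 \pmod 6$, which reduces the problem to producing three commutative idempotent quasigroups (equivalently, symmetric idempotent Latin squares) of order $m = n/3$, and the Skolem construction when $n \equiv 1 \pmod 6$, which instead uses commutative half-idempotent quasigroups of order $(n-1)/3$ together with a bounded number of exceptional points. In the Bose case the ground set is identified with $Q \times \{1,2,3\}$ where $|Q| = m$; the `vertical' triples $\{(x,1),(x,2),(x,3)\}$, $x \in Q$, form a rigid skeleton of only $m$ triples, while each of the remaining $\Theta(n^2)$ triples has the form $\{(x,i),(y,i),(x \circ_i y,\,i+1)\}$ with $x \neq y$ and records one off-diagonal cell of the $i$-th quasigroup $\circ_i$ (indices modulo $3$) --- and, crucially, one checks that taking $\circ_1,\circ_2,\circ_3$ to be \emph{independent} still produces a genuine Steiner triple system. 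I would therefore draw $\circ_1,\circ_2,\circ_3$ independently from a well-spread distribution on commutative idempotent quasigroups of order $m$ --- equivalently, on $1$-factorizations of $K_{m+1}$ --- provided by Theorem~\ref{thm:robust-ls-threshold}; this is the point at which each of our thresholds is bounded in terms of that for a $1$-factorization of a nearly complete regular bipartite graph, and it makes the non-skeleton triples as spread as required. To also control sets of triples concentrated on the skeleton, I would compose this with a uniformly random relabelling $\pi$ of $[n]$ before reading off the Steiner triple system: after relabelling, any fixed triple becomes a vertical triple with probability only $\Theta(n^{-2})$, far below $q$.

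The technical core is then verifying that $\mu$ is $O(\log n/n)$-spread. Fix a set $\cS$ of triples of $[n]$. For a given relabelling $\pi$, each $t \in \cS$ acquires a shape: either $\pi^{-1}(t)$ is a vertical triple; or $\pi^{-1}(t) = \{(x,i),(y,i),(z,i+1)\}$ for some $i$ and some $x \neq y$, in which case $t$ lies in the output system exactly when $x \circ_i y = z$; or neither of these, in which case $t$ lies in no Steiner triple system produced by the construction with this $\pi$. Conditioning on $\pi$: if some $t \in \cS$ is of the last type then the conditional probability that $\cS$ is covered is $0$; otherwise it is at most $(c\log n/n)^{b(\pi)}$, where $b(\pi)$ counts the $t \in \cS$ of the second type and $c$ absorbs the spreadness constant of the quasigroup distribution. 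Indeed, distinct second-type triples force distinct cells of the three independent quasigroups, so their joint contribution is bounded by $(c\log n/n)^{b(\pi)}$ by $q$-spreadness of the $\circ_i$; and each vertical-type triple imposes no quasigroup constraint but forces $\pi^{-1}(t)$ to be one of the $m$ vertical triples, an event of probability $\Theta(n^{-2}) \le q$. Averaging over $\pi$ --- treating the triples of $\cS$ one at a time so that, conditional on the shapes of the others, each contributes a factor at most $q$, and absorbing the mild dependence that arises when triples of $\cS$ share a point --- gives $\mu(\langle\cS\rangle) \le q^{|\cS|}$ with $q = O(\log n/n)$, as required.

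I expect the main obstacle to be the interface between Theorem~\ref{thm:robust-ls-threshold} and the construction: that theorem produces a well-spread distribution on $1$-factorizations (equivalently, list edge-colourings) of a nearly complete regular bipartite graph, whereas Bose and Skolem require \emph{symmetric} Latin squares with a prescribed diagonal --- idempotent for Bose, half-idempotent for Skolem --- and the Skolem construction carries a bounded number of exceptional points. One must therefore show that the symmetrising and diagonal-fixing operations needed to turn such a $1$-factorization into a quasigroup of the required type can be carried out so as both to output a legitimate quasigroup and to preserve well-spreadness up to a constant factor. Granting this, the remaining ingredients --- the shape analysis and bookkeeping of the previous paragraph, the handling of the Skolem exceptional points, and the appeal to the Frankston--Kahn--Narayanan--Park theorem --- are routine.
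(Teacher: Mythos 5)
Your route is genuinely different from the paper's, and it has a real gap that the paper's own argument is designed to avoid.

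\smallskip
\emph{What the paper actually does.} The paper proves Theorem~\ref{thm:sts-reduction}, a combinatorial reduction carried out entirely with random triangles: with $p \ge C\log n/n$, one \aas\ finds a collection $\cT \subseteq \randt(G,p)$ of edge-disjoint triangles covering everything except a spanning regular bipartite graph $H$ on the two parts $V_1,V_2$ (via R\"odl nibble, cover-down lemmas, and a Vizing/equitable-colouring argument to engineer the divisibility conditions). Theorem~\ref{thm:robust-ls-threshold} is then applied to $H$ to finish. No algebraic construction of Steiner triple systems appears anywhere, and no distribution on $1$-factorizations of a \emph{complete} graph is ever produced.

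\smallskip
\emph{The gap in your proposal.} The step that carries all the weight is the clause ``draw $\circ_1,\circ_2,\circ_3$ independently from a well-spread distribution on commutative idempotent quasigroups of order $m$ --- equivalently, on $1$-factorizations of $K_{m+1}$ --- provided by Theorem~\ref{thm:robust-ls-threshold}.'' But Theorem~\ref{thm:robust-ls-threshold} does not provide this, for two separate reasons. First, it is a \emph{threshold} statement, not a spreadness statement; the spreadness statement in the paper is Theorem~\ref{thm:LS-spreadness}. Second, and more fundamentally, Theorem~\ref{thm:LS-spreadness} concerns $1$-factorizations of a nearly complete $d$-regular \emph{bipartite} graph, i.e.\ (partial) Latin squares with no symmetry constraint. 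A commutative idempotent quasigroup of order $m$ is a \emph{symmetric} Latin square with a prescribed diagonal, equivalently a $1$-factorization of the complete graph $K_{m+1}$. These are not the same object, and the paper's proof of Theorem~\ref{thm:LS-spreadness} does not transfer: that proof randomly partitions the edges of $G$ into $\Theta(n/\log n)$ nearly-regular pieces, regularizes each piece, and then $1$-factorizes each regular piece \emph{because it is bipartite} (K\"onig/Hall). For a spanning $O(\log n)$-regular piece of $K_{2n}$ there is no such $1$-factorization guarantee, so the whole edge-vortex/cover-down mechanism breaks at the final step. Your ``main obstacle'' paragraph flags the symmetrising issue but treats it as a routine post-processing operation on an existing $1$-factorization of a bipartite graph; in fact there is no known spreadness statement for symmetric Latin squares (nor for $1$-factorizations of $K_{m+1}$) in the paper or its predecessors, and producing one is an open problem of comparable difficulty to the one you are trying to solve. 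Indeed, the paper's whole architecture --- proving a bipartite spreadness theorem plus \emph{reductions} (Theorems~\ref{thm:sts-reduction} and~\ref{thm:list-reduction}) at the level of random triangle sets rather than at the level of distributions --- is designed precisely to sidestep the need for a symmetric/complete-graph spreadness result.

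\smallskip
The remaining ingredients of your proposal (the Bose/Skolem combinatorics with independent $\circ_i$, the random relabelling $\pi$ to handle the rigid skeleton of vertical triples, the shape analysis and the FKNP invocation with $\ell = \Theta(n^2)$) look sound, and if someone did supply an $O(\log n/n)$-spread distribution on commutative idempotent (resp.\ half-idempotent) quasigroups, your outline would plausibly yield Theorem~\ref{thm:sts-threshold}. But as written, the proposal silently assumes exactly the result that would need to be proved, so it does not constitute a proof.
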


\subsubsection{Main result}

As mentioned, our main result is a strengthening of Theorem~\ref{thm:ls-threshold}, which we can use to deduce Theorems~\ref{thm:list-threshold} and \ref{thm:sts-threshold}.  This result \aas guarantees an $L$-edge-colouring in a nearly complete $d$-regular bipartite graph $H \subseteq K_{n,n}$ with $d = n - o(n)$, where $L$ is a random $(p, d)$-list assignment for $H$, as follows.

\begin{theorem}\label{thm:robust-ls-threshold}
  There exist $C_{\ref*{thm:robust-ls-threshold}}, \alpha_{\ref*{thm:robust-ls-threshold}} > 0$ such that the following holds.  Let $V_1$, $V_2$, and $V_3$ be disjoint sets such that $|V_1| = |V_2| = n$ and $|V_3| = d$.  Let $H$ be a $d$-regular bipartite graph with bipartition $\{V_1, V_2\}$, and let $G$ be the graph obtained from $H$ by adding all edges $xy$ where $x \in V_1 \cup V_2$ and $y \in V_3$.  If $d \geq (1 - \alpha_{\ref*{thm:robust-ls-threshold}})n$ and $p \geq C_{\ref*{thm:robust-ls-threshold}} \log^2 n / n$, then \aas $\randt(G, p)$ contains a triangle decomposition of $G$ and equivalently, \aas there exists an $L$-edge-colouring of $H$ where $L$ is a random $(p, d)$-list assignment for the edges of $H$.
\end{theorem}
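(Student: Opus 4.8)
The plan is to deduce Theorem~\ref{thm:robust-ls-threshold} from the Frankston--Kahn--Narayanan--Park (FKNP) theorem by constructing a suitably \emph{spread} probability distribution $\mu$ on the set of triangle decompositions of $G$ (equivalently, on $L$-edge-colourings of $H$ for the "full" list $L = [d]$). Recall that FKNP guarantees: if $\mu$ is a distribution on a family $\cH$ of subsets of the ground set (here: triangles of $G$), each of size at most $\ell$, and $\mu$ is $q$-spread---meaning $\mu(\{H' \in \cH : S \subseteq H'\}) \le q^{|S|}$ for every set $S$ of triangles---then $\randt(G, p) \in \{$contains a member of $\cH\}$ a.a.s.\ whenever $p \ge K q \log \ell$ for an absolute constant $K$. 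Here the ground set has size $O(n^3)$, so $\log \ell = \Theta(\log n)$, and a triangle decomposition of $G$ has $\Theta(n \cdot d) = \Theta(n^2)$ triangles. Thus it suffices to exhibit a distribution on triangle decompositions of $G$ that is $(C' \log n / n)$-spread; combined with the $\log \ell$ factor this yields the required $p \ge C \log^2 n / n$.

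**Reducing spreadness to a single-triangle bound, then iterating.**

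First I would reduce the spreadness condition to a statement about how likely a random decomposition $T \sim \mu$ is to contain a \emph{prescribed} partial decomposition $\{t_1, \dots, t_k\}$ of $k$ edge-disjoint triangles of $G$. By a standard chain-rule argument, it is enough to show that for every partial decomposition $P$ that is "extendable" (i.e.\ $P \subseteq$ some decomposition in the support of $\mu$) and every triangle $t$ edge-disjoint from $P$, the conditional probability $\Prob{t \in T \mid P \subseteq T}$ is $O(\log n / n)$; taking the product over a chain of length $k$ then gives the $(O(\log n/n))^k$ bound. So the structure of $\mu$ should be such that it can be "continued" after conditioning on any extendable partial decomposition.

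**Constructing the distribution: iterative random greedy with a switching/absorption finish.**

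The natural construction of $\mu$ is a two-phase process. In the \emph{first phase}, run a random greedy procedure on $G$: maintain a partial triangle decomposition, and at each step pick (uniformly, or with a carefully chosen weighting) an uncovered edge of $H$ and then a uniformly random triangle of $G$ through it among those still available, adding it to the decomposition. Standard nibble/differential-equation-method analysis (in the style of the results of Luria--Simkin on Latin boxes, which the paper cites) shows that one can cover all but an $o(1)$-fraction of the edges of $H$ while keeping the process "quasirandom": at every step the set of available triangles through any given edge has size $(1+o(1))$ times its expected value, which is what delivers the per-step probability bound of $O(1/n)$ for any fixed triangle (the $\log n$ slack will come from the FKNP $\log \ell$ factor, plus a small loss from the tail of the greedy process and from the absorbers). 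The crucial feature of the robust ($d = n - o(n)$) setting is that $V_3$ is slightly smaller than $V_1, V_2$, which gives the process room to breathe and makes the "leftover" graph (the uncovered edges of $H$ together with the unused cone edges to $V_3$) controllable. In the \emph{second phase}, complete the leftover to a full decomposition; here I would build in, before the random greedy phase, a random family of \emph{absorbing gadgets} (small edge-disjoint configurations, each of which can "absorb" a small piece of arbitrary leftover by being locally re-decomposed in one of exponentially many ways), and argue that a.a.s.\ the leftover can be partitioned and absorbed. To make $\mu$ spread one wants each local choice in both phases to be made from a large (polynomial, or exponential) menu, so that conditioning on any one triangle $t$ only constrains an $O(\log n/n)$ fraction of the probability mass---the uniformity/quasirandomness of the greedy process controls phase one, and the built-in multiplicity of re-decompositions controls phase two.

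**The main obstacle and how to handle it.**

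The hard part will be the interaction between spreadness and the absorption/finishing step: it is easy to make a \emph{nibble} distribution spread while it is running, but the moment one "finishes" by completing the leftover deterministically (or via a unique completion), spreadness is destroyed, since a single triangle in the completion could be forced. The resolution---and the technical heart of the paper---must be to design the absorbers and the finishing step so that the completion is itself drawn from a distribution with many options at every scale, \emph{and} so that conditioning on an extendable partial decomposition $P$ (which may include absorber edges!) still leaves enough freedom. Concretely I expect one needs: (i) a robustly-spread distribution on near-perfect partial decompositions (phase one), proven via a martingale/DEM analysis tracking the number of available triangles through each edge and each pair; (ii) a supply of absorbers that is itself chosen from a spread distribution and that survives the conditioning; and (iii) a lemma that the leftover after phase one is, with the right probability, absorbable by a spread choice of re-decompositions. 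Steps (ii)--(iii)---reconciling absorption with spreadness---are where essentially all the difficulty lies; everything else is either a direct appeal to FKNP or a routine concentration argument. Finally, Theorems~\ref{thm:sts-threshold} and~\ref{thm:list-threshold} are deduced from Theorem~\ref{thm:robust-ls-threshold} by the reductions indicated in the introduction (viewing a Steiner triple system via two suitably chosen near-$1$-factorizations / list-colouring instances, and likewise for $1$-factorizations of $K_{2n}$), which I would defer to separate short sections.
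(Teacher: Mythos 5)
Your reduction to the FKNP theorem is the same as the paper's: Theorem~\ref{thm:robust-ls-threshold} is deduced from Theorem~\ref{thm:FKNP} together with Theorem~\ref{thm:LS-spreadness}, which asserts the existence of a $(C\log n/n)$-spread distribution on $1$-factorizations of $H$. But the construction of the spread distribution is where your plan departs from the paper, and the departure is a gap rather than merely a different route.

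You propose to build a spread distribution directly on triangle decompositions of $G$ via a random greedy/nibble phase followed by absorbing gadgets, and you correctly identify the central difficulty: making the absorbers and the finishing step compatible with spreadness. What you propose to do about it (``absorbers drawn from a spread distribution with exponentially many local re-decompositions'') is essentially the Sah--Sawhney--Simkin approach (a vertex-vortex iterative absorption on triangle decompositions), which the paper cites and which only achieves $p = n^{-1+o(1)}$; Sah, Sawhney, and Simkin themselves remark that this ``is essentially the limit for a pure iterative absorption framework.'' So the plan as stated is not likely to deliver the required $O(\log n/n)$ spreadness: the absorbers must be able to handle arbitrary small leftovers, which typically forces a positive fraction of the randomness to be ``spent'' deterministically, and the tail of the nibble process is exactly where the losses accumulate beyond logarithmic.

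The paper's escape is structural and exploits a feature special to Latin squares (not shared by Steiner triple systems): a $1$-factorization of $H$ can be built in two independent stages. First one constructs a well-spread random decomposition $\{R_{i,j}\}$ of $H$ into spanning regular bipartite subgraphs each with $O(n\log n)$ edges; then, since every regular bipartite graph has a $1$-factorization by Hall's theorem, each $R_{i,j}$ is decomposed into perfect matchings \emph{arbitrarily}, and this second stage costs nothing in spreadness because $|E(R_{i,j})| = O(n\log n)$ already makes $(\log n/n)^{|E(R_{i,j})|}$ small enough. The first stage is done via an ``edge vortex'': decompose $H$ uniformly at random into $2^\ell - 1$ quasirandom spanning subgraphs of average degree $O(\log n)$, then iteratively regularize each piece by splitting the leftover randomly and topping up each part from the next level using a bipartite $f$-factor argument (Lemma~\ref{lemma:f-factor}, Lemma~\ref{lemma:non-spread-cover-down-lemma}). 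The crucial point is that the ``absorber'' at the final level is free: the last leftover, together with the final piece of the vortex, is automatically regular because it is the complement (in the regular graph $H$) of a union of regular graphs, so no special absorbing structure is needed and no spreadness is lost. Spreadness of the whole construction then follows by a direct coupling with i.i.d.\ Bernoullis, because each edge $e$ lands in $R_{i,j}$ only if the initial random labelling or the random re-split of the leftover assigns it there, each with probability $O(\log n/n)$. In short, you have the right outer framework, but the inner engine --- the passage to $1$-factorizations and the decomposition into regular subgraphs, replacing triangle-level absorption --- is the missing idea, and without it the plan would plateau at a weaker bound.
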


Theorem~\ref{thm:ls-threshold} follows immediately from Theorem~\ref{thm:robust-ls-threshold} with $n$ playing the role of $d$.  Next we show how we use Theorem~\ref{thm:robust-ls-threshold} to deduce Theorems~\ref{thm:list-threshold} and \ref{thm:sts-threshold}.

\subsection{Reductions}

 Together with Theorem~\ref{thm:robust-ls-threshold}, the following result implies Theorem~\ref{thm:sts-threshold}.  
 
\begin{theorem}[Steiner triple system reduction]\label{thm:sts-reduction}
  There exists $\varepsilon_{\ref*{thm:sts-reduction}} > 0$ such that for every $\eps \in (0,\varepsilon_{\ref*{thm:sts-reduction}})$, the following holds for some $C_{\ref*{thm:sts-reduction}}(\eps) > 0$. 
  Let $n \equiv 1, 3 \mod 6$, let $V_1$, $V_2$, and $V_3$ be disjoint sets such that $|V_1| = |V_2| = \lfloor n / 3\rfloor$ and $|V_3| = \lceil n / 3\rceil$, and let $W \subseteq V_3$ such that $|W| = \lfloor \eps n \rfloor$.
  Let $G$ be obtained from the complete graph on $V_1 \cup V_2 \cup V_3$ by removing every edge $xy$ where $x \in V_1 \cup V_2$ and $y \in V_3 \setminus W$.
  If $p \geq C_{\ref*{thm:sts-reduction}}(\eps) \log n / n$, then \aas there exists a collection $\cT \subseteq \randt(G, p)$ of edge-disjoint triangles and a $|V_3\setminus W|$-regular bipartite graph $H$ with bipartition $\{V_1, V_2\}$  such that $\cT$ is a triangle decomposition of $G - E(H)$.
\end{theorem}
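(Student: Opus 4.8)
The plan is to apply the Frankston--Kahn--Narayanan--Park theorem \cite{FKNP21}. Since the triangle set of $G$ has size $\Theta(n^3)$, it suffices to construct a probability distribution $\mu$ on the pairs $(\cT, H)$ as in the statement---regarded as a distribution on subsets of the triangle set of $G$, with $\cT$ the subset to be embedded---that is $q$-spread for some $q = O(1/n)$ (where the implied constant may depend on $\eps$); for any such $\mu$, a sample $(\cT, H)$ a.a.s.\ satisfies $\cT \subseteq \randt(G, p)$ once $p \geq C_{\ref{thm:sts-reduction}}(\eps)\log n / n$.

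So the work is to construct $\mu$, which I would sample as an essentially independent union of pieces. First I would fix the divisibility bookkeeping: $n \equiv 1,3 \bmod 6$ makes $|V_3| = \lceil n/3\rceil$ odd, and, insisting that $H$ be $|V_3\setminus W|$-regular, one chooses sparse graphs $F_1 \subseteq K_{V_1}$, $F_2\subseteq K_{V_2}$, $F_3 \subseteq \binom{W}{2}$, each with $O(1)$ edges, so that $G-E(H)$ is triangle-divisible; concretely this reduces to $K_{V_1}-F_1$, $K_{V_2}-F_2$, $K_{V_3}-F_3$ and the crossing piece below each being triangle-divisible. The edge set $E(G)\setminus E(H)$ then splits into (a) the near-complete graphs $K_{V_3}-F_3$, $K_{V_1}-F_1$, $K_{V_2}-F_2$, and (b) a crossing piece consisting of all $V_1$--$W$ and $V_2$--$W$ edges, the graphs $F_1\cup F_2\cup F_3$, and a $(\lfloor n/3\rfloor - |V_3\setminus W|)$-regular subgraph $R\subseteq K_{V_1,V_2}$, whose complement $K_{V_1,V_2}-R$ is recorded as $H$. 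Part (b) is bipartite in nature, and here we have complete freedom in choosing $R$: covering $R$ and the $W$-incident edges by transversal triangles of type $V_1V_2W$ is equivalent to a proper edge-colouring of a near-regular bipartite graph by the palette $W$ with colour classes of linear size, and taking $R$ and its colouring to be a random union of $\approx|W|$ pairwise edge-disjoint near-perfect matchings of $K_{V_1,V_2}$ (with a bounded number of $V_iV_iW$ and $V_iWW$ triangles to absorb $F_1\cup F_2\cup F_3$ and any parity discrepancy) yields an $O(1/n)$-spread distribution on this piece directly.

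The main obstacle is part (a): an $O(1/n)$-spread distribution on triangle decompositions of near-complete graphs on $\Theta(n)$ vertices (namely $K_{V_3}-F_3$, and likewise $K_{V_1}-F_1$ and $K_{V_2}-F_2$). I would construct this with a R\"odl-nibble / random-greedy process---iteratively removing random batches of edge-disjoint triangles to cover all but an $o(1)$-fraction of the edges while keeping the uncovered remainder quasirandom---followed by completing the decomposition exactly via an absorbing structure, chosen at random and reserved at the outset, in the spirit of the iterative-absorption proofs of the existence of designs \cite{GKLO16}. Spreadness is then inherited from the construction, since at every step the triangle used to cover a given edge is drawn roughly uniformly from $\Omega(n)$ candidates (and the absorber behaves similarly), so that no fixed triangle---and hence no fixed set of $t$ pairwise edge-disjoint triangles---is used with probability exceeding $(O(1/n))^t$. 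It remains to check that the pieces combine into a single $O(1/n)$-spread distribution on $(\cT, H)$ and to invoke \cite{FKNP21}. The delicate points are maintaining the nibble's quasirandomness simultaneously with the spreadness bound, and arranging the absorber to cover precisely the random leftover while inflating the spreadness parameter only by a constant factor.
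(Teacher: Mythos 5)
Your proposal has a fundamental circularity in part (a). You route the reduction through the Frankston--Kahn--Narayanan--Park theorem, which requires an $O(1/n)$-spread distribution on the pairs $(\cT,H)$, and you locate the ``main obstacle'' as constructing an $O(1/n)$-spread distribution on triangle decompositions of near-complete graphs $K_{V_i}-F_i$ on $\Theta(n)$ vertices. But that is precisely the Steiner triple system spreadness problem (for a near-complete graph), which is the open problem the reduction is designed to circumvent. If one could directly produce an $O(1/n)$-spread distribution on triangle decompositions of an $n$-vertex near-complete graph via nibble plus iterative absorption, one would resolve the conjectured threshold for Steiner triple systems (up to a $\log$ factor) with no reduction at all. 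The paper itself emphasises that this direct spreadness route seems out of reach: Sah--Sawhney--Simkin, using a vertex-vortex iterative absorption of exactly the kind you cite, only obtained $n^{-1+o(1)}$-spreadness, and they remark that this is ``essentially the limit for a pure iterative absorption framework.'' Your claim that ``spreadness is then inherited from the construction'' glosses over precisely the hard point; there is no known way to keep the absorber's contribution to the spreadness bounded by $O(1/n)$ per triangle.

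The paper's proof of Theorem~\ref{thm:sts-reduction} (via Theorem~\ref{thm:red1}) sidesteps spreadness entirely. It is a direct a.a.s.\ argument at $p\ge C\log n/n$: expose triangles at probability $p$ and build $\cT$ greedily via R\"odl nibble (Corollary~\ref{cor:almostdecomp}), a cover-down lemma (Lemma~\ref{lem:coverdown}), and a divisibility-correcting step (Step~\ref{step:step3}). Crucially, it never needs a full triangle decomposition of any near-complete graph. It exploits the degree of freedom in choosing the regular bipartite remainder $H$: edges inside each $V_i$ are pushed down into small sets $U_i\subseteq V_i$, the residual edges in $G^2[U_i]$ are then covered by triangles $\{u,v,w\}$ with $uv\in U_i$ and $w\in U_j$ for $j\ne i$ (not by triangles inside $V_i$), while a bookkeeping argument (the functions $w_{i,r}$ and cases \eqref{cond:case1}--\eqref{cond:case3}) enforces the divisibility condition $d_{G^3}(u,U_j)=d_{G^3}(u,U_k)$ so that whatever remains between $V_1$ and $V_2$ can be declared $H$. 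No absorber for a clique decomposition is ever constructed; the ``absorption'' happens by absorbing the leftover into the free choice of $H$. Your part (b) has the right flavour (covering the crossing piece by matchings with palette $W$), but your part (a) as stated cannot be completed without solving the problem the reduction exists to avoid, and the proposal does not reproduce the key idea that the uncovered edges inside the parts must be eliminated using cross-part triangles, with divisibility carefully maintained, rather than by in-part decompositions.
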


To deduce Theorem~\ref{thm:sts-threshold} from Theorems~\ref{thm:robust-ls-threshold} and~\ref{thm:sts-reduction}, let $\eps \coloneqq \min (\varepsilon_{\ref*{thm:sts-reduction}} / 2 , \alpha_{\ref*{thm:robust-ls-threshold}} / 4)$, let $C_{\ref*{thm:sts-threshold}} \coloneqq C_{\ref*{thm:robust-ls-threshold}} + C_{\ref*{thm:sts-reduction}}(\eps)$, let $p_{\ref*{thm:sts-reduction}} \coloneqq C_{\ref*{thm:sts-reduction}} (\eps) \log^2 n / n$, and let $p_{\ref*{thm:robust-ls-threshold}} \coloneqq C_{\ref*{thm:robust-ls-threshold}} \log^2 n / n$.
For $p \geq C_{\ref*{thm:sts-threshold}} \log^2 n / n$, 
since $1 - p \leq (1 - p_{\ref*{thm:sts-reduction}})(1 - p_{\ref*{thm:robust-ls-threshold}})$, there is a coupling $\cG_{\ref*{thm:sts-reduction}} \cup \cG_{\ref*{thm:robust-ls-threshold}} \subseteq \cG \sim \cG^{(3)}(K_n , p)$,
where $\cG_{\ref*{thm:sts-reduction}} \sim \cG^{(3)}(K_n , p_{\ref*{thm:sts-reduction}})$ and $\cG_{\ref*{thm:robust-ls-threshold}} \sim \cG^{(3)}(K_n , p_{\ref*{thm:robust-ls-threshold}})$ are independent.
First, apply Theorem~\ref{thm:sts-reduction} to obtain $\cT \subseteq \cG_{\ref*{thm:sts-reduction}}$ and $H$ as described there, and let $d \coloneqq \lceil n / 3\rceil - \lfloor \eps n \rfloor \geq (1-  \alpha_{\ref*{thm:robust-ls-threshold}}) \lfloor n/3 \rfloor$.
Then apply Theorem~\ref{thm:robust-ls-threshold} with this $d$ and $H$ and with $\lfloor n / 3\rfloor$ and $V_3 \setminus W$ playing the roles of $n$ and $V_3$, respectively,\COMMENT{(note that $d \geq (1 - \alpha_{\ref*{thm:robust-ls-threshold}})\lfloor n / 3 \rfloor $)}
to obtain a triangle decomposition $\cT' \subseteq \cG_{\ref*{thm:robust-ls-threshold}}$ of the graph $G$ as defined in Theorem~\ref{thm:robust-ls-threshold}.
Now $\cT \cup \cT' \subseteq \cG_{\ref*{thm:sts-reduction}} \cup \cG_{\ref*{thm:robust-ls-threshold}} \subseteq \cG$ is a triangle decomposition of $K_n$, as required.

Similarly, together with Theorem~\ref{thm:robust-ls-threshold}, the following result immediately implies Theorem~\ref{thm:list-threshold}.

\begin{theorem}[$1$-factorization reduction]\label{thm:list-reduction}
  There exists $\eps_{\ref*{thm:list-reduction}} > 0$ such that for every $\eps \in (0, \eps_{\ref*{thm:list-reduction}})$, the following holds for some $C_{\ref*{thm:list-reduction}}(\eps) > 0$.
  Let $d \coloneqq \lceil (1 - \eps) n\rceil$.  If $p \geq C_{\ref*{thm:list-reduction}}(\eps) \log n / n$ and $L$ is a random $(p, 2n - d - 1)$-list assignment for the edges of $K_{2n}$, then \aas there exists a $d$-regular graph $H \subseteq K_{n,n}$ and an $L$-edge-colouring of $K_{2n} - E(H)$.
\end{theorem}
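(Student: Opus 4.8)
The plan is to build the pair $(H,\phi)$ by hand. Fix the bipartition $\{V_1,V_2\}$ of $V(K_{2n})$ with $|V_1|=|V_2|=n$ on which $H\subseteq K_{n,n}$ lives. For a bipartite $H$ we have, as edge-disjoint graphs, $K_{2n}-E(H)=K_{2n}[V_1]\sqcup K_{2n}[V_2]\sqcup H'$, where $K_{2n}[V_i]$ is a copy of $K_n$ and $H':=K_{n,n}-E(H)$ is $(n-d)$-regular with $n-d=\lfloor\eps n\rfloor$. Since $H$ is $d$-regular, $K_{2n}-E(H)$ is $r$-regular with $r:=2n-1-d$, so every $L$-edge-colouring of it is forced to meet each of the $r=2n-d-1$ colours exactly once at every vertex; equivalently, at each $x\in V_1$ all $n-1$ edges inside $V_1$ are coloured, and the $\lfloor\eps n\rfloor$ coloured crossing edges at $x$ carry precisely the $r-(n-1)=\lfloor\eps n\rfloor$ colours that $x$ misses inside $V_1$ (and symmetrically for $V_2$). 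Accordingly I aim for a two-phase construction. In Phase 1, properly colour $K_{2n}[V_1]$ and $K_{2n}[V_2]$ from the lists with colours in $[r]$. In Phase 2, for each colour $c$ choose a perfect matching $N_c$, using only crossing edges $xy$ with $c\in L(xy)$, between the set $A_c\subseteq V_1$ of vertices missing $c$ in $K_{2n}[V_1]$ and the set $B_c\subseteq V_2$ of vertices missing $c$ in $K_{2n}[V_2]$, with the $N_c$ pairwise edge-disjoint. Then $H':=\bigcup_c N_c$, $H:=K_{n,n}-E(H')$, and $\phi(xy):=c$ for $xy\in N_c$, extended by the Phase-1 colouring, is a proper $L$-edge-colouring of $K_{2n}-E(H)$; and since each $x\in V_1$ lies in exactly $\lfloor\eps n\rfloor$ of the sets $A_c$, we get $\deg_H(x)=n-\lfloor\eps n\rfloor=\lceil(1-\eps)n\rceil=d$, so $H$ is $d$-regular as required.

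Phase 1 is the crux, and the reason one logarithmic factor suffices here (in contrast with Theorem~\ref{thm:robust-ls-threshold}) is that $r=(1+\eps+o(1))n$, i.e.\ the palette exceeds $\chi'(K_n)$ by essentially a factor $1+\eps$: there is genuine multiplicative slack, which is exactly what is absent when colouring $K_{n,n}$ with $n$ colours. I would colour $K_{2n}[V_1]$ and then $K_{2n}[V_2]$ by a R\"odl-nibble-type procedure — repeatedly colour a small random fraction of the still-uncoloured edges with uniformly random available colours (present in the list and unused at both endpoints), maintaining via standard concentration that every vertex keeps $\Omega(\eps n)$ available colours and every edge keeps $\Omega_\eps(\log n)$ available colours — until a sublinear leftover remains, which is then coloured edge by edge: an available colour exists with probability $1-n^{-\omega(1)}$ because an $\Omega(\eps n)$-sized set of colours meets a random list of expected size $\Theta_\eps(\log n)$ whenever $C_{\ref*{thm:list-reduction}}(\eps)$ is a large enough function of $1/\eps$. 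Running the nibble on $K_{2n}[V_2]$ so as to steer its colour-class sizes to those produced on $K_{2n}[V_1]$ (which the slack permits) ensures $|A_c|=|B_c|$ for every $c$, and the pseudorandomness of the nibble makes each $A_c$, $B_c$ a near-uniform subset of size $\Theta(\eps n)$. Alternatively, one may package Phase 1 (indeed the whole construction) as exhibiting an $O_\eps(1/n)$-spread distribution on the admissible partial colourings and then quote the Frankston--Kahn--Narayanan--Park theorem; again it is precisely the excess colours that make the spreadness verifiable, for instance by a switching argument that re-routes colours along short alternating walks.

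For Phase 2 I would process the colours $c=1,\dots,r$ in turn, at step $c$ choosing $N_c$ uniformly at random among the perfect matchings of the bipartite graph $G_c$ on $A_c\cup B_c$ formed by the crossing edges $xy$ with $c\in L(xy)$ not used by any $N_{c'}$ with $c'<c$. The events $\{c\in L(xy)\}$ are independent of all earlier steps, so $G_c$ is a random subgraph of density $p$ of the complete bipartite graph on $A_c\cup B_c$ with a bounded set of edges deleted; because the earlier matchings were chosen randomly, the deleted edges are spread out, and with probability $1-n^{-\omega(1)}$ (tracked by martingale concentration) at most $O(\eps^2n)=o(|B_c|)$ of the crossing edges used so far at any given $x\in A_c$ lie inside $B_c$. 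Hence every vertex of $G_c$ has degree $\Omega_\eps(\log n)\gg\log|A_c|$, so $G_c$ has a perfect matching whp by Hall's theorem, once $C_{\ref*{thm:list-reduction}}(\eps)$ is a large enough multiple of $1/\eps$. A union bound over the $O(n^2)$ bad events across both phases, with $\eps_{\ref*{thm:list-reduction}}$ chosen small and $C_{\ref*{thm:list-reduction}}(\eps)=\mathrm{poly}(1/\eps)$, completes the proof.

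The main obstacle is Phase 1: properly edge-colouring $K_n$ from sparse random lists while using only a $1+\eps$ factor more colours than $\chi'(K_n)$, and simultaneously controlling the colour-class sizes and the distribution of missing colours at each vertex. This is the familiar ``last few edges'' difficulty of edge-colouring; overcoming it is exactly where the $1+\eps$-multiplicative slack is indispensable, and it is this slack — which Theorem~\ref{thm:robust-ls-threshold} does not enjoy — that lets the reduction run at density $\log n/n$ rather than $\log^2 n/n$. Phase 2, by contrast, is a routine sequence of perfect-matching arguments in nearly random bipartite graphs of order $\Theta(\eps n)$.
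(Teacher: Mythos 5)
Your skeleton is the right one and is essentially the paper's: properly colour the two cliques $K_{2n}[V_1]$ and $K_{2n}[V_2]$ from the lists, then for each colour $c$ complete its class to a perfect matching by matching the set of $V_1$-vertices missing $c$ against the set of $V_2$-vertices missing $c$ using list-available crossing edges, and let $H$ be the unused crossing graph. The paper's Theorem~\ref{thm:red2} is exactly this plan, written in the equivalent language of triangle decompositions of the join $K_{2n}\ast\overline{K_{2n-1}}$ (the ``colour'' vertices form the independent side); your Phase~2 corresponds to Steps~\ref{step:step1'new} and~\ref{step:step3'}, which rest on Lemma~\ref{lem:coverdown_reservoir} and Lemma~\ref{lem:manymatchings}, and your account of that phase is correct.

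The genuine gap is in Phase~1, and it is precisely the point your proposal glosses over with ``steer the nibble''. Phase~2 requires the \emph{exact} equality $|A_c|=|B_c|$ for every colour $c$, i.e.\ the number of $c$-coloured edges inside $V_1$ must equal the number inside $V_2$, for all $(1+\eps+o(1))n$ colours simultaneously. A nibble followed by a greedy list-cleanup gives you a proper colouring but essentially no control over the individual class sizes; they will typically disagree by $\Theta(\sqrt n)$, and no perfect matching exists between sets of unequal size. The multiplicative slack in the palette does not by itself buy you the ability to ``steer'' a nibble driven by random lists of size $\Theta(\log n)$ so that $(1+\eps)n$ running totals all hit prescribed targets exactly. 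The paper avoids this by never trying to colour the cliques in one pass from the lists. It splits the palette into a large part $C_1$ and a tiny part $C_2'\subseteq C_2$: a hypergraph nibble (Step~\ref{step:step1'}) assigns $C_1$-colours to most inside edges, and \emph{after} the nibble it deletes a few triangles to force $|W_1(c)|=|W_2(c)|$ exactly for each $c\in C_1$ (condition \ref{cond:t3}), whence $e(G'[V_1])=e(G'[V_2])$ (condition \ref{samenumG'Vi}). For the sparse leftover it then does something qualitatively different from a greedy list-cleanup: it first builds an \emph{auxiliary} equitable proper edge-colouring of the leftover with $q=O(\gamma^{2/3}n)$ dummy classes (Vizing plus McDiarmid's Lemma~\ref{lem:equitable}), pairs a $V_1$-class with a $V_2$-class of the same size, and only then re-labels each pair of dummy classes using actual list-available colours from $C_2'(r)$ via a system of distinct representatives found with Lemma~\ref{lem:manymatchings} (property~\ref{paragraph:step2}). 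This dummy-colour-then-reassign device decouples the exact balancing constraint (which is enforced combinatorially, independent of the lists) from the list-availability constraint (which is then a routine bipartite matching), and it is the idea your proposal is missing. Without some replacement for it, your Phase~1 does not deliver $|A_c|=|B_c|$, and the construction does not produce a $d$-regular $H$.

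Two smaller remarks. First, your greedy edge-by-edge cleanup is a correct way to \emph{complete} a partial list-edge-colouring (the $\Omega(\eps n)$-vs-$\Theta_\eps(\log n)$ calculation is fine, with fresh lists revealed edge by edge), but note it is strictly weaker than what is needed here, for the balancing reason above. Second, your Phase~2 also implicitly needs the sets $A_c$ to be well spread so that the matchings can be taken edge-disjointly; the paper makes this explicit by tracking the spread of the leftover sets $W_s(c)$ through the nibble (conditions \ref{cond:t4} and \ref{cond:t5}) and feeding them into the hypotheses of Lemma~\ref{lem:coverdown_reservoir}, rather than appealing loosely to ``pseudorandomness of the nibble''.
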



Moreover, since these results apply with $p = \omega(\log n / n)$, we have reduced all of the Luria--Simkin conjecture~\cite{LS19}, the Casselgren--H{\"a}ggkvist~\cite{CH16} conjecture, the Steiner triple system case of Simkin's conjecture~\cite{Si17}, and the threshold problem for $1$-factorizations of the complete graph to the problem of $L$-edge-colouring a nearly complete $d$-regular bipartite graph $H$, where $L$ is a random $(\omega(\log n / n), d)$-list assignment for $H$.

\subsection{Spreadness}\label{sect:spreadness}

Our proof of Theorem~\ref{thm:robust-ls-threshold} uses the Frankston--Kahn--Narayanan--Park theorem~\cite{FKNP21}, which reduces the problem to that of finding a probability distribution on $1$-factorizations (of the $d$-regular bipartite graph $H$) which is `well-spread'.  This notion of `spreadness' can be made much more general, but for simplicity here we only define it in the context of $1$-factorizations, as follows.

\begin{definition}[Spreadness]
  Let $G$ be a $d$-regular bipartite graph.  A probability distribution on $1$-factorizations $(M_1, \dots, M_d)$ of $G$ is \textit{$q$-spread} if 
  \begin{equation}\label{eq:spreadness}\tag{$\star$}
    \text{every $S_1, \dots, S_d \subseteq E(G)$ satisfies $\Prob{S_i \subseteq M_i~\forall i \in [d]} \leq q^{\sum_{i=1}^d |S_i|}$.}
  \end{equation}
\end{definition}

The following result is a corollary of \cite[Theorem 1.6]{FKNP21}.


\begin{theorem}[Frankston, Kahn, Narayanan, and Park~\cite{FKNP21}]\label{thm:FKNP}
   There exists $C_{\ref*{thm:FKNP}} > 0$ such that the following holds. If there exists a $q$-spread probability distribution on $1$-factorizations of a $d$-regular bipartite graph $G$ on $2n$ vertices, and if $p \geq C_{\ref*{thm:FKNP}}q \log n$, then \aas there exists an $L$-edge-colouring of $G$ where $L$ is a random $(p, d)$-list assignment for the edges of $G$.
\end{theorem}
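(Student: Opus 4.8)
The plan is to deduce this as a direct corollary of \cite[Theorem~1.6]{FKNP21} by re-encoding the existence of an $L$-edge-colouring of $G$ as the event that a $p$-random subset of a suitable finite ground set contains a member of an $\ell$-bounded, $q$-spread family. Here $G$ is $d$-regular on $2n$ vertices, so $|E(G)| = dn$ and each perfect matching of $G$ has $n$ edges. I would take the ground set to be $X \coloneqq E(G) \times [d]$; identifying a $(p,d)$-list assignment $L$ with the set $\{(e,i) : i \in L(e)\} \subseteq X$, a random $(p,d)$-list assignment is exactly a $p$-random subset $X_p$ of $X$. To each $1$-factorization $\mathbf M = (M_1,\dots,M_d)$ of $G$ assign $F(\mathbf M) \coloneqq \{(e,i) : e \in M_i\} \subseteq X$; as the $M_i$ partition $E(G)$, the map $\mathbf M \mapsto F(\mathbf M)$ is injective and $|F(\mathbf M)| = dn$. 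Since the proper $d$-edge-colourings of a $d$-regular graph are precisely its $1$-factorizations, there is an $L$-edge-colouring of $G$ if and only if $X_p$ contains $F(\mathbf M)$ for some $1$-factorization $\mathbf M$ — equivalently, if and only if $X_p$ belongs to the increasing family $\mathcal H$ generated by $\mathcal F \coloneqq \{F(\mathbf M) : \mathbf M \text{ a $1$-factorization of } G\}$ — so it suffices to show $\Prob{X_p \in \mathcal H} \to 1$.

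The second step is to transfer the spreadness hypothesis to this encoding. Let $\mu$ be a $q$-spread distribution on $1$-factorizations of $G$ as in \eqref{eq:spreadness}, and let $\nu$ be its pushforward under $F$, a probability distribution supported on $\mathcal F \subseteq \mathcal H$. Every $T \subseteq X$ has a unique decomposition $T = \bigcup_{i\in[d]} (S_i \times \{i\})$ with $S_i \coloneqq \{e : (e,i)\in T\}$, and then $T \subseteq F(\mathbf M)$ if and only if $S_i \subseteq M_i$ for all $i$; hence \eqref{eq:spreadness} yields
\[
  \nu\bigl(\{A \in \mathcal F : T \subseteq A\}\bigr) = \Prob{S_i \subseteq M_i~\forall i\in[d]} \leq q^{\sum_{i=1}^d |S_i|} = q^{|T|}.
\]
So $\nu$ is a $q$-spread probability distribution, in the sense required by \cite[Theorem~1.6]{FKNP21}, supported on $\mathcal H$ and on sets of size at most $\ell \coloneqq dn$.

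Finally I would apply \cite[Theorem~1.6]{FKNP21} with $\ell = dn$: it provides an absolute constant $K$ such that $p \geq Kq\log \ell$ implies $\Prob{X_p \in \mathcal H} \to 1$. Since $d \leq 2n$ we have $\ell \leq 2n^2$, so $\log\ell \leq 3\log n$ for $n$ large, and thus $C_{\ref*{thm:FKNP}} \coloneqq 3K$ makes the hypothesis $p \geq C_{\ref*{thm:FKNP}} q\log n$ imply $p \geq Kq\log\ell$; we conclude that \aas $X_p \in \mathcal H$, i.e.\ \aas there is an $L$-edge-colouring of $G$. I do not expect any genuine obstacle here: this is essentially a translation exercise, and the only points requiring care are that $(\star)$ is precisely the spreadness of $\nu$ on the ground set $X$ and that the $1$-factorization sets have size $O(n^2)$, so that the $\log\ell$ appearing in the Frankston--Kahn--Narayanan--Park bound contributes only a $\log n$ factor.
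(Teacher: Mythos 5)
Your derivation is correct and is evidently the translation the authors had in mind: the paper states Theorem~\ref{thm:FKNP} as a corollary of \cite[Theorem~1.6]{FKNP21} without giving the argument, and your encoding of a random $(p,d)$-list assignment as a $p$-random subset of $E(G)\times[d]$, identification of $1$-factorizations with the sets $\{(e,i):e\in M_i\}$ (which have size $|E(G)|=dn\le n^2$ so $\log\ell=O(\log n)$), and transfer of \eqref{eq:spreadness} to the $q^{|T|}$-spreadness required by FKNP are exactly right. The only point worth making explicit, which you implicitly use, is that for a $d$-regular graph a proper $d$-edge-colouring necessarily partitions $E(G)$ into perfect matchings, so $L$-edge-colourings and $1$-factorizations $\mathbf M$ with $F(\mathbf M)\subseteq X_p$ are genuinely in bijection.
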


Together with Theorem~\ref{thm:FKNP}, Theorem~\ref{thm:robust-ls-threshold} follows from the following result.
 
\begin{theorem}\label{thm:LS-spreadness}
  There exist $C_{\ref*{thm:LS-spreadness}} ,\alpha_{\ref*{thm:LS-spreadness}} > 0$ such that the following holds for every $n \in \mathbb N$.  If $G$ is a $d$-regular bipartite graph on $2n$ vertices where $d \geq (1 - \alpha_{\ref*{thm:LS-spreadness}})n$, then there exists a $(C_{\ref*{thm:LS-spreadness}} \log n / n)$-spread probability distribution on $1$-factorizations of $G$.
\end{theorem}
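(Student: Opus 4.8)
The plan is to construct the distribution by randomly splitting $G$ into $\ell=\Theta(\log n)$ much sparser but quasirandom regular bipartite graphs, independently taking a uniformly random $1$-factorization of each, and concatenating. First I would colour the edges of $G$ with $[\ell]$ uniformly and independently, and let $F_1,\dots,F_\ell$ be the resulting colour classes. Standard concentration (Chernoff/Azuma inequalities, as in nibble-type arguments) shows that with positive probability the following holds for every $i\in[\ell]$: $F_i$ has all degrees $(1\pm o(1))d/\ell$ and all same-side codegrees $O(d/\ell^2)$. Here the hypothesis $d\ge(1-\alpha_{\ref*{thm:LS-spreadness}})n$ is used so that $d/\ell=\Omega(n/\log n)$ is comfortably larger than any fixed power of $\log n$; this makes these error terms negligible and forces the codegree-to-degree ratio down to $O(1/\ell)=o(1)$. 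Fixing such a colouring, a routine deterministic clean-up (reassigning a negligible number of edges at each vertex) makes every $F_i$ exactly $r_i$-regular with $r_i\in\{\lfloor d/\ell\rfloor,\lceil d/\ell\rceil\}$ while preserving the codegree bound, so each $F_i$ admits a $1$-factorization by K\"onig's theorem.

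The heart of the argument is a \emph{layer lemma}: every $k$-regular bipartite graph $F$ on at most $2n$ vertices that is sufficiently quasirandom -- it will suffice that all same-side codegrees are $o(k)$ -- admits a $(C_2/k)$-spread probability distribution on its $1$-factorizations, for an absolute constant $C_2$. Granting this, take the $1$-factorization of each $F_i$ (for which $k=r_i=\Theta(n/\log n)$) to be an independent sample from such a distribution, and concatenate the resulting $d$ perfect matchings into a $1$-factorization $(M_1,\dots,M_d)$ of $G$. Given any $S_1,\dots,S_d\subseteq E(G)$, each colour $c$ lies in a unique layer $i(c)$, and since $M_c\subseteq F_{i(c)}$ we may assume $S_c\subseteq F_{i(c)}$ for every $c$ (otherwise $\Prob{S_c\subseteq M_c}=0$). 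Using independence across layers and the layer lemma within each,
\[
  \Prob{S_c\subseteq M_c\text{ for all }c}
  \;=\;\prod_{i=1}^{\ell}\Prob{S_c\subseteq M_c\text{ for all }c\text{ with }i(c)=i}
  \;\le\;\prod_{i=1}^{\ell}\Big(\tfrac{C_2}{r_i}\Big)^{\sum_{c:\,i(c)=i}|S_c|}
  \;\le\;\Big(\tfrac{C_{\ref*{thm:LS-spreadness}}\log n}{n}\Big)^{\sum_c|S_c|}
\]
for a suitable choice of $C_{\ref*{thm:LS-spreadness}}$, which is exactly the desired bound. Note that the $\Theta(\log n)$ layers are precisely the source of the extra logarithmic factor in the spreadness -- and hence of the second logarithm in Theorems~\ref{thm:ls-threshold}--\ref{thm:sts-threshold}, the first being supplied by Theorem~\ref{thm:FKNP}.

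All the real difficulty is thus concentrated in the layer lemma. The natural candidate is the uniform distribution on $1$-factorizations of $F$. Since permuting colours is a bijection on $1$-factorizations, each $M_c$ has the same law, and revealing the matchings $M_1,M_2,\dots$ one at a time reduces the spreadness estimate to a statement of the following type: in every bipartite graph $H$ obtained from $F$ by deleting some already-revealed perfect matchings together with a small partial matching, the fraction of perfect matchings of $H$ that contain one further prescribed edge is at most $C_2/k$. Equivalently, deleting an edge from such a near-$k$-regular quasirandom bipartite graph must decrease its number of perfect matchings by a factor $\Theta(k)$, which comes down to showing that $\#\mathrm{PM}(H-u-v')$ varies by only a constant factor as $v'$ ranges over $N_H(u)$. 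This is a \emph{stability} strengthening of the van der Waerden and Br\'egman bounds on permanents, and it genuinely requires the quasirandomness -- the Br\'egman bound on its own is off here by a factor that is superpolynomial in $n$ -- so I would prove it via an entropy-counting argument exploiting the small codegrees; I expect this to be the main obstacle. Two further points need care: quasirandomness must be shown to survive the successive removal of perfect matchings, so that the estimate can be applied at every stage; and colours that become scarce towards the end of the process -- where a naive conditional bound fails, the last matching being forced -- have to be controlled through the colour-exchangeability symmetry rather than by conditioning. The remaining ingredients -- the concentration estimate for the random colouring, the deterministic clean-up, and the elementary calculation above -- are routine.
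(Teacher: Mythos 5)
Your construction is genuinely different from the paper's, and the difference is precisely where the difficulty sits. You split $G$ into $\ell=\Theta(\log n)$ \emph{dense} regular quasirandom pieces of degree $k=\Theta(n/\log n)$, and then rely on a ``layer lemma'' asserting that each such piece has a $(C_2/k)$-spread distribution on its $1$-factorizations. The paper does the \emph{opposite}: it decomposes $G$ into $\Theta(n/\log n)$ \emph{sparse} regular pieces, each of degree only $\Theta(\log n)$, and then takes an entirely \emph{arbitrary} (deterministic) $1$-factorization of each piece. In the paper the spreadness comes purely from the randomness of which piece an edge lands in (an event of probability $O(\log n/n)$); nothing at all is required of the $1$-factorization within a piece. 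In your scheme the randomness of the colouring only contributes a factor $\Theta(1/\log n)$ per edge, so the remaining $\Theta(1/k)$ must be supplied by the layer lemma. That is the source of the gap.

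The layer lemma is not a routine ingredient to be filled in later; it is essentially the whole problem. Notice that $C_2/k$ is \emph{optimal} spreadness for a $k$-regular bipartite graph (any $q$-spread distribution forces the number of $1$-factorizations to be at least $q^{-n k}$, and Br\'egman gives an upper bound of roughly $(k/e)^{n k}$, so $q=\Omega(1/k)$). In other words, your layer lemma is a sharp, logarithm-free spreadness statement for quasirandom regular bipartite graphs, which is strictly stronger in kind than the $O(\log n/n)$ bound that the theorem itself asks for. The paper explicitly flags the route you propose as the obvious one and explains why it is stuck: ``the bounds from the Egorychev--Falikman theorem and Br\`egman's theorem do not seem strong enough to prove the required spreadness.'' You acknowledge this and gesture at an entropy-counting stability strengthening of Br\'egman exploiting small codegrees, but you give no argument; the additional issues you raise (quasirandomness surviving matching removal, forced colours near the end, and the reduction of spreadness of the uniform Latin rectangle distribution to a single permanent ratio) are each substantial. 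For what it's worth, the optimal-spreadness statement was eventually established in later work by Keevash and by Jain and Pham, and neither went via the uniform distribution or permanent estimates: Keevash used a random greedy process and Jain--Pham a Lov\'asz Local Lemma construction, and both decompose $G$ into $O(n)$-edge regular pieces in the spirit of the paper rather than into $\Theta(\log n)$ dense pieces.

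A smaller but real issue is the ``routine deterministic clean-up'' that makes each random layer exactly regular while preserving quasirandomness and keeping the layers a partition of $E(G)$. Reassigning edges between layers so that every layer becomes regular is itself an iterated balancing problem, and the paper devotes real work to the analogous step (Lemmas~\ref{lemma:non-spread-cover-down-lemma}, \ref{lemma:inductive-cover-down-lemma}, \ref{lemma:main-decomposition-lemma}, using a max-flow characterization and an iterative cover-down scheme). It may well be easier in your dense regime than in the paper's $\Theta(\log n)$-degree regime, but it should not be dismissed as routine. The concatenation calculation at the end is correct as written once a good colouring is fixed and the layer lemma is granted; the problem is entirely in granting the layer lemma.
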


Theorem~\ref{thm:LS-spreadness} may also hold without the $\log n$, which would immediately imply the Luria--Simkin conjecture~\cite{LS19} and the Casselgren--H{\"a}ggkvist~\cite{CH16} conjecture, and combined with Theorems~\ref{thm:sts-reduction} and \ref{thm:list-reduction}, would imply the Steiner triple system case of Simkin's conjecture~\cite{Si17} and our conjecture on the threshold for $1$-factorizations of the complete graph.
A necessary condition for the existence of such spreadness is that the number of $1$-factorizations of $G$ is at least $\Omega(n)^{nd}$ because of the condition \eqref{eq:spreadness} in the case when $(S_1, \dots, S_d)$ is itself a $1$-factorization.  This necessary condition is satisfied since the Egorychev--Falikman theorem~\cite{Eg81, Fa81} and Br\`{e}gman's theorem~\cite{Br73} (previously known as van der Waerden's conjecture and Minc's conjecture, respectively) give bounds on the number of perfect matchings in a regular bipartite graph which imply that a $d$-regular bipartite graph with parts of size $n$ has $\left((1 + o(1))n / e^2\right)^{dn}$ $1$-factorizations.

An obvious approach to proving Theorem~\ref{thm:LS-spreadness} would be to attempt to show that the uniform distribution on $1$-factorizations of $G$ is $O(\log n / n)$-spread.  (In the case $G \cong K_{n,n}$, one could equivalently attempt to show that the uniform distribution on order-$n$ Latin squares is $O(\log n / n)$-spread).  However, the bounds from the Egorychev--Falikman theorem~\cite{Eg81, Fa81} and Br\`{e}gman's theorem~\cite{Br73} do not seem strong enough to prove the required spreadness.  For the related problem of bounding the threshold for Steiner triple systems, as discussed in \cite{FKNP21}, it is also unclear how to carry out such a direct approach (that is, by bounding the spreadness of the uniform distribution on Steiner triple systems).


An advantage of considering Latin squares over Steiner triple systems (and one which we exploit), is that they can be constructed recursively.  A well-known corollary of Hall's theorem is that regular bipartite graphs have a perfect matching, which further implies that every regular bipartite graph has a $1$-factorization.  The main goal in our proof is to first find a `well-spread' probability distribution on decompositions of $G$ into spanning regular subgraphs with $O(n\log n)$ edges.  Since each one of these subgraphs is bipartite and regular, they all have a $1$-factorization, and it turns out that choosing a $1$-factorization of each arbitrarily immediately yields a distribution on $1$-factorizations of $G$ satisfying \eqref{eq:spreadness} with $q = O(\log n / n)$.  At first glance, finding this distribution on decompositions into regular subgraphs may not appear to be any easier, but we are able to construct it using ideas inspired by the method of \textit{iterative absorption}.  We remark that Sah, Sawhney, and Simkin~\cite{SSS22} also relied on an iterative absorption approach to construct a probability distribution on Steiner triple systems; however, their application of this method is completely different from ours.  They use what one could consider a `vertex vortex' as developed in \cite{BGKLMO20, GKLO16} to prove the existence of subgraph decompositions and designs.  This approach gradually decomposes subgraphs induced on a nested sequence of vertex sets and is a natural choice for finding a triangle decomposition.  Our approach uses what we call an `edge vortex', which is more natural for finding a decomposition into spanning structures and is more similar to that of e.g.~\cite{KKO15, KO13} which first introduced the iterative absorption method.  Moreover, referring essentially to their threshold bound of $n^{-1+o(1)}$, Sah, Sawhney, and Simkin~\cite{SSS22} remark, `This is essentially the limit for a pure iterative absorption framework as in \cite{BGKLMO20, GKLO16}'.  There does not appear to be any such barrier to our approach, and it may be possible to build on our ideas to prove that $\log n / n$ is indeed the threshold for Latin squares, $1$-factorizations of complete graphs, and Steiner triple systems.

\subsection{Open problems}

Now we discuss a few open problems and conjectures.  
First, we remark that in the time since the first draft of this paper was posted on the arxiv preprint server, the thresholds $\Theta(\log n / n)$ for order-$n$ Latin squares, Steiner triple systems, and $1$-factorizations of $K_{2n}$ were proved in subsequent work by Keevash~\cite{K2022} and Jain and Pham~\cite{JP2022}, resolving 
the Luria--Simkin conjecture~\cite{LS19}, the Casselgren--H{\"a}ggkvist~\cite{CH16} conjecture, and the Steiner triple system case of Simkin's conjecture~\cite{Si17}.  Both works~\cite{JP2022, K2022} utilize our reduction results and build on our approach to show that Theorem~\ref{thm:LS-spreadness} holds without the `$\log n$' in the spreadness. 
 Whereas our proof finds a `well-spread' probability distribution on decompositions of $G$ into spanning regular subgraphs with $O(n\log n)$ edges, Keevash's~\cite{K2022} and Jain and Pham's~\cite{JP2022} proofs decompose $G$ into regular subgraphs with $O(n)$ edges.  Nevertheless, the two proofs are still quite different.  
 Keevash~\cite{K2022} developed a carefully constructed random greedy process for this decomposition, and Jain and Pham~\cite{JP2022} constructed the decomposition recursively while using a distribution based on the Lov\'{a}sz Local Lemma.

As mentioned in Section~\ref{sect:1-factorizations}, even more may still be true.  We say a threshold $p^*$ for a property $\cP$ of $3$-uniform hypergraphs and a sequence $(G_n : n \in \mathbb N)$ of graphs is \textit{sharp} if
\begin{equation*}
  \Prob{\randt(G_n, p) \in \cP} \rightarrow \left\{
    \begin{array}{l l}
      0 & \text{if $p \leq (1 - \Omega(1))p^*$ and}\\
      1 & \text{if $p \geq (1 + \Omega(1))p^*$.}
    \end{array}
  \right.
\end{equation*}
Intriguingly, as discussed in \cite{SSS22}, the results of \cite{SSS22} combined with results of Friedgut~\cite{Fr99, Fr05} imply the existence of a sharp threshold for $\randt(K_n, p)$ to contain an order-$n$ Steiner triple system, but without determining its value.  Similar arguments imply the existence of a sharp threshold for $\randt(K_{n,n,n}, p)$ to contain an order-$n$ Latin square (see \cite{SSS22}).
Sah, Sawhney, and Simkin~\cite{SSS22} also conjectured that $2\log n / n$ is the value of the sharp threshold for Steiner triple systems, since $2\log n / n$ is the sharp threshold for the property that every edge of $K_n$ is in at least one triangle of $\randt(K_n, p)$.  They even conjectured a hitting time result.  It is natural to conjecture the following analogues for Latin squares and $1$-factorizations of complete graphs as well.\COMMENT{The expected number of edges of $K_{n,n, n}$ not covered by a triangle of $\randt(K_{n,n,n}, p)$ is $3n^2 (1 - p)^n \sim 3n^2 e^{-pn}$, so the sharp threshold for Latin squares should correspond to the value of $p$ for which $3n^2 e^{-pn} = 1$.  Rearranging terms, we obtain $p = (2\log n - \log(1/3)) / n \sim 2\log n / n$.  Similarly, the sharp threshold for $1$-factorizations of $K_{2n}$ should be when $\left(\binom{2n}{2} + (2n - 1)2n\right)(1 - p)^{2n-1} \sim 6n^2 e^{-2pn} = 1$, i.e.\ when $p = (2\log n - \log(1/6)) / (2n) \sim \log n / n$.}

\begin{conjecture}\label{conj:latin-square-threshold}
  For every $\eps > 0$, the following holds.  If $p \geq (2 + \eps)\log n / n$, then \aas $\randt(K_{n,n,n}, p)$ contains an order-$n$ Latin square.
\end{conjecture}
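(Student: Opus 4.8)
Since Latin squares exist for every order there are no divisibility constraints, and by monotonicity it suffices to treat $p=(2+\eps)\log n/n$; the matching lower bound is routine, since when $p\le(2-\eps)\log n/n$ the expected number of edges of $K_{n,n,n}$ lying in no triangle of $\randt(K_{n,n,n},p)$ is $3n^2(1-p)^n\to\infty$, and a Poisson-approximation argument shows that \aas some such edge exists. For the upper bound, my plan is to show that above the covering threshold the \emph{only} obstruction to a triangle decomposition of $\randt(K_{n,n,n},p)$ is the local event that some edge lies in no triangle. I would expose the random triangles in two rounds, coupling $\randt(K_{n,n,n},p_1)\cup\randt(K_{n,n,n},p_2)\subseteq\randt(K_{n,n,n},p)$ with $p_1=(2+\eps/2)\log n/n$ and $p_2$ absorbing the remaining budget $\Theta(\eps\log n/n)$. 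After round~1, \aas every edge of $K_{n,n,n}$ lies in at least $c(\eps)\log n$ triangles, no edge lies in only boundedly many triangles, and the triangle set enjoys the quasirandomness needed below; this already removes the crude local obstruction, and it is precisely here that the slack between the constants $2+\eps$ and $2$ is spent. It then remains to assemble a triangle decomposition from the round-1 triangles together with a small reserve taken from round~2.

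For this I would first set aside an absorbing structure built from round-2 triangles --- an `edge vortex' in the spirit of Theorem~\ref{thm:LS-spreadness}, but arranged so as to flexibly complete any triangle packing whose uncovered remainder is sufficiently sparse and pseudorandom --- then run an efficient nibble/iterative-absorption process on the round-1 triangles covering all but such a remainder, and finally deploy the absorber. The quantitative heart of the matter is that this whole construction must cost only $(1+o(1))$ times the covering threshold: the reserve for the absorber must have density $o(\log n/n)$, so that it fits inside the $\Theta(\eps\log n/n)$ budget of round~2, and the nibble itself must not lose even a constant factor. This is exactly what a black-box combination of spreadness with the Frankston--Kahn--Narayanan--Park theorem (Theorem~\ref{thm:FKNP}) cannot provide: even with the optimal spreadness $\Theta(1/n)$ for $1$-factorizations, Theorem~\ref{thm:FKNP} loses a constant factor and so yields a bound exceeding the conjectured $2\log n/n$. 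One therefore needs a bespoke, essentially lossless argument in the spirit of Kahn's resolution of the sharp (hitting-time) version of Shamir's problem, in which the few `barely covered' edges near the threshold --- whose number obeys a Poisson heuristic --- are carried untouched through the process and completed only at the end by the reserved absorber, while the bulk of the decomposition is handled by a quasirandomness-driven argument that sees no obstruction at density $(1+o(1))\cdot 2\log n/n$.

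The main obstacle is precisely this last point: proving that the \emph{global} triangle-decomposition problem is, away from the local covering bottleneck, essentially free, i.e.\ solvable with asymptotically no extra room. At density $\Theta(\log n/n)$ the graph of triangles through any fixed edge has only $\Theta(\log n)$ members, so the absorbing gadgets and the nibble must be engineered inside an extremely sparse host --- the regime that Sah, Sawhney, and Simkin~\cite{SSS22} identify as the limit of a pure iterative-absorption framework. It seems likely that a genuinely new ingredient will be required to close the remaining factor, perhaps building on the edge-vortex construction of this paper, on Keevash's random greedy process~\cite{K2022}, or on Jain and Pham's local-lemma-based recursion~\cite{JP2022}, each refined so as to track constants exactly.
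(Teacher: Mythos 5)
What you were asked to prove is Conjecture~\ref{conj:latin-square-threshold}, which the paper explicitly leaves open, so there is no proof of it in the paper against which to compare your submission. Your proposal is not a proof either --- it is a research plan that ends by acknowledging a missing ingredient --- and that is the correct reading of the statement's status. Your diagnosis of why the paper's machinery cannot reach the sharp constant is accurate: Theorem~\ref{thm:FKNP} loses an unspecified absolute constant, so even the optimal $\Theta(1/n)$-spreadness for $1$-factorizations (which the follow-up works of Keevash and of Jain and Pham do achieve) yields only a threshold of the form $C\log n/n$ for unspecified $C$, not $(2+\eps)\log n/n$. Your lower-bound calculation, that the expected number of uncovered edges is $3n^2(1-p)^n$ and this diverges below $(2-\eps)\log n/n$, matches exactly the heuristic the authors record, and your two-round exposure template --- an essentially lossless nibble on round-1 triangles plus a reserve absorber of density $o(\log n/n)$ from round 2, in the spirit of Kahn's sharp resolution of Shamir's problem --- is a reasonable template.

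That said, you should be precise that \emph{nothing quantitative is proved} in your plan. The claim that after round 1 every edge a.a.s.\ lies in $\geq c(\eps)\log n$ triangles is fine (it requires $c\log\bigl(e(2+\eps/2)/c\bigr) < \eps/2$, so $c$ must be small in $\eps$), but the genuinely hard parts --- constructing an absorber of total density $o(\log n/n)$ that can complete any sparse pseudorandom leftover, and running a nibble that loses no constant factor --- are exactly the unsolved parts, and you correctly name them as such. In short, there is a genuine gap in your proposal: it is the conjecture itself. The honest form of your answer would be a paragraph explaining the obstacles, labelled as a discussion rather than as a proof; presenting it as a ``proof proposal'' slightly overstates what is on the page, but your assessment of where the difficulty lies, and why neither this paper nor its successors settle the sharp constant, is correct.
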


\begin{conjecture}\label{conj:1-factorization-threshold}
  For every $\eps > 0$, the following holds.  If $p \geq (1 + \eps)\log n / n$ and $L$ is a random $(p, 2n - 1)$-list assignment for the edges of $K_{2n}$, then \aas there exists an $L$-edge-colouring of $K_{2n}$.
\end{conjecture}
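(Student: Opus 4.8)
The plan is to deduce Theorem~\ref{thm:list-threshold} from Theorem~\ref{thm:robust-ls-threshold} and Theorem~\ref{thm:list-reduction}, exactly as Theorem~\ref{thm:sts-threshold} was deduced from Theorem~\ref{thm:robust-ls-threshold} and Theorem~\ref{thm:sts-reduction}; the only new ingredient is the bookkeeping of the colour palette, using that $2n-1 = (2n-d-1) + d$ to let the two results slot together. It suffices to prove the $(p,2n-1)$-statement: the uniform $(k,2n-1)$-statement follows from it by the standard coupling between the binomial and uniform models of random subsets (as relates $G_{n,p}$ and $G_{n,m}$; see e.g.~\cite{Bo01, JLR00}), since a uniformly random $k$-subset of $[2n-1]$ \aas contains a $p$-random subset with $p = \Omega(k/n)$, so that $k \geq C_{\ref*{thm:list-threshold}}\log^2 n$ gives $p = \Omega(\log^2 n / n)$, and a colouring from the smaller lists is a colouring from the larger lists.

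So suppose $p \geq C_{\ref*{thm:list-threshold}}\log^2 n / n$ and let $L$ be a random $(p, 2n-1)$-list assignment for $E(K_{2n})$. Set $\eps \coloneqq \min(\eps_{\ref*{thm:list-reduction}}/2, \alpha_{\ref*{thm:robust-ls-threshold}}/2)$, $d \coloneqq \lceil (1-\eps)n \rceil$ (so $d \geq (1 - \alpha_{\ref*{thm:robust-ls-threshold}})n$), and $C_{\ref*{thm:list-threshold}} \coloneqq C_{\ref*{thm:robust-ls-threshold}} + C_{\ref*{thm:list-reduction}}(\eps)$, so that $p$ satisfies the hypotheses of both Theorem~\ref{thm:list-reduction} and Theorem~\ref{thm:robust-ls-threshold} for large $n$, the finitely many small $n$ being handled by enlarging the constant. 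Partition the colour set $[2n-1]$ into disjoint sets $A$ and $B$ with $|A| = 2n-d-1$ and $|B| = d$, and write $L_A(e) \coloneqq L(e) \cap A$ and $L_B(e) \coloneqq L(e) \cap B$; since $A$ and $B$ are disjoint, $L_A$ and $L_B$ are independent, $L_A$ is (after relabelling $A$) a random $(p, 2n-d-1)$-list assignment for $E(K_{2n})$, and $L_B$ is (after relabelling $B$) a random $(p, d)$-list assignment for $E(K_{2n})$. First apply Theorem~\ref{thm:list-reduction} to $L_A$ to obtain, \aas, a $d$-regular graph $H \subseteq K_{n,n}$ (on the fixed bipartition of $K_{2n}$ into two parts of size $n$) together with an $L_A$-edge-colouring $\phi_A$ of $K_{2n} - E(H)$; crucially $H$ is a function of $L_A$ alone. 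Conditioned on any outcome of $L_A$ for which this succeeds — hence on the resulting $H$ — the restriction of $L_B$ to $E(H)$ is still a random $(p, d)$-list assignment for the $d$-regular bipartite graph $H$, by independence of $L_A$ and $L_B$. Hence the $L$-edge-colouring formulation of Theorem~\ref{thm:robust-ls-threshold}, with the colour set $[d]$ relabelled to $B$, yields \aas and uniformly over all such $H$ an $L_B$-edge-colouring $\phi_B$ of $H$. Averaging over $L_A$, \aas both $\phi_A$ and $\phi_B$ exist, and then $\phi_A \cup \phi_B$ assigns each $e \in E(K_{2n})$ a colour of $L(e)$ and is proper because $\phi_A$ and $\phi_B$ are individually proper and use colours from the disjoint blocks $A$ and $B$; thus it is an $L$-edge-colouring of $K_{2n}$ (indeed a $1$-factorization, using all $|A \cup B| = 2n-1$ colours), as required.

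All of the substantive work lies inside Theorems~\ref{thm:robust-ls-threshold} and~\ref{thm:list-reduction}; granted those, the deduction above is routine. The one point requiring care is the conditioning in the last step: one must ensure that the random graph $H$ produced by Theorem~\ref{thm:list-reduction} depends only on $L_A$, so that conditioning on $H$ leaves $L_B|_{E(H)}$ a genuine random $(p,d)$-list assignment — this is precisely why we split $[2n-1]$ into two disjoint colour blocks rather than re-using the same colours for both steps — and one uses that the ``\aas'' conclusion of Theorem~\ref{thm:robust-ls-threshold} is uniform in $H$ (its constants do not depend on $H$) in order to average over $L_A$ at the end.
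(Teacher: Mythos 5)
You have proved the wrong statement. What you deduce is Theorem~\ref{thm:list-threshold} (the bound $p \geq C_{\ref*{thm:list-threshold}}\log^2 n/n$), whereas the statement at hand is Conjecture~\ref{conj:1-factorization-threshold}, which asks for the sharp bound $p \geq (1+\eps)\log n/n$. The paper does not prove this conjecture at all --- it is stated as an open problem, and even the subsequent works of Keevash and of Jain and Pham cited in the paper only establish thresholds of order $\Theta(\log n/n)$, not the sharp constant $1$. Your two ingredients cannot reach the conjectured bound: Theorem~\ref{thm:robust-ls-threshold} requires $p \geq C_{\ref*{thm:robust-ls-threshold}}\log^2 n/n$ (the extra $\log n$ coming from the spreadness $O(\log n/n)$ in Theorem~\ref{thm:LS-spreadness} multiplied by the $\log n$ in the Frankston--Kahn--Narayanan--Park theorem), and Theorem~\ref{thm:list-reduction} requires $p \geq C_{\ref*{thm:list-reduction}}(\eps)\log n/n$ with an unspecified, $\eps$-dependent constant rather than $1+\eps$. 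So the deduction is off from the target by a factor of $\log n$ and by absolute constants, and no amount of colour-palette bookkeeping closes that gap; what is missing is precisely the main open problem the paper identifies, namely removing the $\log n$ from the spreadness (and then improving the constants to the sharp value), which would require genuinely new ideas beyond the paper's edge-vortex construction.

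As a side remark: read as a proof of Theorem~\ref{thm:list-threshold} rather than of the conjecture, your argument is essentially the intended one. The paper omits this deduction, saying only that Theorem~\ref{thm:list-reduction} together with Theorem~\ref{thm:robust-ls-threshold} ``immediately implies'' Theorem~\ref{thm:list-threshold}, and your colour-splitting of $[2n-1]$ into disjoint blocks of sizes $2n-d-1$ and $d$, with the observation that $H$ is measurable with respect to $L_A$ so that $L_B$ restricted to $E(H)$ remains a fresh $(p,d)$-list assignment, is the natural way to make that implication precise; it parallels the coupling the paper writes out explicitly for the Steiner triple system reduction. But none of this bears on Conjecture~\ref{conj:1-factorization-threshold} itself.
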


We also conjecture that a hitting time version of Conjecture~\ref{conj:latin-square-threshold} holds; that is, if $T_1, \dots, T_{n^3}$ is a uniformly random ordering of the triangles of $K_{n,n,n}$, then \aas $\{T_1, \dots, T_t\}$ contains an order-$n$ Latin square for every $t \in [n^3]$ such that every edge of $K_{n,n,n}$ is contained in at least one triangle in $T_1, \dots, T_t$.
Similarly, we conjecture the following hitting time version of Conjecture~\ref{conj:1-factorization-threshold}: If $L_0, \dots, L_{(2n-1)\binom{2n}{2}}$ are list assignments for the edges of $K_{2n}$, where $L_0(e) = \varnothing$ for every edge $e$ and for $i \in [(2n-1)\binom{2n}{2}]$, $L_i$ is obtained by adding a colour $c \in [2n - 1]\setminus L_{i-1}(e)$ to $L_{i-1}(e)$ for some edge $e$ where $(c, e)$ is chosen uniformly at random among all such pairs, then \aas there exists an $L_t$-edge-colouring of $K_{2n}$ for every $t \in [(2n - 1)\binom{2n}{2}]$ such that $L_t(e) \neq \varnothing$ for every edge $e$ and $\bigcup_{e \ni v}L_t(e) = [2n - 1]$ for every vertex $v$.

Conjecture~\ref{conj:latin-square-threshold} is equivalent to the following: If $p \geq (2 + \eps)\log n / n$ and $L$ is a random $(p, n)$-list assignment for the edges of $K_{n,n}$, then \aas there is an $L$-edge-colouring of $K_{n,n}$.  It would also be interesting to strengthen this further to a `sparse random version' of Galvin's theorem~\cite{Ga95}, by showing the following: If $L'$ is a list assignment for the edges of $K_{n,n}$ satisfying $|L'(e)| \geq n$ for every edge $e$ and $L$ is a random list assignment for the edges of $K_{n,n}$ in which each $c \in L'(e)$ is included in $L(e)$ independently with probability $(2 + \eps)\log n / n$, then \aas there is an $L$-edge-colouring of $K_{n,n}$.  The analogous strengthening of Conjecture~\ref{conj:1-factorization-threshold} would also be interesting, but it is not even known whether the list chromatic index of $K_{2n}$ is $2n - 1$ for all $n \in \mathbb N$ (see \cite{HJ97, Sc14}).

The threshold for Latin squares could also be generalized by considering so-called `high-dimensional permutations' (as in \cite{LL14}) or `Latin hypercubes' (as in \cite{MW08}).  To that end, we introduce the following definitions.  A \textit{$d$-dimensional order-$n$ hypercube} is a $d$-dimensional array indexed by $[n]^d$ with entries from a set of $n$ symbols, and a \textit{line} in a hypercube is a set of $n$ entries whose coordinates match in all but one dimension.  A hypercube $\cL$ is \textit{Latin} if each line in $\cL$ contains each symbol exactly once.

\begin{conjecture}\label{conj:latin-hypercube-threshold}
  If $p = \omega(\log n / n)$ and $L_{\myVec{x}} \subseteq [n]$ is a random subset of $[n]$ in which each $k \in [n]$ is included in $L_{\myVec{x}}$ independently with probability $p$ for each $\myVec{x} \in [n]^d$, then \aas there exists a $d$-dimensional order-$n$ Latin hypercube $\cL$ such that $\cL_{\myVec{x}} \in L_{\myVec{x}}$ for each $\myVec{x} \in [n]^d$.
\end{conjecture}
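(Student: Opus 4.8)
The plan for attacking Conjecture~\ref{conj:latin-hypercube-threshold} is to follow the strategy of this paper, replacing $2$-dimensional Latin squares by $d$-dimensional ones throughout (with $d$ fixed) and pushing the spreadness to its optimal order. First I would reformulate: including each $k\in[n]$ in $L_{\myVec{x}}$ with probability $p$ is the same as keeping each point of $[n]^{d+1}$ independently with probability $p$, and a $d$-dimensional order-$n$ Latin hypercube $\cL$ compatible with the lists is exactly a set $M$ of $n^d$ surviving points in $[n]^{d+1}$ such that every projection of $M$ onto $d$ of the $d+1$ coordinates is a bijection onto $[n]^d$. This is a monotone property of subsets of $[n]^{d+1}$ whose minimal elements are the hypercubes themselves, of size $\ell=n^d$, so the Frankston--Kahn--Narayanan--Park theorem~\cite{FKNP21} (or, in this regime, the Park--Pham form of the Kahn--Kalai theorem~\cite{PP22}) reduces the conjecture to the assertion that \emph{for each fixed $d$ there are $C_d>0$ and a $(C_d/n)$-spread probability distribution on $d$-dimensional order-$n$ Latin hypercubes}, where $q$-spread means $\Prob{S\subseteq M}\le q^{|S|}$ for every $S\subseteq[n]^{d+1}$. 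Such a distribution exhibits $O(1/n)$ as a fractional expectation-threshold, and since $\log\ell=d\log n$, any $p=\omega(\log n/n)$ exceeds $C_{\ref*{thm:FKNP}}\cdot(C_d/n)\cdot d\log n$ for large $n$. Note that an $O(\log n/n)$-spread distribution would only yield the weaker bound $p\ge C\log^2 n/n$, exactly as in Theorem~\ref{thm:ls-threshold}; the hypothesis $p=\omega(\log n/n)$ forces the optimal spreadness (and, conversely, for fixed $d$ this hypothesis is necessary, since for $p=o(\log n/n)$ some list $L_{\myVec{x}}$ is \aas empty).

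To produce the distribution I would induct on $d$, using the recursive structure of Latin hypercubes. Fixing the last coordinate, a $d$-dimensional order-$n$ Latin hypercube is precisely an ordered partition of the ``board'' $[n]^{d-1}\times[n]$ (cells times symbols) into $n$ point-sets, each being the point-set of a $(d-1)$-dimensional order-$n$ Latin hypercube, subject to the ``pencil condition'' that the $n$ chosen hypercubes take distinct symbols in each cell. For $d=2$ this is exactly the identification of an order-$n$ Latin square with a $1$-factorization of $K_{n,n}$ into $n$ perfect matchings, so the base case is Theorem~\ref{thm:LS-spreadness} of this paper in its optimal form --- with the $\log n$ removed, i.e.\ an $O(1/n)$-spread distribution --- which follows from the subsequent work of Keevash~\cite{K2022} and of Jain and Pham~\cite{JP2022}. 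Thus it suffices to establish, for each $d\ge 3$, an analogue of the robust result Theorem~\ref{thm:robust-ls-threshold}/Theorem~\ref{thm:LS-spreadness} one dimension up: for every ``near-complete, regular'' $d$-dimensional structure $\mathcal H$ (the analogue of the near-complete $d$-regular bipartite graph $H$, obtained by deleting a small balanced set of symbols from each cell) there is an $O_d(1/n)$-spread distribution on decompositions of $\mathcal H$ into $(d-1)$-dimensional order-$n$ Latin hypercubes; grouping consecutive blocks of such a decomposition of the complete structure then gives the desired distribution on $d$-dimensional hypercubes.

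The construction of this distribution should mirror the iterative-absorption ``edge vortex'' of this paper, with the r\^ole of spanning regular subgraphs played by $(d-1)$-dimensional Latin hypercubes supplied by the inductive hypothesis. One fixes a nested sequence of ``widths'' for $\mathcal H$; at each stage one reserves a random collection of $(d-1)$-dimensional hypercubes inside $\mathcal H$ as absorbers (so that spreadness on the reserved part is automatic), uses the dimension-$(d-1)$ robust distribution to peel off a constant fraction of the current leftover as $(d-1)$-dimensional hypercubes while respecting the pencil and consistency constraints, and finally absorbs the small remaining width using the reserved absorbers. As in this paper, no further arbitrary refinement is needed because the building blocks are already the atoms we want; the points needing care are that each deletion keeps the leftover near-complete and regular enough for the next application of the inductive hypothesis, and that the reserved blocks can themselves be assembled into valid $(d-1)$-dimensional hypercubes.

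The main obstacle is quantitative: controlling spreadness through the nested induction. Each of the $d-1$ levels is itself an iterative absorption, and one must ensure (a) the spread constant only degrades to some $C_d$ depending on $d$ rather than acquiring a factor of $\log n$ --- this is precisely the refinement of~\cite{K2022, JP2022} over the present paper in the case $d=2$, and it has to be preserved at every level; (b) the absorbers at each level are flexible enough to complete \emph{any} typical small leftover, which for $d\ge 3$ is more delicate than the bipartite case because the leftover of a partially built $d$-dimensional hypercube is genuinely $d$-dimensional (not lower-dimensional), so the induction must run on the decomposition problem in near-complete $d$-dimensional structures rather than naively on ``build a hypercube from scratch''; and (c) the existence of good decompositions of a ``regular'' $d$-dimensional structure into $(d-1)$-dimensional Latin hypercubes --- the analogue of ``regular bipartite graphs have a $1$-factorization'' --- which is itself a nontrivial design-existence statement for $d\ge 3$ and is best circumvented by building the decomposition and its grouping into $d$-dimensional hypercubes simultaneously inside the vortex, so that one only needs the existence of \emph{one} suitable decomposition of the reserved absorber blocks, which can be engineered by hand.
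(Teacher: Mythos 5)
This statement is one of the paper's open problems: Conjecture~\ref{conj:latin-hypercube-threshold} is stated without proof (the paper only proves the much weaker $d=2$ bound with an extra $\log n$ factor, via Theorems~\ref{thm:LS-spreadness} and~\ref{thm:FKNP}), so there is no proof of the paper's to compare against, and your proposal must stand on its own as a complete argument. It does not. The reduction you give is sound as far as it goes --- reformulating the list problem as a binomial random subset of $[n]^{d+1}$, invoking Frankston--Kahn--Narayanan--Park, and observing that an $O_d(1/n)$-spread distribution on $d$-dimensional order-$n$ Latin hypercubes would yield the threshold $O_d(\log n/n)$ since the minimal witnesses have size $n^d$ --- and your lower-bound remark is correct. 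But everything after that is a research programme, not a proof. The base case $d=2$ with optimal spreadness is not in this paper (it requires the subsequent work of Keevash and of Jain--Pham), which is citable but only disposes of $d=2$; the entire inductive step for $d\ge 3$ is asserted, not constructed.

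Concretely, three steps you yourself flag as ``points needing care'' are exactly where the difficulty lives and where no argument is supplied. (a) The higher-dimensional analogue of Theorem~\ref{thm:robust-ls-threshold}/Theorem~\ref{thm:LS-spreadness} --- a well-spread distribution on decompositions of a near-complete, regular $d$-dimensional structure into $(d-1)$-dimensional Latin hypercubes --- is an unproven design-existence-with-spreadness statement; nothing analogous to ``every regular bipartite graph has a $1$-factorization'' (the Hall/K\"onig fact that makes the paper's cover-down step close for free) is available in dimension $d\ge 3$, and completing or decomposing partial higher-dimensional Latin structures is a well-known hard problem, not a lemma one can engineer ``by hand'' without an explicit construction. (b) The cover-down/absorption mechanics: in the paper, the leftover at each level is merely a nearly regular bipartite graph, which Lemma~\ref{lemma:non-spread-cover-down-lemma} regularizes by a max-flow argument; for $d\ge 3$ the leftover of a partial decomposition carries genuinely $d$-dimensional constraints (your ``pencil condition''), and you give no analogue of that regularization lemma, nor of the final absorber step where the paper exploits that the complement of a regular graph is regular. (c) The quantitative claim that spreadness only degrades by a constant $C_d$ through a nested, $(d-1)$-level iterative absorption is precisely the refinement that took two further papers to achieve even for $d=2$; asserting it survives the induction is not an argument. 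So the proposal identifies a plausible route but leaves the core of the conjecture --- the spread distribution for $d\ge 3$ --- unproved.
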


The $d = 2$ case of Conjecture~\ref{conj:latin-hypercube-threshold} corresponds to the Luria--Simkin conjecture~\cite{LS19}.  Note also that the $d = 1$ case corresponds to Erd\H{o}s and R\'{e}nyi's~\cite{ER64} classic result on the threshold for a binomial random subgraph of $K_{n,n}$ to have a perfect matching.  Because of the connection between $d$-dimensional order-$n$ Latin hypercubes and $(n, d + 1, d)$-Steiner systems, Conjecture~\ref{conj:latin-hypercube-threshold} may be viewed as a `partite version' of Simkin's conjecture~\cite{Si17}.

Finally, we conjecture the threshold for the existence of designs; recall that a \textit{$t$-$(n, k, \lambda)$ design} is a pair $(V, \cB)$ where $V$ is an $n$-element ground set and $\cB \subseteq \binom{V}{k}$ is a collection of $k$-element subsets of $V$ such that every $t$-element subset of $V$ is contained in exactly $\lambda$ elements of $\cB$.  The threshold for the existence of a $t$-$(n, k, \lambda)$ design in a random subset of $\binom{V}{k}$ is obviously at least the threshold for each $t$-set in $\binom{V}{t}$ to be covered at least $\lambda$ times by $k$-sets of the random subset, and we conjecture that these thresholds are in fact the same, as follows.

\begin{conjecture}\label{conj:designs-threshold}
  For every $t, k, \lambda \in \mathbb N$ with $k > t$, the following holds.
  Let $V$ be a finite set with $|V| = n$ satisfying $\binom{k - i}{t - i} \mid \lambda\binom{n - i}{t - i}$ for every $i \in \{0\}\cup [t - 1]$.
  If $p = \omega(\log n / n^{k - t})$ and each $k$-set in $\binom{V}{k}$ is included in $\cA$ independently with probability $p$, then \aas there exists a $t$-$(n, k, \lambda)$ design $(V, \cB)$ where $\cB \subseteq \cA$.
\end{conjecture}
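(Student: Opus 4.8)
The plan is to follow the spreadness paradigm of this paper: construct a well-spread probability distribution on $t$-$(n,k,\lambda)$ designs of $V$ and then feed it into the general form of the Frankston--Kahn--Narayanan--Park theorem~\cite{FKNP21} (or, equivalently for our purposes, the Park--Pham theorem~\cite{PP22}). Here the ground set is $\binom{V}{k}$, of size $\binom{n}{k} = \Theta(n^k)$, and every $t$-$(n,k,\lambda)$ design has exactly $N \coloneqq \lambda\binom{n}{t}/\binom{k}{t} = \Theta(n^t)$ blocks; so for a more or less uniform distribution $\mu$ on designs the probability that a fixed $k$-set is a block is about $N/\binom{n}{k} = \Theta(n^{-(k-t)})$, and the natural target is a $q$-spread distribution with $q = C(t,k,\lambda)\, n^{-(k-t)}$. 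The Frankston--Kahn--Narayanan--Park bound then says that a random subset of $\binom{V}{k}$ with inclusion probability $p \geq C q \log\binom{n}{k} = \Theta(\log n / n^{k-t})$ \aas contains a design, which is exactly the conjectured threshold — the matching lower bound being the (easy) threshold for every $t$-set of $V$ to be covered $\lambda$ times. So the whole task reduces to: for all large $n$ satisfying the stated divisibility conditions (which are precisely the conditions under which a $t$-$(n,k,\lambda)$ design exists, by Keevash~\cite{Ke14} or Glock, K\"uhn, Lo, and Osthus~\cite{GKLO16}), construct a $(C(t,k,\lambda)\,n^{-(k-t)})$-spread distribution on designs.

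The first idea is to take $\mu$ to be uniform over all $t$-$(n,k,\lambda)$ designs of $V$. Then $q$-spreadness is equivalent to the enumerative statement that, for every collection $S$ of $k$-sets extendable to a design with $|S|=m$, the number of designs containing $S$ is at most $q^m$ times the total number of designs, uniformly in $m$ and in the choice of $S$. Even the total count is delicate, and controlling how sharply it drops upon conditioning on $m$ prescribed blocks seems beyond current enumeration technology — this is the same obstruction noted in \cite{FKNP21} and \cite{SSS22} for Steiner triple systems. Instead, as in the proof of Theorem~\ref{thm:LS-spreadness}, I would \emph{construct} the distribution: decompose the design-building problem into many ``sparse'' pieces, each completable with a great deal of freedom, and spend a fresh, well-spread source of randomness on each piece, so that — since spreadness is, up to a constant factor, stable under unions of independent well-spread families — the composed law on full designs is $O(n^{-(k-t)})$-spread. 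Concretely this would be an iterative-absorption scheme (an ``edge vortex'' as in Section~\ref{sect:spreadness}, or a carefully randomized version of Keevash's algebraic / random-greedy construction~\cite{Ke14, K2022}): reserve a small absorbing structure, peel off in rounds partial designs that cover a controlled fraction of the still-uncovered $t$-sets via a nibble- or random-greedy-type choice whose conditional law is $O(n^{-(k-t)})$-spread, and finally use the absorber to clear the bounded leftover while respecting the divisibility constraints.

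The main obstacle is the final absorption step. In the $1$-factorization proof of this paper, the difficulty is sidestepped entirely: one decomposes the host graph into spanning regular subgraphs, each of which \emph{automatically} has a $1$-factorization, so the only randomness that needs controlling is that of the decomposition. For general $t,k,\lambda$ there is no comparable self-completing building block, and the absorber — essentially deterministic in the usual iterative-absorption framework — threatens to concentrate $\mu$ onto few designs and wreck spreadness. Circumventing this requires either an absorber drawn from a genuinely well-spread random family of completions (say, many edge-disjoint absorbing gadgets, of which a well-spread random choice is actually used, paired with a robust divisibility ``fix-up'' admitting many solutions), or replacing absorption by a purely iterative decomposition into sparse designs with built-in completion freedom — in the spirit of the closing remarks of Section~\ref{sect:spreadness}, which suggest there is no barrier of the usual kind for this approach. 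Making this work uniformly in $t$, $k$, and $\lambda$ — in particular handling the full hierarchy of divisibility conditions $\binom{k-i}{t-i}\mid\lambda\binom{n-i}{t-i}$ and the associated link structures at each level $i\in\{0\}\cup[t-1]$ — is where essentially all the effort would go; once such a distribution exists, the reduction to spreadness and the invocation of Frankston--Kahn--Narayanan--Park are routine.
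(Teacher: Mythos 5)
There is a genuine gap here, and in fact the statement you are addressing is not a theorem of the paper at all: Conjecture~\ref{conj:designs-threshold} is posed in the open-problems section precisely because nobody — including the authors — knows how to prove it. Your proposal correctly identifies the standard reduction: by Theorem~\ref{thm:FKNP} (in its general $\ell$-bounded form), it would suffice to construct an $O(n^{-(k-t)})$-spread distribution on $t$-$(n,k,\lambda)$ designs, since a design has $\Theta(n^t)$ blocks and $\log$ of that is $\Theta(\log n)$. But that reduction is the routine part; the entire content of the conjecture lies in producing such a distribution, and your sketch does not produce it. You acknowledge this yourself ("this is where essentially all the effort would go"), so what you have written is a research programme, not a proof: the uniform distribution is out of reach for enumerative reasons you correctly cite, and the proposed randomized iterative-absorption construction is left entirely unspecified at its critical point — the absorber. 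As you note, the paper's own argument for Theorem~\ref{thm:LS-spreadness} works only because regular bipartite graphs self-complete into $1$-factorizations (K\H{o}nig/Hall), and no analogous self-completing building block is known for general $t$-designs; replacing the essentially deterministic absorber of the usual iterative-absorption framework by a well-spread family of completions that also respects the full hierarchy of divisibility conditions is an open problem, not a step one can wave through.

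Two further points of calibration. First, even in the simplest unknown case $t=2$, $k=3$, $\lambda=1$ (Steiner triple systems), this paper does not prove the conjecture: it obtains spreadness $O(\log n/n)$ rather than $O(1/n)$, hence a threshold bound of $O(\log^2 n/n)$, a $\log n$ factor above the conjectured value; removing that factor required the genuinely new ideas of the subsequent works of Keevash and of Jain and Pham, and even those are confined to the triple-system/Latin-square setting. Second, a faithful adaptation of the paper's method would therefore at best be expected to give $p=\omega(\log^2 n/n^{k-t})$, which does not match the statement you set out to prove. So the proposal is not comparable to a proof in the paper (there is none), and as a standalone argument it has a decisive missing ingredient: the construction of a well-spread distribution on $t$-$(n,k,\lambda)$ designs for general $t$, $k$, $\lambda$.
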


Conjecture~\ref{conj:designs-threshold} provides a common generalization of Shamir's problem and Simkin's conjecture~\cite{Si17}.  Indeed, the case $t = \lambda = 1$ corresponds to Shamir's problem, and the case $t = k - 1$ and $\lambda = 1$ corresponds to Simkin's conjecture.  Here again, and for Conjecture~\ref{conj:latin-hypercube-threshold} as well, we conjecture there is a sharp threshold at the obvious lower bound and that the analogous hitting time results hold as well.

All of the above conjectures concern finding decompositions of some particular graph or hypergraph via a binomial random selection of `available' subgraphs.
It is also interesting to study decompositions of random graphs or hypergraphs; however, in this case, one should ensure the random structure satisfies the trivial divisibility requirements to admit the desired decomposition.  For example, a beautiful question of Yuster~\cite{Yu07}, which is still wide open, asks for the threshold for a triangle decomposition of a random even-regular graph.  Many further open problems e.g.\ on extremal aspects of triangle decompositions and Latin square completion can be found in \cite{GKO21}.

\subsection{Outline of the paper}

We have shown that Theorem~\ref{thm:ls-threshold} follows from Theorem~\ref{thm:robust-ls-threshold}, which in turn follows from Theorem~\ref{thm:LS-spreadness}.  Moreover, Theorem~\ref{thm:list-threshold} follows from Theorems \ref{thm:robust-ls-threshold} and \ref{thm:list-reduction}, and Theorem~\ref{thm:sts-threshold} follows from Theorems \ref{thm:robust-ls-threshold} and \ref{thm:sts-reduction}.  Therefore, the rest of the paper is devoted to the proofs of Theorems \ref{thm:sts-reduction}, \ref{thm:list-reduction}, and \ref{thm:LS-spreadness}.

In Section~\ref{sect:prelim}, we introduce some notation and basic lemmas used in the subsequent sections.
We prove Theorem~\ref{thm:LS-spreadness} in Section~\ref{sect:spreadness-proof}, Theorem~\ref{thm:sts-reduction} in Section~\ref{sect:sts-reduction}, and Theorem~\ref{thm:list-reduction} in Section~\ref{sect:list-reduction}.  In Section~\ref{sect:overview}, we provide an overview of the proofs of Theorem~\ref{thm:LS-spreadness} and \ref{thm:sts-reduction}; since the proofs of Theorems \ref{thm:sts-reduction} and \ref{thm:list-reduction} are similar, we only overview Theorem~\ref{thm:sts-reduction} in Section~\ref{sect:overview}, but we also briefly overview the proof of Theorem~\ref{thm:list-reduction} in Section~\ref{sect:list-reduction}. 

\section{Overview of the proofs}\label{sect:overview}

In this section, we overview the proofs of Theorem~\ref{thm:LS-spreadness} and \ref{thm:sts-reduction}.  

\subsection{Overview of Theorem~\ref{thm:LS-spreadness}: Finding a spread distribution on $1$-factorizations} 

We begin with an overview of the proof of Theorem~\ref{thm:LS-spreadness},
in which we construct a $(C_{\ref*{thm:LS-spreadness}}\log n / n)$-spread probability distribution on $1$-factorizations of some nearly complete, regular bipartite graph $G$.  For the purposes of this overview, we assume $G \cong K_{n,n}$, since the proof of this case can be easily generalized.

As mentioned in Section~\ref{sect:spreadness}, our goal in the proof is to find a `well-spread' probability distribution on decompositions of $G$ into spanning regular subgraphs with $O(n\log n)$ edges.\COMMENT{To be precise, a set $\{H_i \subseteq G : i \in [m]\}$ is a \textit{decomposition} of $G$ if $H_1, \dots, H_m$ are pairwise edge-disjoint and $\bigcup_{i=1}^m H_i = G$.}
Such a decomposition gives a 1-factorization of $G$, since each regular subgraph can be decomposed into perfect matchings by Hall's theorem.
More precisely, to facilitate an iterative framework, we find such a decomposition $\{R_{i,j} \subseteq G : i \in [\ell], j \in [2^{\ell - i}]\}$ with $2^{\ell} - 1$ parts for some $\ell \in \mathbb N$.  We choose $\ell$ such that $2^{\ell} - 1 = n / (C\log n)$, where $C$ is a large absolute constant.
By `well-spread', we mean that 
\begin{enumerate}[align=parleft, left=0pt..0pt, label={($\star'$)}]
\item\label{eq:spreadness2}
  {\hfil every $\{S_{i,j} \subseteq E(G) : i \in [\ell], j \in [2^{\ell - i}]\}$ satisfies}
  \begin{equation*}
    \Prob{S_{i,j} \subseteq E(R_{i,j})~\forall i \in [\ell], \forall j \in [2^{\ell - i}]} \leq \left(\frac{C_{\ref*{thm:LS-spreadness}} \log n}{n}\right)^{\sum_{i,j} |S_{i,j}|}.
  \end{equation*}
\end{enumerate}

\subsubsection{An approximate decomposition}

To find such a decomposition, the first step of the proof is to consider a decomposition $\{H_{i,j} \subseteq G : i \in [\ell], j \in [2^{\ell - i}]\}$ of $G$ into $2^{\ell} - 1$ parts chosen uniformly at random.
In a sense, this decomposition is already `close' to the one we want, and the distribution clearly satisfies \ref{eq:spreadness2}.  It is of course very unlikely that every $H_{i,j}$ is regular, but a straightforward application of Chernoff's bound (Lemma~\ref{lemma:chernoff}) and a union bound shows that \aas every $H_{i,j}$ is at least \textit{nearly} regular. 
Moreover, a similar argument yields that \aas every $H_{i,j}$ satisfies a few additional quasirandomness properties (formally described in Definition~\ref{def:quasirandomness} and proved in Lemma~\ref{lemma:edge-vortex}), and these quasirandomness properties imply (via Lemma~\ref{lemma:f-factor}) that each $H_{i,j}$ contains a spanning regular subgraph with almost all of its edges.  (Since each $H_{i,j}$ is a binomial random graph with expected average degree $C\log n$, this argument strengthens Erd\H{o}s and R\'{e}nyi's~\cite{ER64} classic result that $\log n / n$ is the threshold for finding a perfect matching in a random bipartite graph.  This part of our proof uses standard probabilistic and graph-theoretic techniques.  We remark that one of the main reasons we need the $\log n$ term in Theorem~\ref{thm:LS-spreadness} is this application of Chernoff's bound.)
Therefore, by conditioning on these quasirandomness properties we can construct a distribution satisfying \ref{eq:spreadness2} on \textit{approximate} decompositions of $G$ into spanning $O(\log n)$-regular subgraphs.  Converting this approximate decomposition into a complete one is the core of our argument, where ideas from iterative absorption come into play.

The above argument for finding an approximate decomposition of $G$ into spanning $O(\log n)$-regular subgraphs relies on the fact that $G$ is complete (or at least, nearly complete), whereas the above argument for decomposing $G$ into nearly $O(\log n)$-regular subgraphs only requires that $G$ itself is nearly regular.  
As explained in the next subsection, we use this latter fact iteratively (in $\ell - 1$ steps) to find a complete decomposition $\{R_{i,j} \subseteq G : i \in [\ell], j \in [2^{\ell - i}]\}$ of $G$ into spanning $O(\log n)$-regular subgraphs.

\subsubsection{Iterative absorption}\label{sect:iterative-absorption}

As mentioned, we start by choosing a decomposition $\{H_{i,j} \subseteq G : i \in [\ell], j \in [2^{\ell - i}]\}$ of $G$ into $2^{\ell} - 1$ parts chosen uniformly at random.  With high probability, each $H_{i,j}$ satisfies several quasirandomness properties, in which case we call $\{H_{i,j} \subseteq G : i \in [\ell], j \in [2^{\ell - i}]\}$ an \textit{edge vortex} (see Definition~\ref{def:edge-vortex}).  We think of $\bigcup_{j=1}^{2^{\ell - i}}H_{i,j}$ as the `$i$th level' of the vortex.  Given an edge vortex, we (randomly) construct the decomposition into spanning $O(\log n)$-regular subgraphs, as follows.
First, as a convenient initialization, we let $R_{1, j}$ be an empty graph for each $j \in [2^{\ell - 1}]$, and for each $i \in [\ell - 1]$, we iteratively construct a set of regular graphs $\{R_{i+1, j} : j \in[2^{\ell - i - 1}]\}$.
In each step except the final one, we decompose the `leftover' $L_i \coloneqq \bigcup_{j=1}^{2^{\ell - i}}(H_{i, j} - E(R_{i, j}))$ of the previous step into $2^{\ell - i - 1}$ parts $\{L_{i, j} : j \in [2^{\ell - i - 1}]\}$ uniformly at random.  (Since $L_1 = \bigcup_{j=1}^{2^{\ell - 1}}H_{1, j}$, it is not necessary to define the $H_{1, j}$ individually, but the notation is more convenient this way.)  With high probability, $L_{i,j}$ is nearly regular for each $j \in [2^{\ell - i - 1}]$, and the quasirandomness properties of $H_{i + 1,j}$ ensure that $L_{i,j} \cup H_{i + 1, j}$ contains a spanning regular subgraph $R_{i + 1, j}$ that itself contains $L_{i,j}$ (see Lemma~\ref{lemma:non-spread-cover-down-lemma}).  In this way, we decompose the leftover of the $i$th level of the edge vortex into regular subgraphs by incorporating some additional edges from the $(i + 1)$th level (see Lemma~\ref{lemma:inductive-cover-down-lemma}).  This part of the argument can be viewed as the so-called `cover down' step in the iterative absorption framework.
To complete the decomposition, we need to decompose the union of $H_{\ell, 1}$ and the `final leftover' $L_{\ell - 1} \coloneqq \bigcup_{j=1}^{2}(H_{\ell - 1, j} - E(R_{\ell - 1, j}))$ into spanning $O(\log n)$-regular subgraphs.  A reader familiar with iterative absorption might expect $H_{\ell, 1}$ to require some special `absorbing' properties to facilitate this process.  However, in our case, $L_{\ell - 1} \cup H_{\ell, 1}$ is already $O(\log n)$-regular because it is the complement of the regular graph $\bigcup_{i=1}^{\ell - 2}\bigcup_{j=1}^{2^{\ell - i}}R_{i,j}$.  Thus, simply letting $R_{\ell, 1} \coloneqq L_{\ell - 1} \cup H_{\ell, 1}$ completes the decomposition.  The full details of this iterative argument are provided in Lemma~\ref{lemma:main-decomposition-lemma}. 
We remark that this idea of using a regular bipartite graph as an absorber is also present in the work of Ferber, Jain, and Sudakov~\cite{FJS20}.

We conclude by explaining why this decomposition satisfies \ref{eq:spreadness2}.  We only construct the decomposition in the event that $\{H_{i,j} \subseteq G : i \in [\ell], j \in [2^{\ell - i}]\}$ is an edge vortex and each $L_{i,j}$ is nearly regular, but these events hold with very high probability, so let us ignore this aspect of the proof here.  First, we sketch the argument for when $|\bigcup_{i,j}S_{i,j}| = 1$; in this case, we may assume $S_{i^*, j^*} = \{e\}$ and $S_{i, j} = \varnothing$ for all $(i, j) \neq (i^*, j^*)$.  By construction, there are only two situations in which the edge $e$ can end up in $R_{i^*,j^*}$: either $e \in E(H_{i^*, j^*})$, or $e \in E(L_{i^* - 1, j^*})$.  Both of these events occur with probability at most $O(\log n / n)$; indeed,
\begin{equation*}
  \Prob{e \in E(H_{i^*, j^*})} = \frac{1}{2^{\ell} - 1} = O\left(\frac{\log n}{n}\right)
\end{equation*}
and
\begin{equation*}
  \Prob{e \in E(L_{i^* - 1, j^*})} \leq \left.\Prob{e \in \bigcup_{j=1}^{2^{\ell - i^* + 1}}E(H_{i^* - 1, j})} \middle. \cdot \frac{1}{2^{\ell - i^*}}\right.\COMMENT{$=2 / (2^{\ell} - 1)$}= O\left(\frac{\log n}{n}\right).
\end{equation*}
Hence, $\Prob{e \in R_{i^*, j^*}} = O(\log n / n)$, as desired.
To extend this argument to any choice of $\{S_{i,j} \subseteq E(G) : i \in [\ell], j \in [2^{\ell - i}]\}$, we use a coupling argument 
-- see the proof of Theorem~\ref{thm:LS-spreadness} for details.
Via the coupling, it is straightforward to prove the decomposition satisfies \ref{eq:spreadness2}. 
Thus, as mentioned in Section~\ref{sect:spreadness}, taking arbitrary $1$-factorizations of each $R_{i,j}$ will yield a $(C_{\ref*{thm:LS-spreadness}} \log n / n)$-spread distribution on $1$-factorizations of $G$.

\subsection{Overview of Theorem~\ref{thm:sts-reduction}: Reducing to the partite case}

For simplicity, we only consider the case $|V_1|=|V_2|=|V_3|$.
In this case, we choose sets $U_i \subseteq V_i$ of size $\lfloor \eps n \rfloor$ for every $i \in [2]$, and let $U_3 \coloneqq W$. 
First, we aim to cover all of the edges in $G[V_i] - E(G[U_i])$ and most of the edges in $G[U_i]$ for every $i \in [3]$ with a set  $\cT^2$ of edge-disjoint random triangles (see Step~\ref{step:step2}). 
To that end, for every $i \in [3]$, we approximately decompose the edges in $G[V_i] - E(G[U_i])$ using the R\"odl nibble (by applying Corollary~\ref{cor:almostdecomp}) into edge-disjoint random triangles and then we `cover down' all of the leftover edges $uv$ in $G[V_i] - E(G[U_i])$ using random triangles of the form $uvw$ where $w \in U_i$. This is accomplished by applying  Lemma~\ref{lem:coverdown}. Then we cover most of the edges in $G[U_i]$ for every $i \in [3]$ with edge-disjoint random triangles (using Corollary~\ref{cor:almostdecomp} again). Let $G^2 \coloneqq G - \bigcup_{T \in \cT^2}E(T)$. As discussed, $d_{G^2}(v,V_i) = 0$ for each $v \in \bigcup_{i \in [3]} (V_i \setminus U_i)$ and $e(G^2[U_i]) = o(n^2)$ for each $i \in [3]$.

In Step~\ref{step:step3} of the proof, we will define a collection $\cT^3$ of random triangles in $G^2$, where $\cT^3$ consists of the triangles of the form $uvw$ where $uv \in E(G^2[U_i])$ for some $i \in [3]$ and $w \in U_j$ for some $j \in [3] \setminus \{ i \}$. 
The triangles in $\cT^3$ will be carefully chosen to ensure the graph $G^3 \coloneqq G^2 - \bigcup_{T \in \cT^3} E(T)$ satisfies $e(G^2[V_i]) = 0$ for each $i \in [3]$ and the following divisibility condition: $d_{G^3}(v , U_j) = d_{G^3}(v , U_k)$ for every $v \in V_i$, $i \in [3]$, and distinct $j,k \in [3]\setminus \{ i \}$; this divisibility condition ensures that the final step (Step~\ref{step:step4}) can be carried out successfully.

To explain how to define $\cT^3$ in more detail, for each $i \in [3]$ and distinct $j,k \in [3] \setminus \{ i \}$ with $j < k$, we randomly divide $E(G^2[U_i])$ into two parts $E_{i,j}$ and $E_{i,k}$ of the same size. 
For each $uv \in E_{i,j}$ (or $uv \in E_{i,k}$), we will choose some random triangle $uvw$ for some $w \in U_j$ ($w \in U_k$, respectively) and put into $\cT^3$, so $G^3 = G^2 - \bigcup_{T \in \cT^3}E(T)$ has no edges inside $U_1$, $U_2$, and $U_3$, respectively.
For all $u \in U_i$, let $E_{i,j}(u)$ be the set of the edges incident to $u$ in $E_{i,j}$, and let $E_{i,k}(u)$ be the set of the edges incident to $u$ in $E_{i,k}$. By the Chernoff's bound, with probability $1-o(1)$, ${\rm def}(u) \coloneqq |E_{i,k}(u)| - |E_{i,j}(u)|$ is at most $n^{2/3}$ for all $u \in U_i$. 
If we delete the edges of triangles $uvw \in \cT^3$ covering $E_{i,j}(u) \cup E_{i,k}(u)$ from $G^2$, it reduces the degree $d_{G^2}(u,U_j)$ and $d_{G^2}(u,U_k)$ by $|E_{i,j}(u)|$ and $|E_{i,k}(u)|$, respectively. Thus, the resulting degrees of $u$ to $U_j$ and $U_k$ may differ by at most $|{\rm def}(u)| \leq n^{2/3}$. 
Since $|{\rm def}(u)|$ is small, we can choose some random triangles $uvw$ for some $vw \in E_{j,i} \cup E_{k,i}$ carefully (see Figure~\ref{fig:step3}); after deleting the edges of such triangles, the divisibility condition $d_{G^3}(u,U_j) = d_{G^3}(u,U_k)$ is satisfied for each $u \in U_i$.

Finally, in Step~\ref{step:step4} of the proof, we cover all the edges in $G^3$ containing a vertex of $U_3$ with edge-disjoint random triangles of the form $u v w$ where $u \in U_3$, $v \in V_1$ and $w \in V_2$. Then after removing the edges of these triangles from $G^3$, the graph induced by $V_1 \cup V_2$ is the desired subgraph $H$.  Indeed, every edge of $G - E(H)$ is covered by a triangle, and since $G^3$ satisfies the divisibility condition, $H$ is $|V_3 \setminus W|$-regular, as desired. 

\section{Notation and preliminaries}\label{sect:prelim}

In this section we introduce some notation and terminology along with some tools that are used in the proofs of Theorems~\ref{thm:sts-reduction}, \ref{thm:list-reduction}, and \ref{thm:LS-spreadness}.

\subsection{Basic terminology}

By $\log x$, we mean the natural logarithm of $x$.
For $n \in \mathbb{N}$, we let $[n] \coloneqq \{k \in \mathbb{N} \: : \:1 \leq k \leq n \}$.
For $a, b, c , r \in \mathbb R$ with $b,r \geq 0$, we write $c = a \pm b$ if $a-b \leq c \leq a+b$, and we write $c = (a \pm b)r$ if $ar - br \leq c \leq ar + br$.
We say a partition of a set is \textit{equitable} of the part sizes differ by at most $1$.

We use the `$\ll$' notation to state many of our results.
We write $x \ll y$ to mean that there exists a non-decreasing function $f : (0,1] \mapsto (0,1]$ such that the subsequent result holds for all $x, y \in (0, 1]$ with $x \leq f(y)$.
We will not calculate these functions explicitly.
Hierarchies with more constants are defined in a similar way and are to be read from right to left.  

For a graph $G$, we let $G^{(3)}$ denote the set of all triangles in $G$.
For disjoint sets $X, Y \subseteq V(G)$, we let $G[X, Y]$ be the bipartite graph with vertex set $X \cup Y$ and edge set $\{xy \in E(G) : x \in X, y \in Y\}$.
Otherwise, our graph theory notation and terminology is standard. For completeness, we also mention the following that we frequently use.  Let $G$ be a graph.  A subgraph $H \subseteq G$ is \textit{spanning} if $V(H) = V(G)$.  We let $e(G) \coloneqq |E(G)|$, and for $X, Y \subseteq V(G)$, we let $e_G(X, Y) \coloneqq |\{xy \in E(G) : x \in X, y \in Y\}$.  The \textit{degree} of a vertex $v\in V(G)$, denoted $d_G(v)$, is the number of edges containing $v$, and for $U\subseteq V(G)$, we let $d_G(v, U) \coloneqq e_G(\{v\}, U)$.  We may omit the subscripts when $G$ is clear from the context.  A graph $G$ is \textit{regular} if every vertex in $G$ has the same degree and \textit{$r$-regular} if every vertex in $G$ has degree $r$.
For $a,b,c \in \mathbb{R}$ with $b,c \geq 0$, a graph $G$ is \emph{$(a \pm b)c$-regular} if every vertex of $G$ has degree in $[(a-b)c , (a+b)c]$.

We also use standard notation and terminology from probability theory.  Recall that for a graph $G$ and $p \in [0, 1]$, we let $\randt(G, p)$ be a random set of triangles of $G$ where each is included independently with probability $p$.  For a set $X$ and $p \in [0, 1]$, we also let $X(p)$ denote a random subset of $X$ in which each $x \in X$ is included with probability $p$ independently at random.  Thus, $\randt(G, p) = G^{(3)}(p)$ for every graph $G$.

\subsection{Tools}

We will often use the following standard Chernoff-type bounds (see \cite[Theorem~A.1.12]{AS16} and \cite[Corollary~2.3]{JLR00}).  Since we use Lemma~\ref{lemma:chernoff}\ref{chernoff:two-sided} below more frequently, if we only refer to Lemma~\ref{lemma:chernoff}, then we mean Lemma~\ref{lemma:chernoff}\ref{chernoff:two-sided}.

\begin{lemma}\label{lemma:chernoff}
  If $X$ is the sum of mutually independent Bernoulli random variables with $\mu \coloneqq \Expect{X}$, then the following holds.
  \begin{enumerate}[(i)]
  \item\label{chernoff:two-sided} For all $\delta \in [0, 1]$, we have
    \begin{equation*}
      \Prob{|X - \mu| \geq \delta \mu} \leq 2e^{-\delta^2 \mu / 3}.
    \end{equation*}
  \item\label{chernoff:upper-tail} For all $\beta > 1$, we have
    \begin{equation*}
      \Prob{X > \beta \mu} \leq (\beta / e)^{-\beta \mu}.
    \end{equation*}
  \end{enumerate}
\end{lemma}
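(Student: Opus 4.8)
The plan is to follow the classical exponential-moment (Bernstein--Chernoff) argument; everything reduces to Markov's inequality together with a couple of elementary one-variable estimates, which is why the literature (e.g.\ \cite{AS16, JLR00}) treats this as standard. Write $X = \sum_i X_i$ with the $X_i$ independent and $\Prob{X_i = 1} = p_i$, so that $\mu = \sum_i p_i$. The first step is the moment-generating-function bound: for every real $t$, independence and $1 + x \leq e^x$ give
\begin{equation*}
  \Expect{e^{tX}} = \prod_i \bigl(1 + p_i(e^t - 1)\bigr) \leq \prod_i e^{p_i(e^t - 1)} = e^{\mu(e^t - 1)}.
\end{equation*}

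For the upper tails, I would apply Markov's inequality to $e^{tX}$ with $t > 0$: for every $s > 0$,
\begin{equation*}
  \Prob{X \geq s} \leq e^{-ts}\Expect{e^{tX}} \leq e^{\mu(e^t - 1) - ts}.
\end{equation*}
Part~\ref{chernoff:upper-tail} follows by taking $s = \beta\mu$ and $t = \log\beta > 0$ (valid as $\beta > 1$): the exponent is $\mu(\beta - 1) - \beta\mu\log\beta \leq \beta\mu(1 - \log\beta)$, whence $\Prob{X > \beta\mu} \leq (e/\beta)^{\beta\mu} = (\beta/e)^{-\beta\mu}$. Taking instead $s = (1 + \delta)\mu$ and $t = \log(1 + \delta) \geq 0$ for $\delta \in [0,1]$ yields the multiplicative form $\Prob{X \geq (1 + \delta)\mu} \leq \bigl(e^\delta / (1 + \delta)^{1 + \delta}\bigr)^{\mu}$. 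For the lower tail I would run the same computation with $t < 0$: for $0 < s < \mu$, $\Prob{X \leq s} \leq e^{-ts}\Expect{e^{tX}} \leq e^{\mu(e^t - 1) - ts}$, and choosing $s = (1 - \delta)\mu$, $t = \log(1 - \delta) < 0$ gives $\Prob{X \leq (1 - \delta)\mu} \leq \bigl(e^{-\delta} / (1 - \delta)^{1 - \delta}\bigr)^{\mu}$ (the degenerate case $\delta = 1$ being handled directly via $\Prob{X = 0} = \prod_i(1 - p_i) \leq e^{-\mu}$).

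The only genuine computation left — the closest thing to an obstacle, though it is routine — is to show that both ratios are at most $e^{-\delta^2/3}$ on $[0,1]$. For the upper tail this is the inequality $(1 + \delta)\log(1 + \delta) - \delta \geq \delta^2/3$: setting $f(\delta)$ equal to the difference of the two sides, one checks $f(0) = 0$, $f'(\delta) = \log(1 + \delta) - 2\delta/3$ with $f'(0) = 0$, and $f' \geq 0$ throughout $[0,1]$ (it is nonnegative at the endpoints and unimodal since $f''(\delta) = 1/(1+\delta) - 2/3$ changes sign once), so $f \geq 0$. For the lower tail one needs $\delta + (1 - \delta)\log(1 - \delta) \geq \delta^2/2 \geq \delta^2/3$, which follows even more simply since the corresponding $h$ satisfies $h(0) = h'(0) = 0$ and $h''(\delta) = \delta/(1 - \delta) \geq 0$ on $[0,1)$. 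Finally, a union bound over the two one-sided estimates gives $\Prob{|X - \mu| \geq \delta\mu} \leq 2e^{-\delta^2\mu/2} \leq 2e^{-\delta^2\mu/3}$, which is \ref{chernoff:two-sided}.
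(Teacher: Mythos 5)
Your proof is correct and is the standard exponential-moment (Chernoff) argument; the paper itself gives no proof of this lemma, citing \cite[Theorem~A.1.12]{AS16} and \cite[Corollary~2.3]{JLR00} instead, and those sources argue exactly as you do, via the MGF bound, Markov's inequality with $t=\log(1\pm\delta)$ (resp.\ $t=\log\beta$), and the elementary inequalities $(1+\delta)\log(1+\delta)-\delta\ge\delta^2/3$ and $\delta+(1-\delta)\log(1-\delta)\ge\delta^2/2$ on $[0,1]$. One cosmetic slip in your last line: the intermediate bound $2e^{-\delta^2\mu/2}$ does not follow from your own estimates, since your upper-tail exponent is only $\delta^2\mu/3$ (and indeed at $\delta=1$ the true upper-tail exponent $(2\log 2-1)\mu$ is smaller than $\mu/2$); just bound each one-sided tail by $e^{-\delta^2\mu/3}$ and add, which yields the stated $2e^{-\delta^2\mu/3}$ directly.
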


We will also use the following result, which is an easy application of the max-flow min-cut theorem (see e.g.~\cite[p.~57]{Lo07}).

\begin{lemma}\label{lemma:f-factor}
  Let $G$ be a bipartite graph with bipartition $\{A, B\}$, and let $f : V(G) \rightarrow \mathbb Z_{\geq0}$.  There exists a spanning subgraph $H\subseteq G$ satisfying $d_H(v) = f(v)$ for every $v\in V(G)$ if and only if
  \begin{itemize}
  \item $\sum_{a \in A} f(a) = \sum_{b \in B}f(b)$ and
  \item $e(A', B') \geq \sum_{a \in A'}f(a) - \sum_{b \in B\setminus B'}f(b)$ for every $A' \subseteq A$ and $B' \subseteq B$.
  \end{itemize}
\end{lemma}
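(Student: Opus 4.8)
The plan is to model the problem as an integral $s$--$t$ flow problem and apply the max-flow min-cut theorem together with the integrality of optimal flows. Concretely, I would build a directed network $D$ on vertex set $\{s, t\} \cup A \cup B$: for each $a \in A$ add an arc $sa$ of capacity $f(a)$, for each $b \in B$ add an arc $bt$ of capacity $f(b)$, and for each edge $ab \in E(G)$ with $a \in A$, $b \in B$ add an arc $ab$ of capacity $1$. Since $G$ is simple, an integral $s$--$t$ flow in $D$ corresponds exactly to a subgraph $H \subseteq G$ (the capacity-$1$ arcs carrying flow form $E(H)$) together with the data of how much of each $f(a)$, $f(b)$ is used; flow conservation at $a$ (resp. $b$) says $d_H(a)$ equals the flow on $sa$ (resp. $d_H(b)$ equals the flow on $bt$). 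Hence a spanning $H$ with $d_H \equiv f$ exists if and only if $D$ has an integral flow saturating every arc $sa$ and every arc $bt$; as the capacities out of $s$ sum to $\sum_{a \in A} f(a)$ and those into $t$ sum to $\sum_{b \in B} f(b)$, this forces $\sum_{a \in A} f(a) = \sum_{b \in B} f(b) =: N$, and then such a flow, if it exists, has value exactly $N$ and automatically saturates all source and sink arcs.

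It is cleanest to record the necessity direction by hand rather than through flows: if $H$ exists, then $N = e(H) = \sum_{a \in A} f(a) = \sum_{b \in B} f(b)$ by double counting, and for any $A' \subseteq A$, $B' \subseteq B$ the $\sum_{a \in A'} f(a)$ edges of $H$ meeting $A'$ include at most $\sum_{b \in B \setminus B'} f(b)$ edges into $B \setminus B'$, so the remaining ones, all of which lie in $E(G[A',B'])$, give $e(A', B') \geq \sum_{a \in A'} f(a) - \sum_{b \in B \setminus B'} f(b)$. For sufficiency, assume both displayed conditions. The cut $(\{s\}, V(D) \setminus \{s\})$ has capacity $N$, so the maximum flow value is at most $N$; by max-flow min-cut it equals $N$ as soon as every $s$--$t$ cut has capacity at least $N$, and by integrality of max flows in integer-capacitated networks we may take this flow integral, producing the required $H$ as described above. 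So the task reduces to verifying the min-cut bound from the second hypothesis.

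Finally I would carry out that translation. A cut is specified by $S$ with $s \in S$, $t \notin S$; writing $A' := A \cap S$ and $B' := B \setminus S$, its capacity is $\sum_{a \in A \setminus S} f(a) + \sum_{b \in B \cap S} f(b) + e_G(A', B') = (N - \sum_{a \in A'} f(a)) + \sum_{b \in B \setminus B'} f(b) + e_G(A', B')$, which is at least $N$ precisely when $e_G(A', B') \geq \sum_{a \in A'} f(a) - \sum_{b \in B \setminus B'} f(b)$; ranging over all $S$ gives exactly the second hypothesis over all $A' \subseteq A$, $B' \subseteq B$. Together with the previous paragraph this yields the lemma. There is no real obstacle here beyond this bookkeeping step: one must track which side of the bipartition lies in $S$, identify $B \cap S$ with $B \setminus B'$, and see the $N$ cancel correctly; everything else is a direct invocation of standard flow theory.
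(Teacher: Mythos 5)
Your proof is correct and follows exactly the approach the paper indicates: the paper gives no proof of its own, instead stating that the lemma is ``an easy application of the max-flow min-cut theorem'' and citing a reference, and your flow network, the cut-capacity translation with $A' = A \cap S$ and $B' = B \setminus S$, and the appeal to integrality are the standard way to carry that out.
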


\section{Proof of Theorem~\ref{thm:LS-spreadness}}\label{sect:spreadness-proof}

In this section we prove Theorem~\ref{thm:LS-spreadness}, which combined with Theorem~\ref{thm:FKNP}, implies our main result, Theorem~\ref{thm:robust-ls-threshold}.

\subsection{Edge vortices}

The first step of the proof is to decompose our nearly complete bipartite graph $G$ into $n / (C\log n)$ parts uniformly at random, where $C$ is a large absolute constant.  In this subsection we prove Lemma~\ref{lemma:edge-vortex}, which states that every part of such a decomposition satisfies several quasirandomness properties, which are listed in the following definition.

\begin{definition}[Quasirandom graph]\label{def:quasirandomness}
  Let $\delta, p \in [0, 1]$, and let $H$ be a bipartite graph on $\{A, B\}$ with $|A| = |B| = n$.  We say $H$ is \textit{$(\delta, p)$-quasirandom} if the following holds for some $\delta' \in [0, \delta]$:
  \begin{enumerate}[(QR\arabic*)]
  \item\label{QR-regularity} $H$ is $(1 \pm \delta)pn$-regular,
  \item\label{QR-lower-quasirandomness} every $X \subseteq A$ and $Y \subseteq B$ with $|X|, |Y| \geq \delta' n$ satisfies $e(X, Y) \geq (1 - \delta)p|X||Y|$,
  \item\label{QR-left-upper-quasirandomness} every $X\subseteq A$ and $Y \subseteq B$ with $\delta' n > |X| > |Y| / 2$ satisfies $e(X, Y) \leq pn|X| / 3$, and
  \item\label{QR-right-upper-quasirandomness} every $X\subseteq A$ and $Y \subseteq B$ with $\delta' n > |Y| > |X| / 2$ satisfies $e(X, Y) \leq  pn|Y| / 3$.
  \end{enumerate}
  If $G$ is a bipartite graph and we write $H \subseteq G$ is $(\delta, p)$-quasirandom, then we assume $H$ is a spanning subgraph of $G$.  
\end{definition}

  Note that if $H$ is $(\delta, p)$-quasirandom, then $H$ is $(\delta', p)$-quasirandom for every $\delta' \in [\delta, 1]$.  We use this fact throughout this section without further mention.

Our iterative absorption argument described in Section~\ref{sect:iterative-absorption} requires that we work with a decomposition of $G$ in which every part is quasirandom.  Moreover, it is convenient to index the subgraphs in the decomposition in a particular way.  We refer to such a decomposition as an `edge vortex', as in the following definition.

\begin{definition}[Edge vortex]\label{def:edge-vortex}
  Let $\delta, p \in [0, 1]$.  A \textit{$(\delta, p)$-edge-vortex} of a bipartite graph $G$ is a decomposition $\{H_{i, j} \subseteq G : i \in [\ell], j \in [2^{\ell - i}]\}$ of $G$ into 
  $(\delta, p)$-quasirandom subgraphs.
\end{definition}

Now we show that a uniformly random decomposition of a nearly complete bipartite graph is an edge vortex with high probability.  The proof uses fairly standard probabilistic arguments, including the Chernoff-type bounds from Lemma~\ref{lemma:chernoff} and union bounds.

\begin{lemma}\label{lemma:edge-vortex}
  Let $1 / n_0 \ll 1 / C, \alpha \ll \delta \ll  1$, let $n \geq n_0$, and let $G$ be a spanning subgraph of $K_{n,n}$ with minimum degree at least $(1 - \alpha)n$.  Let $\ell \in \mathbb N$, and choose a decomposition $\{H_{i, j} \subseteq G : i \in [\ell], j \in [2^{\ell - i}]\}$ of $G$
  uniformly at random.
  If $p \coloneqq 1/(2^{\ell} - 1) \geq C\log n / n$, then $\{H_{i, j}\}$ is a $(\delta,  p)$-edge-vortex with probability at least $1 - n^{-\sqrt C}$.
\end{lemma}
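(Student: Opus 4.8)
The plan is to expose a single edge $xy \in E(G)$, observe that its membership in a given part $H_{i,j}$ is determined by a uniformly random choice among the $2^{\ell}-1$ parts, and then run Chernoff bounds on the relevant edge counts, finishing with a union bound over all $O(2^{\ell})$ parts and over the (polynomially or mildly super-polynomially many) \enquote{bad} subset configurations that could violate \ref{QR-regularity}--\ref{QR-right-upper-quasirandomness}. First I would set $m \coloneqq 2^{\ell}-1$ and note $p = 1/m$, so each part receives each edge independently with probability exactly $p$; equivalently, for a fixed part $H = H_{i,j}$ and a fixed vertex $v$, the degree $d_H(v)$ is a sum of $d_G(v) \geq (1-\alpha)n$ independent Bernoulli$(p)$ variables, hence has mean $(1\pm\alpha)pn$. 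Since $pn \geq C\log n$, Lemma~\ref{lemma:chernoff}\ref{chernoff:two-sided} with an appropriate constant $\delta$ (and using $\alpha \ll \delta$ to absorb the error from $d_G(v)$ not being exactly $n$) gives $\Prob{d_H(v) \neq (1\pm\delta)pn} \leq 2e^{-\delta^2 C \log n/ 12} \leq n^{-2\sqrt C}$, say, after choosing constants with $1/C \ll \delta$. A union bound over the at most $2n \cdot 2^{\ell} \leq 2n^2$ pairs $(v, H_{i,j})$ then establishes \ref{QR-regularity} for all parts simultaneously with failure probability at most $n^{-\sqrt C}/2$ (for $C$ large). The parameter $\delta'$ in Definition~\ref{def:quasirandomness} will simply be taken to be (a constant multiple of) $\alpha$, or whatever threshold $\delta'=\delta'(C,\alpha)$ is convenient; the \enquote{for some $\delta' \in [0,\delta]$} clause gives us freedom here.

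For \ref{QR-lower-quasirandomness}, fix a part $H$ and sets $X \subseteq A$, $Y \subseteq B$ with $|X|,|Y| \geq \delta' n$; then $e_H(X,Y)$ is a sum of $e_G(X,Y)$ independent Bernoulli$(p)$ variables, and $e_G(X,Y) \geq |X||Y| - \alpha n \cdot \min(|X|,|Y|) \geq (1 - \alpha/\delta')|X||Y|$ by the minimum-degree hypothesis on $G$. Its mean is thus at least $(1-\alpha/\delta')p|X||Y| \geq (1-\alpha/\delta') \delta'^2 p n^2$, which is $\Omega(pn \cdot n)$ and hence $\geq \Omega(Cn \log n) \gg \log\binom{n}{|X|} + \log\binom{n}{|Y|}$; Lemma~\ref{lemma:chernoff}\ref{chernoff:two-sided} (with a slightly smaller relative deviation, again absorbing $\alpha/\delta'$ using $\alpha \ll \delta$) gives $e_H(X,Y) \geq (1-\delta)p|X||Y|$ with failure probability beating $4^{-n}$, so a union bound over all $\leq 4^n$ choices of $(X,Y)$ and all $\leq 2^{\ell} \leq n$ parts is comfortably absorbed. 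Properties \ref{QR-left-upper-quasirandomness} and \ref{QR-right-upper-quasirandomness} are upper-tail statements for \emph{small} sets: if $\delta' n > |X| > |Y|/2$, then $e_G(X,Y) \leq |X||Y| < 2|X|^2 < 2\delta' n |X|$, so $\Expect{e_H(X,Y)} \leq 2\delta' pn |X| \leq pn|X|/4$ provided $\delta' \leq 1/8$; then Lemma~\ref{lemma:chernoff}\ref{chernoff:upper-tail} with $\beta$ chosen so that $\beta \Expect{e_H(X,Y)} = pn|X|/3$ (so $\beta \geq 4/3$, a fixed constant bounded away from $1$) yields failure probability at most $(\beta/e)^{-pn|X|/3} \leq e^{-c p n |X|}$ for some absolute $c>0$; since $|X| \geq 1$ and $pn \geq C\log n$, this is at most $n^{-cC}$, but in fact we need the union bound over all $(X,Y)$ with $|X| = s$, of which there are at most $\binom{n}{s}\binom{n}{2s} \leq (en/s)^s(en/(2s))^{2s} \leq n^{3s}$, so we want $e^{-cpn s} \leq n^{-3s} \cdot (\text{slack})$, i.e.\ $cpn \geq 4\log n$, which holds once $cC \geq 4$, i.e.\ for $C$ large. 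Summing the geometric series over $s$ and over parts keeps the total failure probability below $n^{-\sqrt C}/2$. The symmetric case gives \ref{QR-right-upper-quasirandomness}.

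I expect the main obstacle to be the bookkeeping of constants in \ref{QR-left-upper-quasirandomness} and \ref{QR-right-upper-quasirandomness}: unlike \ref{QR-regularity} and \ref{QR-lower-quasirandomness}, where the relevant means are of order $n\log n$ and crude two-sided Chernoff plus a $4^n$-type union bound suffices, here the \enquote{small set} $X$ may be a single vertex, so the mean $\Expect{e_H(X,Y)}$ can be as small as $\Theta(pn) = \Theta(\log n)$, and one must use the multiplicative (upper-tail) form of Chernoff and check that the exponent $c \cdot pn \cdot |X|$ genuinely dominates the entropy term $\log(n^{3|X|}) = 3|X|\log n$ for \emph{every} value of $|X|$ from $1$ up to $\delta' n$ --- this is exactly where the hypothesis $p \geq C\log n/n$ with $C$ large is used, and it is the step that makes the $\log n$ factor in Theorem~\ref{thm:LS-spreadness} unavoidable via this argument. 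Everything else is routine: once the four events are each shown to fail with probability $o(n^{-\sqrt C})$ after their respective union bounds, a final union bound over the (at most four) property types and the at most $2^{\ell}-1 \leq n$ parts gives the claimed bound $1 - n^{-\sqrt C}$, and the resulting $\delta'$ can be read off as a fixed function of $C$ and $\alpha$ lying in $[0,\delta]$ by the hierarchy $1/n_0 \ll 1/C, \alpha \ll \delta \ll 1$.
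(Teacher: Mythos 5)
Your high-level approach is the same as the paper's: one Chernoff application per quasirandomness condition (the two-sided form for \ref{QR-regularity} and \ref{QR-lower-quasirandomness}, the upper-tail form for \ref{QR-left-upper-quasirandomness} and \ref{QR-right-upper-quasirandomness}), a union bound over the at most $2^{\ell}-1 \le n$ parts, and the observation that $p \ge C\log n/n$ is exactly what makes the Chernoff exponent dominate the entropy term $\log\binom{n}{|X|} + \log\binom{n}{2|X|} \approx 3|X|\log n$ for every $|X|$ from $1$ up to $\delta' n$. However, your explicit calculation for \ref{QR-left-upper-quasirandomness} has a gap. From $\Expect{e_H(X,Y)} \le pn|X|/4$ you deduce $\beta \ge 4/3$ and then assert $(\beta/e)^{-pn|X|/3} \le e^{-cpn|X|}$. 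This does not follow: Lemma~\ref{lemma:chernoff}\ref{chernoff:upper-tail} gives a nontrivial bound only when $\beta > e$, and for $\beta$ near $4/3$ one has $\beta/e < 1$, so $(\beta/e)^{-pn|X|/3} > 1$. The lower bound on $\beta$ that you actually need, and that the paper uses, comes from stochastically dominating $e_H(X,Y)$ by $\mathrm{BIN}(|X||Y|,p)$ and taking $\beta = (pn|X|/3)/(p|X||Y|) = n/(3|Y|) > n/(6|X|)$; since $|X| < \delta' n$ this exceeds $1/(6\delta')$, comfortably above $e$ when $\delta' < 1/(6e)$. The paper keeps this $|X|$-dependent exponent, bounding the failure probability by $(n/(6e|X|))^{-C|X|}$ rather than $e^{-cpn|X|}$ for a fixed absolute $c$, and this is also what makes the union bound over sets of size $|X| = s$ close cleanly.

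A smaller point: you propose $\delta' = \Theta(\alpha)$ as the inner threshold in Definition~\ref{def:quasirandomness}. That would undermine your own argument for \ref{QR-lower-quasirandomness}, where the estimate $e_G(X,Y) \ge (1-\alpha/\delta')|X||Y|$ requires $\alpha/\delta'$ to be small, which fails when $\delta' = O(\alpha)$. The paper simply takes $\delta' = \delta$, consistent with the hierarchy $\alpha \ll \delta \ll 1$, and this single choice works for all four conditions.
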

\begin{proof}
  For every $e \in E(G)$, $i \in [\ell]$, and $j \in [2^{\ell - i}]$, let $\rvX_{e, i, j} = 1$ if $e \in E(H_{i, j})$ and $0$ otherwise, and note that for fixed $i \in [\ell]$ and $j \in [2^{\ell - i}]$, the random variables $\{\rvX_{e, i, j} : e \in E(G)\}$ are mutually independent.  Therefore, by Lemma~\ref{lemma:chernoff}\ref{chernoff:two-sided}, for every $v \in V(G)$,\COMMENT{since $\Expect{d_{H_{i,j}}(v)} = pn(1 \pm \alpha)$} we have
  \begin{equation}
    \label{eq:chernoff-degree}
    \Prob{d_{H_{i,j}}(v) \neq (1 \pm \delta)pn} \leq 2\exp\left(-\delta^2 pn / 4\right) \leq 2\exp\left(-\delta^2 C \log n / 4\right)
  \end{equation}
  and for every $X \subseteq A$ and $Y \subseteq B$ with $|X|, |Y| \geq \delta n$,\COMMENT{since $e_G(X, Y) \geq |X|(|Y| - \alpha n) \geq |X||Y|(1 - \alpha / \delta)$} we have
  \begin{equation}
    \label{eq:chernoff-lower-quasirandomness}
\Prob{e_{H_{i, j}}(X, Y) < (1 - \delta)p|X||Y|} \leq 2\exp\left(-\delta^2 p|X||Y| / 4\right) \leq 2\exp\left(-\delta^4C n\log n / 4\right).
  \end{equation}
  In addition, by Lemma~\ref{lemma:chernoff}\ref{chernoff:upper-tail}, for every $X \subseteq A$ and $Y \subseteq B$ with $\delta n > |X| > |Y| / 2$,\COMMENT{since $e_{H_{i, j}}(X, Y)$ is stochastically dominated by $\mathrm{BIN}(|X||Y|, p)$} we have
  \begin{equation}
    \label{eq:chernoff-upper-quasirandomness}
    \Prob{e_{H_{i, j}}(X, Y) > pn|X| / 3} \leq \left(\frac{n}{3e|Y|}\right)^{- pn|X| / 3} \leq \left(\frac{n}{6e|X|}\right)^{-C |X|}.
  \end{equation}
  Thus, by a union bound, we have by \eqref{eq:chernoff-degree} that
  \begin{equation}
    \label{eq:degree-union-bound}
    \Prob{d_{H_{i,j}}(v) = (1 \pm \delta)pn~\forall v} \geq 1 - n^{-C^{3/4}},
  \end{equation}
  by \eqref{eq:chernoff-lower-quasirandomness} that
  \begin{equation}
    \label{eq:lower-quasirandomness-union-bound}
    \Prob{e_{H_{i,j}}(X, Y) \geq (1 - \delta)p|X||Y|~\forall X\subseteq A, \forall Y\subseteq B, |X|, |Y| \geq \delta n} \geq 1 - n^{-C^{3/4}},
  \end{equation}
  and by \eqref{eq:chernoff-upper-quasirandomness}\COMMENT{since it suffices to check the property for all $X$ of size $k$ and $Y$ of size $2k$ for all $k \in [\lfloor \delta n\rfloor]$} that
  \begin{multline}
    \label{eq:upper-quasirandomness-union-bound}
    \Prob{e_{H_{i, j}}(X, Y) \leq pn|X| / 3~\forall X \subseteq A, \forall Y \subseteq B, \delta n > |X| > |Y| / 2} \\ 
    > 1 - \sum_{k=1}^{\lfloor \delta n\rfloor} 2k\binom{n}{k}\binom{n}{2k}\left(\frac{n}{6ek}\right)^{-C k} > 1 - n^{-C^{3/4}}.
  \end{multline}
  \COMMENT{Using the bound $\binom{n}{k} \leq (en / k)^k$, we have
  \begin{equation*}
    \sum_{k=1}^{\lfloor \delta n\rfloor} 2k \binom{n}{k}\binom{n}{2k}\left(\frac{n}{6ek}\right)^{-C k} \leq 
    \COMMENT{$2k \sum_{k=1}^{\lfloor \delta n\rfloor}\binom{n}{2k}^2\left(\frac{n}{6ek}\right)^{-C k} \leq 2k\sum_{k=1}^{\lfloor \delta n\rfloor}\left(\frac{en}{2k}\right)^{4k}\left(\frac{n}{6ek}\right)^{-C k} = $} 
    2k\sum_{k=1}^{\lfloor \delta n\rfloor} \left(\frac{1}{3e^2}\right)^{-Ck}\left(\frac{en}{2k}\right)^{-k(C - 4)} \leq 
    \COMMENT{2k$\sum_{k=1}^{\lfloor \delta n\rfloor} \left(\sqrt\delta\right)^{-9CK/10}\left(\frac{n}{k}\right)^{-9CK/10}\leq $}
    2k\sum_{k=1}^{\lfloor \delta n\rfloor} \left(\frac{\sqrt \delta n}{k}\right)^{-9Ck / 10}.
  \end{equation*}
  Since $(\sqrt \delta n / k)^{-9Ck/10} \leq (\sqrt \delta n)^{-9C / 10}$
  for all $k \in [\lfloor \delta n \rfloor]$, we have
  \begin{equation*}
    \sum_{k=1}^{\lfloor \delta n\rfloor} \left(\frac{\sqrt \delta n}{k}\right)^{-9Ck / 10} \leq \delta n (\sqrt \delta n)^{-9C / 10}\leq n^{-C^{3/4}}. 
  \end{equation*}
  Combining the previous three inequalities, we have
  \begin{equation*}
    \Prob{e_{H_{i, j}}(X, Y) \leq pn|X| / 3~\forall X \subseteq A, \forall Y \subseteq B, \delta n > |X| > |Y| / 2} > 1 - n^{-C^{3/4}}.
  \end{equation*}}
  Essentially the same argument yields
  \begin{equation}
    \label{eq:upper-quasirandomness-union-bound2}
    \Prob{e_{H_{i, j}}(X, Y) \leq  pn|Y| / 3~\forall X \subseteq A, \forall Y \subseteq B, \delta n > |Y| > |X| / 2} > 1 - n^{-C^{3/4}}.
  \end{equation}
  Combining \eqref{eq:degree-union-bound}--\eqref{eq:upper-quasirandomness-union-bound2} with a union bound over $i \in [\ell]$ and $j \in [2^{\ell - i}]$, we have that \ref{QR-regularity}--\lastQRcondition holds simultaneously for every $H_{i,j}$ with probability at least $1 - n^{-\sqrt C}$.  In particular, in this event every $H_{i,j}$ is $(\delta, p)$-quasirandom, so $\{H_{i,j}\}$ is a $(\delta, p)$-edge-vortex, as desired.
\end{proof}

\subsection{Covering down}

In this subsection, we show (in Lemma~\ref{lemma:main-decomposition-lemma}) how, given an edge vortex of a regular graph $G$, we can randomly decompose $G$ into regular subgraphs via an iterative-absorption-type argument.  A key component of this argument is the following lemma, which uses Lemma~\ref{lemma:f-factor} to show how edges of a quasirandom graph (in the sense of Definition~\ref{def:quasirandomness}) can be added to a nearly regular graph to make it regular.

\begin{lemma}\label{lemma:non-spread-cover-down-lemma}
  Let $1 / n_0 \ll \delta \ll 1$, let $n \geq n_0$, let $p \in [0,1]$, and let $\gamma \geq 0$. Let $H \subseteq K_{n,n}$ be $(\delta, p)$-quasirandom.
  If $L$ is a $(\gamma \pm \delta)pn$-regular spanning subgraph of $K_{n,n} - E(H)$,
  then there exists $R \subseteq H \cup L$ such that $R \supseteq L$ is spanning and regular.
\end{lemma}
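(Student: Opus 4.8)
The plan is to realize $R$ as $L$ together with a carefully chosen spanning subgraph $H' \subseteq H$, setting $R \coloneqq L \cup H'$; since $L$ is a subgraph of $K_{n,n} - E(H)$, the graphs $L$ and $H'$ are automatically edge-disjoint, so $R$ is $r$-regular exactly when $d_{H'}(v) = f(v)$ for every vertex $v$, where $f(v) \coloneqq r - d_L(v)$ and $r$ is a suitable target degree. First I would fix an integer $r$ in the interval $\big[(\gamma+\delta)pn,\ (\gamma+1-2\delta)pn\big]$; by \ref{QR-regularity} for $H$ together with the near-regularity of $L$, this interval is contained in $\big[\Delta(L),\ \min_v(d_L(v)+d_H(v))\big]$, and it has length $(1-3\delta)pn$, so such an $r$ exists whenever $(1-3\delta)pn \ge 1$ — and otherwise $2\delta pn < 1$, so $L$ is already regular and we may simply take $R = L$. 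With this choice, writing $r = (\gamma+\rho)pn$ (so $\rho$ lies within $\delta$ of $\tfrac12$), we get $f(v) \in \big[(\rho-\delta)pn,\ (\rho+\delta)pn\big]$ and hence $0 \le f(v) \le d_H(v)$ for all $v$. It then suffices to find a spanning $H' \subseteq H$ with $d_{H'}(v) = f(v)$ for every $v$, which I would extract from Lemma~\ref{lemma:f-factor} applied to $H$ with its parts $A, B$ and the function $f$.

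So everything reduces to verifying the two hypotheses of Lemma~\ref{lemma:f-factor}. The first, $\sum_{a\in A}f(a) = \sum_{b\in B}f(b)$, is immediate: $L$ is bipartite with parts $A, B$, so both sums equal $nr - e(L)$. For the second I need $e_H(X,Y) \ge \sum_{a\in X}f(a) - \sum_{b\in B\setminus Y}f(b)$ for all $X\subseteq A$, $Y\subseteq B$; using the first hypothesis ($\sum_{b\in B\setminus Y}f(b) = F - \sum_{b\in Y}f(b)$ with $F \coloneqq \sum_{a\in A}f(a)$) this can be rewritten in the symmetric form $e_H(X,Y) + F \ge \sum_{a\in X}f(a) + \sum_{b\in Y}f(b)$. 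Bounding $f$ above by $(\rho+\delta)pn$ on $X$ and below by $(\rho-\delta)pn$ on $A\setminus X$, the right-hand side minus $F$ is at most $\big[|Y|(\rho+\delta) - (n-|X|)(\rho-\delta)\big]pn$ whenever $|X|\ge|Y|$ (this sharper estimate, rather than the cruder bound $(|X|+|Y|)(\rho+\delta)pn - n(\rho-\delta)pn$, is what makes the extreme cases $X=A$ and $Y=B$ come out right). Hence, after relabelling the parts so that $|X|\ge|Y|$, it is enough to prove
\begin{equation*}
  e_H(X,Y) \ \ge\ \Big[|Y|(\rho+\delta) - \big(n-|X|\big)(\rho-\delta)\Big]pn .
\end{equation*}

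The remaining work — and the step I expect to be the main obstacle — is to establish this inequality by a case analysis on $|X|,|Y|$ that uses exactly the quasirandomness conditions \ref{QR-lower-quasirandomness}--\lastQRcondition. If the right-hand side is non-positive there is nothing to show; otherwise one first deduces $|X| \ge \delta' n$, and then splits on $|Y|$. If $|Y| \ge \delta' n$ as well, \ref{QR-lower-quasirandomness} gives $e_H(X,Y) \ge (1-\delta)p|X||Y|$, and substituting this and bounding $|Y|$ by $|X|$ reduces the claim to non-negativity of the quadratic $(1-\delta)|X|^2 - 2\rho |X| n + (\rho-\delta)n^2$ in the variable $|X|$, which holds because with $\rho$ near $\tfrac12$ its discriminant $4n^2\big(\rho^2-(1-\delta)(\rho-\delta)\big)$ is negative for small $\delta$ (the sub-case $(1-\delta)|X|\ge(\rho+\delta)n$ being even easier, as every term on the left is then non-negative). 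If instead $|Y| < \delta' n$, positivity of the right-hand side forces $n-|X|$ to be comparable to $|Y|$, hence small, and writing $e_H(X,Y) = \sum_{b\in Y}d_H(b) - e_H(A\setminus X, Y)$ one bounds $e_H(A\setminus X, Y)$ from above using \ref{QR-left-upper-quasirandomness} or \lastQRcondition\ according to whether $n-|X| > |Y|/2$ or not — with $X=A$ handled directly, and the narrow regime in which the size thresholds of \ref{QR-left-upper-quasirandomness}--\lastQRcondition\ just fail to apply covered instead by applying \ref{QR-lower-quasirandomness} to the complementary pair $A\setminus X$, $B\setminus Y$. I anticipate the real difficulty to be purely bookkeeping: pinning down the constant in $r$ (equivalently the value of $\rho$) and tracking the interaction between $\delta$ and the quasirandomness parameter $\delta'$ so that all of \ref{QR-regularity}--\lastQRcondition\ fit together with no slack wasted.
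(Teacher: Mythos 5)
Your proposal follows the same backbone as the paper's proof: set $R = L \cup H'$ with $H' \subseteq H$ a spanning $f$-factor, reduce to Lemma~\ref{lemma:f-factor}, rewrite the max-flow min-cut inequality in symmetric form, and verify it by a case analysis driven by \ref{QR-regularity}--\lastQRcondition. Having traced your sub-cases, I believe the argument is sound. The one genuine divergence is the target degree: you take $r \approx (\gamma + \tfrac12)pn$, so $f(v) \approx pn/2$, whereas the paper takes $r = (\gamma + 3\delta)pn$, so $f(v) \in [2\delta pn, 4\delta pn]$. The paper's tiny $f$ keeps the right side of the flow inequality at scale $\delta pn$; the both-large case then falls to the crude bound $(1-\delta)p|A'||B'| \geq 4\delta pn\cdot\max(|A'|,|B'|)$ once the larger of $|A'|,|B'|$ exceeds $n/3$, and in the small-side cases the min-degree bound and one of \ref{QR-left-upper-quasirandomness}, \lastQRcondition\ carry the day with generous slack. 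Your mid-range $f$ forces the genuine quadratic/discriminant computation on $(1-\delta)x^2 - 2\rho x n + (\rho-\delta)n^2$ in the both-large regime, which is where $\rho$ being pinned near $\tfrac12$ really matters (the discriminant sign fails as $\rho \to 1$ or $\rho \to \delta$, so the integrality rounding of $r$ needs exactly the care you flag when $pn$ is not large); it also produces the narrow band $\delta' n \leq |A\setminus X| < (1+O(\delta))\delta'n$ in which neither \ref{QR-left-upper-quasirandomness}\ nor \lastQRcondition\ applies and you resort to \ref{QR-lower-quasirandomness}\ on the complementary pair. The paper sidesteps that band entirely by case-splitting on $|A'|$ and $|B'|$ directly rather than on $|Y|$ and $n-|X|$. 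Net: your route is correct and a workable alternative, but the paper's small-$r$ choice is what makes the verification nearly effortless.
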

\begin{proof}
  We find $R \supseteq L$ to be $(\gamma + 3\delta)pn$-regular.  To that end, let $f(v) \coloneqq (\gamma + 3\delta)pn - d_L(v)$ for every $v \in V(H)$.  We claim that there is a spanning subgraph $H' \subseteq H$  satisfying $d_{H'}(v) = f(v)$ for every $v \in V(H)$, in which case $R \coloneqq H' \cup L$ is $(\gamma + 3\delta)pn$-regular, as desired.  Let $\{A, B\}$ be the bipartition of $H$, and note that $\sum_{a\in A} f(a) = \sum_{b\in B} f(b) = (\gamma + 3\delta)pn^2 - |E(L)|$.  Hence, by Lemma~\ref{lemma:f-factor}, it suffices to show that 
  \begin{equation}
    \label{eq:hall-type-condition}
    \text{$e(A', B') \geq \sum_{A \in A'} f(a) - \sum_{b \in B\setminus B'}f(b)$ for every $A' \subseteq A$ and $B' \subseteq B$.}
  \end{equation}

  To that end, let $A' \subseteq A$ and $B' \subseteq B$, let $A'' \coloneqq A \setminus A'$ and $B'' \coloneqq B \setminus B'$, and note that\COMMENT{since $|A'| - |B''| = (n - |A''|) - (n - |B'|) = |B'| - |A''|$ and $\sum_{a\in A'}d_L(a) - \sum_{b\in B''}d_L(b) = \sum_{a\in A'}d_L(a) - \sum_{a \in A}d_L(a) + \sum_{b\in B}d_L(b) - \sum_{b\in B''}d_L(b) = \sum_{b\in B'}d_L(b) - \sum_{a\in A''}d_L(a)$}
  \begin{align*}
    \sum_{A \in A'} f(a) - \sum_{b \in B\setminus B'}f(b)
    &= pn(\gamma + 3\delta)(|A'| - |B''|) - \sum_{a \in A'}d_L(a) + \sum_{b \in B''}d_L(b)\\
    &= pn(\gamma + 3\delta)(|B'| - |A''|) - \sum_{b \in B'}d_L(b) + \sum_{a \in A''}d_L(a).
  \end{align*}
  Since $L$ is $(\gamma \pm \delta)pn$-regular, we have
  \begin{multline*}
    pn(\gamma + 3\delta)(|A'| - |B''|) - \sum_{a \in A'}d_L(a) + \sum_{b \in B''}d_L(b) \\
    \leq pn3\delta(|A'| - |B''|) + \delta pn(|A'| + |B''|) \leq pn\delta (4 |A'| - 2|B''|)
  \end{multline*}
  and similarly
  \begin{equation*}
    pn(\gamma + 3\delta)(|B'| - |A''|) - \sum_{b \in B'}d_L(b) + \sum_{a \in A''}d_L(a)
    \COMMENT{$\leq pn3\delta(|B'| - |A''|) - \delta pn(|A''| + |B'|)$}
    \leq pn\delta(4|B'| - 2|A''|).
  \end{equation*}
  Therefore, to show \eqref{eq:hall-type-condition}, it suffices to show that one of
  \begin{equation}
    \label{eq:s-is-min}
    e(A', B') \geq pn\delta(4|A'| - 2|B''|)
  \end{equation}
  and
  \begin{equation}
    \label{eq:t-is-min}
    e(A', B') \geq pn\delta(4|B'| - 2|A''|)
  \end{equation}
  holds.  We consider a few cases.

  First, if $|B''| \geq 2|A'|$, then \eqref{eq:s-is-min} clearly holds\COMMENT{because the left side is at least zero and the right side is at most zero}, and similarly, if $|A''| \geq 2|B'|$, then \eqref{eq:t-is-min} holds.  Thus, we may assume $|B''| < 2|A'|$ and $|A''| < 2|B'|$.
  
  Next, suppose $|A'| < \delta' n$, where $H$ satisfies \ref{QR-regularity}--\lastQRcondition for $\delta' \in [0, \delta]$.  Since $H$ has minimum degree at least $(1 - \delta) pn$, we have
  \begin{equation*}
    e(A', B') \geq (1 - \delta)pn |A'| - e(A', B''),
  \end{equation*}
  and by \ref{QR-left-upper-quasirandomness}, we have
  \begin{equation*}
    e(A', B'') \leq pn |A'| / 3.
  \end{equation*}
  Combining the inequalities above, we have $e(A', B') \geq (2/3 - \delta)pn |A'| \geq 4 pn \delta |A'|$, and \eqref{eq:s-is-min} holds, as desired.
  Similarly, if $|B'| < \delta' n$, then we can use essentially the same argument, using that $e(A', B') \geq (1 - \delta)pn|B'| - e(A'', B')$ and \ref{QR-right-upper-quasirandomness} to show that \eqref{eq:t-is-min} holds.\COMMENT{Since $H$ has minimum degree at least$(1 - \delta) pn$, we have
  \begin{equation*}
    e(A', B') \geq (1 - \delta)pn|B''| - e(A'', B'),
  \end{equation*}
    and by \ref{QR-right-upper-quasirandomness}, we have
  \begin{equation*}
    e(A'', B') \leq pn |B'| / 3.
  \end{equation*}
  Combining the inequalities above, we have $e(A', B') \geq (2/3 - \delta)pn |B'| \geq 4 pn \delta |B'|$, and \eqref{eq:t-is-min} holds, as desired.
}

Finally, suppose $|A'|, |B'| \geq \delta' n$.  By \ref{QR-lower-quasirandomness},
\begin{equation}
  \label{MFMC-lower-quasirandomess}
  e(A', B') \geq (1 - \delta)p|A'||B'| .
\end{equation}
If $|B'| \geq n / 3 \geq 4 \delta n / (1 - \delta)$, then \eqref{MFMC-lower-quasirandomess} implies $e(A', B') \geq 4pn\delta |A'|$, and \eqref{eq:s-is-min} holds, as desired.  Otherwise, since $|B''| < 2|A'|$, we have $|B''| > 2n / 3$ and $|A'| \geq n / 3$, in which case \eqref{MFMC-lower-quasirandomess} implies $e(A', B') \geq 4pn\delta |B'|$, and \eqref{eq:t-is-min} holds, as desired.  
Therefore \eqref{eq:hall-type-condition} holds, and the proof is complete.
%
\end{proof}

Next we prove what can be viewed as a `cover down' lemma, which we apply iteratively to prove Lemma~\ref{lemma:main-decomposition-lemma}.  This lemma `bootstraps' Lemma~\ref{lemma:non-spread-cover-down-lemma} to show how a nearly regular graph can be decomposed randomly, in a sufficiently `well-spread' way, into regular graphs, by incorporating some edges from some quasirandom graphs.  We obtain the `spreadness' by first decomposing the nearly regular graph randomly, before `regularizing' each piece via Lemma~\ref{lemma:non-spread-cover-down-lemma}.

\begin{lemma}\label{lemma:inductive-cover-down-lemma}
  Let $1 / n_0 \ll 1 / C \ll \delta \ll 1$, let $n \geq n_0$, let $p \in [0,1]$, and let $\gamma \geq 0$.  
  Let $H_1, \dots, H_m \subseteq K_{n, n}$ be pairwise edge-disjoint $(\delta,  p)$-quasirandom graphs.
  Let $L$ be a $(\gamma \pm \delta)mpn$-regular spanning subgraph of $K_{n, n} - E\left(\bigcup_{i=1}^m H_i \right)$, 
  and choose a decomposition $\{L_i \subseteq L : i \in [m]\}$ of $L$ 
  uniformly at random.
  If $p \geq C \log n / n$, then with probability at least $1 - n^{-\sqrt C}$, there exist pairwise edge-disjoint $R_i \subseteq H_i \cup L_i$ for each $i \in [m]$ such that $R_i \supseteq L_i$ is spanning and regular. 
\end{lemma}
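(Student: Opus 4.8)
The plan is to deduce this lemma from Lemma~\ref{lemma:non-spread-cover-down-lemma}, applied once for each index $i \in [m]$, after first establishing that the random decomposition $\{L_i\}$ of $L$ typically consists of nearly regular pieces. The key structural observation is that edge-disjointness of the $R_i$ comes for free: since $E(H_1), \dots, E(H_m)$ are pairwise disjoint, since $E(L_1), \dots, E(L_m)$ are pairwise disjoint (as $\{L_i\}$ decomposes $L$), and since $E(H_i) \cap E(L_j) = \varnothing$ for all $i, j \in [m]$ (because $L \subseteq K_{n,n} - E(\bigcup_k H_k)$), the edge sets $E(H_i) \cup E(L_i)$ are pairwise disjoint. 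Hence any choice of subgraphs $R_i \subseteq H_i \cup L_i$ automatically yields pairwise edge-disjoint graphs, and the whole task reduces to producing, for each $i$ separately, a spanning regular graph $R_i$ with $L_i \subseteq R_i \subseteq H_i \cup L_i$.

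\emph{Step 1: the pieces are nearly regular.} For fixed $v \in V(K_{n,n})$ and $i \in [m]$, the degree $d_{L_i}(v)$ is a sum of $d_L(v)$ mutually independent indicator random variables, one for each edge of $L$ at $v$ (recording whether it is placed in $L_i$), so $\Expect{d_{L_i}(v)} = d_L(v)/m = (\gamma \pm \delta)pn$. Since $p \geq C\log n / n$ with $1/C \ll \delta$, a routine application of Chernoff's bounds (Lemma~\ref{lemma:chernoff}\ref{chernoff:two-sided} when $\Expect{d_{L_i}(v)}$ is not too small, and Lemma~\ref{lemma:chernoff}\ref{chernoff:upper-tail} otherwise) together with a union bound over the at most $2n$ vertices and the $m$ indices (note $m \leq n$, since each vertex lies in $m$ edge-disjoint graphs of minimum degree at least $(1-\delta)pn$, so $m(1-\delta)pn \leq \Delta(K_{n,n}) = n$) shows that with probability at least $1 - n^{-\sqrt C}$, every $L_i$ is $(\gamma \pm 2\delta)pn$-regular.

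\emph{Step 2: regularize each piece.} Condition on the event of Step~1 and fix $i \in [m]$. Each $H_i$ is $(\delta, p)$-quasirandom, hence also $(2\delta, p)$-quasirandom, and $L_i$ is a $(\gamma \pm 2\delta)pn$-regular spanning subgraph of $K_{n,n} - E(H_i)$ (since $L_i \subseteq L \subseteq K_{n,n} - E(\bigcup_k H_k)$). Applying Lemma~\ref{lemma:non-spread-cover-down-lemma} with $2\delta$, $H_i$, and $L_i$ playing the roles of $\delta$, $H$, and $L$ (the hierarchy $1/n_0 \ll 2\delta \ll 1$ being a consequence of $1/n_0 \ll \delta \ll 1$) produces a spanning regular graph $R_i$ with $L_i \subseteq R_i \subseteq H_i \cup L_i$. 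As Step~2 introduces no further randomness and the $R_i$ are automatically pairwise edge-disjoint by the first paragraph, this completes the proof.

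\emph{The main obstacle.} Conceptually the argument is short once Lemma~\ref{lemma:non-spread-cover-down-lemma} is available; essentially all of the work lies in Step~1, and this is where the hypothesis $p \geq C\log n / n$ with $C$ large relative to $\delta$ is needed. Two points require care: that the random split concentrates tightly enough that each $L_i$ remains within a factor $O(\delta)$ of regularity -- which is most delicate when the mean degree $d_L(v)/m$ is itself of order $\delta pn$ or smaller, forcing use of the multiplicative upper-tail Chernoff bound rather than the two-sided one -- and that the resulting failure probability, after union-bounding over the $O(n^2)$ relevant events, can be driven below $n^{-\sqrt C}$.
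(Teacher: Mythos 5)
Your proposal is correct and matches the paper's proof exactly: establish via Chernoff and a union bound that every $L_i$ is $(\gamma \pm 2\delta)pn$-regular with probability at least $1 - n^{-\sqrt C}$, then apply Lemma~\ref{lemma:non-spread-cover-down-lemma} to each pair $(H_i, L_i)$ with $2\delta$ in place of $\delta$. The extra observations you include (automatic edge-disjointness of the $R_i$, the bound $m = O(1/p)$ from edge-disjointness of the $H_i$, and care with the upper tail when $\gamma$ is near zero) are all sound and make explicit what the paper treats tersely.
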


\begin{proof}
  For every edge $e \in E(L)$ and $i \in [m]$, let $\rvX_{e, i} = 1$ if $e \in E(L_{i})$ and $0$ otherwise, and note that for fixed $i \in [m]$, the random variables $\{\rvX_{e, i} : e \in E(L)\}$ are mutually independent.  Therefore, by Lemma~\ref{lemma:chernoff}\ref{chernoff:two-sided}, for every vertex $v \in V(L)$,
  \COMMENT{using that $d_{L_i}(v)$ is a random variable stochastically dominated by $\mathrm{BIN}((\gamma + \delta)mnp, 1/m)$ and which stochastically dominates $\mathrm{BIN}(\max\{(\gamma - \delta)mnp, 0\}, 1/m)$}
  \COMMENT{$\Prob{d_{L_i}(v) \leq (\gamma + 2\delta)pn} \leq 2\exp(-((2\delta / (\gamma + \delta)^2)(\gamma + \delta)np / 3)) \leq \exp(-\delta^4 np) / 2$ and assuming that $\gamma > 2\delta$, $\Prob{d_{L_i}(v) \leq (\gamma - 2\delta)pn} \leq 2\exp(-((2\delta / (\gamma - \delta)^2)(\gamma - \delta)np / 3)) \leq \exp(-\delta^4 np) / 2$}
  \begin{equation*}
    \Prob{d_{L_i}(v) \neq (\gamma \pm 2\delta)pn} \leq \exp(-\delta^4 np) \leq \exp(-\delta^4 C \log n).
  \end{equation*}
  Hence, by a union bound over $i \in [m]$ and $v \in V(L)$, we have that $L_i$ is $(\gamma \pm 2\delta)pn$-regular for every $i \in [m]$ with probability at least $1 - n^{-\sqrt C}$.  In this event, by Lemma~\ref{lemma:non-spread-cover-down-lemma} applied with $H_i$, $L_i$, and $2\delta$ playing the role of $H$, $L$, and $\delta$, respectively for each $i \in [m]$, there exists $R_i \subseteq H_i \cup L_i$ that is spanning and regular with $R_i \supseteq L_i$, as desired.
\end{proof}

Now we show how, given an edge vortex of a regular graph $G$, we can iterate Lemma~\ref{lemma:inductive-cover-down-lemma} to randomly decompose $G$ into regular subgraphs in a `well-spread' way.

\begin{lemma}\label{lemma:main-decomposition-lemma}
  Let $1 / n_0 \ll 1 / C \ll \delta \ll 1$, let $n \geq n_0$, and let $p \geq C \log n / n$.
  Let $\{H_{i, j} \subseteq G : i \in [\ell], j \in [2^{\ell - i}]\}$ be a $(\delta, p)$-edge-vortex of a regular bipartite spanning subgraph $G$ of $K_{n,n}$.
  If $\randomBit_e \in [2^{\ell - i - 1}]$ is chosen independently and uniformly at random for each $i \in [\ell - 1]$ and $e \in \bigcup_{j=1}^{2^{\ell - i}}E(H_{i, j})$, then with probability at least $1 - n^{-\sqrt C}\log_2 n$ there exists a decomposition $\{R_{i, j} \subseteq G : i \in [\ell], j\in [2^{\ell - i}]\}$ of $G$ such that
  \begin{enumerate}[(\ref*{lemma:main-decomposition-lemma}.1)]
  \item\label{item:decomposition-regularity} $R_{i, j}$ is regular for each $i \in [\ell]$ and $j \in [2^{\ell - i}]$ and
  \item\label{item:decomposition-cover-down} every $e \in E(H_{i,j})$ for $i \in [\ell - 1]$ and $j \in [2^{\ell - i}]$ satisfies $e \in E(R_{i,j}) \cup E(R_{i+1, \randomBit_e})$.
  \end{enumerate}
\end{lemma}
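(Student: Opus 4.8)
The plan is to prove Lemma~\ref{lemma:main-decomposition-lemma} by an iterative construction that runs through the levels $i = 1, \dots, \ell - 1$ of the edge vortex, applying Lemma~\ref{lemma:inductive-cover-down-lemma} once per level, and then closing off at level $\ell$ using a `free absorber' observation. To set up the iteration, I first initialise $R_{1,j}$ to be the empty graph on $V(G)$ for every $j \in [2^{\ell-1}]$; this is vacuously regular and keeps the indexing uniform. I maintain, as a loop invariant after step $i$, that the graphs $R_{1,j}, \dots, R_{i,j}$ (over the appropriate $j$'s) are pairwise edge-disjoint regular spanning subgraphs of $G$ and that the \textit{leftover}
\[
  L_i \coloneqq \bigcup_{j=1}^{2^{\ell - i}}\bigl(H_{i,j} - E(R_{i,j})\bigr)
\]
is $(\gamma_i \pm \delta)p n$-regular for a suitable $\gamma_i = O(\log_2 n)$ depending only on how many levels have been processed, and that every edge of $\bigcup_{i'\le i, j}E(H_{i',j})$ already lies in some $R_{i',j}$ or $R_{i'+1,\randomBit_e}$. (For the initialisation $L_1 = \bigcup_j H_{1,j}$, which is $(1\pm\delta)p(2^{\ell-1})\cdot n$-ish-regular since each $H_{1,j}$ is $(1\pm\delta)pn$-regular by \ref{QR-regularity}; I'll track the exact scaling in the write-up.)

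The core step is the passage from level $i$ to level $i+1$. I apply Lemma~\ref{lemma:inductive-cover-down-lemma} with $m \coloneqq 2^{\ell - i - 1}$, with the $H_{i+1,j}$ for $j \in [2^{\ell-i-1}]$ playing the role of $H_1, \dots, H_m$ (these are pairwise edge-disjoint and $(\delta,p)$-quasirandom since $\{H_{i,j}\}$ is a $(\delta,p)$-edge-vortex), and with $L_i$ playing the role of $L$ — noting that $L_i$ is edge-disjoint from $\bigcup_{j}H_{i+1,j}$ because the $H_{i,j}$'s and $H_{i+1,j}$'s come from the same decomposition of $G$. The random decomposition $\{L_{i,j} : j \in [2^{\ell-i-1}]\}$ of $L_i$ used in that lemma is precisely the one induced by the bits $\randomBit_e$ for $e \in \bigcup_j E(H_{i,j})$ with $e$ still in the leftover, i.e. $e \in E(L_{i,\randomBit_e})$. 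The lemma then produces pairwise edge-disjoint $R_{i+1,j} \subseteq H_{i+1,j} \cup L_{i,j}$ with $R_{i+1,j} \supseteq L_{i,j}$ spanning and regular, with failure probability at most $n^{-\sqrt C}$. This re-establishes the invariant: the new $R_{i+1,j}$ absorb all of $L_i$, so every edge of level $i$ is now in its $R_{i,j}$ or in $R_{i+1,\randomBit_e}$ (giving (\ref{lemma:main-decomposition-lemma}.2) for level $i$), they are regular (part of (\ref{lemma:main-decomposition-lemma}.1)), and the new leftover $L_{i+1} = \bigcup_j(H_{i+1,j} - E(R_{i+1,j}))$ is $(\gamma_{i+1}\pm\delta')pn$-regular since it is the difference of two regular graphs — I need the $(\delta,p)$-quasirandomness and the bound on $\gamma_{i+1}$ to stay within the $\delta$-slack required by the next application, which is why $\delta \gg 1/C$ and only $\ell - 1 = O(\log n / \log\log n)$ rounds are run.

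The termination step handles level $\ell$, where there is a single graph $H_{\ell,1}$. Here I observe — as in the overview — that no absorbing structure is needed: since every $R_{i,j}$ with $i \le \ell - 1$ constructed so far is regular and spanning, their union $\bigcup_{i\le \ell-1, j}R_{i,j}$ is regular and spanning, hence its complement in $G$ is regular (as $G$ is regular); but that complement is exactly $L_{\ell-1} \cup H_{\ell,1}$. So I simply set $R_{\ell,1} \coloneqq L_{\ell-1}\cup H_{\ell,1}$, which is automatically regular and contains all remaining edges, completing the decomposition and both (\ref{lemma:main-decomposition-lemma}.1) and (\ref{lemma:main-decomposition-lemma}.2). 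Finally I union-bound the at most $\ell - 1 \le \log_2 n$ failure events from the applications of Lemma~\ref{lemma:inductive-cover-down-lemma}, each of probability at most $n^{-\sqrt C}$, to get total failure probability at most $n^{-\sqrt C}\log_2 n$, as claimed.

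I expect the main obstacle to be bookkeeping the regularity-degree parameter $\gamma_i$ across the $\ell - 1$ iterations: each application of Lemma~\ref{lemma:inductive-cover-down-lemma} both shifts the `target' degree (by $O(\delta)$ per step) and the leftover picks up accumulating error, so one must check that after all rounds the relevant degrees are still of the form $(\gamma \pm \delta)\cdot(\text{scale})\cdot pn$ with $\gamma$ bounded and the slack absorbed by choosing $C$ large and $\delta$ small — i.e. verifying the hierarchy $1/n_0 \ll 1/C \ll \delta \ll 1$ genuinely suffices through all $O(\log n)$ levels. The probabilistic content, by contrast, is light: it is entirely packaged inside Lemmas~\ref{lemma:non-spread-cover-down-lemma} and~\ref{lemma:inductive-cover-down-lemma}, and the level-$\ell$ step is purely deterministic.
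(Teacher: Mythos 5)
Your proposal matches the paper's proof in structure: initialise $R_{1,j}$ to be the empty spanning subgraphs, at each level $\ell'$ apply Lemma~\ref{lemma:inductive-cover-down-lemma} with the bits $\randomBit_e$ giving the random split of the leftover $L_{\ell'}$, and close at level $\ell$ by setting $R_{\ell,1}\coloneqq L_{\ell-1}\cup H_{\ell,1}$ and noting it is regular because it is the complement in $G$ of a union of regular spanning subgraphs; a union bound over the $\ell-1\le\log_2 n$ rounds gives the stated failure probability.

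The concern you flag --- that the $\delta$-slack in the leftover's degree might accumulate over $\Theta(\log n)$ rounds --- is a real one to address, but its resolution is not the hierarchy $\delta\gg 1/C$ as you suggest; rather, the slack does not accumulate at all, because the degree of the leftover is re-derived from scratch at each level rather than propagated. Setting $m\coloneqq 2^{\ell-\ell'-1}$, every vertex $v$ satisfies $d_{L_{\ell'}}(v) = d - \sum_{i\le\ell',\,j} d_{i,j} - \sum_{i>\ell',\,j} d_{H_{i,j}}(v)$, where $d$ and the $d_{i,j}$ are the \emph{exact} degrees of $G$ and of the regular $R_{i,j}$ already built. Only the final sum carries error, namely at most $\delta pn\sum_{i>\ell'}2^{\ell-i} < 2\delta mpn$, so $L_{\ell'}$ is $(\gamma\pm2\delta)mpn$-regular with the explicit choice $\gamma\coloneqq\max\{0,\,(d-\sum_{i,j}d_{i,j}-pn(2^{\ell-\ell'}-1))/(mpn)\}$. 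The error coefficient $2\delta$ is level-independent because it is tied to the level's own scale $mpn$, which is exactly the hypothesis of Lemma~\ref{lemma:inductive-cover-down-lemma}. Two small imprecisions worth correcting: the leftover scale is $mpn$, not $pn$, so the bounded quantity is $\gamma=O(1)$ at scale $mpn$ rather than $\gamma_i=O(\log_2 n)$ at scale $pn$; and $\ell-1 = \Theta(\log n)$, not $O(\log n/\log\log n)$, though only $\ell\le\log_2 n$ is needed for the union bound.
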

\begin{proof}
  We prove the following by induction: For every $\ell' \in [\ell - 1]$, with probability at least $1 - (\ell' - 1) n^{-\sqrt C}$ there exists a decomposition $\{L_{\ell'}\} \cup \{R_{i, j} \subseteq G : i \in [\ell'], j \in [2^{\ell - i}]\}$ of $\bigcup_{i=1}^{\ell'}\bigcup_{j=1}^{2^{\ell - i}} H_{i, j}$ such that
  \begin{enumerate}[(a)]
  \item\label{inductive-decomposition:regularity} $R_{i,j}$ is regular for each $i \in [\ell']$ and $j \in [2^{\ell - i}]$,
  \item\label{inductive-decomposition:cover-down} every $e \in E(H_{i,j})$ for $i \in [\ell' - 1]$ and $j \in [2^{\ell - i}]$ satisfies $e \in E(R_{i,j}) \cup E(R_{i+1, \randomBit_e})$, and
  \item\label{inductive-decomposition:leftover} every $e \in E(H_{\ell', j})$ for $j \in [2^{\ell - \ell'}]$ satisfies $e \in E(R_{\ell', j}) \cup L_{\ell'}$.
  \end{enumerate}
  Having proved this, the lemma follows from the case $\ell' = \ell - 1$, by letting $R_{\ell, 1}\coloneqq L_{\ell - 1} \cup H_{\ell, 1}$. \COMMENT{Indeed, $R_{i, j}$ is regular for each $i \in [\ell - 1]$ and $j \in [2^{\ell - i}]$ by \ref{inductive-decomposition:regularity}, and since $\bigcup_{i=1}^{\ell - 1}\bigcup_{j=1}^{2^{\ell - i}}R_{i,j} = G - E(R_{\ell, 1})$ and $G$ are also regular, we have that $R_{\ell, 1}$ is regular as well.  Moreover, every $e \in E(H_{i,j})$ satisfies $e \in E(R_{i, j}) \cup E(R_{i + 1, \randomBit_e})$ for $i \in [\ell - 2]$ and $j \in [2^{\ell - i}]$ by \ref{inductive-decomposition:cover-down} and for $i = \ell - 1$ and $j \in [2]$ by \ref{inductive-decomposition:leftover}.}

  To prove the base case $\ell' = 1$, we let $R_{1, j} \subseteq H_{1, j}$ be spanning subgraphs with no edges for each $j \in [2^{\ell - 1}]$.  Now  $\{L_1\} \cup \{R_{1, 1}, \dots, R_{1, 2^{\ell-1}}\}$, where $L_1 \coloneqq \bigcup_{j=1}^{2^{\ell}-1}H_{1, j}$, satisfies \ref{inductive-decomposition:regularity}--\ref{inductive-decomposition:leftover}, as desired.

  Now we assume that for $\ell' \in [\ell - 2]$, as long as some event $\cB_{\ell'}$ satisfying $\Prob{\cB_{\ell'}} \leq (\ell' - 1)n^{-\sqrt C}$ does not hold, there exists a decomposition $\{L_{\ell'}\} \cup \{R_{i, j} \subseteq G : i \in [\ell'], j \in [2^{\ell - i}]\}$ of $\bigcup_{i=1}^{\ell'}\bigcup_{j=1}^{2^{\ell - i}} H_{i, j}$ satisfying \ref{inductive-decomposition:regularity}--\ref{inductive-decomposition:leftover} for $\ell' \in [\ell - 2]$, and we show that as long as some event $\cB_{\ell'+1}\supseteq \cB_{\ell'}$ satisfying $\Prob{\cB_{\ell' + 1}} \leq \ell'n^{-\sqrt C}$ does not hold, there exists a decomposition $\{L_{\ell' + 1}\} \cup \{R_{i, j} \subseteq G : i \in [\ell' + 1], j \in [2^{\ell - i}]\}$ of $\bigcup_{i=1}^{\ell' + 1}\bigcup_{j=1}^{2^{\ell - i}} H_{i, j}$ satisfying \ref{inductive-decomposition:regularity}--\ref{inductive-decomposition:leftover}.
  
  Let us assume that $\cB_{\ell'}$ does not hold, and let $m \coloneqq 2^{\ell - \ell' - 1}$.
  We will apply Lemma~\ref{lemma:inductive-cover-down-lemma} to $H_{\ell' + 1, 1}, \dots, H_{\ell' + 1, m}$, and $L_{\ell'}$ with 
  \begin{align*}
    \gamma \coloneqq \max\left\{0,\:\: \frac{d - \sum_{i=1}^{\ell'}\sum_{j=1}^{2^{\ell - i}}d_{i,j} - \sum_{i=\ell' + 1}^{\ell}pn 2^{\ell - i}}{mpn} \right\},    
  \end{align*}
  where $R_{i,j}$ is $d_{i,j}$-regular for each $i \in [\ell']$ and $j \in [2^{\ell - i}]$ and $G$ is $d$-regular, so we need to show that $L_{\ell'}$ is nearly $\gamma mpn$-regular. 
  By \ref{inductive-decomposition:regularity}, every vertex $v \in V(G)$ satisfies
  \begin{equation*}
    d_{L_{\ell'}}(v) = \COMMENT{$d - \sum_{i=1}^{\ell'}\sum_{j=1}^{2^{\ell-i}}d_{i,j} - \sum_{i=\ell' + 1}^{\ell}\sum_{j=1}^{2^{\ell - i}}d_{H_{i,j}}(v) = $} d - \sum_{i=1}^{\ell'}\sum_{j=1}^{2^{\ell-i}}d_{i,j} -  \sum_{i=\ell' + 1}^{\ell}(1 \pm \delta)pn 2^{\ell - i}.
  \end{equation*}
  Note that
  \begin{equation*}
    \sum_{i=\ell' + 1}^{\ell}pn 2^{\ell - i} = pn\sum_{i=0}^{\ell - \ell' - 1}2^i = pn\left(2^{\ell - \ell'} - 1\right).
  \end{equation*}
  In particular, since $\delta pn (2^{\ell - \ell'} - 1) < 2\delta mpn$, the two equalities above imply that 
  $L_{\ell'}$ is $(\gamma \pm 2\delta)mpn$-regular, as required.
  

  Decompose $L_{\ell'}$ into $\{L_{\ell', j} \subseteq L_{\ell'} : j \in [m]\}$ where $E(L_{\ell', j}) \coloneqq \{e \in E(L_{\ell'}) : \randomBit_e = j\}$, and note that $\{L_{\ell', j}\}$ is a uniformly random decomposition of $L_{\ell'}$.  
  Hence, 
  since $L_{\ell'}$ is $(\gamma \pm 2\delta)mpn$-regular, 
  by Lemma~\ref{lemma:inductive-cover-down-lemma}, as long as some event $\cB'_{\ell'}$ satisfying $\Prob{\cB'_{\ell'}} \leq n^{-\sqrt C}$ does not hold, there exist pairwise edge-disjoint $R_{\ell' + 1, j} \subseteq H_{\ell'+1, j} \cup L_{\ell', j}$ for each $j \in [2^{\ell - \ell' - 1}]$ such that $R_{\ell' + 1, j}$ is spanning and regular 
  and $R_{\ell' + 1, j} \supseteq L_{\ell', j}$ for each $j \in [2^{\ell - \ell' - 1}]$.  
  
  Now as long as $\cB_{\ell' + 1} \coloneqq \cB_{\ell'} \cup \cB'_{\ell'}$ does not hold, $\{L_{\ell' + 1}\} \cup \{R_{i, j} \subseteq G : i \in [\ell' + 1], j \in [2^{\ell - i}]\}$, where $L_{\ell'+1}\coloneqq \bigcup_{j=1}^{2^{\ell - \ell' - 1}}\left(H_{\ell'+1, j} - E(R_{\ell'+1, j})\right)$, satisfies \ref{inductive-decomposition:regularity}--\ref{inductive-decomposition:leftover}, as desired.
\end{proof}

\subsection{The proof}

Finally, we combine Lemmas~\ref{lemma:edge-vortex} and \ref{lemma:main-decomposition-lemma} to prove Theorem~\ref{thm:LS-spreadness}.  In the proof, we use Lemma~\ref{lemma:edge-vortex} to construct an edge vortex in $G$ and then use Lemma~\ref{lemma:main-decomposition-lemma} to randomly decompose $G$ into regular subgraphs.  Crucially, \ref{item:decomposition-cover-down}, combined with the fact that the edge vortex comes from a uniformly random decomposition, ensures that the distribution of the ultimate decomposition of $G$ into $1$-factorizations is $O(\log n / n)$-spread.

\begin{proof}[Proof of Theorem~\ref{thm:LS-spreadness}]
  Let $1 / n_0 \ll 1 / C, \alpha \ll \delta \ll 1$, let $\alpha_{\ref*{thm:LS-spreadness}} \coloneqq \alpha$, let $C_{\ref*{thm:LS-spreadness}} \coloneqq \max\{8C, n_0 / \log n_0 \}$, and let $G \subseteq K_{n,n}$ be a $d$-regular graph where $d \geq (1 - \alpha)n$.
  For $n < n_0$, any probability distribution on $1$-factorizations of $G$ is $1$-spread and $C_{\ref*{thm:LS-spreadness}} \log n / n \geq 1$,
  so it suffices to prove that for $n \geq n_0$, there is an $(8C \log n / n)$-spread probability distribution on $1$-factorizations of $G$.
  To that end, let $p \in [C\log n / n, 2C\log n / n]$ satisfy $p = 2^{-\ell}$ for $\ell \in \mathbb N$.

  Let $\randomBit_{e, 0} \in [2^\ell - 1]$ be chosen independently and uniformly at random for each $e \in E(G)$, and for each $i \in [\ell]$ and $j \in [2^{\ell - i}]$, let $H_{i,j}$ be the spanning subgraph of $G$ where $E(H_{i,j}) \coloneqq \{e \in E(G) : \randomBit_{e,0} = 2^{\ell - i} + j - 1\}$.
  Note that $\{H_{i, j} : i \in [\ell], j \in [2^{\ell - i}]\}$ is a decomposition of $G$ chosen uniformly at random, and let $\cB_1$ be the event that $\{H_{i, j} : i \in [\ell], j \in [2^{\ell - i}]\}$ is not a $(\delta,  p)$-edge-vortex.  By Lemma~\ref{lemma:edge-vortex}, $\Prob{\cB_1} \leq n^{-\sqrt C}$. 

  Now let $\randomBit_{e, i} \in [2^{\ell - i - 1}]$ for each $i \in [\ell - 1]$ be chosen independently and uniformly at random for each $e \in E(G)$.
  For each outcome not in $\cB_1$, we can apply Lemma~\ref{lemma:main-decomposition-lemma} with $\randomBit_{e, i}$ playing the role of $\randomBit_e$ for each $e \in \bigcup_{j=1}^{2^{\ell - i}}E(H_{i, j})$.
  Therefore, there is an event $\cB_2$ such that $\Prob{\cB_2} \leq n^{-\sqrt C}\log_2 n$ and for every outcome not in $\cB_1$ or $\cB_2$, there is a decomposition $\{R_{i, j} \subseteq G : i \in [\ell], j\in [2^{\ell - i}]\}$ of $G$ such that
  \begin{enumerate}[(a)]
  \item\label{item:regularity} $R_{i, j}$ is $d_{i,j}$-regular for each $i \in [\ell]$ and $j \in [2^{\ell - i}]$ for some $d_{i,j}\in\mathbb N$ and
  \item\label{item:cover-down} every $e \in E(H_{i,j})$ for $i \in [\ell - 1]$ and $j \in [2^{\ell - i}]$ satisfies $e \in E(R_{i,j}) \cup E(R_{i+1, \randomBit_{e, i}})$.
  \end{enumerate}
  Hence, by \ref{item:cover-down}, for every $i \in [\ell]$ and $j \in [2^{\ell - i}]$, if $e \in E(R_{i, j})$, then either $\randomBit_{e, 0} = 2^{\ell - i} + j - 1$ or $\randomBit_{e, 0} \in [2^{\ell - i + 1}, 2^{\ell - i + 2} - 1]$ and $\randomBit_{e, i-1} = j$.
  Since
  \begin{equation*}
    \Prob{\randomBit_{e, 0} = 2^{\ell - i} + j - 1} = \frac{1}{2^\ell - 1}
  \end{equation*}
  and
  \begin{equation*}
    \Prob{(\randomBit_{e, 0} \in [2^{\ell - i + 1}, 2^{\ell - i + 2} - 1]) \cap (\randomBit_{e, i-1} = j)} = \frac{2^{\ell - i + 1} - 1}{2^\ell - 1}\cdot\frac{1}{2^{\ell - i}} \leq \frac{2}{2^{\ell} - 1},
  \end{equation*}
  and since 
  \begin{equation*}
      \left.\left(\frac{1 + 2}{2^{\ell} - 1}\right)\middle/\Prob{\overline{\cB_1} \cap \overline{\cB_2}}\right. \leq\frac{4}{2^{\ell}} = 4p,
  \end{equation*}\COMMENT{$\left(3 / (2^{\ell} - 1)\right) / (1 - n^{-\sqrt C} - n^{-\sqrt{C}}\log_2 n) \leq 4 / 2^{\ell}$}
  we can couple the distributions of $\{\randomBit_{e, i} : i \in \{0\}\cup[\ell - 1]\}$ conditional on $\overline{\cB_1} \cap \overline{\cB_2}$ with the distributions of $\rvX_{e, i, j}$ so that the event $e \in E(R_{i, j})$ is stochastically dominated by $\rvX_{e, i, j}$, where the $\rvX_{e, i, j}$ are i.i.d.\ Bernoulli random variables satisfying $\Prob{\rvX_{e, i, j} = 1} = 4p$ for each $e \in E(G)$, $i \in [\ell]$, and $j \in [2^{\ell - i}]$.

  By \ref{item:regularity}, since each $R_{i, j}$ is bipartite, there is a decomposition $\{M_{i, j, k} \subseteq R_{i, j} : k \in [d_{i,j}]\}$ of $R_{i, j}$ into perfect matchings, and $(M_{i, j, k} : i \in [\ell], j\in[2^{\ell - i}], k\in [d_{i,j}])$ is a random $1$-factorization of $G$ which we claim is $(8C\log n / n)$-spread.  Indeed, given a set $\{S_{i,j,k} \subseteq E(G) : i \in [\ell], j\in[2^{\ell - i}], k\in [d_{i,j}]\}$ of pairwise edge-disjoint matchings in $G$, we have
  \begin{multline*}
    \Prob{S_{i, j, k} \subseteq M_{i, j, k}~\forall i \in [\ell], \forall j\in[2^{\ell - i}], \forall k\in [d_{i,j}]}\\
    \begin{aligned}
        &\leq \Prob{S_{i, j, k} \subseteq E(R_{i, j})~\forall i \in [\ell], \forall j\in[2^{\ell - i}], \forall k\in [d_{i,j}]}\\    
        &\leq \prod_{i, j, k}\prod_{e\in S_{i,j,k}} \Prob{\rvX_{e, i, j} = 1}\\
        &= (4p)^{\sum_{i,j,k} |S_{i, j, k}|} \leq \left(\frac{8C \log n}{n}\right)^{\sum_{i,j,k} |S_{i, j, k}|},
    \end{aligned}
  \end{multline*}
  as desired.
\end{proof}

\section{Steiner triple system reduction: Proof of Theorem~\ref{thm:sts-reduction}}\label{sect:sts-reduction}

In this section we prove Theorem~\ref{thm:sts-reduction}. Rather than proving Theorem~\ref{thm:sts-reduction}, we find it convenient to prove the following almost identical result, where we replace the `base graph' $G$ (as defined in Theorem~\ref{thm:sts-reduction}) by the complete graph $K_n$.
Since \ref{deg_red1} below ensures that the set $\cT$ of triangles obtained by Theorem~\ref{thm:red1} all lie in $G$ (as defined in Theorem~\ref{thm:sts-reduction}), it is not necessary to consider the outcome of the `exposure' of the triangles not spanned by $G$ at this point, so Theorem~\ref{thm:sts-reduction} follows from Theorem~\ref{thm:red1}.  Alternatively, one can of course obtain Theorem~\ref{thm:sts-threshold} from Theorems~\ref{thm:robust-ls-threshold} and \ref{thm:red1} directly, by exposing each triangle twice (once for Theorem~\ref{thm:red1} and once for Theorem~\ref{thm:robust-ls-threshold}) and adjusting the constant $C_{\ref*{thm:sts-threshold}}$ accordingly.

\begin{theorem}\label{thm:red1}
Let\COMMENT{This reduction is used to show the following. There exists $C \in \mathbb{R}$ such that if $n \equiv 1,3\pmod 6$ and $p \ge C \log n/n$, then with high
probability, $\randt(K_n,p)$ contains a Steiner triple system on $n$ vertices.} $1/n_0 \ll 1/C \ll \eps \ll 1$, and let $n \ge n_0$ satisfy $n \equiv 1,3 \mod 6$. Let $\{V_1 , V_2 , V_3\}$ be an equitable partition of $[n]$ with $|V_1| \leq |V_2| \leq |V_3|$, and let $G$ be a complete graph on vertices $V_1 \cup V_2 \cup V_3$. 
Let $W_3 \subseteq V_3$ such that $|W_3| = \lfloor \eps n \rfloor$. 
If $p \geq C \log n/n$, then with probability at least $1-1/n$, there exists a collection $\cT \subseteq \randt(G , p)$ of edge-disjoint triangles such that $H \coloneqq G - \bigcup_{T \in \cT} E(T)$ is a tripartite graph with parts $V_1 , V_2 , V_3$ satisfying the following properties.
\begin{enumerate}[(\ref*{thm:red1}:a), topsep = 6pt]
    \item \label{deg_red1} Every $u \in W_3$ satisfies $d_H(u) = 0$ and every $u \in V_3 \setminus W_3$ satisfies $N_H(u)=V_1 \cup V_2$.
    
    \item \label{reg_red1} $H[V_1 \cup V_2]$ is $(|V_3| - |W_3|)$-regular.
\end{enumerate}
\end{theorem}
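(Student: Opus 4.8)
The plan is to build $\cT$ in four stages that mirror the overview in Section~\ref{sect:overview}, exposing the random triangles freshly at each stage so that the probabilistic estimates remain independent. Throughout, write $U_i \coloneqq W_i$ for $i=3$ and choose sets $U_1 \subseteq V_1$, $U_2 \subseteq V_2$ of size $\lfloor \eps n\rfloor$; we also have to track the (slight) imbalance between $|V_1|,|V_2|,|V_3|$ coming from the floors and ceilings, but this affects only lower-order error terms. The key tools are an ``almost decomposition'' corollary of the R\"odl nibble (Corollary~\ref{cor:almostdecomp} in the paper) and a ``cover down'' lemma (Lemma~\ref{lem:coverdown}), each formulated so that the triangles are drawn from $\randt(G,p)$ with $p = \Omega(\log n/n)$.

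\textbf{Step 1 (inside each $V_i$).} For each $i \in [3]$, first apply the nibble to approximately decompose $G[V_i] - E(G[U_i])$ into edge-disjoint triangles of $\randt(G,p)$, leaving a sparse leftover with maximum degree $o(n)$; then apply the cover down lemma to absorb every leftover edge $uv$ of $G[V_i]-E(G[U_i])$ into a triangle $uvw$ with $w \in U_i$, using fresh randomness. Then apply the nibble once more to cover almost all of $G[U_i]$ by edge-disjoint triangles inside $U_i$. Call the resulting triangle set $\cT^2$ and set $G^2 \coloneqq G - \bigcup_{T\in\cT^2}E(T)$. After this step $d_{G^2}(v,V_i)=0$ for $v \in V_i\setminus U_i$ (all such edges have been covered — careful accounting is needed here, since edges between $V_i\setminus U_i$ and $U_i$ must also be covered, which one arranges by including them in the domain of the nibble/cover-down), and $e(G^2[U_i]) = o(n^2)$.

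\textbf{Step 2 (the divisibility fix).} For each $i\in[3]$ and the two indices $\{j,k\}=[3]\setminus\{i\}$ with $j<k$, split $E(G^2[U_i])$ equitably into $E_{i,j}, E_{i,k}$, cover each $uv \in E_{i,j}$ by a random triangle $uvw$ with $w \in U_j$ and each $uv\in E_{i,k}$ by $uvw$ with $w\in U_k$. By Chernoff, $|\mathrm{def}(u)| \coloneqq \big||E_{i,k}(u)| - |E_{i,j}(u)|\big| \leq n^{2/3}$ for all $u$ \aas, so the degrees $d(u,U_j)$ and $d(u,U_k)$ in the leftover differ by at most $n^{2/3}$; then use a small number of further random triangles $uvw$ with $vw \in E_{j,i}\cup E_{k,i}$ (as in Figure~\ref{fig:step3}) to equalize them. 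This yields $\cT^3$ with $G^3 \coloneqq G^2 - \bigcup_{T\in\cT^3}E(T)$ having no edges inside any $U_i$ and satisfying $d_{G^3}(v,U_j)=d_{G^3}(v,U_k)$ for every $v\in V_i$ and $\{j,k\}=[3]\setminus\{i\}$.

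\textbf{Step 3 (eliminate $U_3$).} Cover every remaining edge of $G^3$ meeting $U_3$ by edge-disjoint random triangles $uvw$ with $u\in U_3$, $v\in V_1$, $w\in V_2$ — again a nibble-plus-cover-down application, using that each $u\in U_3$ has roughly equal degree into $V_1$ and into $V_2$ by the divisibility condition. Put these triangles into $\cT^4$ and let $H \coloneqq G - \bigcup_{T\in\cT^2\cup\cT^3\cup\cT^4}E(T)$. Then every edge of $H$ lies in $V_1\cup V_2$, so $H$ is tripartite with parts $V_1,V_2,V_3$ and $d_H(u)=0$ for $u\in W_3$; and for $u\in V_3\setminus W_3$ we never touched any edge at $u$, so $N_H(u)=V_1\cup V_2$, giving \ref{deg_red1}. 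Finally \ref{reg_red1} holds because the divisibility condition carried through Step~3 forces $H[V_1\cup V_2]$ to be regular of the right degree. Combine the (constantly many) failure probabilities by a union bound, and rescale $C$ so the total is at most $1/n$.

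\textbf{Main obstacle.} The delicate part is making all of this work with a \emph{random} triangle set rather than the full triangle set of a complete graph: each nibble/cover-down step must be stated and applied so that the triangles it produces actually lie in $\randt(G,p)$, which is exactly what Corollary~\ref{cor:almostdecomp} and Lemma~\ref{lem:coverdown} are engineered to provide, and one must expose disjoint portions of the randomness at each of the (boundedly many) stages. A secondary but genuine obstacle is the bookkeeping in Step~2: one needs the leftover $G^2[U_i]$ to be simultaneously sparse enough for the degree-imbalance bound $n^{2/3}$ and structured enough (via the quasirandomness of $\randt(G,p)$ restricted to the relevant triple set) that the equalizing triangles can always be found greedily without conflicts. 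I expect this degree-fixing step, and not the nibble applications, to require the most care.
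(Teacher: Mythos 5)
Your plan broadly tracks the paper's proof (which has four steps: equalise parities, nibble-plus-cover-down inside each $V_i$, equalise degrees between the $U_i$, and finally clear $U_3$), but it omits the paper's Step~1 entirely, and that omission is fatal to your Step~2.

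When $n\equiv1\pmod 6$, the equitable partition has $|V_1|=|V_2|=2t$ and $|V_3|=2t+1$, so $G[V_1]$ and $G[V_2]$ are $(2t-1)$-regular: every vertex has \emph{odd} degree inside its own part. The paper's Step~1 fixes this by first choosing a special vertex $v^*\in W_3$, finding perfect matchings $M_1,M_2$ of $G[V_1],G[V_2]$ via triangles $\{u,v,v^*\}$, and deleting them, after which every degree $d_{G^1[V_i]}(u)$ is even. That evenness is preserved under removal of edge-disjoint triangles, so in $G^2$ each vertex $u\in U_i$ still has even degree into $U_i$. You need this for two reasons that your proposal does not address: (i) Lemma~\ref{lem:coverdown} explicitly hypothesises that $d_G(v)$ is even for $v\in V(G)\setminus U$, so without the parity fix you cannot even invoke the cover-down step; (ii) in your Step~2 you write $\mathrm{def}(u)=|E_{i,k}(u)|-|E_{i,j}(u)|$ and want to ``equalize,'' but the paper's equalisation machinery (the functions $w_{i,r}$ and the cases \eqref{cond:case1}--\eqref{cond:case3}) only converges to $\|w_{i,q}\|=0$ because $\mathrm{def}(u)$ is always even, which follows from $|E_{i,j}(u)|+|E_{i,k}(u)|=d_{G^2[U_i]}(u)$ being even. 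With odd degrees, $\mathrm{def}(u)/2$ is not even an integer.

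There is a second, related gap in the same place: to split $E(G^2[U_i])$ into equal halves and to pair up the colour classes $\mathcal C_{j,i}$ with $\mathcal C_{k,i}$ size-for-size, the paper arranges that $e(G^2[U_1])=e(G^2[U_2])=e(G^2[U_3])$ and this common value is even, by removing random triangles from $\cT^2$; this in turn relies on the numbers being congruent modulo $3$, which the paper engineers in Step~1 via the identity $e(G^1[V_1])\equiv e(G^1[V_2])\equiv e(G^1[V_3])\pmod 3$. Your proposal performs the split ``equitably'' and pairs colour classes without noting that these operations only work after this preparation. You correctly singled out the degree-fixing step as the delicate one, but the concrete difficulty is not quasirandomness of the random triangle set --- it is these parity and divisibility constraints, and without the paper's Step~1 they are simply violated when $n\equiv 1\pmod 6$.
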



\subsection{More preliminaries}
\subsubsection{Finding many edge-disjoint matchings in a random subgraph of a dense graph}



Since every $r$-regular bipartite graph contains $r$ edge-disjoint perfect matchings, by Lemma~\ref{lemma:f-factor},\COMMENT{with $f(v) = r$ for all $v \in V(G)$} every bipartite graph $G$ with bipartition $\{ X,Y \}$ such that $|X| = |Y| = n$ has $r$ edge-disjoint perfect matchings if and only if for every $X' \subseteq X$ and $Y' \subseteq Y$, we have $e(X', Y') \geq r(|X'| + |Y'|-n)$. 
Using this fact and Lemma~\ref{lemma:chernoff}, we obtain the following lemma.  The proof is straightforward, so we omit it.

\begin{lemma}\label{lem:manymatchings}
Let $1/n_0 \ll \eps , 1/C \ll 1$, and let $n \ge n_0$. Let $G$ be a bipartite graph with bipartition $\{X, Y\}$ such that $|X|=|Y|=n$ and $\delta(G) \geq (1-\eps)n$. 
If $p \geq C \log n / n$, then with probability at least $1 - n^{-C^{1/2}}$, there exist at least $C \log n/2$ edge-disjoint perfect matchings of $G$ in $E(p)$, where $E \coloneqq E(G)$.
\end{lemma}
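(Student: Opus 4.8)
The plan is to use the min-cut characterisation recorded just before the statement: a bipartite graph on parts $X,Y$ with $|X|=|Y|=n$ contains $r$ edge-disjoint perfect matchings if and only if $e(X',Y')\ge r(|X'|+|Y'|-n)$ for all $X'\subseteq X$ and $Y'\subseteq Y$. Write $G'$ for the random spanning subgraph of $G$ with edge set $E(p)$, and set $r:=\lceil C\log n/2\rceil$, so that $C\log n/2\le r\le C\log n/2+1$. It then suffices to prove that, with probability at least $1-n^{-C^{1/2}}$,
\[
  e_{G'}(X',Y')\ \ge\ r\bigl(|X'|+|Y'|-n\bigr)\qquad\text{for every }X'\subseteq X,\ Y'\subseteq Y.
\]
This is automatic when $|X'|+|Y'|\le n$, so fix a pair with $k:=|X'|+|Y'|-n\ge 1$; by symmetry (swapping $X$ and $Y$) we may assume $|X'|\ge|Y'|$, and we set $a:=n-|X'|$, so that $|Y'|=a+k$ and $|X'|\ge|Y'|$ forces $a\le (n-k)/2\le n/2$.

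Next I would estimate the mean $\mu:=\Expect{e_{G'}(X',Y')}=p\,e_G(X',Y')$. Since $\delta(G)\ge(1-\eps)n$, each $y\in Y'$ has at least $d_G(y)-(n-|X'|)\ge|X'|-\eps n$ neighbours inside $X'$, so
\[
  e_G(X',Y')\ \ge\ |Y'|\bigl(|X'|-\eps n\bigr)\ =\ (a+k)(n-a-\eps n).
\]
As $a\le n/2$, the right-hand side is at least $(a+k)\cdot n/3$; and since the right-hand side is a downward parabola in $a$, its minimum over $a\in[0,(n-k)/2]$ is attained at an endpoint, and a short calculation shows this minimum is at least $(1-2\eps)kn$ provided $\eps$ is small. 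Hence $\mu\ge(1-2\eps)Ck\log n$ and $\mu\ge C(a+k)\log n/3$. As $r\le C\log n/2+1$, the first bound gives $rk\le\tfrac34\mu$ for $\eps$ small and $n,C$ large, so applying Lemma~\ref{lemma:chernoff}\ref{chernoff:two-sided} to the sum of independent edge-indicators making up $e_{G'}(X',Y')$, with relative deviation $1/4$, yields
\[
  \Prob{e_{G'}(X',Y')<rk}\ \le\ 2\exp(-\mu/48)\ \le\ 2\,n^{-C(a+k)/144}.
\]

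Finally I would take a union bound. For fixed $a$ and $k$ the number of pairs $(X',Y')$ with these parameters is $\binom{n}{a}\binom{n}{a+k}\le\exp\bigl(4(a+k)\log n\bigr)$ for $n$ large, so for $C$ sufficiently large the probability that some pair with parameters $(a,k)$ violates the inequality is at most $2\,n^{-C(a+k)/288}$. Summing this geometric series over all $a\ge0$ and $k\ge1$, and doubling to cover the symmetric case $|Y'|\ge|X'|$, bounds the total failure probability by $n^{-C^{1/2}}$ once $C$ is large enough. On the complementary event, $G'$ has an $r$-regular spanning subgraph by Lemma~\ref{lemma:f-factor} (with $f\equiv r$), and since an $r$-regular bipartite graph decomposes into $r$ perfect matchings, this gives the desired $r\ge C\log n/2$ edge-disjoint perfect matchings of $G$, all contained in $E(p)$.

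There is no deep obstacle here; the only point that needs care is that the single lower bound $e_G(X',Y')\ge(a+k)(n-a-\eps n)$ must serve two purposes at once — it must be a large enough multiple of $kn$ so that $\mu$ exceeds $rk$ by a constant factor (so that Chernoff applies with a fixed relative deviation), and simultaneously a large enough multiple of $(a+k)n$ so that the exponential concentration savings outweigh the $\binom{n}{a}\binom{n}{a+k}$ term in the union bound. Both hold because $a\le n/2$, which is precisely where the hypothesis $\delta(G)\ge(1-\eps)n$ (rather than mere near-regularity) is used.
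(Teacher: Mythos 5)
Your proof is correct and uses essentially the same ingredients as the paper's (omitted, but sketched in the source): reduce via the max-flow/min-cut criterion of Lemma~\ref{lemma:f-factor}, apply Chernoff to each pair $(X',Y')$, and take a union bound. The only difference is bookkeeping — you parameterize pairs by $(a,k)=(n-|X'|,\,|X'|+|Y'|-n)$ and run a single uniform union bound, whereas the paper first strengthens the target to $e_{G_p}(X',Y')\ge\tfrac{C\log n}{2n}|X'||Y'|$ via the inequality $xy/n\ge x+y-n$ and then splits into three cases according to whether $|X'|$, $|Y'|$, or neither is below $3\eps n$; both routes work.
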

\COMMENT{\begin{proof}For $1 \leq x,y \leq n$, the inequality $xy/n \geq x+y-n$ holds. 
Thus, it suffices to show that with probability $1 - n^{-C^{1/2}}$, $e_{G_p}(X',Y') \geq \frac{C \log n}{2n}|X'||Y'|$ for all subsets $X' \subseteq X$ and $Y' \subseteq Y'$ such that $|X'| \geq n-|Y'|$ and $|Y'| \geq n-|X'|$.
If $0 < |X'| < 3 \eps n$, for any $Y'$ with $|Y'| \geq n - |X'| > (1-3\eps)n$ (where there are at most $\sum_{x=0}^{|X'|} \binom{n}{x} \leq (1+n)^{|X'|}$ such choices of $Y'$), we have $d_G (x , Y') > |Y'| - \eps n > (1-4\eps)n$ for each $x \in X'$, thus $e_G(X',Y') > (1-4\eps)n|X'|$. 
Thus $\mathbb{E}[e_{G_p}(X',Y')] > (1-4\eps)C \log n |X'|$. By Lemma~\ref{lemma:chernoff}, with probability at least $1 - \exp(-|X'| \frac{C \log n}{100})$, $e_{G_p}(X',Y') > \frac{C \log n}{2}|X'| > \frac{C \log n}{2n} |X'||Y'|$.
Similarly, without loss of generality, if $0 < |Y'| < 3 \eps n$, for any $X'$ with $|X'| \geq n - |Y'| > (1-3\eps)n$ (where there are at most $\sum_{x=0}^{|Y'|} \binom{n}{x} \leq (1+n)^{|Y'|}$ such choices of $X'$) with probability at least $1 - \exp(-|Y'|\frac{C \log n}{100})$, $e_{G_p}(X',Y') > \frac{C \log n}{2} |Y'| > \frac{C \log n}{2 n} |X'||Y'|$.
Thus, we may assume $|X'|, |Y'| \ge 3\eps n$. So we have $d_G (x , Y') \ge |Y'| - \eps n \ge 2|Y'|/3$ for every $x \in X'$, thus $e_G(X',Y') \ge 2|X'||Y'|/3$. Thus $\mathbb{E}e_{G_p}(X',Y') \ge \frac{2C \log n}{3n}|X'||Y'| \geq 6 \eps^2 C n \log n$. By Lemma~\ref{lemma:chernoff}, with probability at least $1 - \exp(-n \log^{1/2} n)$, $e_{G_p}(X',Y') > \frac{C \log n}{2n }|X'||Y'|$ as desired. By a union bound, with probability at least
\begin{equation*}
    1 - 2\sum_{x=1}^{3\eps n} n^x (1+n)^{x} e^{-x C \log n / 100} - 4^n e^{-n \log^{1/2} n} \geq 1 - n^{-C^{1/2}}, 
\end{equation*}
the lemma follows.
\end{proof}}

\subsubsection{Equitable proper edge-colouring}
A proper $k$-edge-colouring of a graph $G$ is \emph{equitable} if the set of colour classes is an equitable partition of $E(G)$, i.e., each colour class has either $\lfloor e(G)/k\rfloor$ or $\lceil e(G)/k \rceil$ edges. We will use the following lemma proved in~\cite[Theorem 1]{mcdiarmid1972}.

\begin{lemma}[McDiarmid~\cite{mcdiarmid1972}]\label{lem:equitable}
Let $G$ be a graph, and let $k$ be an integer such that $G$ admits a proper $k$-edge-colouring. Then there exists an equitable proper $k$-edge-colouring of $G$.
\end{lemma}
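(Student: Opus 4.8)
The plan is to start from an arbitrary proper $k$-edge-colouring of $G$ and then repeatedly perform Kempe-type swaps of pairs of colours along connected components, each swap reducing a suitable potential, until no two colour classes differ in size by more than one.

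First I would fix a proper $k$-edge-colouring with colour classes $C_1, \dots, C_k$ (where $C_t$ is the set of edges receiving colour $t$) and suppose it is not equitable, so there are indices $i, j$ with $|C_i| \geq |C_j| + 2$. Consider the spanning subgraph $H$ with edge set $C_i \cup C_j$. Since the colouring is proper, every vertex is incident with at most one edge of colour $i$ and at most one of colour $j$, so $\Delta(H) \leq 2$ and each connected component of $H$ is a path or a cycle whose edges alternate between colours $i$ and $j$. In any cycle component, and in any path component with an even number of edges, the numbers of $i$-edges and $j$-edges are equal; hence, since $|C_i| > |C_j|$, there must exist a path component $P$ containing exactly one more edge of colour $i$ than of colour $j$.

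Next I would swap the two colours along $P$, recolouring every colour-$i$ edge of $P$ with colour $j$ and vice versa and leaving all other edges untouched. Because $P$ is an entire component of $H$, this keeps the colouring proper (each vertex still sees at most one edge of each of colours $i$ and $j$, and no other colour class is affected), while it decreases $|C_i|$ by one and increases $|C_j|$ by one. Consequently the potential $\Phi \coloneqq \sum_{t=1}^{k} |C_t|^2$ changes by $(|C_i|-1)^2 - |C_i|^2 + (|C_j|+1)^2 - |C_j|^2 = -2(|C_i| - |C_j|) + 2 \leq -2$, so $\Phi$ strictly decreases.

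Finally, since $\Phi$ is a non-negative integer, the process terminates after finitely many steps; when it does, no two colour classes differ in size by two or more, so each class has size $\lfloor e(G)/k\rfloor$ or $\lceil e(G)/k\rceil$, i.e. the colouring is equitable. I do not expect a genuine obstacle here: the only point requiring care is the structural observation that $H$ decomposes into alternating paths and cycles and that a size-imbalanced pair of classes always yields a path component with the required surplus, which is immediate from the parity bookkeeping above.
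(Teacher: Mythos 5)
Your proof is correct. The paper does not give a proof of this lemma—it is stated as a black-box citation to McDiarmid~\cite{mcdiarmid1972}—so there is no in-paper argument to compare against. For what it is worth, your argument (take two classes $C_i$, $C_j$ with $|C_i|\ge |C_j|+2$, observe that $C_i\cup C_j$ decomposes into alternating paths and cycles, locate an odd path with a surplus $i$-edge via the parity count $|C_i|-|C_j|>0$, swap along it, and drive the potential $\Phi=\sum_t |C_t|^2$ down) is exactly the classical Kempe-chain proof of McDiarmid's theorem, and every step you give is sound: the swap preserves properness because the path is a full component of the two-colour subgraph, the surplus-path exists because cycles and even paths contribute zero to the imbalance, and $\Phi$ is a nonnegative integer that drops by at least $2$ per step, so the process terminates at an equitable colouring.
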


In particular, by Vizing's theorem~\cite{vizing1965}, for any integer $k$, a graph $G$ with maximum degree at most $k$ admits a proper $(k+1)$-edge-colouring, so it also admits an equitable proper $(k+1)$-edge-colouring by Lemma~\ref{lem:equitable}.



\subsubsection{Pseudorandom matchings and approximate decomposition of typical graphs with random triangles}

\begin{lemma}[Pseudorandom matchings]\label{prop:pseudomat}
Let $1/n_0 \ll 1/C \ll \delta \ll \gamma \ll 1$, let $n \ge n_0$, and let $D \geq C \log n$.  Let $\cH$ be an $n$-vertex $3$-uniform linear hypergraph such that $d_\cH(v) = (1 \pm \delta)D$ for every $v \in V(\cH)$. Let $\cF \subseteq 2^{V(\cH)}$ be a family of subsets of $V(\cH)$ where $|\cF| \leq n^{100}$ and every set $S \in \cF$ satisfies $|S| \geq n^{1/3}$. 
Then there exists a matching $M$ of $\cH$ such that $4\gamma|S|/5 \leq |S \setminus V(M)| \leq \gamma |S|$ for every $S \in \cF$.
\end{lemma}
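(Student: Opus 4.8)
The plan is to build $M$ in two stages: first produce a near-perfect matching $M^{\ast}$ of $\cH$ that is ``well spread'' across the family $\cF$, and then randomly thin $M^{\ast}$ out so that roughly a $\gamma$-fraction of each test set is left uncovered. For the first stage I would invoke a pseudorandom hypergraph matching result obtained by the semi-random method / R\"odl nibble (e.g.\ in the form proved by Ehard, Glock, and Joos): since $\cH$ is linear it has codegrees at most $1$, and it is $(1\pm\delta)D$-regular with $D\geq C\log n\to\infty$, so such a result applies. I would feed it the bounded weight functions $w_S\colon E(\cH)\to\{0,1,2,3\}$, $w_S(e)\coloneqq|e\cap S|$, one for each $S\in\cF$; note that $w_S(E(\cH))=\sum_{v\in S}d_\cH(v)=(1\pm\delta)D|S|$, and that the family has at most $n^{100}$ members each of size at least $n^{1/3}$. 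This yields a matching $M^{\ast}$ with $w_S(M^{\ast})=(1\pm\eps')w_S(E(\cH))/D=(1\pm\eps')|S|$ for every $S\in\cF$, where $\eps'=\eps'(\delta)$ can be made as small as we wish; since $M^{\ast}$ is a matching this says exactly that $|S\cap V(M^{\ast})|=(1\pm\eps')|S|$, and in particular $|S\setminus V(M^{\ast})|\leq\eps'|S|$ for all $S\in\cF$.

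For the second stage, set $\gamma'\coloneqq 9\gamma/10$ and let $M$ be obtained from $M^{\ast}$ by keeping each edge independently with probability $1-\gamma'$, so that $M\subseteq M^{\ast}$ is a matching. Conditional on $M^{\ast}$,
\[
  |S\setminus V(M)| \;=\; |S\setminus V(M^{\ast})| \;+\; \sum_{e\in M^{\ast}\colon\, e\cap S\neq\varnothing} |e\cap S|\cdot\mathbf{1}[e\text{ discarded}],
\]
which is a fixed number plus a sum of independent $\{0,1,2,3\}$-valued random variables with expectation $|S\setminus V(M^{\ast})|+\gamma'|S\cap V(M^{\ast})|\in[\gamma'(1-\eps')|S|,\,(\gamma'+\eps')|S|]$, hence (choosing $\eps'\leq\gamma/50$) in $[0.88\gamma|S|,\,0.92\gamma|S|]$. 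The number of nonzero summands is at least $|S\cap V(M^{\ast})|/3\geq|S|/6\geq n^{1/3}/6$, and the summands are bounded, so a Chernoff-type bound (Lemma~\ref{lemma:chernoff}, after rescaling to reduce to Bernoulli summands, or any standard Hoeffding inequality) shows that $|S\setminus V(M)|$ deviates from its mean by more than $\gamma|S|/20$ with probability at most $2\exp(-\Omega(\gamma^{2}|S|))\leq 2\exp(-\Omega(\gamma^{2}n^{1/3}))$. Since this beats a union bound over the at most $n^{100}$ sets $S\in\cF$, with probability $1-o(1)$ every $S\in\cF$ satisfies $|S\setminus V(M)|\in[4\gamma|S|/5,\,\gamma|S|]$; in particular such a matching $M$ exists, as required.

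I expect the real difficulty to lie entirely in the first stage, i.e.\ in producing a pseudorandom near-perfect matching that is simultaneously well distributed over all $n^{100}$ test sets; the sparsification in the second stage is routine. Unpacked, the first stage is the R\"odl nibble: one runs a bounded number of rounds of sparse random edge selection, keeping the isolated chosen edges at each step, while controlling both that the residual hypergraph stays nearly regular and that each test set loses close to its expected share of vertices in each round. The hypothesis $|S|\geq n^{1/3}$ is precisely what makes the per-round fluctuation of $|S\cap V(\,\cdot\,)|$ small enough to survive a union bound over $\cF$, once handled via Talagrand's inequality or a bounded-differences argument with exceptional outcomes (naive edge-exposure Lipschitz constants are far too large here). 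The regularity maintenance is the more delicate point when $D$ is only of order $C\log n$ rather than polynomial in $n$, and this is where the precise hypothesis $D\geq C\log n$ (rather than merely $D\to\infty$) is used, since the degree concentration must beat a union bound over the $n$ vertices. If one is content to cite a black-box pseudorandom matching theorem, all of this is packaged away and only the elementary sparsification argument above remains.
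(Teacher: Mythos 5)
Your two-stage architecture --- a pseudorandom near-perfect matching $M^\ast$ followed by random sparsification --- is exactly the paper's, and the second stage is fine (your calibration with $\gamma'\coloneqq 9\gamma/10$ and explicit mean and deviation bounds is if anything cleaner than the paper's, which deletes each edge with probability $\gamma$). The gap is in the first stage, and it is precisely the one the paper flags: right after stating this lemma, the paper remarks that the pseudorandom-matching results of Alon--Yuster and of Ehard--Glock--Joos ``do not work for our purposes as they require a more restrictive assumption on the value of $D$.'' Concretely, the Ehard--Glock--Joos theorem tolerates at most $\exp(D^{\Theta(1)})$ weight functions; when $D$ may be as small as $C\log n$ this is $n^{o(1)}$, far short of the $|\cF|\leq n^{100}$ test sets the lemma must handle. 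So the black-box citation you open with does not produce $M^\ast$ in this regime, and the ``package it all away'' option from your final paragraph is not available.

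What the paper does instead is prove a bespoke nibble step (Lemma~\ref{lem:nibble}) and iterate it $\Theta(\eps^{-1}\log\eps^{-1})$ times, getting the needed concentration of $|S\cap V(M_i)|$, $|S\setminus V(B_i)|$, and residual degrees from the Alon--Kim--Spencer martingale (``query'') inequality, Lemma~\ref{lem:martingale}. That inequality bounds the total variance accumulated along an adaptive sequence of edge-exposure queries rather than the raw Lipschitz constants, and it is what makes the union bound over $n^{100}$ sets and $n$ vertices go through at $D=C\log n$. Your closing diagnoses --- that naive edge-exposure Lipschitz constants are too large, that $|S|\geq n^{1/3}$ is what makes the per-round fluctuation survivable, and that degree concentration over $n$ vertices is what forces $D\geq C\log n$ --- are all correct, but they are the substance of the proof rather than an obstacle a stronger black box absorbs.
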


Related results were proved by Alon and Yuster~\cite{AY05} and Ehard, Glock, and Joos~\cite{EGJ20}, but they do not work for our purposes as they require a more restrictive assumption on the value of $D$.  
The proof of Lemma~\ref{prop:pseudomat} proceeds by the following lemma which is based on the R\"odl nibble (see e.g. \cite{AS16, rodl1985}). 
We include the proofs for completeness.

\begin{lemma}[Nibble lemma]\label{lem:nibble}
Let $1/n_0 \ll 1/C \ll \delta \ll \eps \ll 1$, let $n \ge n_0$, and let $D \geq C \log n$. Let $\cH$ be an $n$-vertex 3-uniform linear hypergraph such that $d_\cH(v) = (1 \pm \delta)D$ for every $v \in V(\cH)$. Let $\cF \subseteq 2^{V(\cH)}$ be a family of subsets of $V(\cH)$ where $|\cF| \leq n^{200}$ and each set $S \in \cF$ satisfies $|S| \geq n^{1/4}$. 
Then there exist a subset $B \subseteq \cH$ and a matching $M \subseteq B$ satisfying the following.
\begin{enumerate}[(\ref*{lem:nibble}:a), topsep = 6pt]
    \item \label{nibble:SminusB} $|S \setminus V(B)| = (1 \pm 5\delta) e^{-\eps} |S|$ for each $S \in \cF$, 
    \item \label{nibble:SintM} $|S \cap V(M)| = (1 \pm 5\delta) \eps e^{-3\eps} |S|$ for each $S \in \cF$, and
    \item \label{nibble:degHminudB} $d_{\cH - V(B)}(v) = (1 \pm 5\delta) e^{-2\eps} D$ for each $v \in V(\cH) \setminus V(B)$.
\end{enumerate}
\end{lemma}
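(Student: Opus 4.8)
The plan is to carry out a single step of the R\"odl nibble. Set $p \coloneqq \eps/D$ (so $p<1$ since $D \ge C\log n$), let $B \subseteq \cH$ be obtained by including each edge of $\cH$ independently with probability $p$, and for $e \in \cH$ let $z_e \coloneqq 1$ if $e \in B$ and $z_e \coloneqq 0$ otherwise; the $z_e$ are mutually independent. Let $M$ be the set of $e \in B$ such that $e \cap e' = \varnothing$ for every $e' \in B \setminus \{e\}$; then $M$ is automatically a matching contained in $B$. It suffices to show that this random pair $(B,M)$ satisfies each of \ref{nibble:SminusB}, \ref{nibble:SintM} and \ref{nibble:degHminudB} with failure probability $o(n^{-200})$, since a union bound over the at most $n^{200}$ sets in $\cF$ and the at most $n$ vertices of $\cH$ then leaves positive probability that all of them hold. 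I would do this by computing the relevant expectations and then applying concentration.

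For the expectations, note that since $Dp=\eps$ and $d_\cH(v)=(1\pm\delta)D$, the discrepancy between $d_\cH(v)$ and $D$ — which enters the exponent of $(1-p)^{d_\cH(v)}$, itself of order $\eps$ — together with the $O(p^2)$ term in $\log(1-p)$, contributes only a factor $e^{\pm O(\delta\eps)}$, so $\Prob{v \notin V(B)} = (1-p)^{d_\cH(v)} = (1\pm 2\delta)e^{-\eps}$ and hence $\Expect{|S\setminus V(B)|} = (1\pm 2\delta)e^{-\eps}|S|$. For \ref{nibble:degHminudB}, conditioning on $v \notin V(B)$ and using that $\cH$ is linear — so that for an edge $e=\{v,w,x\}\ni v$ the edges through $w$ other than $e$, the edges through $x$ other than $e$, and the edges through $v$ form pairwise disjoint families — the events $\{w\notin V(B)\}$ and $\{x\notin V(B)\}$ are conditionally independent, each of probability $(1\pm 2\delta)e^{-\eps}$, so $\Expect{d_{\cH-V(B)}(v) \mid v\notin V(B)} = (1\pm 2\delta)d_\cH(v)e^{-2\eps} = (1\pm 3\delta)e^{-2\eps}D$. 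For \ref{nibble:SintM}, a vertex $v$ lies in $V(M)$ exactly when precisely one edge $e\ni v$ lies in $B$ and $e$ is isolated; by linearity $e=\{v,w,x\}$ meets $(1\pm 2\delta)3D$ other edges, all distinct, so $\Prob{v\in V(M)} = d_\cH(v)p(1-p)^{(1\pm 2\delta)3D} = (1\pm 2\delta)\eps e^{-3\eps}$ and $\Expect{|S\cap V(M)|} = (1\pm 2\delta)\eps e^{-3\eps}|S|$.

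For \ref{nibble:SminusB} and \ref{nibble:degHminudB} the concentration is routine. Each of $|S\setminus V(B)|$ and (after conditioning on $v\notin V(B)$, which leaves the remaining $z_e$ independent) $d_{\cH-V(B)}(v)$ is a sum of indicators that is $3$-Lipschitz in the $z_e$: flipping one $z_e$ affects only the $\le 3$ vertices of $e$ in the first case, and at most three edges through $v$ in the second (again by linearity). Moreover only the $\le (1+\delta)D|S|$ edges meeting $S$, respectively the $O(D^2)$ edges within distance two of $v$, are relevant, and each is present with probability only $p=\eps/D$, so the sum of conditional variances of the associated Doob martingale is $O(\eps|S|)$, respectively $O(\eps D)$. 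A Bernstein-type martingale inequality (Freedman's inequality, or a comparable tool) then shows that a deviation exceeding $\delta e^{-\eps}|S|$, respectively $\delta e^{-2\eps}D$, has probability at most $\exp(-\Omega(\delta^2 e^{-2\eps}|S|/\eps))$, respectively $\exp(-\Omega(\delta^2 e^{-4\eps}D/\eps))$; since $|S|\ge n^{1/4}$, $D\ge C\log n$ and $1/C\ll\delta\ll\eps$, both are at most $n^{-300}$ for $n$ large, which suffices.

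The main obstacle is the concentration of $X \coloneqq |S\cap V(M)|$. Here flipping a single $z_e$ can change $X$ by $\Theta(D)$ — precisely when $e$ meets $\Theta(D)$ isolated selected edges each covering a vertex of $S$ — so the crude martingale bound fails badly once $D$ is large. The point is that such a change requires $\Omega(D)$ of the $\Theta(D)$ edges meeting $e$ to be selected, an event of probability $n^{-\omega(1)}$, so in a typical configuration a single edge moves $X$ by $O(1)$. I would therefore invoke a concentration inequality insensitive to rare large fluctuations — Warnke's typical bounded differences inequality, or Talagrand's inequality applied with the certificate for the event $X\ge s$ consisting of $s$ vertices of $S$ together with, for each of them, its unique selected edge and the unselected edges meeting that edge — to deduce that $|X-\Expect{X}| \le \delta\eps e^{-3\eps}|S|$ with failure probability $o(n^{-200})$, and then union bound over $\cF$ to obtain \ref{nibble:SintM}. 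This is exactly where the hypothesis $D\ge C\log n$ with $C$ large is needed, and it is the source of the improvement of Lemma~\ref{lem:nibble} over the nibble lemmas of Alon--Yuster and Ehard--Glock--Joos, which require $D$ to be larger.
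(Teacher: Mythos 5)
Your setup (a single nibble step with $p = \eps/D$, $B$ the random edge set, $M$ the isolated edges of $B$), your expectation calculations, and your concentration plan for \ref{nibble:SminusB} and \ref{nibble:degHminudB} all match the paper's approach. But your treatment of \ref{nibble:SintM} misdiagnoses the obstacle, and your proposed fix would not work. You claim that flipping one $z_e$ can change $X \coloneqq |S\cap V(M)|$ by $\Theta(D)$ ``when $e$ meets $\Theta(D)$ isolated selected edges''; this is impossible. Two isolated edges of $B$ are disjoint, and by linearity of $\cH$ they must meet $e$ at distinct vertices, so at most three isolated edges of $B$ meet $e$ --- the change in $X$ is always at most $9$, a constant (this is exactly what the paper uses: ``$e$ can only intersect at most three pairwise disjoint edges'').

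The genuine obstacle is different and is not addressed by your fix. The quantity $X$ depends on the $\Theta(|S|D^2)$ edges at intersection-distance at most one from $S$, so a martingale over a fixed enumeration of these has total conditional variance of order $|S|D^2 \cdot (\eps/D) = \eps|S|D$, yielding a tail of $\exp(-\Omega(\delta^2\eps|S|/D))$, which is useless when $D$ is large (here $D$ can be up to $\Theta(n)$ while $|S|$ can be as small as $n^{1/4}$). Talagrand with your $O(sD)$-size certificate and $O(1)$-Lipschitz constant gives the same $D$-dependent bound, and Warnke's typical bounded differences inequality gains nothing because the worst-case Lipschitz constant is already $O(1)$. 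The idea you are missing --- and the reason the paper uses the martingale inequality of~\cite{AKS1997} (Lemma~\ref{lem:martingale}) --- is adaptivity: first query the at most $2|S|D$ edges incident to $S$ (which identifies the at most $|S|$ solitary edges), then query only the at most $3|S|D$ edges meeting a solitary edge. Every line of questioning then has at most $5|S|D$ queries, each of variance $O(\eps/D)$, so the total variance is $O(\eps|S|)$ with no factor of $D$, and the tail $\exp(-\Omega(\delta^2\eps|S|))$ is uniform in $D$. Equivalently, the overwhelming majority of the $\Theta(|S|D^2)$ potentially-relevant edges have zero conditional variance given the first-stage answers; the adaptive decision tree captures this, whereas none of the non-adaptive tools you invoke can.
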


\begin{proof}[Proof of Lemma~\ref{prop:pseudomat} using Lemma~\ref{lem:nibble}]
Let $\eps$ satisfy $\eps \ll \gamma$, and choose $t$ close to $2\eps^{-1} \log (\eps^{-1})$ so that $e^{-\eps t/2} = (1 \pm 0.01) \eps$. 
Letting $\delta$ small enough, we have
\begin{align}\label{eqn:delta}
    1 - \eps < (1 - 5^t \delta)^t \leq \prod_{i=1}^{t} (1 - 5^i \delta) \leq \prod_{i=1}^{t} (1 + 5^i \delta) \leq (1 + 5^t \delta)^t < 1 + \eps.
\end{align}

Let $\cF$ be a collection of subsets of $V(\cH)$ such that $|S| \geq n^{1/3}$ for each $S \in \cF$ and $|\cF| \leq n^{100}$. 
To that end, applying Lemma~\ref{lem:nibble} repeatedly, we will define subhypergraphs $\cH_1 , \dots , \cH_t$ of $\cH$ inductively. 
For $0 \leq k \leq t$, let $\delta_k \coloneqq 5^k \delta$, let $\cH_0 \coloneqq \cH$, and let $\cF_0 \coloneqq \{ V(\cH) \} \cup \cF$. 
For each $S \in \cF_0$, let $S_k \coloneqq S \cap V(\cH_k)$, and let $\cF_k \coloneqq \{ S_k : S \in \cF_0 \}$. 

For $i \in [t]$, if $|S_{i-1}| \geq |V(\cH_{i-1})|^{1/4}$ for each $S_{i-1} \in \cF_{i-1}$, then we can apply Lemma~\ref{lem:nibble} where $\cH_{i-1}$, $\cF_{i-1}$, and $\delta_{i-1}$ playing the roles of $\cH$, $\cF$, and $\delta$, and obtain a subset $B_i \subseteq \cH_{i-1}$ and a matching $M_i$ of $\cH_{i-1}$.
Let $\cH_i \coloneqq \cH_{i-1} - V(B_i)$, thus $S_i = S \cap V(\cH_i) = S_{i-1} \setminus V(B_i)$ for each $S \in \cF_0$. Consequently, we have $|V(\cH_i)| = (1 \pm \delta_i) e^{-\eps} |V(\cH_{i-1})|$, $|S_i| = (1 \pm \delta_i) e^{-\eps} |S_{i-1}|$, and $|S_{i-1} \cap V(M_i)| = (1 \pm \delta_i) \eps e^{-3 \eps} |S_{i-1}|$ for each $S \in \cF_0$. 

We now establish the bounds on $|S_i|$ and $|S_{i-1} \cap V(M_i)|$ for $S \in \cF_0$ and $i \in [t]$, which also shows that $|S_i| \geq |V(\cH_i)|^{1/4}$ in order to apply Lemma~\ref{lem:nibble}.
By the induction,
\begin{align}\label{eqn:size_si}
    |S_i| = (1 \pm \delta_i)e^{-\eps}|S_{i-1}| = \prod_{k=1}^{i} ((1 \pm 5^k \delta) e^{-\eps}) |S| \overset{\eqref{eqn:delta}}{=} (1 \pm \eps) e^{-\eps i} |S|,   
\end{align}
thus $|S_t| = (1 \pm \eps) e^{-\eps t} |S| = (1 \pm 0.05) \eps^2 |S|$, since $e^{-\eps t/2} = (1 \pm 0.01)\eps$ and $\eps$ is sufficiently small.
Since $V(\cH) \in \cF_0$, we have $|V(\cH_i)| \geq |V(\cH_t)| = (1 \pm 0.05) \eps^2 |V(\cH)|$. Thus,
\begin{align*}
    |S_i| \geq |S_t| = (1 \pm 0.05) \eps^2 |S| \geq 0.95 \eps^2 n^{1/3} > n^{1/4} \geq |V(\cH_i)|^{1/4},
\end{align*}
as desired. Moreover,
\begin{align*}
    |S_{i-1} \cap V(M_i)| &= (1 \pm \delta_i)\eps e^{-3\eps}|S_{i-1}| \overset{\eqref{eqn:size_si}}{=} (1 \pm 5^i \delta)\eps e^{-3\eps} \cdot (1 \pm \eps) e^{-\eps (i-1)} |S| \\ 
    &\overset{\eqref{eqn:delta}}{=} (1 \pm 3\eps)\eps e^{-3\eps} e^{-\eps (i-1)} |S|.
\end{align*}
Let $M^* \coloneqq \bigcup_{i=1}^{t} M_i$, which is a matching in $\cH$. Then
\begin{align*}
    |S \setminus V(M^*)| &= |S| - \sum_{i=1}^{t} |S_{i-1} \cap V(M_i)| = |S| -  \sum_{i=1}^{t} (1 \pm 3 \eps) \eps e^{-3 \eps} e^{-\eps (i-1)} |S|\\ 
    &\leq |S| - (1 - 3\eps)\eps e^{-3 \eps} \frac{1 - e^{-\eps t}}{1 - e^{-\eps}}|S| \\
    &\leq |S| - (1 - 3\eps)(1 - 4\eps) (1 - \eps)|S| < 50\eps|S|,
\end{align*}
since $\frac{\eps e^{-3 \eps}}{1 - e^{-\eps}} = 1 - 3\eps + O(\eps^2)$ and $e^{-\eps t} < \eps$ by the definition of $t$. 
Let $M$ be a matching obtained from $M^*$ by deleting each edge of $M^*$ with probability $\gamma$ independently at random. For each $S \in \cF$,
$\mathbb{E} |S \setminus V(M)| = |S \setminus V(M^*)| + \gamma |S \cap V(M^*)| = (\gamma \pm 50 \eps)|S|$.
Thus, by the Chernoff's bound (Lemma~\ref{lemma:chernoff}) and the union bound, we have $|S \setminus V(M)| = (\gamma \pm \eps^{1/2})|S|$ for all $S \in \cF$ with probability at least 0.99.
\end{proof}

The proof of Lemma~\ref{lem:nibble} is a modification of the proof of~\cite[Lemma 4.7.2]{AS16} which uses the second moment method to estimate probabilities, while our approach uses a Martingale inequality (see Lemma~\ref{lem:martingale}) from~\cite{AKS1997}. This allows us to obtain better estimates when the degree is a function in $n$.

Suppose our random variable $Z$ is determined by asking mutually independent ``Yes/No" queries. The choice of the next query can depend on the answers to the previous queries. Thus a \emph{strategy} for determining $Z$ can be represented in the form of a decision tree. \emph{A line of questioning} is a path from the root to a leaf of this tree that determines $Z$. Suppose the $i$th query in this line of questioning has probability $p_i$ to be answered ``Yes". Then the \emph{variance} of the $i$th query is $p_i(1-p_i)\zeta_i^2 \leq p_i\zeta_i^2$, where $\zeta_i$ is the \emph{effect} of the $i$th query, i.e., changing the answer of the $i$th query (but keeping all the answers of other queries the same) will affect the random variable $Z$ by at most $\zeta_i$.  Let $\zeta \coloneqq \max_{i} \zeta_i$. The \emph{total variance} of a line of questioning is the sum of variances of the queries in it, i.e $\sum_{i} p_i (1 - p_i)\zeta_i^2 \leq \sum_{i} p_i \zeta_i^2$.

\begin{lemma}[Martingale Inequality~\cite{AKS1997}]\label{lem:martingale}
There exists an absolute constant $\eta > 0$ such that if there is a strategy for determining $Z$ for which the total variance of every line of questioning is at most $\sigma^2$, then
\begin{equation*}
    \mathbb{P}(|Z - \mathbb{E}Z| > \lambda \sigma) \leq 2 \exp(-\lambda^2 / 4)
\end{equation*}
for any $\lambda \in [0, 2 \sigma \eta / \zeta]$.
\end{lemma}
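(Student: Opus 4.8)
The statement is the Alon--Kim--Spencer concentration inequality, and the plan is to run the standard exponential-moment argument on the Doob martingale of a line of questioning, tracking conditional variances rather than worst-case increments. Fix a strategy, viewed as a decision tree, let $A_1, A_2, \dots$ be the (random) answers to the queries actually asked, let $\mathcal{F}_t \coloneqq \sigma(A_1, \dots, A_t)$, and let $Z_t \coloneqq \mathbb{E}[Z \mid \mathcal{F}_t]$, so that $Z_0 = \mathbb{E} Z$ and $Z_T = Z$, where $T$ is the depth of the leaf reached (bounded by the depth of the tree; pad short branches by dummy queries so that $T$ is a fixed constant). Set $X_t \coloneqq Z_t - Z_{t-1}$. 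Since the $t$-th query asked and its ``Yes''-probability $p_t$ are determined by $A_1, \dots, A_{t-1}$, the quantity $v_t \coloneqq p_t(1-p_t)\zeta_t^2$ is $\mathcal{F}_{t-1}$-measurable, $\mathbb{E}[X_t \mid \mathcal{F}_{t-1}] = 0$, and by hypothesis $\sum_t v_t \le \sigma^2$ along every line of questioning.

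First I would establish the conditional increment bounds $|X_t| \le \zeta_t \le \zeta$ and $\mathbb{E}[X_t^2 \mid \mathcal{F}_{t-1}] \le v_t$: conditioned on $\mathcal{F}_{t-1}$ the variable $Z_t$ takes one of two values according to $A_t$, and these two values differ by at most $\zeta_t$ (this is exactly the ``effect'' bound, after coupling the two continuations of the strategy); a two-point variable with gap at most $\zeta_t$ and probabilities $p_t, 1-p_t$ has centred absolute value at most $\max(p_t, 1-p_t)\zeta_t \le \zeta_t$ and variance at most $p_t(1-p_t)\zeta_t^2 = v_t$. Next, for any mean-zero random variable $X$ with $|X| \le \zeta$ and any $\theta$ with $0 < \theta \le 1/\zeta$, the standard estimate $\mathbb{E}[e^{\theta X}] \le \exp(\mathbb{E}[X^2] (e^{\theta\zeta} - 1 - \theta\zeta)/\zeta^2) \le \exp(\mathbb{E}[X^2]\theta^2)$ holds, using that $x \mapsto (e^x - 1 - x)/x^2$ is increasing and equals $e - 2 < 1$ at $x = 1$; applying this conditionally gives $\mathbb{E}[e^{\theta X_t} \mid \mathcal{F}_{t-1}] \le \exp(\theta^2 v_t)$.

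Then set $W_t \coloneqq \exp(\theta(Z_t - Z_0) + \theta^2(\sigma^2 - \sum_{s \le t} v_s))$; since $0 \le \sum_{s \le t} v_s \le \sigma^2$ on every branch, $W_t$ is a well-defined nonnegative process, and the previous paragraph shows it is a supermartingale with $W_0 = e^{\theta^2 \sigma^2}$, so $\mathbb{E}[W_T] \le e^{\theta^2\sigma^2}$; as $W_T \ge e^{\theta(Z - \mathbb{E}Z)}$ we get $\mathbb{E}[e^{\theta(Z - \mathbb{E}Z)}] \le e^{\theta^2\sigma^2}$ for all $0 < \theta \le 1/\zeta$. By Markov's inequality $\mathbb{P}(Z - \mathbb{E}Z > \lambda\sigma) \le \exp(-\theta\lambda\sigma + \theta^2\sigma^2)$, and taking $\theta = \lambda/(2\sigma)$ gives $\exp(-\lambda^2/4)$, valid whenever $\theta \le 1/\zeta$, i.e.\ $\lambda \le 2\sigma/\zeta$, so $\eta = 1$ works (any smaller absolute constant does too); applying the same to $-Z$ and taking a union bound yields the two-sided bound with the factor $2$. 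The one genuinely delicate step is the pointwise bound $|X_t| \le \zeta_t$: the ``effect'' hypothesis only controls how $Z$ changes when a single answer is flipped inside a fixed complete line of questioning, whereas $X_t$ compares averages of $Z$ over continuations whose very queries and biases depend on $A_t$, so one must explicitly couple the two subtrees hanging off the node reached after $A_1, \dots, A_{t-1}$; the rest is the textbook Azuma/Bernstein machinery.
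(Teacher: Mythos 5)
The paper itself gives no proof of this lemma: it is cited from~\cite{AKS1997} and used as a black box (the only surrounding text is that the nibble argument ``uses a Martingale inequality (see Lemma~\ref{lem:martingale}) from~\cite{AKS1997}''). Your argument is therefore an independent reconstruction. It follows the standard Freedman-type route: Doob martingale increments $X_t$ with $\mathbb{E}[X_t\mid\mathcal{F}_{t-1}]=0$, a conditional Bernstein bound $\mathbb{E}[e^{\theta X_t}\mid\mathcal{F}_{t-1}]\le\exp(\theta^2 v_t)$ valid for $\theta\le 1/\zeta$, a variance-compensated exponential supermartingale $W_t$, and optimisation at $\theta=\lambda/(2\sigma)$. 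The algebra is correct, $\eta=1$ is admissible, and you rightly flag that the real content is the coupling between the two subtrees hanging off the current node, which is what lets you compare the conditional expectations over the two continuations and deduce $|X_t|\le\zeta_t$ and $\operatorname{Var}(Z_t\mid\mathcal{F}_{t-1})\le p_t(1-p_t)\zeta_t^2$.

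One step you assert without justification and should tighten: that $v_t=p_t(1-p_t)\zeta_t^2$ is $\mathcal{F}_{t-1}$-measurable. You argue this for $p_t$ (it is a property of the query, which is determined by $A_1,\dots,A_{t-1}$) but not for $\zeta_t$, and as the ``effect'' is defined per line of questioning, $\zeta_t$ a priori depends on the answers $A_t,A_{t+1},\dots$ as well. Both the supermartingale step $\mathbb{E}[e^{\theta X_t-\theta^2 v_t}\mid\mathcal{F}_{t-1}]\le 1$ and the inequality $W_T\ge e^{\theta(Z-\mathbb{E}Z)}$ require $v_t\in\mathcal{F}_{t-1}$, so you are implicitly reading $\zeta_t$ as the worst-case effect over all continuations of the current node. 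That is the natural interpretation (and is exactly what the paper's applications supply: the effects there are uniform constants, $3$ or $9$), and under it your proof is complete. If one instead takes the literal per-line reading, the proof still goes through provided one verifies the additional fact that $\sum_t\mathbb{E}[X_t^2\mid\mathcal{F}_{t-1}]\le\sigma^2$ almost surely — which is what the supermartingale really needs — but this does not follow mechanically from the pointwise bound on each line, so it is worth stating explicitly which reading you are using.
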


Now we are ready to prove Lemma~\ref{lem:nibble}.
\begin{proof}[Proof of Lemma~\ref{lem:nibble}]
Let $B$ be a random subset of $\cH$ such that each edge $e \in \cH$ is chosen to be in $B$ with probability $\eps/D$ independently at random, and $M \subseteq B$ be the set of isolated edges in $B$. Then for every $v \in V(\cH)$ and $e \in \cH$,
\begin{align*}
    \mathbb{P}(e \in M) &= \frac{\eps}{D} \left ( 1 - \frac{\eps}{D} \right )^{3D(1 \pm \delta)} = (1 \pm 2\delta) \frac{\eps e^{-3\eps}}{D},\\
    \mathbb{P}(v \in V(M)) &= (1 \pm \delta)D \cdot (1 \pm 2\delta) \frac{\eps e^{-3\eps}}{D} = (1 \pm 4\delta) \eps e^{-3\eps},\\
    \mathbb{P}(v \notin V(B)) &= \left ( 1 - \frac{\eps}{D} \right )^{d_\cH(v)} = (1 \pm 2\delta) e^{-\eps}.
\end{align*}

Let $\cH' \coloneqq \cH - V(B)$. For each $v \in V(\cH)$, let $d_{\cH'}^* (v)$ be the number of edges $e \in \cH$ incident to $v$ such that $e \setminus \{ v \} \subseteq V(\cH') = V(\cH) \setminus V(B)$. Then it is clear that $d_{\cH'}(v) = d_{\cH'}^*(v)$ for each $v \in V(\cH')$. 
For every $v \in V(\cH)$ and $S \in \cF$,
\begin{align*}
    \mathbb{E}d^*_{\cH'}(v) &= (1 \pm \delta)D \cdot \left ( 1 - \frac{\eps}{D} \right )^{2D(1 \pm \delta)} = (1 \pm 4\delta) e^{-2\eps} D,\\
    \mathbb{E}|S \cap V(M)| &= (1 \pm 4\delta) \eps e^{-3 \eps} |S|,\\
    \mathbb{E}|S \setminus V(B)| &= (1 \pm 2\delta) e^{-\eps}|S|.
\end{align*}

Both $|S \cap V(M)|$ and $|S \setminus V(B)|$ are determined by some queries of whether an edge is in $B$. First, we ask whether every edge $e$ incident to a vertex in $S$ is in $B$, and we call such queries as \emph{Type 1} queries. Type 1 queries will determine $|S \setminus V(B)|$, and there are at most $2|S|D$ Type 1 queries.

In order to determine $|S \cap V(M)|$, we need to ask some more queries. After asking Type~1 queries for every vertex $w \in S$, let us call an edge $e_w$ incident to $w$ \emph{solitary} if it is the only edge incident to $w$ which is in $B$. Note that an edge can be in $M$ only if it is solitary. To see which solitary edges are in $M$, we ask whether every edge intersecting each solitary edge is in $B$, and we call such queries as \emph{Type 2} queries. Since there are at most $|S|$ solitary edges, the number of Type 2 queries is at most $2|S|(1 \pm \delta)D \leq 3|S|D$.

Thus, both $|S \setminus V(B)|$ and $|S \cap V(M)|$ are determined by asking at most $5|S|D$ queries. For each edge $e$, changing whether $e \in B$ affects $|S \cap V(M)|$ by at most nine since $e$ can only intersect at most three pairwise disjoint edges, and also affects $|S \setminus V(B)|$ by at most three. Thus, the variance $\sigma$ of each query is at most $\eps/D \cdot 9^2$, and the total variance of the queries will be $\sigma^2 \coloneqq 5|S|D \cdot \eps/D \cdot 9^2 \leq 405 \eps |S|$, thus $\sigma \leq 21 \sqrt{\eps |S|}$. By the Martingale inequality (Lemma~\ref{lem:martingale}), since $|S| \geq n^{1/4}$, we have $|S \cap V(M)| = (1 \pm 5\delta) \eps e^{-3\eps} |S|$ and $|S \setminus V(B)| = (1 \pm 3\delta)e^{-\eps}|S|$ with probability at least $1 - e^{-n^{1/100}}$, for every $S \in \cF$. This proves ~\ref{nibble:SminusB} and ~\ref{nibble:SintM}.

For each $v \in V(\cH)$, let $E(v)$ be the set of edges in $\cH$ incident to $v$. Then $d_{\cH'}^*(v)$ is determined by the events that $e \setminus \{v \} \subseteq V(\cH) \setminus V(B)$ for each $e \in E(v)$, where each of them is determined by asking at most $2(1 \pm \delta)D \leq 3D$ queries of whether an edge is in $B$. In total, $d_{\cH'}^*(v)$ is determined by at most $d_\cH(v) \cdot 3D \leq 6D^2$ queries of whether an edge is in $B$. If we change the result of one of the queries, then it affects $d_{\cH'}^*(v)$ by at most three, since each edge can intersect at most three edges in $E(v)$ by the linearity of $\cH$. Again by the Martingale inequality (Lemma~\ref{lem:martingale}),\COMMENT{$\sigma^2 = 6 D^2 \frac{\eps}{D} 9 = 54 \eps D$, so $\sigma \le 8 \sqrt{\eps D}$. So we can apply the Martingale inequality with $\lambda = \delta \eta \sqrt{\eps D}/16$.} with probability at least $1 - e^{-\eps \eta^2 \delta^2 D / 128} \geq 1 - n^{-C^{2/3}}$,
\begin{equation}\label{eqn:deg_concentration}
d_{\cH'}^*(v) = \mathbb{E}d_{\cH'}^*(v) \pm \eps \delta \eta D / 2 = (1 \pm 5\delta) e^{-2 \eps} D
\end{equation}
where $\eta > 0$ is some absolute constant. Thus by a union bound, with probability at least $1 - n^{-C^{1/2}}$, ~\eqref{eqn:deg_concentration} holds for every $v \in V(\cH)$. This proves~\ref{nibble:degHminudB}, as desired. 
\end{proof}


An $n$-vertex graph $G$ is $(\eta , k , p)$-\emph{typical} if $|\bigcap_{s \in S} N_G(s)| = (1 \pm \eta)p^{|S|}n$ for every set $S \subseteq V(G)$ with $|S| \leq k$.
A collection $\cF$ of triangles in a graph $G$ is $(\eta , p)$-\emph{regular} if every edge of $G$ is in $(1 \pm \eta)p^2 n$ triangles in $\cF$. 
The following lemma is from \cite[Lemma 4.2]{BGKLMO20}, which in turn is based on an idea from \cite{barber2017fractional}.

\begin{lemma}\label{lem:regulark3}
Let $1/n_0 \ll \eta , p$, let $n \ge n_0$, and let $\eta \leq p^7 / 20$. Every $(\eta , 4 , p)$-typical $n$-vertex graph $G$ contains a $(n^{-1/3} , p/2)$-regular collection $\cT$ of triangles.
\end{lemma}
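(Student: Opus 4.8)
The plan is to reduce the statement to a randomised rounding of a \emph{fractional triangle decomposition} of $G$. First I would record the only consequences of typicality that are needed. Applying the defining property of $(\eta,4,p)$-typicality to a pair $S=\{x,y\}$ with $xy\in E(G)$ shows that every edge $e=xy$ lies in exactly $m_e\coloneqq|N_G(x)\cap N_G(y)|=(1\pm\eta)p^2 n$ triangles of $G$; the cases $|S|\le 4$ will be used only in the construction of the fractional object below. Note also that the $3$-uniform hypergraph with vertex set $E(G)$ whose edges are the triangles of $G$ is automatically linear, since two edges of $G$ lie in at most one common triangle.

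Second, I would produce a fractional triangle decomposition of $G$ with small maximum weight, that is, a function $w\colon G^{(3)}\to[0,\,2/(p^2 n)]$ with $\sum_{T\ni e}w(T)=1$ for every $e\in E(G)$. The uniform choice $w_0(T)\coloneqq 1/(p^2 n)$ already satisfies $\sum_{T\ni e}w_0(T)=m_e/(p^2 n)=1\pm\eta$ for every $e$, so it only remains to correct a defect which is pointwise at most $\eta\le p^7/20$. This is precisely the content of the fractional-decomposition method of \cite{barber2017fractional} (this is the ``idea from \cite{barber2017fractional}'' referred to in the statement): using the $(\eta,4,p)$-typicality one redistributes weight locally along short configurations of triangles joining over-covered and under-covered edges, changing $w$ by $O(\eta/(p^2 n))$ on the triangles involved, and one checks that after the correction the weights still lie within a factor $2$ of $1/(p^2 n)$. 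I would either quote this directly or carry out the local-correction argument in full.

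Third comes the rounding. Set $\lambda\coloneqq(p/2)^2 n=p^2 n/4$, so that $\lambda w(T)\le\lambda\cdot 2/(p^2 n)=1/2\le 1$ for every triangle $T$, and let $\cT$ be the random collection obtained by including each $T\in G^{(3)}$ independently with probability $\lambda w(T)$. For a fixed edge $e$, the number of triangles of $\cT$ through $e$ is a sum of independent Bernoulli random variables with mean $\lambda\sum_{T\ni e}w(T)=\lambda=(p/2)^2 n$, so by Lemma~\ref{lemma:chernoff}\ref{chernoff:two-sided} applied with $\delta=n^{-1/3}$ this count equals $(1\pm n^{-1/3})(p/2)^2 n$ with probability at least $1-2\exp\!\big(-n^{-2/3}(p/2)^2 n/3\big)=1-2\exp\!\big(-p^2 n^{1/3}/12\big)$. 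Since $1/n_0\ll p$ we have $p^2 n^{1/3}\ge 3\log n$ for $n\ge n_0$, so a union bound over the at most $n^2$ edges of $G$ shows that with probability $1-o(1)$ every edge of $G$ lies in $(1\pm n^{-1/3})(p/2)^2 n$ triangles of $\cT$; in particular such a $\cT$ exists and is $(n^{-1/3},p/2)$-regular, as required.

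The main obstacle is the second step: obtaining the bounded-weight fractional triangle decomposition from the relatively weak hypothesis of $(\eta,4,p)$-typicality, which is where the inequality $\eta\le p^7/20$ is used (a naive iterated local correction does not contract, so genuine control of the edge–triangle incidence structure afforded by typicality is needed). Once a fractional triangle decomposition with all weights at most $2/(p^2 n)$ is available, the rounding and concentration argument in the third step are routine, and the factor-$2$ gap between $p$ and $p/2$ in the statement is exactly what guarantees that $\lambda w(T)$ is a legitimate probability.
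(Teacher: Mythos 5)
The paper does not give its own proof of Lemma~\ref{lem:regulark3}; it quotes it as \cite[Lemma~4.2]{BGKLMO20}, attributing the underlying idea to \cite{barber2017fractional}, so there is no in-paper argument to match yours against line by line. That said, your outline does follow the structure of the cited proof: use $(\eta,4,p)$-typicality with $|S|=2$ to get $m_e=(1\pm\eta)p^2n$, build an exact fractional triangle decomposition $w\colon G^{(3)}\to[0,2/(p^2n)]$ with $\sum_{T\ni e}w(T)=1$, then include each $T$ independently with probability $(p/2)^2n\cdot w(T)$ and finish with Chernoff and a union bound. Your third step is written out correctly, and the parameter choices are the right ones --- the factor-of-two gap between $p$ and $p/2$ is exactly what makes $(p/2)^2n\cdot w(T)\le 1$ a legal probability, and the concentration bound beats the $n^2$ union bound since $p$ is a constant.

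The substantive content of the lemma, however, is entirely your second step: producing from $(\eta,4,p)$-typicality and $\eta\le p^7/20$ a fractional triangle decomposition whose weights all lie within a factor two of $1/(p^2n)$. You do not carry this out. You correctly diagnose that a naive iterated $K_4$-gadget correction of the uniform $w_0(T)=1/(p^2n)$ need not contract, and that the $|S|\le 4$ cases of typicality and the bound $\eta\le p^7/20$ are exactly what control the gadget feedback, but you then cite \cite{barber2017fractional} as a black box rather than give the argument. Since the paper itself also defers to \cite{BGKLMO20} for this lemma, that is not a defect relative to the paper; but you should be aware that what you have reconstructed is the randomized-rounding wrapper, not the engine. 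The bounded-weight exact fractional decomposition is where essentially all of the work, and the entire role of the hypothesis $\eta\le p^7/20$, resides.
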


Now Lemma~\ref{lem:regulark3} together with Lemma~\ref{prop:pseudomat} implies the following corollary, which has the advantage that the degree parameter $\gamma$ of the leftover is independent of the typicality parameter $\eta$ of $G$.

\begin{corollary}\label{cor:almostdecomp}
Let $1/n_0 \ll 1/C \ll \gamma , \eta \ll p \leq 1$, and let $n \ge n_0$. For every $(\eta , 4 , p)$-typical $n$-vertex graph $G$, with probability at least $1 - n^{-C^{1/2}}$, there exists $\cT \subseteq \randt(G ,  C \log n/n)$ such that
\begin{itemize}
    \item all triangles in $\cT$ are edge-disjoint, and 
    \item for every vertex $v \in V(G)$, $3\gamma pn / 4 \leq d_{G'}(v) \leq \gamma pn$, where $G' \coloneqq G -  \bigcup_{K \in \cT} E(K)$.
\end{itemize}
\end{corollary}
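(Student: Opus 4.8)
The plan is to deduce the corollary by combining Lemmas~\ref{lem:regulark3} and \ref{prop:pseudomat}. The random set $\randt(G, C\log n/n)$ is far too sparse to contain a near-regular family of triangles directly, so the key point is to extract a near-regular collection of triangles inside $G$ \emph{deterministically} first, and only afterwards pass to the random subset. Write $q \coloneqq C\log n/n$. First I would apply Lemma~\ref{lem:regulark3} (valid since $\eta \ll p$, so in particular $\eta \le p^7/20$) to obtain a $(n^{-1/3}, p/2)$-regular collection $\cT_0$ of triangles in $G$; thus every edge $e \in E(G)$ lies in $(1 \pm n^{-1/3})(p/2)^2 n$ triangles of $\cT_0$. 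Then I would set $\cT_0' \coloneqq \cT_0 \cap \randt(G, q)$, i.e.\ retain each triangle of $\cT_0$ independently with probability $q$. The collection $\cT$ will be chosen inside $\cT_0'$, so that $\cT \subseteq \randt(G,q)$ holds automatically.

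Next, consider the link hypergraph $\cH$ with vertex set $V(\cH) \coloneqq E(G)$ whose hyperedges are the triples $\{e_1, e_2, e_3\}$ of edges of $G$ forming a triangle of $\cT_0'$. This hypergraph is $3$-uniform and linear: two distinct triangles sharing two edges of $G$ would span the same three vertices and hence coincide. Let $N \coloneqq e(G)$, so $N \le n^2$ and $N^{1/3} = o(pn)$, and let $D \coloneqq q(p/2)^2 n = Cp^2\log n/4$. For a fixed $e \in E(G)$ the number of hyperedges of $\cH$ through $e$ is binomial with mean $(1 \pm n^{-1/3})D$; since $1/C \ll p$, we have $D = \omega(\log n)$ with a large multiplicative constant, so Lemma~\ref{lemma:chernoff} together with a union bound over the at most $n^2$ edges of $G$ shows that with probability at least $1 - n^{-C^{1/2}}$ every $e \in E(G)$ lies in $(1 \pm \delta)D$ hyperedges of $\cH$, for a small constant $\delta \ll \gamma$. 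I would condition on this event. For $v \in V(G)$ let $S_v \subseteq E(G) = V(\cH)$ be the set of edges of $G$ incident to $v$, so $|S_v| = d_G(v) = (1 \pm \eta)pn \ge N^{1/3}$, and put $\cF \coloneqq \{S_v : v \in V(G)\}$, which satisfies $|\cF| = n \le N^{100}$. Then I would apply Lemma~\ref{prop:pseudomat} to $\cH$ and $\cF$, with $N$, $\delta$, $D$, and a constant $\gamma' \coloneqq 31\gamma/32 \in (15\gamma/16, \gamma)$ playing the roles of $n$, $\delta$, $D$, and $\gamma$ (the degree condition $D \ge C_{\ref*{prop:pseudomat}}\log N$ holds as $C$ is large relative to $1/p$ and $\gamma$), obtaining a matching $M$ of $\cH$ with $4\gamma'|S_v|/5 \le |S_v \setminus V(M)| \le \gamma'|S_v|$ for every $v \in V(G)$.

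Finally, let $\cT$ be the collection of triangles of $\cT_0'$ corresponding to the hyperedges of $M$. Since $M$ is a matching, these triangles are pairwise edge-disjoint, and $\cT \subseteq \cT_0' \subseteq \randt(G, q)$. An edge $e$ incident to $v$ remains in $G' \coloneqq G - \bigcup_{K \in \cT} E(K)$ exactly when $e \notin V(M)$, so $d_{G'}(v) = |S_v \setminus V(M)|$, which lies in $[\,4\gamma'|S_v|/5,\ \gamma'|S_v|\,] \subseteq [\,3\gamma pn/4,\ \gamma pn\,]$, where the last inclusion uses $|S_v| = (1 \pm \eta)pn$, the value $\gamma' = 31\gamma/32$, and $\eta$ sufficiently small (which holds since $\eta \ll p \le 1$). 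This is exactly the assertion, and the only event discarded has probability at most $n^{-C^{1/2}}$.

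The substance of the corollary lies entirely in Lemmas~\ref{lem:regulark3} and \ref{prop:pseudomat}; the only point genuinely needing care is the order of operations. One cannot apply Lemma~\ref{prop:pseudomat} to the triangles of $\randt(G,q)$ directly, since that family is both too sparse and structureless, so one must first build the near-regular collection $\cT_0$ inside $G$ and sparsify afterwards, checking via Chernoff's bound that near-regularity of the link hypergraph survives the sampling with failure probability at most $n^{-C^{1/2}}$ — which is precisely why the hypothesis demands $1/C \ll p$ rather than merely $C$ large.
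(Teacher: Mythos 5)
Your proposal is correct and follows essentially the same route as the paper: apply Lemma~\ref{lem:regulark3} to get a deterministic $(n^{-1/3},p/2)$-regular family $\cT_0$, sparsify to $\cT_0\cap\randt(G,C\log n/n)$, form the $3$-uniform linear hypergraph on $E(G)$, verify near-regularity via Chernoff and a union bound, and finish with Lemma~\ref{prop:pseudomat} applied to $\cF = \{E_v\}$. The only cosmetic difference is your choice $\gamma'=31\gamma/32$ versus the paper's $99\gamma/100$; both choices are compatible with $\eta\ll p\le 1$, and your checks of the hypotheses of Lemma~\ref{prop:pseudomat} (linearity, $|S_v|\ge N^{1/3}$, $D\ge C\log N$ via $1/C\ll p$) match the paper's.
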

\COMMENT{\begin{proof}Let us consider another parameter $\delta$ such that $1/C \ll \delta \ll \gamma$.
Applying Lemma~\ref{lem:regulark3} to $G$, we obtain a $(n^{-1/3} , p/2)$-regular collection $\cT_0$ of triangles.
Now let us consider a random 3-uniform hypergraph $\cH_{\rm aux}$ such that $V(\cH_{\rm aux}) = E(G)$ and $E(\cH_{\rm aux}) = \cT_0 \cap \randt(G , C \log n / n)$. 
For each $e \in E(G)$, since $\mathbb{E}d_{\cH_{\rm aux}}(e) = (1 \pm n^{-1/3})(p/2)^2 C \log n$, by Lemma~\ref{lemma:chernoff}, with probability at least $1 - n^{-C^{1/2}}$, we have $d_{\cH_{\rm aux}}(e) = (1 \pm \delta) (p/2)^2 C \log n$, so $\cH_{\rm aux}$ is almost regular.
For each vertex $v \in V(G)$, let $E_v$ be the set of edges incident to $v$ in $G$, and $\cF \coloneqq \{ E_v \: : \: v \in V(G) \}$. Note that since $G$ is $(\eta, 4, p)$-typical, for any $v \in V(G)$, $|E_v| = (1 \pm \eta)pn$. Moreover, $|V(\cH _{\rm aux})| = (1 \pm \eta)pn^2 / 2$.
Thus we can apply Lemma~\ref{prop:pseudomat} to $\cH_{\rm aux}$ and $\cF \subseteq 2^{V(\cH_{\rm aux})}$ with $99 \gamma/100$ playing the role of $\gamma$ to obtain the desired set $\cT \subseteq \cT_1$ of edge-disjoint triangles in $\cH_{\rm aux}$. 
\end{proof}}

\subsection{Covering down edges inside the parts}\label{sec:sec3}
We need the following ``randomized version" of~\cite[Lemma 3.8]{BGKLMO20} to prove Theorem~\ref{thm:red1}. More precisely, we use it to cover all of the edges inside each part (of the equitable partition of $[n]$) which do not lie inside a small subset of vertices.

\begin{lemma}\label{lem:coverdown}
Let $1/n_0 \ll 1/C \ll \eps_1 \ll \eps_2 \ll \eps_3 \ll 1$, and let $n \ge n_0$. Let $G$ be an $n$-vertex graph, let $U \subseteq V(G)$ be a subset with $|U| = \lfloor \eps_3 n \rfloor$, and suppose that $\delta(G) \geq (1-\eps_1)n$ holds and $d_G(v)$ is even for each $v \in V(G) \setminus U$. 
With probability at least $1 - n^{-C^{1/6}}$, there exists a set $\cT \subseteq \randt(G , C \log n/n)$ of edge-disjoint triangles such that the subgraph $G^* \coloneqq G - \bigcup_{K \in \cT} E(K)$ satisfies the following.
\begin{enumerate}[(\ref*{lem:coverdown}:a), topsep = 6pt]
    \item \label{emptyVminusUcdown} For every vertex $v \in V(G) \setminus U$, we have $d_{G^*}(v) = 0$.
    \item \label{denseU} For every vertex $v \in U$, we have $d_{G^*}(v) \geq |U| - 2 \eps_2 n$.
\end{enumerate}
\end{lemma}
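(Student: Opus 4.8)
The plan is to prove Lemma~\ref{lem:coverdown} in two phases: an approximate cover-down using random triangles, followed by a finishing step that cleans up the leftover using triangles that all meet $U$. Throughout, set an auxiliary hierarchy $1/C \ll \delta_1 \ll \delta_2 \ll \eps_1 \ll \eps_2 \ll \eps_3$ and split the random set $\randt(G, C\log n/n)$ into a few independent copies $\randt(G, C_1\log n/n), \randt(G, C_2 \log n/n),\dots$ with $C_1 + C_2 + \cdots \le C$, one for each phase.

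\textbf{Phase 1: approximate cover-down via the nibble.} First I would restrict attention to $G - E(G[U])$; since $\delta(G) \ge (1-\eps_1)n$, this graph is $(\eta, 4, 1)$-typical for a suitable $\eta$ depending only on $\eps_1$ (all relevant co-degrees are $n - O(\eps_1 n) = (1\pm O(\eps_1))n$). I would apply Corollary~\ref{cor:almostdecomp} to $G - E(G[U])$ with parameter $p = 1$ and leftover parameter $\gamma$ chosen with $\eps_1 \ll \gamma \ll \eps_2$, obtaining (with probability $\ge 1 - n^{-C^{1/2}}$) a set $\cT_1$ of edge-disjoint triangles in $\randt(G, C_1\log n/n)$ so that $G_1 \coloneqq (G - E(G[U])) - \bigcup_{K\in\cT_1}E(K)$ satisfies $3\gamma n/4 \le d_{G_1}(v) \le \gamma n$ for every $v$. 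Since $\cT_1$ uses no edge of $G[U]$, every vertex of $U$ still has $d_{G_1}(v, V(G)\setminus U) \ge |V(G)\setminus U| - O(\eps_1 n) - \gamma n \cdot(\text{something})$... more carefully, $d_{G-E(G[U])}(v) \ge n - |U| - \eps_1 n$ for $v \in U$, and we remove at most $\gamma n$ from it, so $d_{G_1}(v) \ge n - |U| - \eps_1 n - \gamma n$ for $v\in U$; this will be comfortably more than $|U| - \eps_2 n$ is needed later, once we also account for Phase 2. The point is that all remaining degrees are now $O(\gamma n) = o(\eps_2 n)$ at vertices outside $U$, and still nearly full at vertices of $U$.

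\textbf{Phase 2: finishing with $U$-triangles.} Now I would cover the remaining edges of $G_1$ — which has maximum degree $\le \gamma n$ — using only triangles of the form $xyw$ with $w \in U$. The key structural input is that $|U| = \lfloor\eps_3 n\rfloor \gg \gamma n$, so every edge $xy$ of $G_1$ with $x, y \notin U$ has at least $|U| - 2\eps_1 n$ common neighbours in $U$ inside $G$; edges incident to $U$ are handled similarly. The natural tool is a random-greedy / pseudorandom-matching argument: build an auxiliary hypergraph whose vertices are the edges of $G_1$ to be covered (together with a budget of edges from $G$ to $U$ that may be used) and whose hyperedges are the admissible $U$-triangles in the random set $\randt(G, C_2\log n/n)$, then either invoke Lemma~\ref{prop:pseudomat} directly or run a short nibble to cover all but a tiny fraction, and finish the last few edges by hand using that each uncovered edge still has $\Omega(|U|)$ available completions. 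One must also respect the parity hypothesis: $d_G(v)$ is even for $v \notin U$, and each triangle removes an even number ($0$ or $2$) of edges at such a vertex, so at the end $d_{G^*}(v)$ is even; combined with the degree drop to below $2$ this forces $d_{G^*}(v) = 0$ — so I should arrange the finishing step to reduce every degree outside $U$ to at most $1$, whence parity gives exactly $0$, yielding \ref{emptyVminusUcdown}. For \ref{denseU}, each vertex $w\in U$ loses at most one edge per triangle through it that we select, and the total number of such triangles at $w$ is at most (number of covered edges at $w$) $+$ (number of covered edges using $w$ as an apex); a Chernoff bound on the random set plus the sparsity of $G_1$ bounds this by $\eps_2 n$, giving $d_{G^*}(w) \ge d_G(w) - \eps_2 n - (\text{Phase 1 loss}) \ge |U| - 2\eps_2 n$ after replacing $\eps_2$ by $\eps_2/2$ in the intermediate bounds.

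\textbf{Main obstacle.} The delicate point is Phase 2: one needs to cover the last few leftover edges while simultaneously (i) keeping triangles edge-disjoint, (ii) keeping all chosen triangles inside the random set, (iii) not overloading any vertex of $U$, and (iv) landing on the correct parity at vertices outside $U$. A clean way to sidestep the parity bookkeeping is to note that the edges incident to a fixed vertex $v\notin U$ in $G_1$ can be paired up (there is an even number of them, at most $\gamma n$), and to insist that the cover-down triangles at $v$ respect a fixed such pairing — i.e. for each pair $\{vx, vy\}$ we want a triangle $vxy$ or, if $xy\notin E$, a short path of two triangles; doing this globally is essentially a near-perfect-matching problem in a suitable auxiliary hypergraph, again amenable to Lemma~\ref{prop:pseudomat} after checking the almost-regularity hypothesis, which holds because $G_1$ inherits enough quasirandomness from $G$. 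I expect verifying that this auxiliary hypergraph is almost-regular with degrees $\ge C\log n$ — so that Lemma~\ref{prop:pseudomat} applies with the weak degree requirement — to be the technically heaviest part, but it follows the same template as the proof of Corollary~\ref{cor:almostdecomp}.
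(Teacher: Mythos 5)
Your plan has a genuine gap: you have not set aside a \emph{reservoir} of edges from $V(G)\setminus U$ to $U$, and without it Phase~2 is under-budgeted. You apply Corollary~\ref{cor:almostdecomp} to $G-E(G[U])$, which nibbles \emph{all} cross edges, so afterwards every $w\in U$ has $d_{G_1}(w)\le\gamma n$ and all of those edges go to $V(G)\setminus U$; the total number of surviving cross edges is therefore at most $|U|\cdot\gamma n\approx\gamma\eps_3 n^2$. On the other hand, $G_1$ still has roughly $\gamma n^2/2$ leftover edges inside $V(G)\setminus U$, and covering each such edge $xy$ by a triangle $xyw$ with $w\in U$ consumes two \emph{fresh} cross edges $xw,yw$ — about $\gamma n^2$ cross edges in total. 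That is a shortfall by a factor of $1/\eps_3$, and your claim that each uncovered edge has $\Omega(|U|)$ available completions is false: the number of $w\in U$ with $xw,yw\in E(G_1)$ is at most $d_{G_1}(x,U)\le\gamma\eps_3 n\ll|U|$. The paper's proof fixes this by first choosing, for each $w\notin U$, a random reservoir $U_w\subseteq N_G(w)\cap U$ with $|U_w|\approx\eps_2\eps_3 n$, withholding the reservoir edges $R=\{wv:v\in U_w\}$ from the nibble, and applying Corollary~\ref{cor:almostdecomp} to $G-R-E(G[U])$. The $\approx\eps_2\eps_3 n^2$ reservoir edges then comfortably cover the $\approx\eps_1 n^2$ leftover inside edges since $\eps_1\ll\eps_2\eps_3$.

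The reservoir serves a second purpose you also miss: it gives the leftover sets $V_w=N_{G_{\rm rem}}(w)\subseteq U$ the quantitative size, pairwise-intersection, and bounded-load properties (deterministically, after a Chernoff bound on the random choice of $U_w$) that are exactly the hypotheses of Lemma~\ref{lem:coverdown_reservoir}. The finishing step in the paper is then clean: after all inside edges are covered, each $w\notin U$ has an even number of leftover cross edges, which are paired off by edge-disjoint perfect matchings inside $G[U]$. Your ``reduce degree outside $U$ to at most $1$ and use parity'' and ``if $xy\notin E$ use a short path of two triangles'' heuristics do not supply the edge-disjointness or the degree control at $U$ that Lemma~\ref{lem:coverdown_reservoir} provides, and the intersection conditions it needs would not be available without the reservoir (the sets $V_w$ left over from an unstructured Phase~2 have no controlled overlaps). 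Finally, a smaller point: $G-E(G[U])$ is not $(\eta,4,1)$-typical for $\eta$ depending only on $\eps_1$ — vertices of $U$ lose $\approx\eps_3 n$ degree, so you must take $\eta\ge\eps_3$; this is fine for Corollary~\ref{cor:almostdecomp} (it only needs $\eta\ll 1$), but should be stated correctly.
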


The proof of Lemma~\ref{lem:coverdown} is similar to the proof of~\cite[Lemma 3.8]{BGKLMO20}.
The main ingredient of the proof of~\cite[Lemma 3.8]{BGKLMO20} is~\cite[Lemma 3.10]{BGKLMO20}. Here we need a randomized version of~\cite[Lemma 3.10]{BGKLMO20}, Lemma~\ref{lem:coverdown_reservoir}, whose proof is very similar to the proof of~\cite[Lemma 3.10]{BGKLMO20} -- the main difference is that we apply Lemma~\ref{lem:manymatchings} instead of finding edge-disjoint matchings greedily. 

\begin{proof}[Proof of Lemma~\ref{lem:coverdown}]
For each $w \in V(G) \setminus U$, let $U_w \subseteq N_G(w) \cap U$ be a set obtained by choosing each $v \in N_G(w) \cap U$ independently at random with probability $\eps_2$. By Lemma~\ref{lemma:chernoff}, there exists a family $\{ U_w \}_{w \in V(G) \setminus U}$ of subsets such that
\begin{itemize}
    \item[(i)] $3 \eps_2 \eps_3 n / 4 \leq |U_w| \leq 5 \eps_2 \eps_3 n / 4$ for each $w \in V(G) \setminus U$, 
    \item[(ii)] $3 \eps_2^2 \eps_3 n / 4 \leq |U_{w_1} \cap U_{w_2}| \leq 5 \eps_2^2 \eps_3 n/ 4$ for distinct $w_1 , w_2 \in V(G) \setminus U$, and
    \item[(iii)] each vertex $u \in U$ is contained in at most $\eps_2 n$ sets in $\{ U_w \}_{w \in V(G) \setminus U}$.\COMMENT{$u$ is adjacent to $\eps_2|V(G) \setminus U| = \eps_2(1-\eps_3)n$ in expectation, so Lemma~\ref{lemma:chernoff} gives an upper bound of $\eps_2n$.}
\end{itemize}

Let $R \coloneqq \{wv : w \in V(G) \setminus U \text{ and } v \in U_w\}$ be a reservoir of edges.
For every $i \in [3]$, let $\cS^i$ be the set of triangles in $G^{(3)}$ which contain exactly $i$ vertices in $V(G) \setminus U$. Then $\cS^1, \cS^2, \cS^3$ are disjoint subsets of $G^{(3)}$. 
We now define subsets of $\cS^1$ and $\cS^2$ as follows. For every vertex $v \in V(G) \setminus U$, let $\cS^1_{v}$ be the set of triangles in $\cS^1$ containing $v$. Then $\{ \cS^1_{v} : v \in V(G) \setminus U \}$ is a partition of $\cS^1$.
Let $\cS^2_{\mathrm{res}} \coloneqq \{ \{u,v,w\}: uv \in E(G[V(G) \setminus U]), w \in U, uw, vw \in R\}$, and let $\cS^2_{\mathrm{nres}}  \coloneqq \{ \{u,v,w\}: uv \in E(G[V(G) \setminus U]), w \in U, uw, vw \in E(G) \setminus R \}$. Clearly, $\cS^2_{\mathrm{res}} \sqcup \cS^2_{\mathrm{nres}} \subseteq \cS^2$. 
Moreover, for every edge $uv \in E(G[V(G) \setminus U])$, let $\cS^2_{uv, \mathrm{res}}$ be the set of triangles in $\cS^2_{\mathrm{res}}$ containing $uv$. Then $ \{ \cS^2_{uv, \mathrm{res}} : uv \in E(G[V(G) \setminus U]) \}$ is a collection of disjoint subsets of $\cS^2_{\mathrm{res}}$.

To construct our desired collection of edge-disjoint triangles $\cT \subseteq \randt(G , \frac{C \log n}{n})$, we first expose the triangles in $\cS^3(\frac{C \log n}{n})\cup \cS^2_{\mathrm{nres}} (\frac{C \log n}{n})$ to cover most of the edges in $G - R- E(G[U])$ using a set $\cT'$ of edge-disjoint triangles, and then we expose the triangles in $\cS^2_{uv, \mathrm{res}} (\frac{C \log n}{n})$ for every leftover edge $uv \in E(G[V(G) \setminus U])$ to cover all remaining edges in $G[V(G) \setminus U]$ using a set $\cT''$ of edge-disjoint triangles, and finally, we expose the triangles in $\cS^1_v (\frac{C \log n}{n})$ for all $v \in V(G) \setminus U$ to cover all remaining edges in $G - G[U]$ using a set $\cT_{\rm rem}$ of edge-disjoint triangles.

Let $G' \coloneqq G - R - E(G[U])$. Then for each $u \in V(G) \setminus U$, we have $d_{G'}(u) = d_G (u) - |U_u| \geq (1 - \eps_1 - \eps_2)n$,\COMMENT{we used (i) here, and that $|U_u| \le 5 \eps_2 \eps_3 n/4 \le \eps_2 n$} and for each $v \in U$, we have\COMMENT{For the last inequality below, namely $d_G(v) - |U| - |\{ w \in V(G) \setminus U \: : \: v \in U_w \}| 
\geq (1 - \eps_1 - \eps_2 - \eps_3)n$, we used $d_G(v) \ge (1-\eps_1)n$, $|\{ w \in V(G) \setminus U \: : \: v \in U_w \}| \le \eps_2n$ by (iii) and that $|U| \le \eps_3 n$.} 
\begin{equation*}
    d_{G'}(v) \geq d_G(v) - |U| - |\{ w \in V(G) \setminus U \: : \: v \in U_w \}| \overset{\mathrm{(iii)}}{\geq} (1 - \eps_1 - \eps_3 - \eps_2)n.
\end{equation*}

Thus, exposing triangles in $\cS^3(\frac{C \log n}{n})\cup \cS^2_{\mathrm{nres}} (\frac{C \log n}{n})$, by applying Corollary~\ref{cor:almostdecomp} to $G'$ with $\eps_1$, $1$ and $4(\eps_1 + \eps_2 + \eps_3)$ playing the roles of $\gamma$, $p$ and $\eta$, respectively, with probability at least $1 - n^{-C^{1/2}}$, there exists $\cT' \subseteq {G'}^{(3)} \cap ( \cS^3(\frac{C \log n}{n})\cup \cS^2_{\mathrm{nres}} (\frac{C \log n}{n}))$ such that for $G'' \coloneqq G' - \bigcup_{K \in \cT'} E(K)$,
\begin{itemize}
    \item[(a)] all triangles in $\cT'$ are edge-disjoint, and 
    \item[(b)] $\eps_1 n / 2 \leq \delta(G'') \leq \Delta(G'') \leq \eps_1 n$.
\end{itemize}

Now we will expose triangles in $\cS^2_{\mathrm{res}} (\frac{C \log n}{n})$ and show that with probability at least $1 - n^{-C^{1/2}}$ there exists a map $\psi : E(G''[V(G) \setminus U]) \to U$ such that for each $vw \in E(G''[V(G) \setminus U])$, $\psi(vw) \in U_v \cap U_w$, $\{v,w,\psi(vw)\} \in \cS^2_{\mathrm{res}}(\frac{C \log n}{n})$, and $\psi(e) \ne \psi(e')$ if $e \cap e' \ne \varnothing$.

To that end, we consider edges $vw \in E(G''[V(G) \setminus U])$, one by one, and construct $\psi$ by assigning $\psi(vw)$ to an element of $U_v \cap U_w$ greedily as follows. 
Note that there are at most $2\Delta(G'') \leq 2 \eps_1 n$ edges which intersect $vw$, so there are at most $2 \eps_1 n$ vertices in $U_v \cap U_w$ already assigned to these edges. 
Since $|U_v \cap U_w| \geq 3\eps_2^2 \eps_3 n / 4$ by (ii), we have at least $3 \eps_2^2 \eps_3 n / 4 - 2\eps_1 n > \eps_2^2 \eps_3 n / 2$ candidates $u \in U_v \cap U_w$ which can be assigned to $\psi(vw)$, and for a given $vw \in E(G''[V(G) \setminus U])$ the probability that all triangles $\{v,w,u\}$ where $u$ is a candidate for $\psi(vw)$ are not in $\cS^2_{vw, \mathrm{res}}(\frac{C \log n}{n})$ is
$(1 - \frac{C \log n}{n})^{\eps_2^2 \eps_3 n / 2} \leq n^{- C \eps_2^2 \eps_3 / 2}$,
so by a union bound, with probability at least $1 - |E(G'')| n^{-C \eps_2^2 \eps_3 / 2} \geq 1 - n^{-C^{1/2}}$, the desired map $\psi$ exists. Note that $\cT'' \coloneqq \{ \{ v,w,\psi(vw) \} \: : \: vw \in E(G''[V(G) \setminus U])\}$ is a collection of edge-disjoint triangles. Consider
\begin{equation*}
    G_{\rm rem} \coloneqq (G'' \cup R) - \bigcup_{K \in \cT''} E(K).
\end{equation*}

Note that all edges in $G_{\rm rem}$ have one endpoint in $V(G) \setminus U$ and the other endpoint in $U$.\COMMENT{$G_{\rm rem}$ consists of leftover among crossing edges and the remaining edges of the reservoir $R$} Let $V_w \coloneqq N_{G_{\rm rem}}(w)$ for each $w \in V(G) \setminus U$. Since $w$ has even degree in $G$ and $G_{\rm rem}$ can be obtained from $G$ by deleting edge-disjoint triangles, the vertex $w$ also has even degree in $G_{\rm rem}$, so $|V_w|$ is even.
Note that $|V_w| \geq |U_w| - d_{G''}(w) \geq \eps_2 \eps_3 n / 2 \geq \eps_2 |U|/4$ by (i) and (b). Moreover, for distinct $w_1, w_2 \in V(G) \setminus U$, $|V_{w_1} \cap V_{w_2}| \leq |U_{w_1} \cap U_{w_2}| + d_{G''}(w_1) + d_{G''}(w_2) \leq 3 \eps_2^2 \eps_3 n / 2 \leq 3 \eps_2^2 |U|$ by (ii) and (b).
For every $w \in V(G) \setminus U$, we partition $V_w$ into two equal-sized sets $V_w'$ and $V_w''$.
Let $G_w$ be the random bipartite graph with bipartition $\{V_w',V_w''\}$ such that for every $u \in V_w'$ and $v \in V_w''$, $uv \in E(G_{w})$ if and only if $\{u,v,w \} \in \cS^1_w(\frac{C \log n}{n})$.

For every $u \in U$, the vertex $u$ lies in at most $2\eps_2 \eps_3^{-1} |U|$ sets in $\{ V_w \}_{w \in V(G) \setminus U}$, since
\begin{equation}\label{eqn:deg_u_rem}
    d_{G_{\rm rem}}(u) \leq d_{R}(u) + d_{G''}(u) \overset{\mathrm{(iii)}}{\leq} \eps_2 n + d_{G''}(u) \overset{(b)}{\leq} (\eps_1 + \eps_2)n \leq 2 \eps_2 \eps_3^{-1} |U|.
\end{equation}
Applying Lemma~\ref{lem:coverdown_reservoir} with $|U|$, $\eps_2 \eps_3^{-1}$, $|V(G) \setminus U|$, and $\{ G_{\rm rem}[V_w' , V_w''] \}_{w \in V(G)\setminus U}$ playing the roles of $n$, $\rho$, $m$, and $\{H_i\}_{i \in [m]}$\COMMENT{Note that $\eps_2 \gg (\eps_2 \eps_3^{-1})^{4/3}$ and $\eps_2^2 \ll (\eps_2\eps_3^{-1})^2$ so for $w \in V(G) \setminus U$,
$|V_w| \ge \eps_2 |U|/4 \gg (\eps_2 \eps_3^{-1})^{4/3} |U|$ and for distinct $w_1, w_2 \in V(G) \setminus U$, $|V_{w_1} \cap V_{w_2}| \leq 3 \eps_2^2 |U| \ll (\eps_2\eps_3^{-1})^2 |U|$. Moreover, every $u \in U$ lies in at most $2 (\eps_2 \eps_3^{-1}) |U|$ sets in $\{ V_w \}_{w \in V(G) \setminus U}$. Thus the conditions of Lemma~\ref{lem:coverdown_reservoir} are satisfied for $\rho = \eps_2\eps_3^{-1}$.}, with probability at least $1 - n^{-C^{1/5}}$, for each $w \in V(G) \setminus U$, we obtain a matching $M_w$ of $G_w$ such that $\{ M_w \}_{w \in V(G) \setminus U}$ is a set of edge-disjoint matchings.
Let
$\cT_{\rm rem} \coloneqq \{ \{u,v,w \} \: : \: w \in V(G) \setminus U, \: uv \in M_w \}$.
Since the matchings $M_w$ for $w \in V(G) \setminus U$ are edge-disjoint, all triangles in $\cT_{\rm rem}$ are edge-disjoint, and $E(G_{\rm rem}) \subseteq \bigcup_{K \in \cT_{\rm rem}} E(K)$. 
Moreover, by~\eqref{eqn:deg_u_rem}, every vertex $u \in U$ lies in at most $2 \eps_2 n$ sets in $\{ V_w \}_{w \in V(G) \setminus U}$, so $u$ lies in at most $2 \eps_2 n$ triangles in $\cT_{\rm rem}$. 
Thus, if $\cT \coloneqq \cT' \cup \cT'' \cup \cT_{\rm rem}$ then $G^* = G - \bigcup_{K \in \cT} T(K)$ satisfies ~\ref{denseU}. Note that the triangles in $\cT$ cover all edges of $G - G[U]$, so $G^*$ satisfies ~\ref{emptyVminusUcdown}. Moreover, $\cT$ exists in $G$ with probability at least $1 - (n^{-C^{1/2}} + n^{-C^{1/2}} + n^{-C^{1/5}}) \geq 1 - n^{-C^{1/6}}$, completing the proof of Lemma~\ref{lem:coverdown}.
\end{proof}

\subsection{Proof of Theorem~\ref{thm:red1}}\label{subsec:main}


\begin{proof}[Proof of Theorem~\ref{thm:red1}]

Choose a new constant $\eps_2$ such that $1/C \ll \eps_2 \ll \eps$.

\setcounter{pf-step}{0}


We first define several subsets of $V(G)$ and disjoint subsets $\cS^1, \cS^2, \cS^3, \cS^4$ of $G^{(3)}$. 
For every $i \in [4]$, in Step $i$, our plan is to expose triangles in $\cS^i (p)$ and find a set $\cT^i \subseteq \cS^i(p)$ with probability at least $1 - n^{-C^{1/5}}$ so that, in the end, $\cT \coloneqq \bigcup_{i=1}^{4}\cT^i$ will be the desired set of edge-disjoint triangles.

If $n = 6t+1$ for some integer $t$, then since $\{V_1 , V_2 , V_3 \}$ is an equitable partition of $V(G)$ we have $|V_1| = |V_2| = 2t$ and $|V_3| = 2t+1$. In this case, let $\eps_3 n \coloneqq \eps n - 1$\COMMENT{We used $\eps_3$ instead of $\eps_1$, to be consistent with $\eps_3$ in Lemma~\ref{lem:coverdown}.}, so that $W_3 \subseteq V_3$ has size $\lfloor \eps_3 n \rfloor + 1$.
Let $v^* \in W_3$ be any vertex, let $U_3 \coloneqq W_3 \setminus \{ v^* \}$, and let $\cS^1 \coloneqq \{ \{u,v,v^* \} : uv \in E(G[V_1]) \cup E(G[V_2]) \}$.

Otherwise if $n = 6t+3$ for some integer $t$, then since $\{ V_1 , V_2 , V_3 \}$ is an equitable partition of $V(G)$, we have $|V_1| = |V_2| = |V_3| = 2t+1$. In this case, let $\eps_3 \coloneqq \eps$, and let $U_3 = W_3 \subseteq V_3$, so that $|W_3| = |U_3| = \lfloor \eps_3 n \rfloor$. Let $\cS^1 \coloneqq \varnothing$.

For both cases ($n = 6t+1$ or $n = 6t+3$), let $U_1 \subseteq V_1$, $U_2 \subseteq V_2$ be arbitrary subsets of size $\lfloor \eps_3 n \rfloor$. 
Let $\cS^2$ be the set of triangles in $\bigcup_{i=1}^{3}G[V_i]$, and let
\begin{align*}
    \cS^3 &\coloneqq \bigcup_{i=1}^{3} \bigcup_{j \in [3] \setminus \{ i \}} \{ \{u,v,w\} : u,v \in U_j,\:w \in U_i \},\\
    \cS^4 &\coloneqq \{ \{u,v,w\} : u \in V_1 , v \in V_2, w \in U_3 \}.
\end{align*}

Note that $\cS^1, \cS^2, \cS^3, \cS^4$ are disjoint subsets of $G^{(3)}$.

\begin{pf-step}\label{step:step1}
Equalising the vertex class sizes.
\end{pf-step}

If $n = 6t+1$ for some integer $t$, 
exposing triangles in $\cS^1(p)$, 
by Lemma~\ref{lem:manymatchings}, with probability at least $1 - n^{-C^{1/3}}$, there exist perfect matchings $M_1 , M_2$ of $G[V_1]$ and $G[V_2]$ respectively such that for each $uv \in M_1 \cup M_2$, $\{u,v,v^*\} \in \cS^1(p)$.
Let $\cT^1 \coloneqq \{ \{u,v,v^*\} \: : \: uv \in M_1 \cup M_2 \}$ and let $G^1 \coloneqq G - \bigcup_{T \in \cT^1}E(T)$. Then $G^1[V_1]$, $G^1[V_2]$ are $(2t-2)$-regular, and $G^1[V_3]$ is $2t$-regular. Note that since $e(G^1[V_3]) = t(2t+1)$ and $e(G^1[V_1]) = e(G^1[V_2]) = t(2t-2)$, we have
\begin{equation}\label{eqn:same_rem}
    e(G^1[V_1]) \equiv e(G^1[V_2]) \equiv e(G^1[V_3]) \mod 3.
\end{equation}

Otherwise if $n = 6t+3$ for some integer $t$, 
let $\cT^1 \coloneqq \varnothing$, and let $G^1 \coloneqq G$. Then $G^1[V_1]$, $G^1[V_2]$, and $G^1[V_3]$ are $2t$-regular.

\begin{pf-step}\label{step:step2}
Covering most of the edges in $G^1[V_1], G^1[V_2]$ and $G^1[V_3]$ with edge-disjoint triangles.
\end{pf-step}

For each $i \in [3]$, we apply Lemma~\ref{lem:coverdown} to $G^1[V_i]$ with $U_i$ playing the role of $U$,\COMMENT{Note that $G^1[V_i]$ is almost complete (with even degrees), so we can pick $\eps_1$ arbitrarily small in Lemma~\ref{lem:coverdown}.} to obtain a subgraph $H_i'$ of $G^1[V_i]$ (by removing edge-disjoint triangles) such that $d_{H_i'}(v)=0$ for all $v \in V_i \setminus U_i$, and $d_{H_i'}(v) \ge |U_i| - 2\eps_2 n \ge |U_i| - 4 \eps_2 \eps_3^{-1}|U_i|$ for all $v \in U_i$.\COMMENT{actually, we get $|U_i|-2 \eps_2 (n/3)$ from Lemma~\ref{lem:coverdown} but that is at least $|U_i|-2 \eps_2 n$.}

We then apply Corollary~\ref{cor:almostdecomp} to $H_i'[U_i]$ with $1, \eps_2^2$ and $16\eps_2 \eps_3^{-1}$ playing the role of $p, \gamma$ and $\eta$ respectively. 
Thus, exposing triangles in $\cS^2(p)$, with probability at least $1 - n^{-C^{1/5}}$, we obtain a set of edge-disjoint triangles 
$\cT^2 \subseteq \cS^2(p)$ such that $G^2 \coloneqq G^1 - \bigcup_{T \in \cT^2} E(T)$ satisfies the following. 
\begin{enumerate}[(\ref*{step:step2}:a), topsep = 6pt]
    \item \label{degVioutsideUi} For every $i \in [3]$ and $v \in V_i \setminus U_i$, $d_{G^2[V_i]}(v) = 0$.
    
    \item \label{degVisdegU} For every $i \in [3]$ and $u \in U_i$, $d_{G^2[V_i]}(u) = d_{G^2[U_i]}(u)$ is even.
\end{enumerate}

Indeed, since $d_{G^1[V_i]}(u)$ is even for each $i \in [3]$ and $u \in V_i$, and $G^2[V_i]$ is obtained from $G^1[V_i]$ by removing edge-disjoint triangles from $G^1[V_i]$, (\ref{step:step2}:b) holds. 


In addition, after applying Corollary~\ref{cor:almostdecomp} to $H_i'[U_i]$ for each $i \in [3]$, we also obtain $\Delta(G^2[U_i]) \leq 2 \eps_2^2 \eps_3 n$. 
Thus $e(G^2[U_i]) \leq \eps_2^2 \eps_3^2 n^2$, and the number of triangles in $\cT^2$ contained in $U_i$ is $(1 \pm 0.001) \eps_3^2 n^2 / 6$. Since~\eqref{eqn:same_rem} holds and $e(G^2[U_i]) \leq \eps_2^2 \eps_3^2 n^2$, by removing $(1 \pm 3\eps_2) \eps_2 \eps_3^2 n^2 / 6$ such triangles induced by $U_i$ from $\cT^2$ randomly for each $i \in [3]$, with probability at least 0.9, we can ensure that $e(G^2[U_1]) = e(G^2[U_2]) = e(G^2[U_3])$ and they are all even. 
Moreover, since we removed $(1 \pm 0.1) \eps_2$-fraction of the triangles contained in $U_i$ from $\cT^2$ at random, this increases the degree of all vertices in $G^2[U_i]$ by $(2 \pm 1/2) \eps_2 |U_i|$ with probability at least 0.9. Hence, we deduce the following.
    
\begin{enumerate}[label=(\ref*{step:step2}:\alph*), start=3, topsep = 6pt]
    \item \label{minmaxdegUi} For every $i \in [3]$, 
    $\eps_2 \eps_3 n / 3 \leq  \delta(G^2[U_i]) \leq \Delta(G^2[U_i]) \leq 3 \eps_2 \eps_3 n$. 
    In particular, $\eps_2 \eps_3^2 n^2 / 10 \leq e(G^2[U_i]) \leq 3\eps_2 \eps_3^2 n^2 / 2$. 
    
    \item \label{eveninUi} $e(G^2[U_1]) = e(G^2[U_2]) = e(G^2[U_3])$ and it is even.\COMMENT{This is necessary to ensure~\eqref{eqn:ineq_defi} in the next step.}
\end{enumerate}


\begin{pf-step}\label{step:step3}
Covering the remaining edges in $G^2[U_1]$, $G^2[U_2]$ and $G^2[U_3]$.
\end{pf-step}

In this step our objective is to find a set $\cT^3$ of edge-disjoint triangles satisfying the following with probability at least $1 - n^{-C^{1/5}}$ by exposing triangles in $\cS^3(p)$.

\begin{enumerate}[(\ref*{step:step3}:a), topsep = 6pt]
\item Every $T \in \cT^3$ lies in $\cS^3(p)$, and is of the form $\{u,v,w \}$, where $w \in U_i$ and $uv \in E(G^2[U_s])$ for some distinct $i,s \in [3]$. 
We call $w$ the \emph{centre vertex} of the triangle $T$ and call the triangle $T$ an \emph{$s$-type} triangle.

\item \label{coverremedgesUi} Every edge in $\bigcup_{i=1}^{3}E(G^2[U_i])$ is contained in one of the triangles in $\cT^3$, and the subgraph $G^3 \coloneqq G^2 - \bigcup_{T \in \cT^3}E(T)$ satisfies the \emph{divisibility condition}, i.e., for any $i \in [3]$, distinct $j_1, j_2 \in [3] \setminus \{i \}$, and $u \in V_i$,
    $d_{G^3}(u, V_{j_1}) = d_{G^3}(u, V_{j_2})$,
or equivalently, $d_{G^3}(u, U_{j_1}) = d_{G^3}(u, U_{j_2})$ for each $u \in U_i$.
\end{enumerate}

For each $i \in [3]$, in the rest of this step we always denote $j,k \in [3] \setminus \{ i \}$ to be the indices satisfying $j < k$. 
Now consider a random partition of $E(G^2[U_i])$ into two parts $E_{i,j}$ and $E_{i,k}$ of the same size (which is possible since $e(G^2[U_i])$ is even by ~\ref{eveninUi}); for each $uv \in E_{i,j}$ (or $uv \in E_{i,k}$), the edge $uv$ will be covered by a triangle of the form $\{ u,v,w \}$ for some $w \in U_j$ ($w \in U_k$, respectively).

For each $u \in U_i$, let us define $\defi(u)$ as the number of edges in $E_{i,k}$ incident to $u$ minus the number of edges in $E_{i,j}$ incident to $u$. Since $d_{G^2[U_i]}(u)$ is even by ~\ref{degVisdegU}, $\defi(u)$ is also even. Also note that
\begin{equation}\label{eqn:ineq_defi}
    \sum_{u \in U_i} \defi(u) = 2(|E_{i,k}| - |E_{i,j}|) = 0.
\end{equation}

Since we randomly partitioned $E(G^2[U_i])$ into sets $E_{i,j}, E_{i,k}$ of the same size, with nonzero probability we have $|\defi(u)| \leq n^{2/3}$ for every $u \in U_i$.
To ensure (\ref{step:step3}:b), we will construct $\cT^3$ to satisfy
\begin{equation}\label{cond:def}
    t_j(u,\cT^3) = t_k(u,\cT^3) + \frac{\defi(u)}{2}
\end{equation}
for each $u \in U_i$, where $t_j(u,\cT^3)$ (or $t_k(u,\cT^3)$) denotes the number of $j$-type ($k$-type, respectively) triangles in $\cT^3$ with centre vertex $u$. Note that ~\eqref{cond:def} indeed implies ~\ref{coverremedgesUi}.


Let $q \coloneqq \lfloor 10 \eps_2 \eps_3 n \rfloor$. By (\ref{step:step2}:c), Vizing's theorem, and Lemma~\ref{lem:equitable}, there exists an integer $\ell$ such that for all $i \in [3]$ and $s \in [3] \setminus \{i \}$, the edges in $E_{i,s}$ can be properly coloured with $q$ colours such that each colour class has size either $\ell$ or $\ell + 1$;\COMMENT{By (\ref{step:step2}:c) and Vizing's theorem, the chromatic number of the graph formed by the edges in $E_{i,s}$ is at most $3 \eps_2 \eps_3 n + 1$, and this is less than $q$, so Lemma~\ref{lem:equitable} applies.} let $\cC_{i,s}$ be the set of these colour classes.
Note that the average size of colour classes is $|E_{i,s}|/q = e(G^2[U_i]) / (2q)$, so by (\ref{step:step2}:c),
\begin{equation}\label{eqn:size_ell}
    \ell = \frac{e(G^2[U_i])}{2q} \pm 1 \in \left [\frac{\eps_3 n}{250} \: , \: \frac{\eps_3 n}{5} \right].
\end{equation}

By (\ref{step:step2}:d), the number of colour classes of size $\ell$ (size $\ell+1$) in $\cC_{j,i}$ is equal to the number of colour classes of size $\ell$ (size $\ell+1$, respectively) in $\cC_{k,i}$.
Thus, there exists a bijection between the elements of $\cC_{j,i}$ and $\cC_{k,i}$ such that each $M^j \in \cC_{j,i}$ is paired with a unique $N^k \in \cC_{k,i}$ with $|M^j| = |N^k|$.
Let $(M^j_1 , N^k_1) , \dots , (M^j_q, N^k_q)$ be the enumeration of such pairs. 

We will construct $\cT^3$ as the disjoint union of $\cT^3_1, \cT^3_2, \cT^3_3$, where the triangles in $\cT^3_i$ cover all edges in $E_{j,i} \cup E_{k,i} = \bigcup_{r=1}^{q} (M_r^j \cup N_r^k)$. Let us briefly outline our plan for doing so. We sequentially construct $\cT^3_1, \cT^3_2, \cT^3_3$.
In turn, for every $i \in [3]$, we will construct $\cT^3_i$ as the disjoint union of $\cT^3_{i,1} , \dots , \cT^3_{i,q}$, where the triangles in $\cT^3_{i,r}$ cover all edges in $M_r^j \cup N_r^k$ for each $r \in [q]$.
To construct $\cT^3_{i,r}$ for each $r \in [q]$, let $h_r \coloneqq |M_r^j| = |N_r^k| \in \{\ell, \ell+1 \}$, and let
$\cS^3_{i,r} \coloneqq \{ \{u,v,w \} : uv \in M_r^j \cup N_r^k,\:w \in U_i \}$. Note that $\bigsqcup_{i \in [3]} \bigsqcup_{r \in [q]} \cS^3_{i,r} \subseteq \cS^3$.


We will choose sets $C_r, C'_r \subseteq U_i$ of size $h_r$ and  find $\cT^3_{i,r}$ (based on $C_r$ and $C'_r$) satisfying the following property with probability at least $1 - n^{-C^{1/4}}$ by exposing triangles in $\cS_{i,r}^3 (p)$.

\begin{figure}
	\centering
	\includegraphics[scale=0.5]{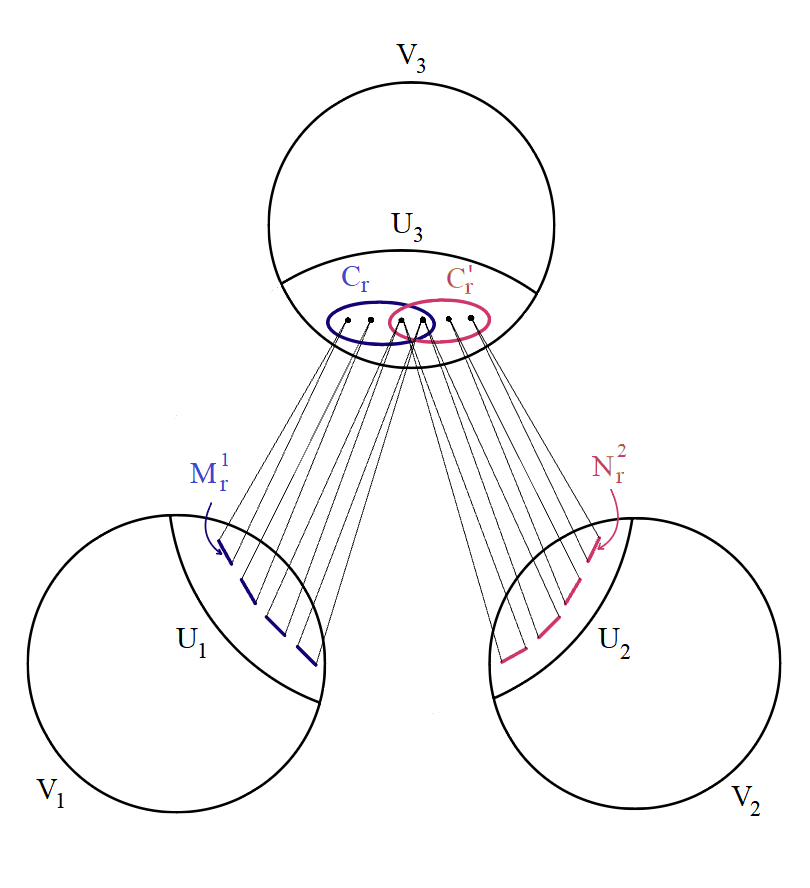}
        \captionsetup{justification=centering}
	\caption{The figure shows how to find $\cT_{3,r}^3$ for $r \in [q]$.}
        \label{fig:step3}
\end{figure}

\begin{enumerate}[label=($\mathrm{P}_r$)]
    \item\label{para:step3} For every $uv \in M_r^j$ there exists a unique $w_{uv} \in C_r$ and for every $u'v' \in N_r^k$, there exists a unique $w'_{u'v'} \in C_r'$, such that the set $\cT^3_{i,r}$ consists of $2h_r$ edge-disjoint triangles of the form
    \begin{equation*}
        \{ \{u,v, w_{uv}\}, \{u',v', w'_{u'v'}\} : uv \in M_r^j,\:u'v' \in N_r^k \} \subseteq \cS^3_{i,r}(p),
    \end{equation*}
    and they are edge-disjoint from the triangles in $\bigcup_{s=1}^{i-1} \cT^3_s \cup \bigcup_{s=1}^{r-1} \cT^3_{i,s}$.
\end{enumerate}

Note that $M_r^j \cup N_r^k \subseteq \bigcup_{T \in \cT^3_{i,r}}E(T)$ for $r \in [q]$, 
so $E_{j,i} \cup E_{k,i} \subseteq \bigcup_{r=1}^{q} \bigcup_{T \in \cT^3_{i,r}} E(T) = \bigcup_{T \in \cT^3_i} E(T)$. Thus $\bigcup_{i=1}^3 E(G^2[U_i]) \subseteq \bigcup_{T \in \cT^3}E(T)$. As discussed later, the choice of sets $C_r, C'_r \subseteq U_i$ for $r \in [q]$ will be made carefully in order to construct $\cT^3$ satisfying the divisibility condition (\ref{step:step3}:b). 
In particular, we need to construct $\cT^3$ so that it satisfies~\eqref{cond:def} for each $u \in U_i$. To that end, for every $r \in \{0\} \cup [q]$, we define a function $w_{i,r} : U_i \to \mathbb{Z}$ which keeps track of how much the degree of a vertex in  $U_i$ needs be compensated for the divisibility condition to be satisfied after constructing $\bigcup_{s=1}^{r}\cT^3_{i,s}$.  More precisely, for every $u \in U_i$ and $r \in [q]$, let $w_{i,0}(u) \coloneqq \defi(u)/2$ and let 
\begin{equation*}
    w_{i,r}(u) \coloneqq \frac{\defi(u)}{2} - t_j (u,\bigcup_{s=1}^{r}\cT^3_{i,s}) + t_k(u,\bigcup_{s=1}^{r}\cT^3_{i,s}),
\end{equation*}
where $t_j(u,\bigcup_{s=1}^{r}\cT^3_{i,s})$ (or $t_k(u,\bigcup_{s=1}^{r}\cT^3_{i,s})$) denotes the number of $j$-type ($k$-type, respectively) triangles in $\bigcup_{s=1}^{r}\cT^3_{i,s}$ with centre vertex $u$.


Note that~\eqref{cond:def} holds if and only if $w_{i,q}(u) = 0$ for each $u \in U_i$.
Let $\norm{w_{i,r}} \coloneqq \sum_{u \in U_i} |w_{i,r}(u)|$ for $r \in \{0\} \cup [q]$. Note that $\norm{w_{i,0}} \leq n^{5/3}/2$. We will inductively construct $\mathcal T_i^3 = \bigsqcup_{r=1}^q \cT^3_{i,r}$ while making sure that for every $0 \leq r \leq q$, we have \begin{equation}\label{eqn:ineq_ws}
    \sum_{u \in U_i}w_{i,r}(u) = 0\:\:\:\text{and}\:\:\:\norm{w_{i,r}} \leq \norm{w_{i,r-1}},
\end{equation}
where $\norm{w_{i,-1}} \coloneqq n^{5/3}/2$. Observe that \eqref{eqn:ineq_ws} holds for $r=0$ by~\eqref{eqn:ineq_defi}.

Fix $s \in [q]$.
Suppose we have already constructed $\cT^3_r$ for $r \in [i-1]$ and $\cT^3_{i,r}$ for $r \in [s-1]$ satisfying \ref{para:step3}, and suppose the functions $w_{i,r} : U_i \to \mathbb{Z}$ satisfy~\eqref{eqn:ineq_ws} for $r \in [s-1]$. We will now construct $\cT^3_{i,s}$
satisfying~\ref{para:step3} for $r = s$, and
show that~\eqref{eqn:ineq_ws} holds for $r = s$.

Assuming that the sets $C_s,C'_s \subseteq U_i$ of size $h_s$ are chosen, let us first construct $\cT^3_{i,s}$. Let $G^2_{i,s} \coloneqq G^2 - \bigcup_{r=1}^{i-1}\bigcup_{T \in \cT^3_r} E(T) - \bigcup_{r=1}^{s-1}\bigcup_{T \in \cT^3_{i,r}}E(T)$.
Then since $\cT^3_r$ for $r \in [i-1]$ and $\cT^3_{i,r}$ for $r \in [s-1]$ satisfy~\ref{para:step3}, for every $i \in [3]$, $y \in [3] \setminus \{i \}$ and $u \in U_i$, we have\COMMENT{For the second inequality below, note that by our construction of $G^2$, every vertex $u \in U_i$ is complete to $U_y$, so $d_{G^2}(u , U_y) = |U_y|$, and $\Delta(G^2[U_i]) \le 3\eps_2 \eps_3 n$ by \ref{minmaxdegUi}, $s \leq q \le 10 \eps_2 \eps_3 n$ and $|U_y| = \lfloor \eps_3 n \rfloor$. Thus $d_{G^2}(u , U_y) - |\Delta(G^2[U_i])| - 2(s-1) \ge |U_y| - 3 \eps_2 \eps_3 n - 20 \eps_2 \eps_3 n \ge (1-25 \eps_2) |U_y|.$}
\begin{equation}\label{eqn:deg_g3}
	d_{G^2_{i,s}}(u , U_y) \geq d_{G^2}(u , U_y) - \Delta(G^2[U_i]) - 2(s-1) \overset{\ref{minmaxdegUi}}{\geq} (1 - 25 \eps_2)|U_y|,
\end{equation}
since there are at most $\Delta(G^2[U_i])$ $i$-type triangles in $\bigcup_{r=1}^{i-1} \cT^3_r \cup \bigcup_{r=1}^{s-1} \cT^3_{i,r}$
containing $u$ which have a centre vertex in $U_y$, and for every $r \in [s-1]$, the set $\cT^3_{i,r}$ contains at most one $y$-type triangle with centre vertex $u$.
Let us define an auxiliary bipartite graph $H_{i,s}$ with bipartition $\{ C_s, M_s^j \}$ and its random subgraph $H_{i,s}(p)$ such that for $w \in C_s$ and $e = uv \in M_s^j$, $\{w,e\} \in E(H_{i,s})$ if and only if $\{u,v,w\}$ is a triangle in $G^2_{i,s}$, and $\{w,e\} \in E(H_{i,s}(p))$ if and only if $\{u,v,w\}$ is a triangle in $G^2_{i,s}$ also lying in $\cS_{i,s}^3 (p)$ (note that $\cS_{i,s}^3 (p)$ is defined before~\ref{para:step3}). Then $H_{i,s}(p)$ is a random subgraph of $H_{i,s}$ where each edge of $H_{i,s}$ is chosen with probability $p$ independently at random.
Similarly, we define another bipartite graph $H_{i,s}'$ and its random subgraph $H_{i,s}'(p)$, with $C_s'$ and $N_s^k$ playing the role of $C_s$ and $M_s^j$, respectively.
By~\eqref{eqn:deg_g3}, both $\delta(H_{i,s})$ and $\delta(H_{i,s}')$ are at least $h_s-50 \eps_2 |U_y| \geq (1 - 12500\eps_2)h_s$ since $h_s \in \{\ell, \ell+1\}$ and $\ell \ge |U_y|/250$ by \eqref{eqn:size_ell}. 
Thus applying Lemma~\ref{lem:manymatchings} to $H_{i,s}$ and $H_{i,s}'$, with probability at least $1 - n^{-C^{1/4}}$, there exist perfect matchings $M_{i,s}$ and $M'_{i,s}$ in $H_{i,s}(p)$ and $H_{i,s}'(p)$ respectively, exposing the triangles in $\cS_{i,s}^3 (p)$; since $\{ \cS_{i,s}^3 : i \in [3] , s \in [q] \}$ are disjoint subsets of $\cS^3$, we do not expose the same triangle in $\cS^3$ again.
Now let $\cT^3_{i,s} \coloneqq \{ \{u,v,w\} : \{uv, w\} \in M_{i,s} \} \cup \{ \{u',v',w'\} : \{u'v', w'\} \in M'_{i,s}\}$. Then $\cT^3_{i,s}$ is a set of $2h_s$ edge-disjoint triangles satisfying~\ref{para:step3} for $r = s$. 
Since they are triangles in $G_{i,s}^2$, they are edge-disjoint from the triangles in $\bigcup_{r=1}^{i-1} \cT_3^r \cup \bigcup_{r=1}^{s-1} \cT^3_{i,r}$, as desired.

Now we describe how to choose the sets $C_s,C_s' \subseteq U_i$ of size $h_s$ such that~\eqref{eqn:ineq_ws} holds for $r = s$. Let $U_{i,s}^+$, $U_{i,s}^-$, and $U_{i,s}^0$ be the sets of vertices $u \in U_i$ with $w_{i,s-1}(u) > 0$, $w_{i,s-1}(u) < 0$, and $w_{i,s-1}(u) = 0$, respectively. Then we have the following cases.
\begin{enumerate}
	\item\label{cond:case1} If $|U_{i,s}^+|,|U_{i,s}^-| \geq h_s$, then choose any disjoint $h_s$-sets $C_s \subseteq U_{i,s}^+$ and $C_s' \subseteq U_{i,s}^-$. 

	\item\label{cond:case2} If $0 < \min(|U_{i,s}^+|,|U_{i,s}^-|) < h_s$, then since~\eqref{eqn:ineq_ws} holds for $r = s-1$, we have $|U_{i,s}^+|,|U_{i,s}^-| > 0$. Choose any $C_s,C_s'$ of size $h_s$ which satisfy:
	\begin{itemize}
	    \item If $|U_{i,s}^+| \leq |U_{i,s}^-|$, then $C_s \setminus C_s' = U_{i,s}^+$, $C_s' \setminus C_s \subseteq U_{i,s}^-$, and $C_s \cap C_s' \subseteq U_{i,s}^- \cup U_{i,s}^0$.
	    
	    \item If $|U_{i,s}^+| > |U_{i,s}^-|$, then $C_s' \setminus C_s = U_{i,s}^-$, $C_s \setminus C_s' \subseteq U_{i,s}^+$, and $C_s \cap C_s' \subseteq U_{i,s}^+ \cup U_{i,s}^0$.
	\end{itemize}

	\item\label{cond:case3} If $|U_{i,s}^+| = |U_{i,s}^-| = 0$, then choose any $h_s$-set $C_s = C_s' \subseteq U_{i,s}^0$. 
\end{enumerate}

Thus, $w_{i,s}(u) = w_{i,s-1}(u) - 1$ for $u \in C_s \setminus C_s'$, $w_{i,s}(u) = w_{i,s-1}(u) + 1$ for $u \in C_s' \setminus C_s$, and $w_{i,s}(u) = w_{i,s-1}(u)$ for any other vertex $u \in U_i$.\COMMENT{This shows that the first part of \eqref{eqn:ineq_ws} holds for $r = s$ since we assumed \eqref{eqn:ineq_ws} holds for $r = s-1$.}
This, and our choice of $C_s$ and $C'_s$ show that $|w_{i,s}(u)| = |w_{i,s-1}(u)| - 1$ for any $u \in (C_s \setminus C_s') \cup (C_s' \setminus C_s)$ and $|w_{i,s}(u)| = |w_{i,s-1}(u)|$ for any other vertex $u$ of $U_i$. Thus \eqref{eqn:ineq_ws} holds for $r=s$ as desired.
Hence, by induction,~\eqref{eqn:ineq_ws} holds for all $r \in [q]$. 
So if $\norm{w_{i,r}} = 0$ for some $r = s_0$, then $\norm{w_{i,r}} = 0$ for all $r > s_0$. Let $s_0 \in [q]$ be the smallest index such that $\norm{w_{i,s_0}} = 0$. 


\begin{claim}
\label{claim:terminate}
We have $s_0 = O(n^{2/3})$.
\end{claim}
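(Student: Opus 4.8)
The plan is to bound $s_0$ by classifying each step $s\in[s_0]$ according to which of (\ref{cond:case1}), (\ref{cond:case2}), (\ref{cond:case3}) is used to pick $C_s$ and $C_s'$, and then bounding the number of steps of each type by $O(n^{2/3})$ separately. Case (\ref{cond:case3}) is used in step $s$ only if $|U_{i,s}^+|=|U_{i,s}^-|=0$, that is, only if $\norm{w_{i,s-1}}=0$; since $\norm{w_{i,r}}$ is non-increasing by~\eqref{eqn:ineq_ws}, this forces $s-1\ge s_0$, so no step of $[s_0]$ is of this type. For case (\ref{cond:case1}), the sets $C_s,C_s'$ are disjoint $h_s$-sets, so $(C_s\setminus C_s')\cup(C_s'\setminus C_s)=C_s\cup C_s'$ has exactly $2h_s$ elements, and hence, using that $|w_{i,s}(u)|=|w_{i,s-1}(u)|-1$ for each $u$ in this set and $|w_{i,s}(u)|=|w_{i,s-1}(u)|$ otherwise (as noted above), $\norm{w_{i,s}}=\norm{w_{i,s-1}}-2h_s\le\norm{w_{i,s-1}}-2\ell$. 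Since $\ell\ge\eps_3 n/250$ by~\eqref{eqn:size_ell} and $\norm{w_{i,0}}\le n^{5/3}/2$, at most $\frac{n^{5/3}/2}{2\eps_3 n/250}=O(n^{2/3})$ steps of $[s_0]$ are of type (\ref{cond:case1}).

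The main point is to bound the number of type-(\ref{cond:case2}) steps. Here the naive estimate $|U_{i,s}^{\pm}|\ge\norm{w_{i,s-1}}/\bigl(2\max_{u\in U_i}|\defi(u)|\bigr)$ only yields geometric decay of $\norm{w_{i,r}}$ by a factor $1-\Theta(n^{-2/3})$ per step, giving $s_0=O(n^{2/3}\log n)$ --- off by a logarithmic factor. To remove the log, I would split the type-(\ref{cond:case2}) steps according to whether $|U_{i,s}^+|\le|U_{i,s}^-|$ and track the quantities $m_r^+\coloneqq\max_{u\in U_i}\max\{w_{i,r}(u),0\}$ and $m_r^-\coloneqq\max_{u\in U_i}\max\{-w_{i,r}(u),0\}$. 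If case (\ref{cond:case2}) is used in step $s$ with $|U_{i,s}^+|\le|U_{i,s}^-|$, then $C_s\setminus C_s'=U_{i,s}^+$, so \emph{every} vertex $u$ with $w_{i,s-1}(u)>0$ is decremented by exactly $1$; moreover no vertex with $w_{i,s-1}(u)\le 0$ becomes positive in step $s$, since a negative entry only moves towards $0$ (via $C_s'\setminus C_s\subseteq U_{i,s}^-$) and a zero entry is never moved. Hence $m_s^+=m_{s-1}^+-1$, while in every other step of $[s_0]$ (type (\ref{cond:case1}), or type (\ref{cond:case2}) with $|U_{i,s}^+|>|U_{i,s}^-|$) positive entries only decrease and no new positive entry is created, so $m_r^+$ does not increase. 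As $m_0^+\le\frac12\max_{u\in U_i}|\defi(u)|\le\frac12 n^{2/3}$ and $m_r^+\ge 0$ always, there are at most $n^{2/3}/2$ steps of $[s_0]$ of type (\ref{cond:case2}) with $|U_{i,s}^+|\le|U_{i,s}^-|$. The symmetric argument with $m_r^-$ --- which decreases by $1$ exactly when case (\ref{cond:case2}) is used with $|U_{i,s}^+|>|U_{i,s}^-|$, because then $C_s'\setminus C_s=U_{i,s}^-$ --- bounds the remaining type-(\ref{cond:case2}) steps by $n^{2/3}/2$.

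Combining the three bounds yields $s_0\le O(n^{2/3})+n^{2/3}/2+n^{2/3}/2=O(n^{2/3})$, as claimed in Claim~\ref{claim:terminate}. The only real obstacle is the one highlighted above: one must observe that in case (\ref{cond:case2}) one whole side of the deficiency vector $w_{i,r}$ is shifted, so that the maximal coordinate of that side drops by a full unit at each such step; combined with the a priori bound $\max_{u\in U_i}|\defi(u)|\le n^{2/3}$ (coming from the random partition of $E(G^2[U_i])$) this directly caps the number of such steps, avoiding the wasteful geometric-series estimate. Everything else is routine bookkeeping with the already-established facts that $\norm{w_{i,r}}$ is non-increasing, that zero entries stay zero, and the explicit sizes of $\ell$ and $h_s$.
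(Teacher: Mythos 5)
The proposal is correct and takes essentially the same approach as the paper: classify the steps by which of cases~\eqref{cond:case1}--\eqref{cond:case3} occurs, rule out case~\eqref{cond:case3} for $s\le s_0$, bound case~\eqref{cond:case1} steps via the drop of at least $2\ell=\Omega(n)$ in $\norm{w_{i,r}}$ per step with $\norm{w_{i,0}}\le n^{5/3}/2$, and bound case~\eqref{cond:case2} steps by observing that the maximum positive (respectively, negative) coordinate of $w_{i,r}$ is non-increasing, starts at most $n^{2/3}/2$, and drops by exactly $1$ at each such step. Your $m_r^{\pm}$ are the same quantities the paper tracks as $\max_{u\in U_{i,s}^{\pm}}|w_{i,s}(u)|$, just with slightly more explicit bookkeeping of why they never increase.
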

\begin{claimproof}
We first show that \eqref{cond:case1} or \eqref{cond:case2} occurs for at most $O(n^{2/3})$ indices $s \in [q]$. Indeed, whenever~\eqref{cond:case1} occurs for $s \in [q]$, then $\norm{w_{i,s}} = \norm{w_{i,s-1}} - 2h_s$, thus by~\eqref{eqn:size_ell} and by the fact that $h_s \ge \ell$,~\eqref{cond:case1} occurs for at most $\norm{w_{i,0}} / (2\ell) = O(n^{2/3})$ indices $s \in [q]$.
Moreover, whenever~\eqref{cond:case2} occurs for $s \in [q]$, either $\max_{u \in U_{i,s}^+} |w_{i,s}(u)| =  \max_{u \in U_{i,s-1}^+} |w_{i,s-1}(u)| - 1$ 
or $\max_{u \in U_{i,s}^-} |w_{i,s}(u)| = \max_{u \in U_{i,s-1}^-} |w_{i,s-1}(u)| - 1$ holds.
Thus,~\eqref{cond:case2} occurs for at most $n^{2/3}$ indices $s \in [q]$ since $|w_{i,s}(u)| \leq |w_{i,0}(u)| \leq n^{2/3}/2$, as desired. Now since \eqref{cond:case3} occurs if and only if $w_{i,s}(u) = 0$ for every $u \in U_i$ (i.e. $\norm{w_{i,s}} = 0$ for some $s \in [q]$), this proves the claim. 
\end{claimproof}

Claim~\ref{claim:terminate} shows that $\norm{w_{i,q}} = 0$ (i.e., $w_{i,q}(u) = 0$ for every $u \in U_i$) since we assumed $q = \lfloor 10 \eps_2 
\eps_3 n\rfloor > s_0$. So $G^3 \coloneqq G^2_{3,q+1} = G^2 - \bigcup_{T \in \cT^3}E(T)$ satisfies the divisibility condition (\ref{step:step3}:b) as desired.


\begin{pf-step}\label{step:step4}
Finishing the proof.
\end{pf-step}
In this last step, we construct the desired subgraph $H$ by removing some edge-disjoint triangles from $G^3$ which cover all remaining edges of $G$ incident to $W_3$.

Consider a partition $\{ \cS^4_{w} \}_{w \in U_3}$ of $\cS^4$, where $\cS^4_w \coloneqq \{ \{ u,v,w \} : u \in V_1, v \in V_2 \}$. Note that, for every $w \in U_3$,
\begin{equation}
\label{eqn:v1v2div}
    d_{G^3}(w , V_1) = d_{G^3}(w , V_2) \geq \lfloor n/3 \rfloor - 25 \eps_2 \eps_3 n.
\end{equation}
Indeed, since the triangles in $\bigcup_{j=1}^{3}\cS^j$ do not contain any edge of $G$ between $V_1 \setminus U_1$ and $V_2 \setminus U_2$, we have $N_{G^3}(w , V_2 \setminus U_2) = V_2 \setminus U_2$ for every $w \in U_3$. Moreover, since $G^3 = G^2_{3,q+1}$ satisfies  (\ref{step:step3}:b) we have $d_{G^3}(w , V_1) = d_{G^3}(w , V_2)$. Thus, since \eqref{eqn:deg_g3} holds (for $i=3$ and $s = q+1$), we have
$d_{G^3}(w , V_2) = d_{G^3}(w , U_2) + d_{G^3}(w , V_2 \setminus U_2) \ge (1-25 \eps_2) |U_2| + |V_2 \setminus U_2| = |V_2| - 25 \eps_2 |U_2| \ge \lfloor n/3 \rfloor - 25 \eps_2 \eps_3 n$.\COMMENT{because $|V_1| = |V_2| = \lfloor n/3 \rfloor$ and $|U_1| = |U_2| = \lfloor \eps_3 n \rfloor$.}

Thus, using~\eqref{eqn:v1v2div} and the fact that $|U_3| \leq \eps_3 n$, exposing triangles in $\cS_w^4 (p)$,
we apply Lemma~\ref{lem:manymatchings} repeatedly
for every $w \in U_3$ to find a set $\{ M_{w} \}_{w \in U_3}$ of edge-disjoint matchings in $G^3$ with probability at least $1 - n^{-C^{1/3}}$, where $M_{w}$ is a perfect matching\COMMENT{By \eqref{eqn:v1v2div}, $|N_{G^3}(w) \cap V_1| = |N_{G^3}(w) \cap V_2| \ge \lfloor n/3 \rfloor - 25 \eps_2 \eps_3 n$. Moreover, the subgraph of $G^3$ induced by $(N_{G^3}(w) \cap V_1) \cup (N_{G^3}(w) \cap V_2)$ has minimum degree at least $|N_{G^3}(w) \cap V_2| - 25 \eps_2 |U_2| - |U_3|$ after removing some of the matchings $M_w$ for $w \in U_3$. Here we used \eqref{eqn:deg_g3} (for $i=3$ and $s = q+1$) and the fact that any vertex in $(N_{G^3}(w) \cap V_1) \setminus U_1$ (or $(N_{G^3}(w) \cap V_2) \setminus U_2$) is complete to $N_{G^3}(w) \cap V_2$ (or $N_{G^3}(w) \cap V_1$ respectively) in $G^3$. Moreover, $|N_{G^3}(w) \cap V_2| - 25 \eps_2 |U_2| - |U_3| \ge |N_{G^3}(w) \cap V_2|-25 \eps_2 \eps_3 n - \eps_3 n \ge |N_{G^3}(w) \cap V_2| - 26 \eps_3 n \ge (1- 100 \eps_3)|N_{G^3}(w) \cap V_2|$, so we can apply Lemma~\ref{lem:manymatchings}.} between $N_{G^3}(w) \cap V_1$ and $N_{G^3}(w) \cap V_2$ such that $\{u,v,w \} \in \cS^4_w(p)$ for every $uv \in M_w$. Let $\cT^4 \coloneqq \{ \{u,v,w \} : w \in U_3,\:uv \in M_{w} \}$, and recall that $\cT = \bigcup_{i = 1}^{4}\cT^i$. Let $H \coloneqq G^3 - \bigcup_{T \in \cT^4}E(T) = G - \bigcup_{T \in \cT}E(T)$. We will show that $H$ is the desired tripartite graph satisfying \ref{deg_red1} and \ref{reg_red1}.

Recall that we defined (in Step 1) that $W_3 = U_3 \cup \{v^* \}$ if $n = 6t+1$ and $W_3 = U_3$ if $n = 6t+3$ for some integer $t$. By \ref{degVioutsideUi} and \ref{coverremedgesUi} we have  $e(H[V_i]) = 0$ for $i \in [3]$, so $H$ is a tripartite graph with parts $V_1, V_2, V_3$. 
Moreover, note that the triangles in $\cT \subseteq \bigcup_{j=1}^{4} \cS^j$ do not contain any edge of $G$ between $V_3 \setminus W_3$ and $V_1 \cup V_2$, so $N_H(u) = V_1 \cup V_2$ for $u \in V_3 \setminus W_3$ (as $H$ is tripartite). 
Since the triangles in $\cT^1, \cT^3, \cT^4$ remove all edges of $G$ between $W_3$ and $V_1 \cup V_2$, we have $d_H(u) = 0$ for $u \in W_3$ as $H$ is tripartite. 
Thus, $H$ satisfies \ref{deg_red1}.

It remains to show that $H$ satisfies \ref{reg_red1}. Since $G^3$ satisfies the divisibility condition (\ref{step:step3}:b), for any vertex $v \in V_1$, we have $d_{G^3}(v , V_2) = d_{G^3}(v , V_3)$. Since triangles in $\cT^4$ are edge-disjoint, and every triangle containing $v \in V_1$ in $\cT^4$ has exactly one vertex in $V_2$ and in $V_3$ and $H = G^3 - \bigcup_{T \in \cT^4}E(T)$, we have\COMMENT{For every $v \in V_1$, whenever we remove an edge from $v$ to $V_3$, we also remove an edge from $v$ to $V_2$ due to the construction of $\cT^4$, so the $H = G^3 - \cup_{T \in \cT^4} E(T)$ satisfies the divisibility condition as well.} $d_{H}(v , V_2) = d_{H}(v , V_3)$ for every $v \in V_1$. By \ref{deg_red1},  for every $v \in V_1$, $d_{H}(v , V_3) = |V_3| - |W_3|$, so we also have $d_{H}(v , V_2) = |V_3| - |W_3|$. Similarly, $d_{H}(v , V_1)= |V_3| - |W_3|$ for every $v \in V_2$, so $H$ also satisfies \ref{reg_red1}, as desired. 
\end{proof}

\section{1-factorization of $K_{2n}$ reduction: Proof of Theorem~\ref{thm:list-reduction}}\label{sect:list-reduction}


In this section, we prove Theorem~\ref{thm:list-reduction}, but it is convenient for us to instead prove the following result concerning triangle decompositions which easily implies Theorem~\ref{thm:list-reduction}. 

\begin{theorem}\label{thm:red2}
Let $1/n_0 \ll 1/C \ll \eps \ll 1$, and let $n \ge n_0$. Let $\{V_1 , V_2 , C_1 , C_2\}$ be an equitable partition of $[4n-1]$ with $|C_1| = n-1$, and let 
$G$ be the graph with $V(G) \coloneqq V_1 \cup V_2 \cup C_1 \cup C_2$ obtained by adding all edges $xy$ with $x \in V_1 \cup V_2$.

Let $C_2' \subseteq C_2$ be any subset of size $\lfloor \eps n \rfloor$. 
If $p \geq C \log n/n$, then with probability at least $1-1/n$, there exists a collection $\cT \subseteq \randt(G , p)$ of edge-disjoint triangles such that $H \coloneqq G - \bigcup_{T \in \cT} E(T)$ is a tripartite graph with the partition $\{ V_1 , V_2 , C_1 \cup C_2 \}$ satisfying the following properties.
\begin{enumerate}[(\ref*{thm:red2}:a), topsep = 6pt]
    
    \item \label{deg_red2} For every $c \in C_1 \cup C_2'$, $d_H(c) = 0$, and every $c \in C_2 \setminus C_2'$ satisfies $N_H(c)=V_1 \cup V_2$. 
    
    \item \label{reg_red2} $H[V_1 \cup V_2]$ is $(|C_2| - |C_2'|)$-regular.
\end{enumerate}
\end{theorem}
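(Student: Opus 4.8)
The plan is to follow the proof of Theorem~\ref{thm:red1} closely, taking advantage of the fact that $C_1 \cup C_2$ is an independent set of $G$. The crucial observation is that since in the desired graph $H$ every vertex of $C_1 \cup C_2'$ must be isolated while every vertex of $C_2 \setminus C_2'$ must retain all $2n$ of its edges to $V_1 \cup V_2$, every triangle of $\mathcal T$ must have the form $\{c,x,y\}$ with $c \in C_1 \cup C_2'$ and $x,y \in V_1 \cup V_2$; such a triangle simultaneously covers the edge $xy$ of $G[V_1 \cup V_2] \cong K_{2n}$. For $c \in C_1 \cup C_2'$ to become isolated, the triangles of $\mathcal T$ with apex $c$ must use each of its $2n$ edges exactly once, i.e.\ they induce a perfect matching $M_c$ of $V_1 \cup V_2$; and a short edge count using \ref{reg_red2} then forces $\bigcup_c M_c$ to consist of exactly all edges inside $V_1$, all edges inside $V_2$, and the edges of some $\lfloor \eps n\rfloor$-regular bipartite graph between $V_1$ and $V_2$ (in particular, no triangle of $\mathcal T$ lies entirely inside $V_1 \cup V_2$). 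Thus the task reduces to producing, inside $\randt(G,p)$, pairwise edge-disjoint perfect matchings $(M_c)_{c \in C_1 \cup C_2'}$ of $V_1 \cup V_2$ with this union --- essentially a $1$-factorization of $K_{2n}$ minus a regular bipartite graph, where colour $c$ is restricted to the random list of pairs $xy$ with $\{c,x,y\} \in \randt(G,p)$ --- and then taking $H$ to be what remains; Theorem~\ref{thm:list-reduction} follows by reading, for each $c$, the triangles with apex $c$ as the edges receiving colour $c$ and $H[V_1,V_2]$ as the $d$-regular subgraph of $K_{n,n}$.

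As in Theorem~\ref{thm:red1} I would fix constants $1/C \ll \eps_1 \ll \eps_2 \ll \eps_3 \ll \eps$ and reservoir sets $U_1 \subseteq V_1$, $U_2 \subseteq V_2$ with $|U_1| = |U_2| = \lfloor \eps_3 n\rfloor$, and build $\mathcal T$ in stages. \emph{First}, using $C_1$ as apex set, cover almost all edges inside $V_1 \setminus U_1$ and almost all edges from $C_1$ to $V_1$ (and symmetrically for $V_2$) by edge-disjoint triangles from $\randt(G,p)$, then clean up the sparse leftover using a reserved subfamily of triangles, in the spirit of Lemma~\ref{lem:coverdown} and its proof: here the point is that, for fixed $c \in C_1$, the triangles $\{c,x,y\}$ with $x,y \in V_1$ form an $(n-1)$-regular linear $3$-uniform hypergraph on the edge set of $G[V_1 \cup C_1]$, so its random sub-hypergraph of triangles present in $\randt(G,p)$ is nearly $((n-1)p)$-regular with $(n-1)p \gg \log n$, and the R\"odl-nibble argument of Lemma~\ref{prop:pseudomat}/Corollary~\ref{cor:almostdecomp} applies. \emph{Second}, using $C_2'$ as apex set (which has far more capacity than is needed for the sparse leftover of the first stage), cover the remaining edges from $C_1$ to $V_1 \cup V_2$, the remaining internal edges of $V_1 \setminus U_1$ and $V_2 \setminus U_2$, and all but an $(n-\lfloor\eps n\rfloor)$-regular bipartite graph of the $V_1$--$V_2$ edges, building the matchings from $\randt(G,p)$ via Lemma~\ref{lem:manymatchings} on auxiliary bipartite graphs (and an equitable proper edge colouring of $K_{n,n}$ as in Step~3 of Theorem~\ref{thm:red1}), together with $O(1)$-scale parity corrections (as in Step~1 of Theorem~\ref{thm:red1}) so that the edges left inside $U_1$ and inside $U_2$ have equal even sizes and the degree conditions needed below hold. \emph{Finally}, cover the edges inside $U_1$ and inside $U_2$ --- and complete the matchings $M_c$ of the last few vertices of $C_2'$ --- by triangles $\{u,v,w\}$ with $uv$ inside $U_1$ and $w \in U_2$, and symmetrically, exactly as in Step~3 of Theorem~\ref{thm:red1}: randomly split the leftover edge sets, extract the triangles from $\randt(G,p)$, and maintain a running deficiency function so that $d_H(v,V_1) = d_H(v,V_2)$ holds for the remaining vertices and $H[V_1,V_2]$ comes out $(|C_2|-|C_2'|)$-regular. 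Since $G[U_1,U_2]$ is complete, the auxiliary matchings in this last stage are easy to find with Lemma~\ref{lem:manymatchings}. One then checks that $H$ is tripartite with parts $V_1, V_2, C_1 \cup C_2$ and satisfies \ref{deg_red2} and \ref{reg_red2}, and a union bound over the polynomially many applications of Lemmas~\ref{prop:pseudomat}, \ref{lem:manymatchings}, \ref{lem:coverdown} and \ref{lemma:chernoff} gives success probability at least $1 - 1/n$.

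The main obstacle is the interplay between parity and randomness in the first two stages. Because $C_1$ has size exactly $n-1$ and $n$ may be odd, $n-1$ matchings each using at most $\lfloor n/2\rfloor$ edges inside $V_1$ cannot cover all $\binom{n}{2}$ edges of $G[V_1]$: there is a forced deficiency of order $n$, and the sparse bipartite graph of straddling edges that is created whenever a vertex of $C_1$ cannot be given an even split of its $V_1$- and $V_2$-edges must likewise be accounted for. All of this, together with the nibble leftovers, must be routed into the reservoirs $U_1, U_2$ and then cleared exactly in the final stage while simultaneously enforcing the degree conditions that make the leftover bipartite graph $(|C_2|-|C_2'|)$-regular --- and every matching used must be extracted from the random set $\randt(G,p)$, so no fixed construction is available. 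Tracking all of these quantities simultaneously is the delicate part; the remaining details are a close transcription of the corresponding parts of the proof of Theorem~\ref{thm:red1}.
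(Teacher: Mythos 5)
Your opening analysis is correct and matches the paper's structure: since $C_1 \cup C_2$ is independent in $G$, every triangle of $\cT$ must have exactly one vertex in $C_1 \cup C_2$; properties~\ref{deg_red2} and \ref{reg_red2} then force every such vertex to lie in $C_1 \cup C_2'$, and a degree count shows that for $v \in V_1 \cup V_2$, $d_H(v, V_1\cup V_2)$ and $d_H(v, C_1\cup C_2)$ must decrease by the same amount $|C_1 \cup C_2'|$, which rules out any triangle contained entirely in $V_1 \cup V_2$. Unfortunately, the construction you then sketch contradicts this constraint twice. In your second stage you propose to use triangles with apex in $C_2'$ to cover ``the remaining edges from $C_1$ to $V_1 \cup V_2$'': this is impossible, because a triangle through an edge $c_1 v$ with $c_1 \in C_1$ must contain $c_1$ and a common neighbour of $c_1$ and $v$, which (as $C_1 \cup C_2$ is independent) forces the third vertex into $V_1 \cup V_2$ --- so the apex for a $C_1$-edge can only be that $C_1$-vertex itself, never a $C_2'$-vertex. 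In your final stage you propose triangles $\{u,v,w\}$ with $u,v \in U_1 \subseteq V_1$ and $w \in U_2 \subseteq V_2$, i.e.\ triangles lying entirely in $V_1 \cup V_2$, which your own first paragraph shows cannot appear in $\cT$.

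The root cause is a direct transcription of the Steiner-triple-system reduction (Theorem~\ref{thm:red1}): there the reservoirs $U_1, U_2, U_3$ are vertex subsets that can serve as apex sets for cleaning up leftover edges inside the parts. Here that freedom does not exist --- the only legal apexes are in $C_1 \cup C_2'$, and $V_1, V_2$ contain no usable apex vertices. The paper's actual proof therefore does \emph{not} introduce reservoirs $U_1\subseteq V_1$, $U_2 \subseteq V_2$ at all; instead $C_2'$ itself plays the reservoir role, and the leftover of $G[V_1]\cup G[V_2]$ after the nibble (Step~\ref{step:step1'}) and the $C_1$-cleanup (Step~\ref{step:step1'new}) is handled by properly and equitably edge-colouring it into $q$ colour classes, pairing a class inside $V_1$ with an equal-sized class inside $V_2$, and for each pair finding edge-disjoint perfect matchings in auxiliary bipartite graphs with part $C_2'(s)$ (Step~\ref{step:step2'}); a final application of Lemma~\ref{lem:manymatchings} then clears the remaining $C_2'$-edges against $V_1$--$V_2$ edges (Step~\ref{step:step3'}). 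Your stages~2 and~3 would need to be replaced by something along these lines; the parity bookkeeping you worry about in your last paragraph is also handled differently (through the colour-class pairing and the fact that removing a pair of triangles with the same $C_2'$-apex changes $d(c,V_1)$ and $d(c,V_2)$ equally), not through a deficiency function inside $U_1, U_2$.
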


If $G$ is the join of a complete graph on $2n$ vertices and an empty graph on $2n - 1$ vertices (as in Theorem~\ref{thm:red2}), then $1$-factorizations of $K_{2n}$ are in bijection with triangle decompositions of $G$, so we can couple the distributions of a random $(p, 2n - 1)$-list assignment $L$ for the edges of $K_{2n}$ with $\randt(G, p)$ so that an $L$-edge-colouring exists if and only if $\randt(G, p)$ contains a triangle decomposition of $G$.  Using this coupling, it is straightforward to derive Theorem~\ref{thm:list-reduction} from Theorem~\ref{thm:red2}.

\subsubsection*{An overview of the proof of Theorem~\ref{thm:red2}.} 



In Step~\ref{step:step1'}, using the R\"odl nibble, we cover most of the edges in $G[V_1] \cup G[V_2] \cup G[V_1 \cup V_2, C_1]$ using a set $\cT^0$ of edge-disjoint triangles. 
In Step~\ref{step:step1'new}, we cover all remaining edges of $G$ incident to a vertex of $C_1$ using triangles containing an edge of $G[V_1, V_2]$. For this step, we define the `leftover sets' $W_s(c) \coloneqq V_s \setminus \bigcup_{T \in \cT^0(c)} V(T)$ for $s \in [2]$, where $\cT^0(c) \coloneqq \{ T \in \cT^0 : c \in V(T) \}$ for each $c \in C_1$. Then, using Lemma~\ref{lem:coverdown_reservoir}, we find a perfect matching in $G[W_1(c), W_2(c)]$ for each $c \in C_1$ such that these perfect matchings are edge-disjoint; each perfect matching $M_c$ corresponds to a set of triangles in $G$ covering the remaining edges incident to $c$. To find these perfect matchings, we need the leftover sets $W_s(c)$ for $c \in C_1$ to be well-distributed (e.g., they must not overlap very much). To achieve this property of leftover sets, we use Lemma~\ref{prop:pseudomat}, and we find the almost cover $\cT^0$ in Step~\ref{step:step1'} in `batches'. Let $\cT'$ be the triangles found in Step~\ref{step:step1'new}.


In Step~\ref{step:step2'}, we cover all of the remaining edges in $G[V_1] \cup G[V_2]$ using a set $\cT''$ of edge-disjoint triangles of the form $\{u,v,w\}$ where $uv \in E(G[V_1]) \cup E(G[V_2])$ and $w \in C_2'$ satisfying certain properties. Finally, in Step~4, we cover all remaining edges of $G$ incident to vertices in $C_2'$ using a set $\cT'''$ of edge-disjoint triangles. Then we obtain the desired subgraph $H$ obtained by removing all the triangles in $\cT^0 \cup \cT' \cup \cT'' \cup \cT'''$ from $G$.


\subsection{Finding perfect matchings in random subgraphs of almost disjoint, dense graphs}
The following lemma is a randomised bipartite version of \cite[Lemma 3.10]{BGKLMO20}, whose proof is almost identical to the proof given in~\cite{BGKLMO20}. The main difference is that we use Lemma~\ref{lem:manymatchings} instead of finding edge-disjoint matchings one by one greedily.


\begin{lemma}\label{lem:coverdown_reservoir}
Let $1/n_0 \ll 1/C, \eta \ll \rho \ll 1$, and let $n \ge n_0$. Let $1 \leq m \leq n^2$.
Let $V_1 , \dots , V_m , V_1' , \dots , V_m'$ be sets satisfying the following.
\begin{enumerate}[(i)]
    \item For every $i \in [m]$, $|V_i| = |V_i'|$ and $|V_i| \geq \rho^{4/3} n$.
    \item For every $i \in [m]$, there are at least $m - \rho^3 n$ indices $j \in [m]$ such that 
    $|V_i \cap V_j| , |V_i' \cap V_j'| \leq \rho^2 n$.\COMMENT{We use this lemma in the proof of Theorem~\ref{thm:red2}, when dealing with batches, so we need to allow a few exceptional indices from the same block.}
    
    \item \label{assumption1} Every vertex $v \in \bigcup_{i=1}^{m} V_i$ is in at most $2 \rho n$ sets among $V_1 , \dots , V_m$.
    \item \label{assumption2} Every vertex $v \in \bigcup_{i=1}^{m} V_i'$ is in at most $2 \rho n$ sets among $V_1' , \dots , V_m'$.
\end{enumerate}

For every $i \in [m]$, let $H_i$ be a bipartite graph with the bipartition $\{V_i, V_i'\}$ and $\delta(H_i) \geq (1 - \eta)|V_i|$. Let $G_i$ be a random subgraph obtained from $H_i$ by choosing each edge with probability $p \geq C \log n/n$ independently at random, and $G_1 , \dots , G_m$ are mutually independent.
With probability at least $1 - n^{-C^{1/4}}$, for all $i \in [m]$, there exists a perfect matching $M_i$ of $G_i$, such that $M_1 , \dots , M_m$ are edge-disjoint.
\end{lemma}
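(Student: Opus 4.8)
The plan is to import the proof of \cite[Lemma 3.10]{BGKLMO20} essentially verbatim, the single genuine change being that at the point where that argument greedily peels off edge-disjoint perfect matchings from a fixed dense bipartite graph, we instead feed the random subgraph $G_i$ to Lemma~\ref{lem:manymatchings}, which supplies the same supply of perfect matchings. Concretely, I would first establish a ``good event'' $\mathcal E$ as follows. Fix $i \in [m]$. Since $|V_i| = |V_i'| \geq \rho^{4/3}n$, $\delta(H_i) \geq (1-\eta)|V_i|$, and $p \geq C\log n / n \geq C\rho^{4/3}\cdot\frac{\log|V_i|}{|V_i|}$, Lemma~\ref{lem:manymatchings} applies to $H_i$ with $|V_i|$, $\eta$, and $C\rho^{4/3}$ playing the roles of $n$, $\eps$, and $C$ (legitimate because $1/C, \eta \ll \rho$), so with probability at least $1 - |V_i|^{-(C\rho^{4/3})^{1/2}}$ the graph $G_i$ contains at least $r := \tfrac14 C\rho^{4/3}\log n$ pairwise edge-disjoint perfect matchings. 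Let $\mathcal E$ be the intersection over $i \in [m]$ of these events together with the Chernoff-plus-union-bound event that every $G_i$ has the quasirandomness -- near-regular degrees $\Theta(|V_i|p)$ and codegrees close to their expectations -- needed below; using $m \leq n^2$ and $C$ large, $\Prob{\mathcal E} \geq 1 - \tfrac12 n^{-C^{1/4}}$, and I would condition on $\mathcal E$.

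On $\mathcal E$, I would transcribe the selection procedure of \cite[Lemma 3.10]{BGKLMO20}. It processes the indices in a carefully chosen order -- roughly, batch by batch, where a batch has at most $\rho^3 n$ indices (within which the sets may overlap freely) and different batches are $\rho^2 n$-separated by (ii) -- maintaining pairwise edge-disjoint perfect matchings $M_1, \dots, M_{i-1}$, and at step $i$ producing a further perfect matching $M_i$ of $G_i$ edge-disjoint from all of $M_1, \dots, M_{i-1}$. The only modification is that ``the current graph'' is the random graph $G_i$ rather than $H_i$: wherever \cite{BGKLMO20} extracts matchings greedily from a dense graph via a minimum-degree/Hall argument, I would instead invoke Lemma~\ref{lem:manymatchings} (through $\mathcal E$) to obtain the $r$ required edge-disjoint perfect matchings of $G_i$, and then argue, as in \cite{BGKLMO20} and using hypotheses (i)--(iv), that the already-committed edges -- diluted in $G_i$ by the fact that, conditionally on $M_1, \dots, M_{i-1}$, each committed edge of $H_i$ lies in $G_i$ independently with probability $p$ -- leave a suitable $M_i$ available. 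A union bound over the at most $m \leq n^2$ steps, each failing with probability at most $n^{-C^{1/3}}$, absorbs into the remaining $\tfrac12 n^{-C^{1/4}}$.

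The part I expect to be the main obstacle is exactly this combinatorial core of \cite[Lemma 3.10]{BGKLMO20}, which has to be reproduced with care rather than black-boxed: showing that the edges committed to $M_1, \dots, M_{i-1}$ cannot destroy every perfect matching of $G_i$. The difficulty is that a single vertex of $V_i$ may belong to as many as $2\rho n$ of the sets $V_j$, which \emph{exceeds} $|V_i| \geq \rho^{4/3}n$, so one cannot simply delete the committed edges and quote a minimum-degree bound; one must use the full force of (i)--(iv) -- the per-vertex multiplicity bound $2\rho n$, the $\rho^2 n$-separation between batches, the batch-size bound $\rho^3 n$ -- together with \cite{BGKLMO20}'s non-greedy processing order and, in the random setting, the extra dilution coming from the fresh independent sampling of $G_i$. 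Verifying that the exponent $4/3$ in (i) is precisely what makes all of these estimates close is where essentially all the work lies; the rest is bookkeeping.
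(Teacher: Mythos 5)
Your high-level plan agrees with the paper's: condition on a good event, process the indices $i=1,\dots,m$ in order, use Lemma~\ref{lem:manymatchings} to extract a pool of edge-disjoint perfect matchings from each $G_i$, and show that the matchings $M_1,\dots,M_{i-1}$ already committed cannot destroy too much of $H_i$. But the mechanism you propose for that last step is the wrong one, and it does not close the gap you correctly flagged as the crux. You attribute the control of committed edges at a vertex $u\in V_i$ to ``the extra dilution coming from the fresh independent sampling of $G_i$,'' i.e.\ to the fact that each committed edge of $H_i$ lies in $G_i$ only with probability $p$. Quantitatively this is insufficient: by (iii) the vertex $u$ may lie in $2\rho n$ of the sets $V_j$ with $j<i$, each possibly committing one edge at $u$ into $V_i'$, and after dilution by $p = C\log n/n$ the expected number of those committed edges still present in $G_i$ is of order $2\rho n\cdot p = 2\rho C\log n$, which is \emph{larger} than $d_{G_i}(u)\approx |V_i|p\geq \rho^{4/3}C\log n$ since $\rho^{4/3}<\rho$. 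So deleting the surviving committed edges from $G_i$ can wipe out the degree of $u$ entirely.

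The paper's actual mechanism is not dilution by $p$ but randomized thinning through a pool: having used Lemma~\ref{lem:manymatchings} to produce $\ell \asymp \rho^{3/2}C\log n$ pairwise edge-disjoint perfect matchings $N_1,\dots,N_\ell$ of $G_j$ inside $H_j - \bigcup_{k<j}M_k$, one sets $M_j := N_{t_j}$ with $t_j$ uniform in $[\ell]$. The good event is the codegree bound (the paper's \eqref{eqn:cond1'}): for every $j\in I_i$ and $u\in V_i\cap V_j$ one has $|N_{G_j}(u)\cap V_i'| \leq 2\rho^2 C\log n$, which follows from Chernoff together with hypothesis (ii). Consequently at most $2\rho^2 C\log n$ of the $\ell$ matchings in the pool of $G_j$ can hit $u$ via an edge into $V_i'$, so $\Prob{M_j\text{ hits }u\text{ via }V_i'}\lesssim \rho^{1/2}$; summing over the $\leq 2\rho n$ indices $j\in I_i$ containing $u$ and using Chernoff gives $O(\rho^{3/2}n)$ committed edges at $u$, plus at most $\rho^3 n$ from the exceptional $j\notin I_i$. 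Since $\rho^{3/2}\ll\rho^{4/3}$, this leaves $\delta(H_i-\bigcup_{k<i}M_k)\geq (1-\eta)|V_i|-O(\rho^{3/2}n)$, so Lemma~\ref{lem:manymatchings} applies to $H_i-\bigcup_{k<i}M_k$ (with its intersection with $G_i$ playing the role of the random subgraph, legitimate because $M_1,\dots,M_{i-1}$ are independent of $G_i$) to produce the next pool. Without the random choice from a pool of $\ell$ matchings and the codegree form of the good event, your degree bound does not hold; also, the paper processes indices in simple order $1,\dots,m$ rather than ``batch by batch,'' with hypothesis (ii) serving only to ensure all but $\rho^3 n$ indices are handled by the codegree argument.
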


 
Lemma~\ref{lem:coverdown_reservoir} is used in two different ways. It is used in the proof of Theorem~\ref{thm:red2}
to cover leftover sets of certain colour classes with edge-disjoint matchings. It is also used in the proof of Lemma~\ref{lem:coverdown} to find edge-disjoint matchings in the neighbourhoods of some vertices.

\begin{proof}[Proof of Lemma~\ref{lem:coverdown_reservoir}]
Without loss of generality, we may assume that $p = \frac{C \log n}{n}$. For every $i \in [m]$, let $I_i \subseteq [m]$ be the set of indices $j \in [m] \setminus \{ i \}$ such that $|V_i \cap V_j|, |V_i' \cap V_j'| \leq \rho^2 n$. By our assumption, $|I_i| \geq m - \rho^3 n$. 
For every $i \in [m]$, $j \in I_i$, $v \in V_i \cap V_j$, and $w \in V_i' \cap V_j'$, we have $\mathbb{E}|N_{G_i}(v) \cap V_j'|,\:  \mathbb{E}|N_{G_i}(w) \cap V_j|  \leq  \rho^2 C \log n$.
Thus by Lemma~\ref{lemma:chernoff}, with probability at least $1 - n^{-C^{2/3}}$, 
for all $i \in [m]$, $j \in I_i$, $v \in V_i \cap V_j$, and $w \in V_i' \cap V_j'$, we have
\begin{equation}\label{eqn:cond1'}
    |N_{G_i}(v) \cap V_j'|,\:  |N_{G_i}(w) \cap V_j| \leq 2 \rho^2 C \log n.
\end{equation}

We construct the desired perfect matchings $M_i$ in $G_i$ for $i \in [m]$, one by one. More precisely, for every $i \in [m]$, we aim to choose a perfect matching $M_i$ in $G_i$ such that it is edge-disjoint from the previously chosen matchings $M_1 , \dots , M_{i-1}$ from $G_1 , \dots , G_{i-1}$, respectively. Note that if $\delta(H_i - \bigcup_{k=1}^{i-1} M_k) \geq (1 - 500\rho^{1/6})|V_i|,$ then applying Lemma~\ref{lem:manymatchings} with $H_i - \bigcup_{k=1}^{i-1} M_k$, $|V_i|$, $500\rho^{1/6}$ and $\frac{C|V_i|}{2n}$ playing the roles of $G$, $n$, $\eps$ and $C$, respectively, we find\COMMENT{$\frac{C|V_i|}{2n} \frac{\log|V_i|}{2} \ge \lceil \frac{\rho^{3/2} C \log n }{20} \rceil$} $\ell \coloneqq \lceil \frac{\rho^{3/2} C \log n }{20} \rceil$ edge-disjoint perfect matchings $N_1 , \dots , N_\ell$ in $G_i$ with probability at least $1 - n^{-C^{1/3}}$. Let $M_i \coloneqq N_{t_i}$ where $t_i$ is chosen uniformly at random from $[\ell]$. Then, Claim~\ref{claim:degbound'} (stated below) and ~\eqref{eqn:cond1'} imply that we can find the desired matchings $M_i$ in $G_i$ for all $i \in [m]$ with probability at least $1 - n^{-C^{2/3}} - m \cdot (n^{-C^{1/3}} + n^{-C}) \geq 1 - n^{-C^{1/4}}$, proving our lemma. 

\begin{claim}\label{claim:degbound'}
For every $i \in [m]$, the minimum degree of $H_i - \bigcup_{k=1}^{i-1} M_k$ is at least $(1 - \eta)|V_i| - 200 \rho^{3/2} n \geq (1 - 500\rho^{1/6})|V_i|$ with probability at least $1 - n^{-C}$.
\end{claim}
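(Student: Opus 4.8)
The plan is to fix a vertex $v \in V_i$ (the argument for $v \in V_i'$ being entirely symmetric, using \ref{assumption2} in place of \ref{assumption1}), bound the number of edges of $H_i$ incident to $v$ that are destroyed by $M_1, \dots, M_{i-1}$, and then take a union bound over the at most $2n$ vertices of $V_i \cup V_i'$. Throughout I would condition on the event that~\eqref{eqn:cond1'} holds (shown earlier to have probability at least $1 - n^{-C^{2/3}}$) and on the event that the construction has reached step $i$, so that $M_1, \dots, M_{i-1}$ are all defined; the probability claimed in Claim~\ref{claim:degbound'} is then measured within this conditioning, which is consistent with how the claim is used in the main argument.

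First I would record that each $M_k$, being a perfect matching of $G_k \subseteq H_k$, contains exactly one edge at $v$ when $v \in V_k$ and none otherwise; write $M_k(v) \in V_k'$ for the partner of $v$ in the first case. Hence the number of edges of $H_i$ at $v$ lost to $\bigcup_{k=1}^{i-1} M_k$ is at most the number of $k \in [i-1]$ with $v \in V_k$ and $M_k(v) \in V_i'$. By \ref{assumption1} there are at most $2\rho n$ indices $k$ with $v \in V_k$; among these, all but at most $\rho^3 n$ lie in $I_i$ by assumption (ii) of Lemma~\ref{lem:coverdown_reservoir}, and for those at most $\rho^3 n$ exceptional indices I would use only the trivial bound that each destroys at most one edge at $v$. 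For an index $k \in I_i$ with $v \in V_k$ I would instead exploit the randomness in $M_k = N_{t_k}$: the partners $N_1(v), \dots, N_\ell(v)$ are $\ell$ distinct vertices of $N_{G_k}(v)$, so the number of $s \in [\ell]$ with $N_s(v) \in V_i'$ is at most $|N_{G_k}(v) \cap V_i'|$, and since $k \in I_i$ is equivalent to $i \in I_k$ (the defining condition being symmetric in the two indices), \eqref{eqn:cond1'} applied with $i$ and $k$ interchanged gives $|N_{G_k}(v) \cap V_i'| \le 2\rho^2 C \log n$. Revealing, for $k = 1, 2, \dots, i-1$ in turn, the matchings $N_1, \dots, N_\ell$ (a function of $G_k$ and $M_1, \dots, M_{k-1}$, hence of data already exposed) and then the independent uniform $t_k$, this yields $\Prob{M_k(v) \in V_i' \mid \text{past}} \le 2\rho^2 C \log n / \ell \le 40\rho^{1/2}$ by the choice $\ell = \lceil \rho^{3/2} C \log n / 20 \rceil$.

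It then follows that $\sum_{k} \mathbf{1}[M_k(v) \in V_i']$, summed over the at most $2\rho n$ good indices, is stochastically dominated by a $\mathrm{Bin}(2\rho n, 40\rho^{1/2})$ random variable, so by Lemma~\ref{lemma:chernoff} it is at most $160\rho^{3/2}n$ with probability at least $1 - 2\exp(-80\rho^{3/2}n/3) \ge 1 - n^{-2C}$, using that $\rho^{3/2}n \gg C\log n$ by the hierarchy. Adding the at most $\rho^3 n \le \rho^{3/2}n$ edges lost to the exceptional indices, at most $200\rho^{3/2}n$ edges of $H_i$ at $v$ are destroyed, whence $d_{H_i - \bigcup_{k<i}M_k}(v) \ge \delta(H_i) - 200\rho^{3/2}n \ge (1-\eta)|V_i| - 200\rho^{3/2}n$; and since $|V_i| \ge \rho^{4/3}n$ and $\eta \ll \rho$, the right-hand side is at least $(1-500\rho^{1/6})|V_i|$. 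A union bound over the at most $2n$ vertices of $V_i \cup V_i'$ gives the claim with probability at least $1 - n^{-C}$.

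The main obstacle is the conditioning bookkeeping: one must have the deterministic bound $|N_{G_k}(v) \cap V_i'| \le 2\rho^2 C\log n$ available \emph{before} flipping $t_k$, so that the indicators $\mathbf{1}[M_k(v) \in V_i']$ can be coupled to independent Bernoulli variables despite the $N_1, \dots, N_\ell$ depending on $M_1, \dots, M_{k-1}$. Exposing the matchings and then $t_k$ sequentially in $k$ achieves exactly this, and conditioning on~\eqref{eqn:cond1'} — an event not involving any of the $t_k$ — is what makes that bound available at the right moment.
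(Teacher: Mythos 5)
Your proof is correct and follows essentially the same route as the paper's: partition the pre-$i$ indices into those in $I_i$ (handled via~\eqref{eqn:cond1'} and the independent random choice $t_k$, yielding stochastic domination by a $\mathrm{Bin}(2\rho n, O(\rho^{1/2}))$ variable) and the at most $\rho^3 n$ exceptional indices (handled with the trivial bound), followed by a Chernoff bound and a union bound over $V_i \cup V_i'$. Your explicit treatment of the sequential exposure, and of the symmetry $k \in I_i \iff i \in I_k$ needed to invoke~\eqref{eqn:cond1'} with the roles of $i$ and $k$ swapped, just makes precise what the paper does implicitly, and the minor constant differences ($40$ vs.\ $60$, $160$ vs.\ $150$) are immaterial.
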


\begin{claimproof}
Let $i \in [m]$. For every $u \in V_i$ (or $u \in V_i'$), let $\cF_i(u)$ be the set of indices $j \in [i-1] \cap I_i$ such that $u \in V_j$ ($u \in V_j'$, respectively), 
and let $\cG_i(u)$ be the set of indices $j \in [i-1] \setminus I_i$ such that $u \in V_j$ ($u \in V_j'$, respectively).
For every $j \in [i-1]$ and $u \in V_i$ (or $u \in V_i'$), let $X_j(i , u)$ be the indicator random variable for the event that $uv \in M_j$ for some $v \in V_i'$ ($v \in V_i$, respectively). Then for any $u \in V_i$  (or $u \in V_i'$), we have
\begin{equation*}
    d_{H_i - \bigcup_{k=1}^{i-1} M_k}(u) = d_{H_i}(u) - \sum_{j \in \cF_i(u)} X_j (i , u) - \sum_{j \in \cG_i(u)} X_j(i , u).
\end{equation*}

Therefore, since $|\cG_i(u)| \leq m - |I_i| \leq \rho^3 n$, in order to prove the claim, it suffices to show that $X(i,u) \coloneqq \sum_{j \in \cF_i(u)} X_j (i , u) \leq 150 \rho^{3/2} n$ holds with probability at least $1 - n^{-C}$ for all $u \in V_i$  (or $u \in V_i'$).

Without loss of generality we may assume that $u \in V_i$. 
Let $\cF_i(u) = \{ j_1 , \dots , j_{|\cF_i(u)|} \}$, where $j_1 < j_2 < \dots < j_{|\cF_i(u)|}$ is the enumeration of the elements in $\cF_i(u)$ in increasing order. Among the $\ell$ edge-disjoint matchings we have found in  $G_{j_k}$ (while choosing $M_{j_k}$), by~\eqref{eqn:cond1'}, there are at most $|N_{G_{j_k}}(u) \cap V'_{i}| \leq 2 \rho^2 C \log n$ matchings which make $X_{j_k}(i , u) = 1$. 
Thus, 
\begin{equation*}
    \mathbb{P}(X_{j_k}(i , u) = 1 \: | \: X_{j_1}(i,u) , \dots , X_{j_{k-1}}(i,u)) \leq \frac{2 \rho^2 C \log n}{\ell} \leq 60 \rho^{1/2}.
\end{equation*}

Let $B \sim {\rm Bin}(|\cF_i(u)|, 60\rho^{1/2})$. 
Since $X(i,u)$ is stochastically dominated by $B$ and $|\cF_i(u)| \leq 2 \rho n$ by ~\ref{assumption1} and ~\ref{assumption2} of Lemma~\ref{lem:coverdown_reservoir}, we have
$\mathbb{P} ( X(i,u) > 150 \rho^{3/2} n ) \leq \mathbb{P}(B > 150 \rho^{3/2} n) \leq e^{-\Omega(\rho^{3/2} n)}$ by Lemma~\ref{lemma:chernoff}.
This proves the claim by taking  a union bound for all $u \in V_i \cup V_i'$.
\end{claimproof}
\end{proof}

\subsection{Proof of Theorem~\ref{thm:red2}}

\begin{proof}[Proof of Theorem~\ref{thm:red2}]
Choose new constants $\delta,  \gamma$ such that $1/C \ll \delta \ll \gamma \ll \eps$.

\setcounter{pf-step}{0}

We define disjoint subsets of $G^{(3)}$ as follows.
For every $c \in C_1 \cup C_2$, let $\cA_c$ be the set of triangles in $G^{(3)}$ of the form $\{u,v,c \}$ where $uv \in E(G[V_1]) \cup E(G[V_2])$, and let $\cB_c$ be the set of triangles in $G^{(3)}$ of the form $\{u,v,c \}$ where $uv \in E(G[V_1,V_2])$.

To construct the desired set $\cT \subseteq \randt(G , p)$ of edge-disjoint triangles, we will expose triangles in $\cA_c (p)$ for $c \in C_1$ in Step~\ref{step:step1'}, and we will expose triangles in $\cB_c(p)$ for $c \in C_1$ in Step~\ref{step:step1'new}. We will expose triangles in $\cA_c (p)$ for $c \in C'_2$ in Step~\ref{step:step2'}, and we will expose triangles in $\cB_c (p)$ for $c \in C'_2$ in Step~\ref{step:step3'}.

\begin{pf-step}\label{step:step1'}
Covering most of the edges in $G[V_1] \cup G[V_2] \cup G[V_1 \cup V_2, C_1]$.
\end{pf-step}
Let $K \coloneqq \lfloor 1 / \gamma^{4} \rfloor$.
Let $\{G^1 , \dots , G^K \}$ be a decomposition of $G[V_1] \cup G[V_2]$ 
such that $d_{G^i}(v) = (1 \pm n^{-1/3})n/K$ for every $v \in V_1 \cup V_2$ and $i \in [K]$.\COMMENT{Choose each edge of $G[V_1] \cup G[V_2]$ to be in $G^i$ with probability $1/K$ for every $i \in K$. Then by Lemma~\ref{lemma:chernoff}, for a given $v \in V_1 \cup V_2$, $d_{G^i}(v) = (1 \pm n^{-1/3})n/K$ with probability $1 - \exp(\Omega(n^{1/3}/K))$, so with positive probability $d_{G^i}(v) = (1 \pm n^{-1/3})n/K$ for all $v \in V_1 \cup V_2$ and $i \in [K]$.} 
Let $\{ C_1^1 , \dots , C_1^K \}$ be an equitable partition of $C_1$.
Suppose that for some $i \in [K]$, by exposing triangles in $\bigsqcup_{j=1}^{i-1} \bigsqcup_{c \in C_1^j} \cA_c(p)$, we have already obtained the sets $\cT_1 , \dots , \cT_{i-1}$ of edge-disjoint triangles in $G$ satisfying the following statements (\ref{step:step1'}:a)$_{r}$--(\ref{step:step1'}:d)$_{r}$ for $r = i-1$.

\begin{enumerate}[label=(\ref*{step:step1'}:\alph*)$_{r}$, topsep = 6pt]
    \item\label{cond:t1} The triangles in $\bigcup_{j=1}^{r}\cT_j$ are edge-disjoint. Moreover, for $j \in [r]$, every triangle in $\cT_{j}$ contains exactly one vertex in $C_1^j$, and two other vertices which are adjacent in $G^j$ (thus both vertices lie in either $V_1$ or $V_2$).

    \item\label{cond:t3} For every $j \in [r]$ and $c \in C_1^j$, let $\cT_j(c)$ be the set of triangles in $\cT_j$ containing $c$. Then $\cT_j(c) \subseteq \cA_c(p)$.
    Moreover, for $s \in [2]$, if $W_s(c) \coloneqq V_s \setminus \bigcup_{T \in \cT_j(c)}V(T)$, then
    \begin{equation*}
        0.8 \gamma n \leq |W_1(c)| = |W_2(c)| \leq 2 \gamma n.  
    \end{equation*}
    
    \item\label{cond:t4} For all $1 \leq j \ne k \leq r$, $c_j \in C_1^j$, and $c_k \in C_1^k$, we have
    \begin{equation*}
        0.64 \gamma^2 n \leq |W_1(c_j) \cap W_1(c_k)| , |W_2(c_j) \cap W_2(c_k)| \leq 4 \gamma^2 n.
    \end{equation*}
    
    \item\label{cond:t5} For every $s \in [2]$, $j \in [r]$, and every $v \in V_s$, the degree of $v$ in $G^j[V_s] - \bigcup_{T \in \cT_j}E(T)$ lies in $[0.6 \gamma n / K\:,\: 2 \gamma n / K]$.
    Moreover, $v$ is in at most $2 \gamma n / K$ sets in $\{ W_s(c) \}_{c \in C_1^j}$.
\end{enumerate}

Conditioning on the choices of $\cT_1 , \dots , \cT_{i-1}$ satisfying~\ref{cond:t1}--\ref{cond:t5} for $r = i-1$, 
and exposing triangles in $\bigsqcup_{c \in C_1^i} \cA_c (p)$, with probability at least $1 - n^{-C^{2/3}}$, now we show that there exists a set of edge-disjoint triangles $\cT_i$ such that (\ref{step:step1'}:a)$_{r}$--(\ref{step:step1'}:d)$_{r}$ are satisfied for $r = i$.
To that end, let $\cH^i$ be a 3-uniform hypergraph such that 
\begin{align*}
    V(\cH^i) &\coloneqq E(G^i) \cup \{ vc \: : \: v \in V_1 \cup V_2 ,\: c \in C_1^i \},\\
    E(\cH^i) &\coloneqq \{ \{uv,uc,vc\} \: : \: uv \in E(G^i),\: c \in C_1^i\}.
\end{align*}
Note that every vertex in $\cH^i$ has degree $(1 \pm n^{-1/3})\frac{n}{K}$,\COMMENT{$d_{\cH^i}(vc) = d_{G^i}(v) = (1 \pm n^{-1/3})n/K$, and $d_{\cH^i}(uv) = |C_1^i|= \lfloor (n-1)/K \rfloor \pm 1 = (1 \pm n^{-1/3})n/K$} and $|V(\cH^i)| = \Omega(n^2)$. Let $\cH_p^i$ be the random subhypergraph of $\cH^i$ such that every edge $\{uv,uc,vc\} \in E(\cH^i)$ is in $\cH_p^i$ if and only if $\{u,v,c\} \in \cA_c(p)$. 
By Lemma~\ref{lemma:chernoff},\COMMENT{with probability at least $1-\exp(-\frac{\delta^2 C \log n}{10K}) \ge 1-n^{-C^{4/5}}$ a given vertex has degree $(1 \pm \delta)\frac{C \log n}{K}$, so by a union bound, all vertices have degree $(1 \pm \delta)\frac{C \log n}{K}$ with probability at least $1-n^{-C^{2/3}}$.} with probability at least $1 - n^{-C^{2/3}}$, every vertex has degree $(1 \pm \delta)\frac{C \log n}{K}$ in $\cH_p^i$. 

For every $s \in [2]$, $c \in C_1^i$ and all $c' \in C_1^j$ with $j \in [i-1]$, let $E_s(c,c') \coloneqq \{ vc \: : \: v \in W_s(c')\}$, and let $V_s(c) \coloneqq \{ vc \: : \: v \in V_s \}$. 
For every $w \in V_1 \cup V_2$, let $V^i(w)$ be the set of edges in $G^i$ incident to $w$, and $C^i(w) \coloneqq \{ wc : c \in C_1^i \}$. For $c \in C_1^i$, let us define
\begin{align*}
    \cF(c) &\coloneqq \{ V_1(c) , V_2(c) \} \cup \{ E_s(c,c') : s \in [2],\: c' \in C_1^1 \cup \dots \cup C_1^{i-1} \}, \\ 
    \cF_i &\coloneqq \bigcup_{c \in C_1^i}\cF (c) \cup \{ V^i(w) : w \in V_1 \cup V_2 \} \cup \{ C^i(w) : w \in V_1 \cup V_2 \}.
\end{align*}

Applying Lemma~\ref{prop:pseudomat} to $\cH_p^i$ with $\cF_i$ playing the role of $\cF$, we obtain a matching $\cN_i$ in $\cH_p^i$ such that for every $S \in \cF_i$, 
\begin{equation}\label{eqn:size_s}
    0.8 \gamma |S| \leq |S \setminus V(\cN_i)| \leq \gamma |S|.
\end{equation}

Since $\{uv,uc,vc \} \in E(\cH^i)$ naturally corresponds to $\{u,v,c \} \in G^{(3)}$, the matching $\cN_i$ corresponds to a set $\cT_i^* \subseteq \bigsqcup_{c \in C_1^i} \cA_c(p)$ of edge-disjoint triangles in $G$ such that

\begin{enumerate}[label=(\Alph*)]
    \item \label{A_r} Every triangle of $\cT_i^*$ contains an edge of $G^i$ and a vertex of $C_1^i$. Thus the triangles in $\cT_i^*$ are edge-disjoint from the triangles in $\bigcup_{j=1}^{i-1} \cT_j$.  
\end{enumerate}

For every $c \in C_1^i$ and $s \in [2]$, let $\cT_i^*(c) \coloneqq \{ T \in \cT_i^* : c \in V(T) \}$ and let $W^*_s(c) \coloneqq V_s \setminus \bigcup_{T \in \cT_i^*(c)} V(T)$. By \eqref{eqn:size_s}, \ref{cond:t3} for $r = i-1$  and the correspondence between $\cN_i$ and $\cT_i^*$, we have the following for every $c \in C_1^i$ and $s \in [2]$.

\begin{enumerate}[resume, label=(\Alph*)]
    \item \label{B_r} $|W_s^*(c)| \in [0.8 \gamma n , \gamma n]$.\COMMENT{For this, we let $S = V_s(c)$ in \eqref{eqn:size_s}, so $|S| = n$.}
    
    \item \label{C_r} For every $j \in [i-1]$ and $c' \in C_1^j$, we have $$|W_s^*(c) \cap W_s(c')| \in [0.8\gamma |W_s(c')| , \gamma |W_s(c')| ] \subseteq [0.64 \gamma^2 n, 2\gamma^2 n].$$\COMMENT{Letting $S = E_s(c,c')$ in \eqref{eqn:size_s} (so that $|S| = |W_s(c')|$), we have $|W_s^*(c) \cap W_s(c')| \in [4 \gamma |W_s(c')| / 5 , \gamma |W_s(c')|]$. Since \ref{cond:t3} holds for $r = i-1$, for $j \in [i-1]$ and $c' \in C_1^{j}$, we have $W_s(c') \in [4 \gamma n/5, 2 \gamma n]$, so $[4 \gamma |W_s(c')| / 5 , \gamma |W_s(c')|] \subseteq [16\gamma^2 n / 25, 2 \gamma^2 n]$.}
    
    \item \label{D_r} For every $v \in V_s$, the degree of $v$ in $G^i[V_s] - \bigcup_{T \in \cT_i^*} E(T)$ lies in $[\frac{0.6 \gamma n}{K}, \frac{1.2 \gamma n}{K}]$, and $v$ is in at most $1.2 \gamma n / K$ sets in $\{ W_s^*(c) \}_{c \in C_1^i}$.\COMMENT{Letting $S = V^i(v)$ in \eqref{eqn:size_s} (where $|S| = |V^i(v)| = d_{G^i}(v)=(1 \pm n^{-1/3})n/K$), the degree of $v$ in $G^i[V_s] - \bigcup_{T \in \cT_i^*} E(T)$ lies in $[4\gamma|V^i(v)|/5, \gamma |V^i(v)|] \subseteq [3\gamma n/5K, 6\gamma n/5K]$. Moreover, letting $S = C^i(v)$ in \eqref{eqn:size_s} (where $|S| = |C^i(v)| = (1 \pm n^{-1/3})n/K$), we obtain that $v$ is in at most $\gamma |S| \leq 1.2 \gamma n/K$ sets in $\{ W^*_s(c)\}_{c \in C_1^i}$.}
\end{enumerate}

For every $s \in [2]$ and $c \in C_1^i$, the edges in $G^i[V_s]$ covered by $\cT_i^*(c)$ form a matching. 
Thus, we can remove $||W_1^*(c)| - |W_2^*(c)||/2 \le 0.1 \gamma n$ triangles from $\cT_i^*(c)$ independently at random so that the resulting subset $\cT_i \subseteq \cT_i^*$ satisfies $|W_1(c)|=|W_2(c)|$
for every $c \in C_1^i$, where $W_s(c) \coloneqq V_s \setminus \bigcup_{T \in \cT_i(c)} V(T)$ for $s \in [2]$. 
Moreover, since $|W_s^*(c)| \le |W_s(c)| \le |W_s^*(c)|+0.2 \gamma n$\COMMENT{as we removed at most $0.1 \gamma n$ triangles from $\cT_i^*(c)$ and each triangle adds two vertices}, \ref{cond:t3} is satisfied for $r = i$ by ~\ref{B_r}. By ~\ref{A_r} and the fact that $\cT_i \subseteq \cT_i^*$, \ref{cond:t1} is satisfied for $r = i$. 
For every $c \in C_1^i$, we have $|\cT_i^*(c)| = (2n - |W_1^*(c)| - |W_2^*(c)|) / 2$, so by~\ref{B_r}, $|\cT_i^*(c)| \ge n - \gamma n$. 
Thus, for every $T \in \cT_i^*(c)$, 
$\mathbb{P}(T \in \cT_i^*(c) \setminus \cT_i(c)) \leq 0.15 \gamma$, and for every $s \in [2]$ and $w \in V_s \setminus W_s^*(c)$, $\mathbb{P}(w \in W_s(c)) \leq 0.15 \gamma$. By straightforward applications of Lemma~\ref{lemma:chernoff},~\ref{cond:t4} and~\ref{cond:t5} for $r=i$ follow from ~\ref{B_r},~\ref{C_r} and ~\ref{D_r}.
\COMMENT{For each $s \in [2]$ and $S \subseteq V_s \setminus W_s^*(c)$, let $X_S$ be the number of triangles in $\cT_i^*(c) \setminus \cT_i(c)$ intersecting $S$. Since there are at most $|S|$ triangles in $\cT_i^*(c)$ intersecting $S$ and each triangle affects $|S \cap W_s(c)|$ by at most two, $|S \cap W_s(c)| \leq 2X_S$ and $\mathbb{E}X_S \leq 0.15 \gamma |S|$. Applying Lemma~\ref{lemma:chernoff} to $X_S$ with $S = W_s(c') \setminus W_s^*(c)$ for $c' \in \bigsqcup_{j=1}^{i-1} C_1^j$, with high probability, $|S \cap W_s(c)| \leq 0.4 \gamma |S| \le 0.4 \gamma |W_s(c')|$, so
$|W_s(c) \cap W_s(c')| \leq |W_s^*(c) \cap W_s(c')| + 0.4 \gamma |W_s(c')|$. This combined with ~\ref{cond:t3} for $r=i-1$,~\ref{C_r} shows that ~\ref{cond:t4} is satisfied for $r=i$.} 
\COMMENT{For each $w \in V_s$ and $c \in C_1^i$, consider an indicator random variable $X_{w,c}$ which is 1 iff $w \in V_s \setminus W_s^*(c)$ and $wc \in E(G)$ is not covered by a triangle in $\cT_i(c)$. Then the number of sets in $\{W_s(c)\}_{c \in C_1^i}$ containing $w$ is the number of sets in $\{ W_s^*(c) \}_{c \in C_1^i}$ containing $w$ plus $\sum_{c \in C_1^i} X_{w,c}$, where $\mathbb{E}[\sum_{c \in C_1^i} X_{w,c}] \le 0.15 \gamma |C_1^i| \leq 0.2 \gamma n / K$, so we can apply Lemma~\ref{lemma:chernoff} to obtain $\sum_{c \in C_1^i} X_{w,c} \le 0.3 \gamma n/K$. Thus, by ~\ref{D_r}, the second statement of ~\ref{cond:t5} follows. Similarly, for each $v \in N_{G^i}(w) \setminus W_s^*(c)$, 
consider an indicator random variable $X_{wv}$ which is 1 iff $v \in W_s(c)$. Then the degree of $w$ in $G^i[V_s] - \bigcup_{T \in \cT_i}E(T)$ will be the degree of $w$ in $G^i[V_s] - \bigcup_{T \in \cT_i^*}E(T)$ plus $\sum_{v \in N_{G^i}(w)} X_{wv}$, where $\mathbb{E}\sum_{v \in N_{G^i}(w)} X_{wv} \leq 0.2 \gamma n / K$ as before. Thus, we can again apply Lemma~\ref{lemma:chernoff} to show  the first statement of ~\ref{cond:t5}.}

Thus, by induction, we can construct $\cT_1, \cT_2, \ldots, \cT_K$ satisfying (\ref{step:step1'}:a)$_{r}$--(\ref{step:step1'}:d)$_{r}$ for $r=K$ with probability at least $(1 - n^{-C^{2/3}})^K \geq 1 - K n^{-C^{2/3}}$.
Now we fix $\cT_1 , \dots , \cT_K$ satisfying (\ref{step:step1'}:a)$_{K}$--(\ref{step:step1'}:d)$_{K}$.


\begin{pf-step}\label{step:step1'new}
Covering remaining edges incident to a vertex in $C_1$ using triangles containing an edge of $G[V_1, V_2]$.
\end{pf-step}

For each $c \in C_1$, let $G_c$ be a random bipartite subgraph of $G[V_1,V_2]$ with the bipartition $\{ W_1(c) , W_2(c) \}$, where every edge $(u,v) \in W_1(c) \times W_2(c)$ is in $G_c$ if and only if $\{u,v,c\} \in \cB_c (p)$. 
Exposing triangles in $\bigsqcup_{c \in C_1} \cB_c(p)$,
by Lemma~\ref{lem:coverdown_reservoir}\COMMENT{We apply Lemma~\ref{lem:coverdown_reservoir} with $\{ G[W_1(c) , W_2(c)] \}_{c \in C_1}$,  $n-1$, $2 \gamma$ playing the roles of $\{H_i\}_{i \in m}$, $m$, $\rho$ respectively. Note that $\eta$ in Lemma~\ref{lem:coverdown_reservoir} can be taken arbitrarily small as $G[W_1(c) , W_2(c)]$ is a complete bipartite graph for every $c \in C_1$. We now check whether the conditions (i)--(iv) of Lemma~\ref{lem:coverdown_reservoir} are satisfied. By \ref{cond:t3} and \ref{cond:t4} for $r = K$, we know that for $s \in [2]$ and $c \in C_1$,  $|W_s(c)| \ge 0.8 \gamma n \ge (2\gamma)^{4/3}n$, and $|W_s(c) \cap W_s(c')| \le (2 \gamma)^2 n$ for all $c' \in C_1$ which are not in the same block as $c$, where the size of each block is at most $n/K \le 2 \gamma^4 n \leq (2 \gamma)^3 n$. Moreover by \ref{cond:t5} for $r = K$, $v$ is in at most $K (2 \gamma n/K) \le 2(2 \gamma) n$ sets in $W_s(c)$ for $c \in C_1$ and $s \in [2]$.}
, with probability at least $1 - n^{-C^{1/4}}$, for all $c \in C_1$, there exists a perfect matching $M_c$ of $G_c$ such that the matchings in $\{ M_c \}_{c \in C_1}$ are edge-disjoint. Now let $\cT' \coloneqq \{ \{u,v,c\} \: : \: uv \in E(M_c),\:c \in C_1 \} \subseteq \bigsqcup_{c \in C_1}\cB_c(p)$.
Note that\COMMENT{since $M_c$ is a perfect matching in the bipartite graph $G[W_1(c) , W_2(c)]$} the triangles in $\cT'$ are edge-disjoint from the triangles in $\cT_1 \cup \dots \cup \cT_{K}$. 

Thus, the subgraph $G' \coloneqq G - \bigcup_{i=1}^{K}\bigcup_{T \in \cT_i} E(T) - \bigcup_{T \in \cT'}E(T)$ of $G$ satisfies the following with probability at least $1 - n^{-C^{1/5}}$. 
\begin{enumerate}[label=(\ref*{step:step1'}:{\alph*}'), topsep = 6pt]
    \item \label{noedgesC1} $d_{G'}(c) = 0$ for every $c \in C_1$, and $N_{G'}(c) = V_1 \cup V_2$ for every $c \in C_2$.
    
    \item \label{samenumG'Vi} $e(G'[V_1]) = e(G'[V_2]) \in [\gamma n^2 / 4 , \gamma n^2]$ by~\ref{cond:t3} for $r = K$.\COMMENT{Since for each $s \in [2]$ and $c \in C_1$, there are $(n - |W_s(c)|)/2$ triangles of the form $\{u,v,c\}$ with $u,v \in V_s$, we have $e(G[V_s]) - e(G'[V_s]) = \sum_{c \in C_1} (n - |W_s(c)|)/2$, implying that $e(G'[V_s]) = \sum_{c \in C_1}|W_s(c)|/2$. Since for each $c \in C_1$, $|W_1(c)| = |W_2(c)| \in [0.8 \gamma n, 2 \gamma n]$ by~\ref{cond:t3} for $r = K$, this implies that $e(G'[V_1]) = e(G'[V_2]) \in [0.25 \gamma n^2,  \gamma n^2]$.}
    
    \item \label{boundonDegG'[V_s]}For every $j \in [2]$, $\gamma n / 2 \leq \delta(G'[V_j]) \leq \Delta(G'[V_j]) \leq 2 \gamma n$ by~\ref{cond:t5} for $r =K$.
    
    \item \label{cond:mindegG'} For every $j \in [2]$ and $v \in V_j$, $d_{G'}(v, V_{3-j}) \geq (1 - 2\gamma)n$ by~\ref{cond:t5} for $r =K$. \COMMENT{Since $v$ is in at most $2 \gamma n$ sets in $\{ W_s(c) \}_{c \in C_1}$ by~\ref{cond:t5} for $r = K$, it follows that $v$ is contained in at most $2 \gamma n$ matchings among $\{ M_c \}_{c \in C_1}$, thus at most $2 \gamma n$ edges incident to $v$ are removed from $G[V_1, V_2]$ by removing $\bigcup_{T \in \cT'}E(T)$ (for constructing $G'$).}
\end{enumerate}

Now we fix $\cT'$ satisfying  (\ref{step:step1'}:a')--(\ref{step:step1'}:d').

\begin{pf-step}\label{step:step2'}
Covering all remaining edges in $G[V_1] \cup G[V_2]$ using edge-disjoint triangles containing a vertex in $C_2' \subseteq C_2$.
\end{pf-step}

Let $q \coloneqq \lceil \gamma^{2/3} n \rceil$.
By~\ref{boundonDegG'[V_s]} and Vizing's theorem, both $G'[V_1]$ and $G'[V_2]$ admit proper $\lfloor 2 \gamma n + 1 \rfloor$-edge-colourings,
so we can properly (and equitably) colour the edges in $G'[V_1]$ and $G'[V_2]$ with $q$ colours using Lemma~\ref{lem:equitable}.
By~\ref{samenumG'Vi} and~\ref{boundonDegG'[V_s]}, $e(G'[V_1]) = e(G'[V_2]) \in [\gamma n^2/4, \gamma n^2]$, so there exists an integer $\ell$ such that for every $s \in [2]$, every colour class of $G'[V_s]$ has size either $\ell$ or $\ell + 1$, where $\ell \in [\gamma^{1/3} n/6 , \gamma^{1/3} n]$.\COMMENT{as the average size of colour classes is $e(G'[V_s]) / q \in [\gamma^{1/3} n / 5 , \gamma^{1/3} n]$}
For $s \in [2]$, let $M_1^s , \dots , M_q^s$ be the colour classes of $G'[V_s]$ such that $|M_i^1| = |M_i^2|$ for every $i \in [q]$.
Recall that we assumed $C_2' \subseteq C_2$ is a subset of size $\lfloor \eps n \rfloor$, thus $|C_2'| \geq \ell + 1$.

Now we aim to construct a set $\cT''$ of edge-disjoint triangles in $G'$ covering all of the edges in $E(G'[V_1]) \cup E(G'[V_2])$ such that $\cT'' \subseteq \bigsqcup_{c \in C_2'} \cA_c(p)$. We will construct $\cT''$ as the disjoint union of $\cT_r''$ for $r \in [q]$, where the triangles in  $\cT_r''$ cover all of the edges in $M_r^1 \cup M_r^2$ (thus the triangles in $\cT''$ cover all of the edges in $\bigcup_{r=1}^q(M_r^1 \cup M_r^2) = E(G'[V_1]) \cup E(G'[V_2])$). To construct $\cT_r''$ for $r \in [q]$, let $|M_r^1| = |M_r^2| = n_r \in \{\ell, \ell+1\}$, and let $\cS_{r} \coloneqq \{ \{u,v,c\} : uv \in M_r^1 \cup M_r^2, c \in C_2' \}$. Note that $\bigsqcup_{r \in [q]} \cS_{r} \subseteq \bigsqcup_{c \in C_2'} \cA_c$. For every $r \in [q]$, let $C_2'(r) \subseteq C_2'$ be any subset of size $n_r$. 

For every $r \in [q]$, we will inductively construct $\cT_r''$ satisfying the following property with probability at least $1 - n^{-C^{1/4}}$ by exposing triangles in $\cS_r (p)$.

\begin{enumerate}[label=($\mathrm{Q}_r$), topsep = 6pt]
    \item\label{paragraph:step2} For every $uv \in M_r^1$ there exists a unique $w_{uv} \in C_2'(r)$ and for every $u'v' \in M_r^2$, there exists a unique $w'_{u'v'} \in C_2'(r)$, such that the set $\cT_r''$ consists of $2n_r$ edge-disjoint triangles of the form
    \begin{equation*}
        \{ \{u,v, w_{uv}\}, \{u',v', w'_{u'v'}\} : uv \in M_r^1,\:u'v' \in M_r^2 \} \subseteq \cS_{r}(p),
    \end{equation*}
    and they are edge-disjoint from the triangles in $\bigcup_{j=1}^{r-1} \bigcup_{T \in \cT_j''}E(T)$.
\end{enumerate}



To that end, fix $s \in [q]$. Suppose we have already constructed $\cT_r''$ satisfying ~\ref{paragraph:step2} for $r \in [s-1]$. We now construct $\cT_s''$ satisfying ~\ref{paragraph:step2} for $r=s$.

Let $G'_{s} \coloneqq G' - \bigcup_{r=1}^{s-1} \bigcup_{T \in \cT_r''}E(T)$.  Since $\cT_r''$ satisfies ~\ref{paragraph:step2} for $r \le s-1$ and by~\ref{boundonDegG'[V_s]}, the following holds for every $c \in C'_2$, $j \in [2]$, and $v \in V_j$.
\COMMENT{every vertex $v \in V_j$ is in at most $\Delta(G'[V_j])$ sets in $\{ V(M_r^1) \cup V(M_r^2) \}_{r \in [s-1]}$, so $d_{G_{s}'}(v , C_2') \geq |C_2'| - \Delta(G'[V_j])$. Since $|C_2'| = \lfloor \eps n \rfloor$ and $\Delta(G'[V_j]) \leq 2 \gamma n \leq \frac{4 \gamma |C_2'|}{\eps} \le \gamma^{2/3} |C_2'|$, we have $d_{G_{s}'}(v , C_2') \geq |C_2'| - \Delta(G'[V_j]) \geq (1- \gamma^{2/3})|C_2'|$}
\begin{align}\label{eqn:deg_c_i-1}
d_{G_{s}'}(c , V_j) \geq n - 2q \geq (1 - 3 \gamma^{2/3}) n 
\:\:,\:\:
d_{G_{s}'}(v , C_2') \geq |C_2'| - \Delta(G'[V_j]) \geq (1 - \gamma^{2/3}) |C_2'|. \tag{$\mathrm{Q}'_s$}
\end{align}

For every $j \in [2]$, let $G_{\rm aux}^{j,s}$ be an auxiliary bipartite graph such that $V(G_{\rm aux}^{j,s}) \coloneqq C_2'(s) \cup M_s^j$ and
$E(G_{\rm aux}^{j,s}) \coloneqq \{ \{c,uv\} \: : \: c\in C_2'(s),\:uv \in M_s^j,\: uc,vc \in E(G_{s}')\}$.
Then by~\eqref{eqn:deg_c_i-1}, $G_{\rm aux}^{j,s}$ is a bipartite graph such that each part has size $n_s = \Omega(\gamma^{1/3} n)$ and 
$\delta(G_{\rm aux}^{j,s}) \geq n_s - 2q \geq (1 - 14 \gamma^{1/3})n_s$.\COMMENT{since each vertex in $C_2'(s)$ is not adjacent (in $G'_s$) to at most $2q$ vertices in $V(M_s^j)$, and for any $uv \in M_s^j$ either $u$ or $v$ is not adjacent (in $G'_s$) to at most $2 \Delta(G'[V_s]) \leq 4 \gamma n < 2q$ vertices in $C_2'(s)$ by \ref{boundonDegG'[V_s]}, so $\delta(G_{\rm aux}^{j,s}) \geq n_s - 2q$. For the last inequality note that $q/n_s \le (\lceil \gamma^{2/3} n \rceil)/\ell \le (\lceil \gamma^{2/3} n \rceil)/(\gamma^{1/3}n/6) \leq 7 \gamma^{1/3}$.} 
Now for $j \in [2]$, let $H_{\rm aux}^{j,s}$ be the random subgraph of $G_{\rm aux}^{j,s}$ such that every edge $\{c, uv\} \in E(G_{\rm aux}^{j,s})$ is in $H_{\rm aux}^{j,s}$ if and only if $\{u,v,c \} \in \cS_s(p)$. 
By Lemma~\ref{lem:manymatchings}\COMMENT{with $\gamma^{1/3}C, 14 \gamma^{1/3}$ playing the roles of $C, \eps$ respectively}, with probability at least $1 - n^{-C^{1/4}}$, there exist perfect matchings $M_{\rm aux}^{1,s}$ and $M_{\rm aux}^{2,s}$ in $H_{\rm aux}^{1,s}$ and $H_{\rm aux}^{2,s}$ respectively such that for every $j \in [2]$, $M_{\rm aux}^{j,s}$ corresponds to a set $\cT_{\rm aux}^{j,s} \subseteq \cS_s(p)$ of edge-disjoint triangles in $G'_s$.
Let $\cT_s'' \coloneqq \cT_{\rm aux}^{1,s} \cup \cT_{\rm aux}^{2,s}$. Then the triangles in $\cT_s''$ are edge-disjoint from the triangles in $\cT_1'' \cup \dots \cup \cT_{s-1}''$ by the definition of $G_s'$. Hence $\cT_s''$ satisfies~\ref{paragraph:step2} for $r = s$, as desired. Thus, by induction, $\cT_r''$ satisfies ~\ref{paragraph:step2} for all $r \in [q]$.

Let $\cT'' \coloneqq \bigcup_{r=1}^{q} \cT_r''$ and let $G'' \coloneqq G_{q+1}' = G' - \bigcup_{T \in \cT''}E(T)$. Then $G''$ satisfies the following.
\begin{enumerate}[label=(\ref*{step:step2'}:{\alph*}'), topsep = 6pt]
    \item \label{colordegG''} For every $c \in C_1$, $d_{G''}(c) = 0$  and for every $c \in C_2 \setminus C_2'$, $N_{G''}(c) = V_1 \cup V_2$ by ~\ref{noedgesC1}.
    
    \item \label{degC2'toVj} For every $c \in C_2'$ and $j \in [2]$, we have $d_{G''}(c , V_j) \ge (1 - 3 \gamma^{2/3})n$ by~\eqref{eqn:deg_c_i-1} (with $s = q+1$).
    
    \item \label{degG''crossing} $e(G''[V_1]) = e(G''[V_2]) = 0$ (since $\cT''$ covers all of the edges in $G'[V_1]) \cup G'[V_2]$) and $d_{G''}(v , V_{3-j}) \geq (1 - 2 \gamma)n$ by \ref{cond:mindegG'}.
\end{enumerate}

\begin{pf-step}\label{step:step3'}
Covering all remaining edges of $G$ incident to each vertex of $C'_2$ using triangles containing an edge of $G[V_1,V_2]$ and finishing the proof.
\end{pf-step}

In this step we cover the remaining edges of $G$ incident to $C_2'$ with a set $\cT'''$ of edge-disjoint triangles.

Note that for every $c \in C_2'$, we have $d_{G'}(c, V_1) = d_{G'}(c, V_2)$\COMMENT{as $N_{G'}(c) \cap V_2 = V_2$ and $N_{G'}(c) \cap V_1 = V_1$} by ~\ref{noedgesC1}. This combined with the fact that $\cT_r''$ satisfies ~\ref{paragraph:step2} for all $r \in [q]$,\COMMENT{so whenever we remove the triangles in $\cT_r''$ from $G'$ for some $r \in [q]$, each vertex $c \in C_2'$ either loses 2 edges to both $V_1$ and $V_2$ or it loses no edges at all, thus divisibility is maintained.} we have $|N_{G''}(c) \cap V_1| = |N_{G''}(c) \cap V_2|$ for every $c \in C_2'$. By ~\ref{degC2'toVj} and ~\ref{degG''crossing}, we can apply Lemma~\ref{lem:manymatchings}\COMMENT{We find the desired matchings $N_t$ one by one for $t \in C_2'$. Let $c \in C_2'$. By ~\ref{degC2'toVj}, we have $|N_{G''}(c) \cap V_1|, |N_{G''}(c) \cap V_2| \ge (1 - 3 \gamma^{2/3})n$. By ~\ref{degG''crossing}, $G''[N_{G''}(c) \cap V_1, N_{G''}(c) \cap V_2]$ has minimum degree at least $|N_{G''}(c) \cap V_1|-2\gamma n$. Thus, after removing at most $|C_2'|-1 \le \eps n$ matchings $N_t$, $G''[N_{G''}(c) \cap V_1, N_{G''}(c) \cap V_2]$ has minimum degree at least $|N_{G''}(c) \cap V_1|-2\gamma n-\eps n \ge (1 - 4\gamma-2 \eps )|N_{G''}(c) \cap V_1|$, so we can apply Lemma~\ref{lem:manymatchings} with $C/2$ playing the role of $C$ to obtain a perfect matching $N_c$ between $N_{G''}(c) \cap V_1$ and $N_{G''}(c) \cap V_2$ such that $\{u,v,c \} \in \cB_{c}(p)$ for every $uv \in N_{c}$.} repeatedly for every $c \in C_2'$, by exposing triangles in $\cB_{c}(p)$ to obtain a set $\{ N_{c} \}_{c \in C_2'}$ of edge-disjoint matchings in $G''$  with probability at least $1 - n^{-C^{1/3}}$, where $N_{c}$ is a perfect matching between $N_{G''}(c) \cap V_1$ and $N_{G''}(c) \cap V_2$ such that $\{u,v,c \} \in \cB_{c}(p)$ for every $uv \in N_{c}$.

Let $\cT''' \coloneqq \{ \{u,v,c \} : c \in C_2',\:uv \in N_{c} \}$, and let $\cT \coloneqq  \bigcup_{i=1}^{K} \cT_i \cup \cT' \cup \cT'' \cup \cT'''$. Let $H \coloneqq G'' - \bigcup_{T \in \cT'''}E(T) = G - \bigcup_{T \in \cT}E(T)$. We will show that $H$ is the desired tripartite graph satisfying ~\ref{deg_red2} and ~\ref{reg_red2}. 

Since $H = G'' - \bigcup_{T \in \cT'''}E(T)$, by ~\ref{colordegG''} we have $d_{H}(c) = 0$ for every $c \in C_1$,  and $N_{H}(c) = V_1 \cup V_2$  for every $c \in C_2 \setminus C_2'$. Since the triangles in $\cT'''$ cover all remaining edges between $C_2'$ and $V_1 \cup V_2$, $d_{H}(c) = 0$ for every $c \in C_2'$. Moreover, $e(H[V_1]) = e(H[V_2]) = 0$ by ~\ref{degG''crossing}, so $H$ satisfies ~\ref{deg_red2}.

It remains to check that $H$ satisfies ~\ref{reg_red2}. To that end, we claim that the divisibility condition $d_H(v , V_{3-j}) = d_H(v , C_2 \setminus C_2')$ is satisfied for every $j \in [2]$ and $v \in V_j$. Indeed, note that  $d_G(v , C_1 \cup C_2) = d_G(v , V_1 \cup V_2) = 2n-1$ for every $v \in V_1 \cup V_2$, triangles in $\cT$ are edge-disjoint, and every triangle in $\cT$ contains exactly one vertex in $C_1 \cup C_2$ and two other vertices in $V_1 \cup V_2$.\COMMENT{Thus, removing any triangle of $\cT$ from $G$ incident to a vertex $v \in V_1 \cup V_2$ reduces both $d(v , V_1 \cup V_2)$ and $d(v , C_1 \cup C_2)$ by one.} Hence, $d_H(v , C_1 \cup C_2) = d_H(v , V_1 \cup V_2)$. Moreover, by ~\ref{deg_red2}, for every  $i \in [2]$ and $v \in V_i$, $d_{H}(v , C_1 \cup C_2) = d_H(v , C_2 \setminus C_2')  = |C_2|- |C_2'|$ and $d_H (v , V_1 \cup V_2) = d_H(v , V_{3-i})$. Thus, we have $d_H(v , V_{3-i}) = |C_2|- |C_2'|$ for every $i \in [2]$ and $v \in V_i$. Thus $H$ satisfies ~\ref{reg_red2}, as desired.
\end{proof}

\section*{Acknowledgements}
We are grateful to the referees for thoughtful comments and suggestions.

\bibliographystyle{amsabbrv}
\bibliography{cg}

\end{document}